\selectfont\symbol{60}\fontencoding{\encodingdefault}}
\selectfont\symbol{62}\fontencoding{\encodingdefault}}
\newcommand{\assign}{:=}
\newcommand{\glossaryentry}[3]{\item[{#1}\hfill]#2\dotfill#3}
\newcommand{\infixand}{\text{ and }}
\newcommand{\leangle}{\mathrel{\angle}}
\newcommand{\legeangle}{\mathrel{\substack{\leangle\\\anglege}}}
\newcommand{\leqangle}{\mathrel{\angle \llap {\raisebox{-1ex}{$-$}}}}
\newcommand{\mathe}{\mathrm{e}}
\newcommand{\mathpi}{\pi}
\newcommand{\nobracket}{}
\newcommand{\nosymbol}{}
\newcommand{\suchthat}{:}
\newcommand{\tmabbr}[1]{#1}
\newcommand{\tmaffiliation}[1]{\\ #1}
\newcommand{\tmcolor}[2]{{\color{#1}{#2}}}
\newcommand{\tmem}[1]{{\em #1\/}}
\newcommand{\tmemail}[1]{\\ \textit{Email:} \texttt{#1}}
\newcommand{\tmmathbf}[1]{\ensuremath{\boldsymbol{#1}}}
\newcommand{\tmop}[1]{\ensuremath{\operatorname{#1}}}
\newcommand{\tmrsub}[1]{\ensuremath{_{\textrm{#1}}}}
\newcommand{\tmstrong}[1]{\textbf{#1}}
\newcommand{\tmtextbf}[1]{\text{{\bfseries{#1}}}}
\newcommand{\tmtextit}[1]{\text{{\itshape{#1}}}}
\newcommand{\tmtextrm}[1]{\text{{\rmfamily{#1}}}}
\newcommand{\tmtextsc}[1]{\text{{\scshape{#1}}}}
\newcommand{\tmtextup}[1]{\text{{\upshape{#1}}}}
\newcommand{\widespacing}[1]{#1}
\newenvironment{descriptionaligned}{\begin{description} }{\end{description}}
\newenvironment{descriptioncompact}{\begin{description} }{\end{description}}
\newenvironment{enumerateroman}{\begin{enumerate}[i.] }{\end{enumerate}}
\newenvironment{itemizedot}{\begin{itemize} }{\end{itemize}}
\newenvironment{proof}{\noindent\textbf{Proof\ }}{\hspace*{\fill}$\Box$\medskip}
\newenvironment{theglossary}[1]{\begin{list}{}{\setlength{\labelwidth}{6.5em}\setlength{\leftmargin}{7em}\small} }{\end{list}}
\newtheorem{theorem}{Theorem}[section]
\newtheorem{corollary}[theorem]{Corollary}
\newtheorem{definition}[theorem]{Definition}
\newtheorem{lemma}[theorem]{Lemma}
\newtheorem{proposition}[theorem]{Proposition}
{\theorembodyfont{\rmfamily}\newtheorem{remark}[theorem]{Remark}}
\newmdenv[hidealllines=false,innertopmargin=1ex,innerbottommargin=1ex,innerleftmargin=1ex,innerrightmargin=1ex]{tmframed}
\newcommand{\hl}{\ell}
\begin{document}

\title{The hyperserial field of surreal numbers}

\author{
  Vincent Bagayoko
  \tmaffiliation{imj-prg}, France
  \tmemail{bagayoko@imj-prg.fr}
  \and
  Joris van der Hoeven
  \tmaffiliation{CNRS, LIX}, France
  \tmemail{vdhoeven@lix.polytechnique.fr}
}

\maketitle

\begin{abstract}
  For any ordinal $\alpha > 0$, we show how to define a hyperexponential
  $E_{\omega^{\alpha}}$ and a hyperlogarithm $L_{\omega^{\alpha}}$ on the
  class $\mathbf{No}^{>, \succ}$ of positive infinitely large surreal numbers.
  Such functions are archetypes of extremely fast and slowly growing functions
  at infinity. We also show that the surreal numbers form a so-called
  hyperserial field for our definition.
\end{abstract}

\section{Introduction}

The ordered field $\mathbf{No}$ of surreal numbers was
introduced~{\cite{Con76}}. Conway originally used transfinite recursion to
define both the surreal numbers (henceforth called {\tmem{numbers}}), the
ordering on~{\tmstrong{No}}, and the ring operations. Given two sets $L$ and
$R$ of numbers with $L < R$ ({\tmabbr{i.e.}} $x < y$ for all~$x \in L$ and $y
\in R$), there exists a number $\{ L|R \}$ with
\[ \widespacing{L < \{ L|R \} < R,} \]
and all numbers can be obtained in this way. Given $x = \{ x_L |x_R \}$ and~$y
= \{ y_L |y_R \}$, we have
\begin{eqnarray*}
  x + y & \assign & \{ x_L + y, x + y_L |x_R + y, x + y_R \}
\end{eqnarray*}
and similar recursive formulas exist for $- x$, $xy$ and for deciding whether
$x = y$, $x \leqslant y$, and $x < y$. It~is truly remarkable that
$\mathbf{No}$ turns out to be a totally ordered real-closed field for such
``simple'' definitions~{\cite{Con76}}. The bracket $\{ \nosymbol | \nosymbol
\}$ is called the {\tmem{Conway bracket}}. Using the Conway bracket, we obtain
a surreal number in any traditional Dedekind cut, which allows us to embed
$\mathbb{R}$ into~$\mathbf{No}$. In addition, $\mathbf{No}$ also contains all
ordinal numbers
\[ 0 = \{ \nosymbol | \nosymbol \}, \ 1 = \{ 0| \nosymbol \}, \ 2 = \{
   0, 1| \nosymbol \}, \ \ldots, \omega = \{ 0, 1, 2, \ldots |
   \nosymbol \}, \  \omega + 1 = \{ 0, 1, 2, \ldots, \omega | \nosymbol \}
   \ldots \]
so $\mathbf{No}$ is actually a proper class.

An interesting question is which other operations from calculus can be
extended to the surreal numbers. For instance, Gonshor has shown how to extend
the real exponential function to the surreal numbers~{\cite{Gon86}} and the
exponential field $(\mathbf{No}, \exp)$ turns out to be elementarily
equivalent to $(\mathbb{R}, \exp)$~{\cite{EvdD01}}. Berarducci and Mantova
recently defined a~derivation with respect to $\omega$ on the
surreals~{\cite{BM18}}, again with good model-theoretic
properties~{\cite{vdH:bm}}. In collaboration with Mantova, the authors
constructed a surreal solution to the functional equation
\begin{eqnarray*}
  E_{\omega} (x + 1) & = & \exp E_{\omega} x,
\end{eqnarray*}
which is a bijection of $\mathbf{No}^{>, \succ} \assign \{ x \in \mathbf{No}
\suchthat x >\mathbb{R} \}$ onto itself~{\cite{BvdHM:surhyp}}. We call
$E_{\omega}$ a {\tmem{hyperexponential}} and its functional inverse
$L_{\omega}$ a {\tmem{hyperlogarithm}}.

The first goal of this paper is to extend the results
from~{\cite{BvdHM:surhyp}} to the construction of ``higher'' hyperexponentials
$E_{\omega^{\alpha}} : \mathbf{No}^{>, \succ} \longrightarrow \mathbf{No}^{>,
\succ}$ for any ordinal $\alpha > 1$, together with their functional inverses
$L_{\omega^{\alpha}}$. If $\alpha = \beta + 1$ is a successor ordinal, then
$E_{\omega^{\alpha}}$ satisfies the functional equation
\begin{eqnarray*}
  E_{\omega^{\beta + 1}} (x + 1) & = & E_{\omega^{\beta}} (E_{\omega^{\beta +
  1}} (x)) .
\end{eqnarray*}
Our second goal is to show that these hyperexponentials are ``well-behaved''
in the sense that they endow $\mathbf{No}$ with the structure of a
{\tmem{hyperserial field}} in the sense of~{\cite{BvdHK:hyp}}.

\subsection{Motivation and background}

Whereas it is natural to study surreal exponentiation and differentiation, it
may seem more exotic to define and investigate the properties of surreal
hyperexponentials and hyperlogarithms. In fact, the main motivation behind our
work is a conjecture by the second author~{{\cite[{\tmabbr{p.}}~16]{vdH:ln}}}
and a research program that was laid out in {\cite{vdH:icm}} for proving this
conjecture. The ultimate goal is to expose the deep connections between two
types of mathematical infinities: numerical infinities and growth rates at
infinity. Let us briefly recall the rationale behind this connection.

Cantor's ordinal numbers provide us with a way to count beyond all natural
numbers and keep counting beyond the size of any set. However, ordinal
arithmetic is rather poor in the sense that we have no subtraction or division
and that addition and multiplication do not satisfy the usual laws of
arithmetic, such as commutativity. We may regard Conway's surreal numbers as
providing a calculus with Cantor's ordinal numbers which does extend the usual
calculus with real numbers. In this sense, Conway managed to construct the
ultimate framework for computations with numerical infinities.

Another source for computations with infinitely large quantities stems from
the study of growth rates of real functions at infinity. The first major
results towards a systematic asymptotic calculus of this kind are due to Hardy
in~{\cite{H1910,H1912}}, based on earlier ideas by du
Bois-Reymond~{\cite{dBR1870,dBR1875,dBR1877}}. Hardy defined an
{\tmem{$L$-function}} to be a function constructed from~$x$ and the real
numbers $\mathbb{R}$ using the field operations, exponentiation, and
logarithms. He proved that the germs of $L$-functions at infinity form a
totally ordered field. The framework of $L$-functions is suitable for
asymptotic analysis since we have an ordering for comparing the growth at
infinity of any two such functions. This is often rephrased by saying that
$L$-functions have a {\tmem{regular}} growth at infinity.

Hardy also observed~{\cite[p.~22]{H1910}} that ``The only scales of infinity
that are of any practical importance in analysis are those which may be
constructed by means of the logarithmic and exponential functions.'' In other
words, Hardy suggested that the framework of $L$\mbox{-}functions not only
allows for the development of a systematic asymptotic calculus, but that this
framework is also sufficient for all ``practical'' purposes. Alas, there are
several ``holes''. First of all, the framework is not closed under various
useful operations such as functional inversion and integration. Secondly, the
framework does not contain any functions of extremely fast or slow growth at
infinity, like $E_{\omega}$ and $L_{\omega}$, although such functions
naturally appear in the analysis of certain algorithms; for instance, the best
known algorithm for multiplying two polynomials of degree $n$ in $\mathbb{F}_2
[x]$ runs in time $O (n \log n 4^{L_{\omega} n})$; see~{\cite{HarvdH:mult}}.

This raises the question how to construct a truly universal framework for
computations with regular functions at infinity. Our next candidate is the
class of transseries. A~{\tmem{transseries}} is a~formal object that is
constructed from $x$ (with $x \rightarrow \infty$) and the real numbers, using
exponentiation, logarithms, and {\tmem{infinite}} sums. One example of a
transseries is
\[ \mathe^{\mathe^x + \mathe^{x / 2} + \mathe^{x / 3} + \cdots} - 3
   \mathe^{x^2} + 5 (\log x)^{\mathpi} + 42 + x^{- 1} + 2 x^{- 2} + 6 x^{- 3}
   + 24 x^{- 4} + \cdots + \mathe^{- x} . \]
Depending on conditions satisfied by their supports, there are different types
of transseries. The first constructions of fields of transseries are due to
Dahn and Göring~{\cite{DG87}} and~Écalle~{\cite{Ec92}}. More general
constructions were proposed subsequently by the second author and his former
student Schmeling~{\cite{vdH:phd,vdH:gentr,Schm01}}. Clearly, any $L$-function
is a transseries, but the class of transseries is also closed under
integration and functional inversion, contrary to the class of $L$-functions.

However, the class of transseries still does not contain any hyperexponential
or hyperlogarithmic elements like $E_{\omega} x$ or $L_{\omega} x$. In our
quest for a truly universal framework for asymptotic analysis, we are thus
lead to look beyond: a {\tmem{hyperseries}} is a~formal object that is
constructed from~$x$ and the real numbers, using exponentiation, logarithms,
infinite sums, as well as hyperexponentials $E_{\omega^{\alpha}}$ and
hyperlogarithms $L_{\omega^{\alpha}}$ of any strength $\alpha$. The
hyperexponentials $E_{\omega^{\alpha}}$ and the hyperlogarithms
$L_{\omega^{\alpha}}$ are required to satisfy functional equations
\begin{eqnarray}
  E_{\omega^{\alpha + 1}} \circ T_1 & = & E_{\omega^{\alpha}} \circ
  E_{\omega^{\alpha + 1}} \\
  L_{\omega^{\alpha + 1}} \circ L_{\omega^{\alpha}} & = & T_{- 1} \circ
  L_{\omega^{\alpha + 1}}, 
\end{eqnarray}
where $T_s (u) \assign u + s$. For $\gamma = \sum_{i = 1}^p \omega^{\alpha_i}
n_i$ in Cantor normal form with $\alpha_1 < \cdots < \alpha_p$, we also define
\begin{eqnarray}
  L_{\gamma} & = & L_{\omega^{\alpha_1}}^{\circ n_1} \circ \cdots \circ
  L_{\omega^{\alpha_p}}^{\circ n_p}  \label{eq-Cantor-form-composition}
\end{eqnarray}
and we require that
\begin{eqnarray}
  L_{\gamma}' & = & \frac{1}{\prod_{\beta < \gamma} L_{\beta}} . 
\end{eqnarray}
It is non-trivial to construct fields of hyperseries in which these and
several other technical properties (see
section~\ref{subsection-hyperserial-fields} below) are satisfied. This was
first accomplished by Schmeling for hyperexponentials $E_{\omega^n}$ and
hyperlogarithms $L_{\omega^n}$ of finite strength $n \in \mathbb{N}$. The
general case was tackled in~{\cite{vdH:loghyp,BvdHK:hyp}}.

The construction of general hyperseries relies on the definition of an
abstract notion of {\tmem{hyperserial fields}}. Whereas the hyperseries that
we are really after should actually be hyperseries {\tmem{in}} an infinitely
large variable $x$, abstract hyperserial fields potentially contain
hyperseries that can not be written as infinite expressions in $x$. In the
present paper, we define hyperexponentials~$E_{\omega^{\alpha}}$ and
hyperlogarithms~$L_{\omega^{\alpha}}$ on $\mathbf{No}$ for all ordinals
$\alpha$ and show that this provides $\mathbf{No}$ with the structure of an
abstract hyperserial field. Moreover, given any hyperseries $f$ in $x$ can
naturally be evaluated at $x = \omega$ to produce a surreal number $f
(\omega)$. The conjecture from~{{\cite[{\tmabbr{p.}}~16]{vdH:ln}}} states
that, for a sufficiently general notion of ``hyperseries in $x$'',
{\tmem{all}} surreal numbers can actually be obtained in this way. We plan to
prove this and the conjecture in a follow-up paper.

\subsection{General overview and summary of our new contributions}

Our main goal is to define hyperexponentials $E_{\omega^{\alpha}} :
\mathbf{No}^{>, \succ} \longrightarrow \mathbf{No}^{>, \succ}$ for any ordinal
$\alpha > 1$ and to show that $\mathbf{No}$ is a hyperserial field for these
hyperexponentials. Since our construction builds on a lot of previous work,
the paper starts with three sections of reminders.

In section \ref{section-basic-notions}, we recall basic facts about well-based
series and surreal numbers. In particular, we recall that any surreal number
$x \in \mathbf{No}$ can be regarded as a well-based series
\begin{eqnarray*}
  x & = & \sum_{\mathfrak{m} \in \mathbf{Mo}} x_{\mathfrak{m}} \mathfrak{m}
\end{eqnarray*}
with real coefficients $x_{\mathfrak{m}} \in \mathbb{R}$. The corresponding
group of monomials $\mathbf{Mo}$ consists of those positive numbers
$\mathfrak{m} \in \mathbf{No}^{>}$ that are of the form $\mathfrak{m}= \{
\mathbb{R}^{>} L|\mathbb{R}^{>} R \}$ for certain subsets $L$ and $R$
of~$\mathbf{No}$ with $\mathbb{R}^{>} L <\mathbb{R}^{>} R$.

Section~\ref{subsection-surreal-substructures} is devoted to the theory of
surreal substructures from~{\cite{BvdH19}}. One distinctive feature of the
class of surreal numbers is that it comes with a partial, well-founded order
$\sqsubseteq$, which is called the {\tmem{simplicity}}{\index{simplicity}}
relation. The Conway bracket can then be characterized by the fact that, for
any {\tmem{sets}}~$L$ and~$R$ of surreal numbers with $L < R$, there exists a
unique $\sqsubseteq$-minimal number~$\{ L|R \}$\label{autolab1} with~$L < \{
L|R \} < R$. For many interesting subclasses $\mathbf{S}$ of $\mathbf{No}$, it
turns out that $(\mathbf{S}, \leqslant, \sqsubseteq)$ is actually isomorphic
to $(\mathbf{No}, \leqslant, \sqsubseteq)$, where $(\mathbf{S}, \leqslant,
\sqsubseteq)$ stands for the class $\mathbf{S}$ with the restrictions
of~$\leqslant$ and~$\sqsubseteq$ to~$\mathbf{S}$. Such classes $\mathbf{S}$
are called {\tmem{surreal substructures}} of $\mathbf{No}$ and they come with
their own Conway bracket $\{ \nosymbol | \nosymbol \}_{\mathbf{S}}$.

In section~\ref{subsection-hyperserial-fields}, we recall the definition of
hyperserial fields from~{\cite{BvdHK:hyp}} and the main results on how to
construct such fields. One major fact from~{\cite{BvdHK:hyp}} on which we
heavily rely is that the construction of hyperserial fields can be reduced to
the construction of {\tmem{hyperserial skeletons}}. In the context of the
present paper, this means that it suffices to define the hyperlogarithms
$L_{\omega^{\alpha}}$ only for very special, so called {\tmem{$L_{<
\omega^{\alpha}}$\mbox{-}atomic elements}}.

In the case when $\alpha = 0$, the $L_{< 1}$-atomic elements are simply the
monomials in $\mathbf{Mo}$ and the definition of the general logarithm on
$\mathbf{No}^{>}$ indeed reduces to the definition of the logarithm
on~$\mathbf{Mo}$: given $x \in \mathbf{No}^{>}$, we write $x = c\mathfrak{m}
(1 + \varepsilon)$, where $c \in \mathbb{R}$, $\mathfrak{m} \in \mathbf{Mo}$
and $\varepsilon$ is infinitesimal, and we take $\log x \assign \log
\mathfrak{m}+ \log c + \varepsilon - \varepsilon^2 / 2 + \varepsilon^3 / 3 +
\cdots$. This very special case will be considered in more detail in
section~\ref{section-transserial-structure}.

In the case when $\alpha = 1$, the $L_{< \omega}$-atomic elements of
$\mathbf{No}^{>, \succ}$ are those elements $\mathfrak{a} \in \mathbf{No}^{>,
\succ}$ such that $L_n \mathfrak{a}$ is a monomial for every $n \in
\mathbb{N}$. The construction of $L_{\omega}$ on $\mathbf{No}^{>, \succ}$ then
reduces to the construction of $L_{\omega}$ on the class
$\mathbf{Mo}_{\omega}$ of $L_{< \omega}$-atomic numbers. This particular case
was first dealt with in~{\cite{BvdHM:surhyp}} and this paper can be used as an
introduction to the more general results in the present~paper.

For general ordinals $\alpha$, we say that $\mathfrak{a} \in \mathbf{No}^{>,
\succ}$ is $L_{< \omega^{\alpha}}$-atomic if $L_{\beta} \mathfrak{a}$ is a
monomial for every~{$\beta < \alpha$}. The advantage of restricting ourselves
to such numbers $\mathfrak{a}$ when defining hyperlogarithms is that
$L_{\alpha} \mathfrak{a}$ only needs to verify few requirements with respect
to the ordering. This makes it possible to recursively define $L_{\alpha}
\mathfrak{a}$ using a fairly simple formula:
\begin{eqnarray}
  L_{\alpha} \mathfrak{a} & \assign & \{ \mathbb{R}, L_{\alpha} \mathfrak{a}'
  + (L_{< \alpha} \mathfrak{a}')^{- 1} |L_{\alpha} \mathfrak{a}'' - (L_{<
  \alpha} \mathfrak{a})^{- 1}, L_{< \alpha} \mathfrak{a} \}, 
  \label{Lalpha-def}
\end{eqnarray}
where $\mathfrak{a}', \mathfrak{a}''$ range over $L_{< \alpha}$-atomic numbers
with $\mathfrak{a}', \mathfrak{a}'' \sqsubseteq \mathfrak{a}$ and
$\mathfrak{a}' <\mathfrak{a}<\mathfrak{a}''$; see
also~(\ref{eq-rich-hyperlog}).

In section \ref{section-hyperserial-structure}, we prove that this definition
is warranted and that the resulting functions $L_{\alpha}$ satisfy the axioms
of hyperserial skeletons from~{\cite[Section~3]{BvdHK:hyp}}. Our proof
proceeds by induction on $\alpha$ and also relies on the fact that the class
$\mathbf{Mo}_{\alpha}$ of $L_{< \omega^{\alpha}}$-atomic numbers actually
forms a~surreal substructure of $\mathbf{No}$. Our main result is the
following theorem:

\begin{theorem}
  \label{th-confluent-hyperserial-field}The
  definition~\tmtextup{(\ref{Lalpha-def})} gives $\mathbf{No}$ the structure
  of a confluent hyperserial skeleton in the sense of {\cite{BvdHK:hyp}}.
  Consequently, we may uniquely extend $L_{\omega^{\mu}}$ to $\mathbf{No}^{>,
  \succ}$ in a way that gives $\mathbf{No}$ the structure of a confluent
  hyperserial field. Moreover, for each ordinal $\mu$, the extended function
  {$L_{\omega^{\mu}} : \mathbf{No}^{>, \succ} \longrightarrow \mathbf{No}^{>,
  \succ}$} is bijective.
\end{theorem}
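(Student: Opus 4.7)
The plan is to proceed by transfinite induction on the ordinal $\alpha$ and to leverage the skeleton-to-field reduction from \cite{BvdHK:hyp}: it suffices to show that the partial functions $L_{\omega^\alpha}$ defined by (\ref{Lalpha-def}) on the class $\mathbf{Mo}_\alpha$ of $L_{<\omega^\alpha}$-atomic numbers satisfy the axioms of a confluent hyperserial skeleton. The base cases $\alpha=0$ and $\alpha=1$ are already handled: $\alpha=0$ reduces to Gonshor's logarithm on $\mathbf{Mo}$ (sketched in section~\ref{section-transserial-structure}), while $\alpha=1$ is the main result of \cite{BvdHM:surhyp}. For the induction step at some $\alpha>1$, I would assume that $L_{\omega^\beta}$ has been constructed on $\mathbf{Mo}_\beta$ for every $\beta<\alpha$, that each $\mathbf{Mo}_\beta$ is a surreal substructure, and that all the skeleton axioms below $\omega^\alpha$ hold.

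The first technical step is to show that $\mathbf{Mo}_\alpha=\bigcap_{\beta<\alpha}L_{\omega^\beta}^{-1}(\mathbf{Mo}_\beta)$ (suitably interpreted through Cantor normal form via~(\ref{eq-Cantor-form-composition})) is itself a surreal substructure, so that $\sqsubseteq$-inductions and Conway-bracket manipulations inside $\mathbf{Mo}_\alpha$ make sense. This uses the induction hypothesis together with the general machinery of surreal substructures from \cite{BvdH19}, particularly the fact that a decreasing intersection of surreal substructures indexed by a class with appropriate cofinality is again a surreal substructure. Next, for each $\mathfrak{a}\in\mathbf{Mo}_\alpha$, I would verify that the right-hand side of (\ref{Lalpha-def}) is a legitimate cut, by a $\sqsubseteq$-induction on $\mathfrak{a}$: the left part dominates $\mathbb{R}$ and is majorized by the right part, both because of the strict monotonicity of the previously constructed $L_{<\alpha}$ and because of the asymptotic inequality $L_{<\alpha}\mathfrak{a}'\ll L_{<\alpha}\mathfrak{a}\ll L_{<\alpha}\mathfrak{a}''$ for $\sqsubseteq$-predecessors $\mathfrak{a}'<\mathfrak{a}<\mathfrak{a}''$.

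Then I would check the hyperserial skeleton axioms of \cite[Section~3]{BvdHK:hyp} for $L_{\omega^\alpha}$: strict monotonicity and the fact that $L_{\omega^\alpha}\mathfrak{a}\in\mathbf{Mo}_\alpha$ follow from (\ref{Lalpha-def}) by $\sqsubseteq$-induction; the functional equations relating $L_{\omega^\alpha}$ to $L_{<\omega^\alpha}$ are verified by uniqueness of simplest cuts, comparing the defining cut for $L_{\omega^\alpha}\mathfrak{a}$ with the one obtained after composition; and confluence (the requirement that $\mathbf{Mo}_\alpha$ be cofinal and coinitial in the appropriate sense below every element of $\mathbf{Mo}_{<\alpha}$) is derived from the ``richness'' of $\sqsubseteq$-predecessors inside the surreal substructure $\mathbf{Mo}_\alpha$. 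The successor step and the limit step are treated slightly differently: for limit $\alpha$, the $L_{<\omega^\alpha}$-atomicity is a proper intersection condition, and confluence requires a diagonal construction of simpler atomic approximants.

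The main obstacle, I expect, is precisely this verification of the skeleton axioms at limit ordinals $\alpha$, and in particular confluence: one must ensure that the cut (\ref{Lalpha-def}) does not ``collapse'' or produce a number outside $\mathbf{Mo}_\alpha$, despite the fact that membership in $\mathbf{Mo}_\alpha$ is cut out by a proper class of conditions. Once the skeleton structure is in place, the second sentence of the theorem is a direct application of the extension theorem of \cite{BvdHK:hyp}, which canonically extends $L_{\omega^\mu}$ from $\mathbf{Mo}_\mu$ to all of $\mathbf{No}^{>,\succ}$ via the decomposition $x=c\mathfrak{m}(1+\varepsilon)$ combined with transseries-like series expansions. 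Bijectivity of the extension then follows: injectivity from strict monotonicity, and surjectivity by combining confluence (which gives surjectivity onto $\mathbf{Mo}_\mu$ after iteration) with the explicit inversion formula $E_{\omega^\mu}$ on non-atomic elements, constructed symmetrically to~(\ref{Lalpha-def}).
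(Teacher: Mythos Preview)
Your broad strategy---transfinite induction, verification of the skeleton axioms of \cite{BvdHK:hyp}, then invocation of the extension theorem---matches the paper's. However, several of the concrete steps you describe are either wrong or misplaced, and one of them is a genuine gap.

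First, the claim that ``$L_{\omega^\alpha}\mathfrak{a}\in\mathbf{Mo}_\alpha$ follows from (\ref{Lalpha-def}) by $\sqsubseteq$-induction'' is false: $L_{\beta}\mathfrak{a}$ lands in $\mathbf{No}_{\succ,\beta}$ (the class of $\beta$-truncated numbers), not in $\mathbf{Mo}_\beta$. These are different surreal substructures, and the axiom one must verify here is the regularity axiom $\mathbf{R}_\mu$ about $\operatorname{supp} L_\beta\mathfrak{a}$, which the paper proves (Proposition~\ref{prop-regularity}) by a careful analysis of which option in the cut is violated by a given truncation $\varphi_{\succ\mathfrak{m}}$. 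Second, you have the difficulty backwards: the limit step is essentially trivial (Proposition~\ref{prop-limit-case}---the intersection argument you sketch for $\mathbf{Mo}_\alpha$ and a one-line confluence check suffice), whereas the entire substance of the induction lies in the successor step $\mu\mapsto\mu+1$. There one must prove, in order, that the cut is warranted together with a strengthened monotonicity statement $\mathbf{H}_{\mathfrak{a}}$ (Proposition~\ref{prop-good-hyperlog-def}), the regularity axiom, the functional equation $\mathbf{FE}_\mu$ (which needs an auxiliary cut equation (\ref{eq-weaker-hyperlog}) for $L_{\beta_{/\omega}}$ restricted to $\mathbf{Mo}_\beta$), and confluence via the key Lemma~\ref{lem-confluence} showing that $L_\beta$ preserves $\sqsubseteq$-minimality within $\mathcal{E}_\alpha$-classes.

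The most serious gap is surjectivity. You write that it follows from ``confluence\ldots\ with the explicit inversion formula $E_{\omega^\mu}$ on non-atomic elements, constructed symmetrically to~(\ref{Lalpha-def})''. Confluence alone does not give surjectivity of $L_\beta:\mathbf{Mo}_\beta\to\mathbf{No}_{\succ,\beta}$; the paper devotes Proposition~\ref{prop-surjective-hyperlog} to this, defining $E_\beta\varphi$ by a separate, asymmetric cut equation (\ref{eq-poor-hyperexp}) that includes the non-obvious left options $E_{<\beta}\bigl((\varphi_R^{\mathbf{No}_{\succ,\beta}}-\varphi)^{-1}\bigr)$, and then proving $L_\beta E_\beta\varphi=\varphi$ by a delicate case split on whether $\mu$ is a limit or a successor. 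Without this explicit construction of the inverse on $\mathbf{No}_{\succ,\beta}$, neither bijectivity nor the induction hypothesis $\mathbf{I}_{1,\nu}$ (force $(\nu,\nu)$ rather than merely force $\nu$) goes through.
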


Our final section~\ref{section-useful-identities} is devoted to some further
identities relating the hyperexponential and hyperlogarithm functions and the
simplicity relation $\sqsubseteq$ on $\mathbf{No}$. We also prove the
following more symmetric variant of~(\ref{Lalpha-def}):
\begin{eqnarray}
  L_{\alpha} \mathfrak{a} & = & \{ \mathbb{R}, L_{\alpha} \mathfrak{a}' +
  (L_{< \alpha} \mathfrak{a}')^{- 1} |L_{\alpha} \mathfrak{a}'' - (L_{<
  \alpha} \mathfrak{a}'')^{- 1}, L_{< \alpha} \mathfrak{a} \}, 
  \label{Lalpha-eq}
\end{eqnarray}
where $\mathfrak{a}', \mathfrak{a}''$ again range over the $L_{<
\alpha}$-atomic numbers with $\mathfrak{a}', \mathfrak{a}'' \sqsubseteq
\mathfrak{a}$ and $\mathfrak{a}' <\mathfrak{a}<\mathfrak{a}''$. An interesting
open question is whether there exists an easy argument that would allow us to
use~(\ref{Lalpha-eq}) instead of~(\ref{Lalpha-def}) as a definition of
$L_{\alpha} \mathfrak{a}$.

\section{Basic notions}\label{section-basic-notions}

\subsection{Ordered fields of well-based
series}\label{subsection-well-based-series}

\subsubsection{Well-based series}

Let $(\mathfrak{M}, \times, 1, \prec)$ be a (possibly class-sized) linearly
ordered abelian group. We write $\mathbb{S} \assign \mathbb{R}
[[\mathfrak{M}]]$\label{autolab2} for the class of functions $f : \mathfrak{M}
\longrightarrow \mathbb{R}$ whose support\label{autolab3}
\begin{eqnarray*}
  \tmop{supp} f & \assign & \{ \mathfrak{m} \in \mathfrak{M} \suchthat f
  (\mathfrak{m}) \neq 0 \}
\end{eqnarray*}
is a {\tmem{well-based}} set, i.e. a set which is well-ordered in the reverse
order $(\mathfrak{M}, \succ)$.

We see elements $f$ of $\mathbb{S}$ as formal {\tmem{well-based series}} $f =
\sum_{\mathfrak{m}} f_{\mathfrak{m}} \mathfrak{m}$ where for $\mathfrak{m} \in
\mathfrak{M}$, the term $f_{\mathfrak{m}}$ denotes $f (\mathfrak{m}) \in
\mathbb{R}$. If $\tmop{supp} f \neq \varnothing$, then we define
$\mathfrak{d}_f \assign \max \tmop{supp} f \in \mathfrak{M}$\label{autolab4}
to be the {\tmem{dominant monomial}}{\index{dominant monomial}} of $f$. For
$\mathfrak{m} \in \mathfrak{M}$, we let $f_{\succ \mathfrak{m}} \assign
\sum_{\mathfrak{n} \succ \mathfrak{m}} f_{\mathfrak{n}}
\mathfrak{n}$\label{autolab5} and we write $f_{\succ} \assign f_{\succ
1}$\label{autolab6}. For $f, g \in \mathbb{S}$, we sometimes write $f \oplus g
\assign f + g$\label{autolab7} if $\tmop{supp} g \prec f$. We say that a
series $g \in \mathbb{S}$ is a {\tmem{truncation}}{\index{truncation}} of $f$
and we write $g \trianglelefteqslant f$\label{autolab8} if $\tmop{supp} (f -
g) \succ g$. The relation $\trianglelefteqslant$ is a well-founded partial
order on $\mathbb{S}$ with minimum~$0$.

By {\cite{Hahn1907}}, the class $\mathbb{S}$ is an ordered field under the
pointwise sum
\begin{eqnarray*}
  (f + g) & \assign & \sum_{\mathfrak{m}} (f_{\mathfrak{m}} +
  g_{\mathfrak{m}}) \mathfrak{m},
\end{eqnarray*}
the Cauchy product
\begin{eqnarray*}
  fg & \assign & \sum_{\mathfrak{m}} \left(
  \sum_{\mathfrak{u}\mathfrak{v}=\mathfrak{m}} f_{\mathfrak{u}}
  g_{\mathfrak{v}} \right) \mathfrak{m},
\end{eqnarray*}
(where each sum $\sum_{\mathfrak{u}\mathfrak{v}=\mathfrak{m}} f_{\mathfrak{u}}
g_{\mathfrak{v}}$ has finite support), and where the positive cone
$\mathbb{S}^{>} = \{ f \in \mathbb{S} \suchthat f > 0 \}$ is given by
\begin{eqnarray*}
  \mathbb{S}^{>} & \assign & \{ f \in \mathbb{S} \suchthat f \neq 0 \wedge
  f_{\mathfrak{d}_f} > 0 \} .
\end{eqnarray*}
The identification of $\mathfrak{m} \in \mathfrak{M}$ with the formal series
$\sum_{\mathfrak{n}=\mathfrak{m}} 1 \cdot \mathfrak{n} \in \mathbb{S}$ induces
an ordered group embedding $(\mathfrak{M}, \times, \prec) \longrightarrow
(\mathbb{S}^{>}, \times, <)$.

The order on $\mathfrak{M}$ extends into a strict quasi-order
$\prec$\label{autolab9} on $\mathbb{S}$, which is defined by\label{autolab10}
\label{autolab11}
\begin{eqnarray*}
  f \prec g & \Longleftrightarrow & \mathbb{R}^{>}  | f | < | g |\\
  f \preccurlyeq g & \Longleftrightarrow & \exists r \in \mathbb{R}^{>}, | f |
  \leqslant r | g |\\
  f \asymp g & \Longleftrightarrow & f \preccurlyeq g \preccurlyeq f.
\end{eqnarray*}
When $f, g$ are non-zero, we have $f \prec g$ ({\tmabbr{resp.}} $f
\preccurlyeq g$, {\tmabbr{resp.}} $f \asymp g$) if and only if $\mathfrak{d}_f
\prec \mathfrak{d}_g$ ({\tmabbr{resp.}} $\mathfrak{d}_f \preccurlyeq
\mathfrak{d}_g$, {\tmabbr{resp.}} $\mathfrak{d}_f =\mathfrak{d}_g$). We next
define\label{autolab12} \label{autolab13} \label{autolab14}
\begin{eqnarray*}
  \mathbb{S}_{\succ} & \assign & \{ f \in \mathbb{S} \suchthat \tmop{supp} f
  \subseteq \mathfrak{M}^{\succ} \}\\
  \mathbb{S}^{\prec} & \assign & \{ f \in \mathbb{S} \suchthat \tmop{supp} f
  \subseteq \mathfrak{M}^{\prec} \} \hspace{1.2em} = \hspace{1.2em} \{ f \in
  \mathbb{S} \suchthat f \prec 1 \}\\
  \mathbb{S}^{>, \succ} & \assign & \{ f \in \mathbb{S} \suchthat f
  >\mathbb{R} \} = \{ f \in \mathbb{S} \suchthat f \geqslant 0 \wedge f \succ
  1 \} .
\end{eqnarray*}
Series in $\mathbb{S}_{\succ}$, $\mathbb{S}^{\prec}$ and $\mathbb{S}^{>,
\succ}$ are respectively called {\tmem{purely large}}, {\tmem{infinitesimal}},
and {\tmem{positive infinite}}.{\index{purely large
series}}{\index{infinitesimal series}}{\index{positive infinite series}}

\subsubsection{Well-based families}

Let $(f_i)_{i \in I}$ be a family in $\mathbb{S}$, We say that $(f_i)_{i \in
I}$ is {\tmem{well-based}}{\index{well-based family}} if
\begin{enumerateroman}
  \item $\bigcup_{i \in I} \tmop{supp} f_i$ is well-based, and
  
  \item $\{ i \in I \suchthat \mathfrak{m} \in \tmop{supp} f_i \}$ is finite
  for all $\mathfrak{m} \in \mathfrak{M}$.
\end{enumerateroman}
In that case, we may define the sum $\sum_{i \in I} f_i$ of $(f_i)_{i \in I}$
by
\begin{eqnarray*}
  \sum_{i \in I} f_i & \assign & \sum_{\mathfrak{m}} \left( \sum_{i \in I}
  (f_i)_{\mathfrak{m}} \right) \mathfrak{m}.
\end{eqnarray*}
If $\mathbb{U}=\mathbb{R} [[\mathfrak{N}]]$ is another field of well-based
series and $\Psi : \mathbb{S} \longrightarrow \mathbb{U}$ is
$\mathbb{R}$-linear, then we say that~$\Psi$ is {\tmem{strongly linear}} if
for every well-based family $(f_i)_{i \in I}$ in $\mathbb{S}$, the family
$(\Psi (f_i))_{i \in I}$ is well-based, with
\begin{eqnarray*}
  \Psi \left( \sum_{i \in I} f_i \right) & = & \sum_{i \in I} \Psi (f_i) .
\end{eqnarray*}
\subsection{Surreal numbers}\label{subsection-surreal-numbers}

\subsubsection{Surreal numbers and simplicity}

We define $\mathbf{On}$ to be the class of ordinal numbers.\label{autolab15}
Following {\cite{Gon86}}, we define $\mathbf{No}$ as the class of {\tmem{sign
sequences}}{\index{sign sequence}}
\begin{eqnarray*}
  a \hspace{1.2em} = \hspace{1.2em} (a [\beta])_{\beta < \alpha} & \in & \{ -
  1, + 1 \}^{\alpha}
\end{eqnarray*}
of ordinal {\tmem{length}} $\alpha \in \mathbf{On}$. The terms $a [\beta] \in
\{ - 1, + 1 \}$ are called the {\tmem{signs}} of $a$ and we write $l_a$ for
the length of $a$. Given two numbers $a, b \in \mathbf{No}$, we
define\label{autolab16}
\begin{eqnarray*}
  a \sqsubseteq b & \Longleftrightarrow & l_a \leqslant l_b \wedge (\forall
  \beta \in l_a, a [\beta] = b [\beta]) .
\end{eqnarray*}
We call $\sqsubseteq$ the {\tmem{simplicity relation}} on $\mathbf{No}$ and
note that $(\mathbf{No}, \sqsubseteq)$ is well-founded. See
{\cite[Section~2]{BvdH19}} for more details about the interaction between
$\sqsubseteq$ and the structure of ordered field of $\mathbf{No}$.

Recall that the Conway bracket is characterized by the fact that, for any
{\tmem{sets}}~$L$ and~$R$ of surreal numbers with $L < R$, there exists a
unique $\sqsubseteq$-minimal number~$\{ L|R \}$ with~$L < \{ L|R \} < R$.
Conversely, given a number $a \in \mathbf{No}$, we define
\begin{eqnarray*}
  a_L & \assign & \{ x \in \mathbf{No} \suchthat x \sqsubset a, x < a \}\\
  a_R & \assign & \{ x \in \mathbf{No} \suchthat x \sqsupset a, x > a \} .
\end{eqnarray*}
Then $a$ can canonically be written as
\begin{eqnarray*}
  a & = & \{ a_L |a_R \} .
\end{eqnarray*}
\subsubsection{Ordinals as surreal numbers}\label{ordinal-sec}

The structure $(\mathbf{No}, \sqsubseteq)$ contains an isomorphic copy of
$(\mathbf{On}, \in)$ obtained through the identification of each ordinal
$\alpha$ with the constant sequence $(+ 1)_{\beta < \alpha}$ of length
$\alpha$. We will write $\tmmathbf{\nu} \leqslant \mathbf{On}$ to state that
$\tmmathbf{\nu}$ is either an ordinal or the class of ordinals.

For $\gamma \in \mathbf{On}$, we write $\omega^{\gamma}$\label{autolab17} for
the ordinal exponentiation of base $\omega$ at $\gamma$ and we write
\begin{eqnarray*}
  \omega^{\mathbf{On}} & \assign & \{ \omega^{\gamma} \suchthat \gamma \in
  \mathbf{On} \} .
\end{eqnarray*}
If $\mu \in \mathbf{On}$ is a successor ordinal, then we define $\mu_-$ to be
the unique ordinal with $\mu = \mu_- + 1$.\label{autolab18} We also define
$\mu_- \assign \mu$ if $\mu$ is a limit ordinal. Similarly, if $\alpha =
\omega^{\mu}$, then we write $\alpha_{/ \omega} \assign
\omega^{\mu_-}$\label{autolab19}. Recall that every ordinal $\gamma$ has a
unique Cantor normal form{\index{Cantor normal form}}
\begin{eqnarray*}
  \gamma & = & \omega^{\eta_1} n_1 + \cdots + \omega^{\eta_r} n_r,
\end{eqnarray*}
where $r \in \mathbb{N}$, $n_1, \ldots, n_r \in \mathbb{N}^{> 0}$ and $\eta_1,
\ldots, \eta_r \in \mathbf{On}$ with $\eta_1 > \cdots > \eta_r$.

\subsubsection{Surreal numbers as well-based series}

We define $\mathbf{Mo}$ to be the class of positive numbers $\mathfrak{m} \in
\mathbf{No}^{>}$ that are of the form $\mathfrak{m}= \{ \mathbb{R}^{>}
L|\mathbb{R}^{>} R \}$ for certain subsets $L$ and $R$ of~$\mathbf{No}$ with
$\mathbb{R}^{>} L <\mathbb{R}^{>} R$. Numbers in $\mathbf{Mo}$ are called
{\tmem{monomials}}. It turns out {\cite[Theorem~21]{Con76}} that the monomials
form a subgroup of $(\mathbf{No}^{>}, \times, <)$ and that there is a natural
isomorphism between $\mathbf{No}$ and the ordered field $\mathbb{R}
[[\mathbf{Mo}]]$. We will identify those two fields and thus see $\mathbf{No}$
as a field of well-based series. The ordinal $\omega$, seen as a surreal
number, is the simplest element, or $\sqsubseteq$-minimum, of the class
$\mathbf{No}^{>, \succ}$.

\section{Surreal substructures}\label{subsection-surreal-substructures}

\subsection{Surreal substructures}\label{subsubsection-surreal-substructures}

In {\cite{BvdH19}}, we introduced the notion of {\tmem{surreal
substructures}}{\index{surreal substructure}}. A surreal substructure is a
subclass $\mathbf{S}$ of $\mathbf{No}$ such that $(\mathbf{No}, \leqslant,
\sqsubseteq)$ and $(\mathbf{S}, \leqslant, \sqsubseteq)$ are isomorphic. The
isomorphism $\mathbf{No} \longrightarrow \mathbf{S}$ is unique and denoted
by~$\Xi_{\mathbf{S}}$. Many important subclasses of $\mathbf{No}$ that are
relevant to the study of hyperserial properties of $\mathbf{No}$ are surreal
substructures. In particular, it is known that the following classes are
surreal substructures:
\begin{itemizedot}
  \item The classes $\mathbf{No}^{>}$, $\mathbf{No}^{>, \succ}$ and
  $\mathbf{No}^{\prec}$ of positive, positive infinite and infinitesimal
  numbers.{\nopagebreak}
  
  \item The classes $\mathbf{Mo}$ and $\mathbf{Mo}^{\succ}$ of monomials and
  infinite monomials.
  
  \item The classes $\mathbf{No}_{\succ}$ and $\mathbf{No}_{\succ}^{>}$ of
  purely infinite and positive purely infinite numbers.{\nopagebreak}
  
  \item The class $\mathbf{Mo}_{\omega}$ of log-atomic numbers.
\end{itemizedot}
If $\mathbf{U}, \mathbf{V}$ are surreal substructures, then the class
$\mathbf{U} \mathbin{\Yleft} \mathbf{V} \assign \Xi_{\mathbf{U}}
\mathbf{V}$\label{autolab20} is a surreal substructure with~$\Xi_{\mathbf{U}
\mathbin{\Yleft} \mathbf{V}} = \Xi_{\mathbf{U}} \circ \Xi_{\mathbf{V}}$.

\subsection{Cuts}

Given a subclass $\mathbf{X}$ of $\mathbf{No}$ and $a \in \mathbf{X}$, we will
write
\[ \left. a_L^{\mathbf{X}} \; \assign \; \{ b \in \mathbf{X} \suchthat b < a
   \wedge b \sqsubseteq a \} \right. \infixand \left. a_R^{\mathbf{X}} \;
   \assign \; \{ b \in \mathbf{X} \suchthat b > a \wedge b \sqsubseteq a \}
   \right., \]
so that $a_L \assign a_L^{\mathbf{No}}$ and $a_R \assign a_R^{\mathbf{No}}$.
We also write $a_{\sqsubset}^{\mathbf{X}} \assign a_L^{\mathbf{X}} \cup
a_R^{\mathbf{X}}$ and $a_{\sqsubset} \assign a_{\sqsubset}^{\mathbf{No}}$.

If $\mathbf{X}$ is a subclass of $\mathbf{No}$ and $L, R$ are subsets of
$\mathbf{X}$ with $L < S$, then the class
\begin{eqnarray*}
  (L|R)_{\mathbf{X}} & \assign & \{ a \in \mathbf{X} \suchthat (\forall l \in
  L, l < a) \wedge (\forall r \in R, a < r) \}
\end{eqnarray*}
is called a {\tmem{cut}} in $\mathbf{X}$. If $(L|R)_{\mathbf{X}}$ contains a
unique simplest element, then we denote this element by~$\{ L|R
\}_{\mathbf{X}}$ and say that $(L, R)$ is a {\tmem{cut
representation}}{\index{cut representation}} (of $\{ L|R \}_{\mathbf{X}}$) in
$\mathbf{X}$. These notations naturally extend to the case when $\mathbf{L}$
and $\mathbf{R}$ are subclasses of $\mathbf{X}$ with $\mathbf{L}<\mathbf{R}$.

A surreal substructure $\mathbf{S}$ may be characterized as a subclass of
$\mathbf{No}$ such that for all cut representations $(L, R)$ in $\mathbf{S}$,
the cut $(L|R)_{\mathbf{S}}$ has a unique simplest element
{\cite[Proposition~4.7]{BvdH19}}.

Let $\mathbf{S}$ be a surreal substructure. Note that we have $a = \{
a_L^{\mathbf{S}} |a_R^{\mathbf{S}} \}$ for all $a \in \mathbf{S}$. Let $a \in
\mathbf{S}$ and let~{$(L, R)$} be a cut representation of $a$ in $\mathbf{S}$.
Then $(L, R)$ is {\tmem{cofinal with respect to}}{\index{cofinal with respect
to}} $(a_L^{\mathbf{S}}, a_R^{\mathbf{S}})$ in the sense that $L$ has no
strict upper bound in $a_L^{\mathbf{S}}$ and $R$ has no strict lower bound in
$a_R^{\mathbf{S}}$ {\cite[Proposition~4.11(b)]{BvdH19}}.

Given numbers $a, b \in \mathbf{No}$ with $a \leqslant b$, the number $c
\assign \{ a_L |b_R \}$ is the unique $\sqsubseteq$-maximal number with $c
\sqsubseteq a, b$. We have $a \leqslant c \leqslant b$. Let $\mathbf{S}$ be a
surreal substructure. Considering the isomorphism $\Xi_{\mathbf{S}} :
(\mathbf{No}, \leqslant, \sqsubseteq) \longrightarrow (\mathbf{S}, \leqslant,
\sqsubseteq)$, we see that for all $a, b \in \mathbf{S}$ with $a \leqslant b$,
there is a unique $\sqsubseteq$-maximal element $c$ of $\mathbf{S}$ with $c
\sqsubseteq a, b$, and we have $a \leqslant c \leqslant b$. In what follows,
we will use this basic fact several times without further mention.

\subsection{Cut equations}

Let $\mathbf{X} \subseteq \mathbf{No}$ be a subclass, let $\mathbf{T}$ be a
surreal substructure and $F : \mathbf{X} \longrightarrow \mathbf{T}$ be a
function. Let $\lambda, \rho$ be functions defined for cut representations in
$\mathbf{X}$ and such that $\lambda (L, R), \rho (L, R)$ are subsets of
$\mathbf{T}$ whenever $(L, R)$ is a cut representation in $\mathbf{X}$. We say
that $(\lambda, \rho)$ is a {\tmem{cut equation}}{\index{cut equation}} for
$F$ if for all $a \in \mathbf{X}$, we have
\[ \lambda (a_L^{\mathbf{X}}, a_R^{\mathbf{X}}) \; < \; \rho
   (a_L^{\mathbf{X}}, a_R^{\mathbf{X}}), \qquad F (a) \; = \; \{ \lambda
   (a_L^{\mathbf{X}}, a_R^{\mathbf{X}}) | \rho (a_L^{\mathbf{X}},
   a_R^{\mathbf{X}}) \}_{\mathbf{T}} . \]
Elements in $\lambda (a_L^{\mathbf{X}}, a_R^{\mathbf{X}})$ (resp. $\rho
(a_L^{\mathbf{X}}, a_R^{\mathbf{X}})$) are called {\tmem{left}}
({\tmabbr{resp.}} {\tmem{right}}) {\tmem{options}}{\index{left option, right
option}} of this cut equation at~$a$.

We say that the cut equation is {\tmem{uniform}}{\index{uniform equation}} if
we have
\[ \lambda (L, R) \; < \; \rho (L, R), \quad F (\{ L|R \}_{\mathbf{X}}) \; =
   \; \{ \lambda (L, R) | \rho (L, R) \}_{\mathbf{T}} \]
whenever $(L, R)$ is a cut representation in $\mathbf{X}$. For instance, given
$r \in \mathbb{R}$, consider the translation $T_r : \mathbf{No}
\longrightarrow \mathbf{No} ; a \longmapsto a + r$ on $\mathbf{No}$. By
{\cite[Theorem~3.2]{Gon86}}, we have the following uniform cut equation for
$T_r$ on $\mathbf{No}$:
\begin{equation}
  \forall a \in \mathbf{No}, \quad a + r = \{ a_L + r, a + r_L |a + r_R, a_R +
  r \} . \label{eq-uniform-sum} \text{}
\end{equation}
We will need the following result from~{\cite{BvdH19}}:

\begin{proposition}
  \label{prop-nearly-extensive}\tmtextup{{\cite[Proposition~4.36]{BvdH19}}}
  Let $\mathbf{S}, \mathbf{T}$ be surreal substructures. Let $\Lambda$ be a
  function from $\mathbf{S}$ to the class of subsets of $\mathbf{T}$ such that
  for $x, y \in \mathbf{S}$ with $x < y$, the set $\Lambda (y)$ is cofinal
  with respect to $\Lambda (x)$. For $x \in \mathbf{S}$, let
  $\tmmathbf{\Lambda} [x]$ denote the class of elements $u$ of $\mathbf{S}$
  such that $\Lambda (x)$ and $\Lambda (u)$ are mutually cofinal. Let $\{
  \lambda | \rho \}_{\mathbf{T}}$ be a cut equation on $\mathbf{S}$ that is
  extensive in the sense that
  \[ \forall x, y \in \mathbf{S}, \quad (x \sqsubseteq y \Longrightarrow
     (\lambda (x_L^{\mathbf{S}}, x_R^{\mathbf{S}}) \subseteq \lambda
     (y_L^{\mathbf{S}}, y_R^{\mathbf{S}}) \wedge \rho (x_L^{\mathbf{S}},
     x_R^{\mathbf{S}}) \subseteq \rho (y_L^{\mathbf{S}}, y_R^{\mathbf{S}}))) .
  \]
  Let $F : \mathbf{S} \longrightarrow \mathbf{T}$ be strictly increasing with
  cut equation
  \[ \forall x \in \mathbf{S}, \quad F (x) = \{ \Lambda (x), \lambda
     (x_L^{\mathbf{S}}, x_R^{\mathbf{S}}) | \rho (x_L^{\mathbf{S}},
     x_R^{\mathbf{S}}) \}_{\mathbf{T}} . \]
  Then $F$ induces an embedding $(\tmmathbf{\Lambda} [x], \leqslant,
  \sqsubseteq) \longrightarrow (\mathbf{T}, \leqslant, \sqsubseteq)$ for each
  element $x$ of $\mathbf{S}$.
\end{proposition}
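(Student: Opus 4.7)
The plan is to observe first that strict monotonicity of $F$ handles every part of the claim except $\sqsubseteq$-preservation, then to prove that $u \sqsubseteq v$ in $\mathbf{\Lambda}[x]$ implies $F(u) \sqsubseteq F(v)$ in $\mathbf{T}$. The case $u = v$ is immediate, and for $u \sqsubset v$ the argument is symmetric under swapping the left and right sides of cuts, so I will treat only the case $u < v$.

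I would abbreviate $L_w \assign \Lambda(w) \cup \lambda(w_L^{\mathbf{S}}, w_R^{\mathbf{S}})$ and $R_w \assign \rho(w_L^{\mathbf{S}}, w_R^{\mathbf{S}})$ for $w \in \mathbf{S}$, so that the hypothesis reads $F(u) = \{L_u | R_u\}_{\mathbf{T}}$ and $F(v) = \{L_v | R_v\}_{\mathbf{T}}$. The core step is the inclusion $F(v) \in (L_u | R_u)_{\mathbf{T}}$, i.e.\ $L_u < F(v) < R_u$. On the $\lambda$-side of $L_u$, extensivity applied to $u \sqsubseteq v$ yields $\lambda(u_L^{\mathbf{S}}, u_R^{\mathbf{S}}) \subseteq \lambda(v_L^{\mathbf{S}}, v_R^{\mathbf{S}}) \subseteq L_v < F(v)$. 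For the $\Lambda(u)$-part, the assumption that $u, v \in \mathbf{\Lambda}[x]$ forces $\Lambda(u)$ and $\Lambda(v)$ to be mutually cofinal (both are mutually cofinal with $\Lambda(x)$), so every element of $\Lambda(u)$ is $\leqslant$ some element of $\Lambda(v) \subseteq L_v < F(v)$. Combining, $L_u < F(v)$. Symmetrically, extensivity gives $R_u \subseteq R_v$, and $F(v) < R_v$ then yields $F(v) < R_u$.

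Once this inclusion is in hand, the conclusion is immediate: by the characterization of surreal substructures, the cut $(L_u | R_u)_{\mathbf{T}}$ has a unique $\sqsubseteq$-minimum, which by the cut equation is $F(u)$. Since $F(v)$ lies in this cut, $F(u) \sqsubseteq F(v)$, as required.

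The real difficulty, if any, is framing. The tempting but unhelpful instinct is to try to compare the canonical $\sqsubseteq$-ancestors of $F(u)$ and $F(v)$ directly inside $\mathbf{T}$, over which one has little control. The correct move is instead to combine extensivity with the mutual cofinality of $\Lambda$-values enforced by membership in $\mathbf{\Lambda}[x]$ in order to exhibit $F(v)$ as an element of the very cut whose simplest element defines $F(u)$; everything then reduces to the defining property of the simplest element of a cut in a surreal substructure.
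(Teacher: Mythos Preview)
The paper does not supply its own proof of this proposition: it is quoted verbatim as \cite[Proposition~4.36]{BvdH19} and used as a black box (notably in the proof of Proposition~\ref{prop-hyperexp-star-relation}). So there is no in-paper argument to compare against.

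That said, your argument is correct and is essentially the standard one. A couple of small remarks. First, your case split on $u<v$ versus $u>v$ is harmless but unnecessary: nothing in your verification that $F(v)\in (L_u\,|\,R_u)_{\mathbf{T}}$ uses the order between $u$ and $v$, only that $u\sqsubseteq v$ (extensivity for the $\lambda$/$\rho$ parts) and that $u,v\in\boldsymbol{\Lambda}[x]$ (mutual cofinality of $\Lambda(u)$ and $\Lambda(v)$). Second, the word ``symmetrically'' before the $R_u$ step is a slight misnomer; the step is simply $R_u\subseteq R_v$ by extensivity together with $F(v)<R_v$, no symmetry needed. Finally, if one reads ``embedding'' in the strong model-theoretic sense (reflecting $\sqsubseteq$ as well as preserving it), you have not explicitly addressed reflection. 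This is a minor addendum: given $\sqsubseteq$-preservation and injectivity, the only case to rule out is $u,v$ $\sqsubseteq$-incomparable with $F(u)\sqsubseteq F(v)$; taking the $\sqsubseteq$-meet $w$ of $u,v$ in $\mathbf{S}$ and comparing the order of $F(u),F(w),F(v)$ with the tree structure of $\sqsubseteq$ in $\mathbf{T}$ yields a contradiction. For the applications in this paper, only the forward direction you proved is actually used.
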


\subsection{Convex partitions}

One natural way to obtain surreal substructures is \tmtextit{via} convex
partitions. If $\mathbf{S}$ is a surreal substructure, then a {\tmem{convex
partition}}{\index{convex partition}} of $\mathbf{S}$ is a partition
$\tmmathbf{\Pi}$ of $\mathbf{S}$ whose members are convex subclasses of
$\mathbf{S}$ for the order $\leqslant$. We may then consider the class
$\mathbf{Smp}_{\tmmathbf{\Pi}}$\label{autolab21} of simplest elements (i.e.
$\sqsubseteq$-minima) in each member of~$\tmmathbf{\Pi}$. Those elements are
said $\tmmathbf{\Pi}${\tmem{-simple}}{\index{$\tmmathbf{\Pi}$-simple
element}}. For $a \in \mathbf{S}$, we let $\tmmathbf{\Pi} [a]$ denote the
unique member of $\tmmathbf{\Pi}$ containing~$a$. By
{\cite[Proposition~4.16]{BvdH19}}, the class $\tmmathbf{\Pi} [a]$ contains a
unique $\tmmathbf{\Pi}$-simple element, which we denote by
$\pi_{\tmmathbf{\Pi}} (a)$\label{autolab22}. The function
$\pi_{\tmmathbf{\Pi}}$ is a surjective non-decreasing function $\mathbf{S}
\longrightarrow \mathbf{Smp}_{\tmmathbf{\Pi}}$ with $\pi_{\tmmathbf{\Pi}}
\circ \pi_{\tmmathbf{\Pi}} = \pi_{\tmmathbf{\Pi}}$.

Given $a, b \in \mathbf{Smp}_{\tmmathbf{\Pi}}$, note that we have $a < b$ if
and only if $\tmmathbf{\Pi} [a] <\tmmathbf{\Pi} [b]$. For $\mathbf{X}
\subseteq \mathbf{No}$, we write $\tmmathbf{\Pi} [\mathbf{X}] = \bigcup_{a \in
\mathbf{X}} \tmmathbf{\Pi} [a]$. We have the following criterion to
characterize elements of $\mathbf{Smp}_{\tmmathbf{\Pi}}$.

\begin{proposition}
  \label{prop-simplicity-condition}{\tmem{{\cite[Lemma~6.5]{BvdH19}}}} An
  element $a$ of $\mathbf{S}$ is $\tmmathbf{\Pi}$-simple if and only if there
  is a cut representation $(L, R)$ of $a$ in $\mathbf{S}$ with $\tmmathbf{\Pi}
  [L] < a <\tmmathbf{\Pi} [R]$. Equivalently $a \in \mathbf{S}$ is
  $\tmmathbf{\Pi}$-simple if and only if~$\tmmathbf{\Pi} [a_L^{\mathbf{S}}] <
  a <\tmmathbf{\Pi} [a_R^{\mathbf{S}}]$.
\end{proposition}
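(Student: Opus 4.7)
The plan is to prove the second, more specific formulation directly---namely that $a \in \mathbf{Smp}_{\tmmathbf{\Pi}}$ if and only if $\tmmathbf{\Pi}[a_L^{\mathbf{S}}] < a < \tmmathbf{\Pi}[a_R^{\mathbf{S}}]$---and then note that the first formulation is equivalent: any cut representation $(L, R)$ satisfying $\tmmathbf{\Pi}[L] < a < \tmmathbf{\Pi}[R]$ makes $a$ a $\tmmathbf{\Pi}$-simple element via the reverse direction, and conversely $(a_L^{\mathbf{S}}, a_R^{\mathbf{S}})$ is itself a cut representation in $\mathbf{S}$ witnessing the first formulation.

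For the forward direction, I would fix a $\tmmathbf{\Pi}$-simple $a$ and an arbitrary $b \in a_L^{\mathbf{S}}$, so that $b \sqsubset a$ and $b < a$. Since $a$ is the $\sqsubseteq$-minimum of $\tmmathbf{\Pi}[a]$, the element $b$ lies in a different member of the partition, so $\tmmathbf{\Pi}[b] \cap \tmmathbf{\Pi}[a] = \varnothing$. If some $c \in \tmmathbf{\Pi}[b]$ satisfied $c \geqslant a$, then the convexity of $\tmmathbf{\Pi}[b]$ applied to $b < a \leqslant c$ would force $a \in \tmmathbf{\Pi}[b]$, a contradiction; hence $\tmmathbf{\Pi}[b] < a$. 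A symmetric argument handles $a_R^{\mathbf{S}}$.

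For the reverse direction, I would assume a cut representation $(L, R)$ of $a$ in $\mathbf{S}$ with $\tmmathbf{\Pi}[L] < a < \tmmathbf{\Pi}[R]$ and suppose for contradiction that $a$ is not $\tmmathbf{\Pi}$-simple. Then some $c \in \tmmathbf{\Pi}[a]$ strictly simpler than $a$ exists; without loss of generality $c < a$. Since $a = \{L|R\}_{\mathbf{S}}$ is the $\sqsubseteq$-minimum of $(L|R)_{\mathbf{S}}$, the element $c$ cannot lie in $(L|R)_{\mathbf{S}}$ (otherwise $a \sqsubseteq c$, contradicting $c \sqsubset a$), so there is $l \in L$ with $c \leqslant l < a$. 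Both $c$ and $a$ belong to $\tmmathbf{\Pi}[a]$, so by convexity $l \in \tmmathbf{\Pi}[a]$, whence $a \in \tmmathbf{\Pi}[l] \subseteq \tmmathbf{\Pi}[L]$, contradicting $\tmmathbf{\Pi}[L] < a$.

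The main delicate point is bridging a witness $c$ of non-simplicity (which lives in the canonical cut $(a_L^{\mathbf{S}}, a_R^{\mathbf{S}})$ of $a$) with the \emph{given} cut representation $(L, R)$, for only then does $\tmmathbf{\Pi}[L] < a$ produce the needed contradiction. This is resolved by invoking the characterization of $\{L|R\}_{\mathbf{S}}$ as $\sqsubseteq$-minimal in $(L|R)_{\mathbf{S}}$ to place $c$ strictly on one side of the cut and extract the separating option $l \in L$; once this step is in hand, the rest is formal from the convexity of the members of $\tmmathbf{\Pi}$.
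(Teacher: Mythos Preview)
The paper does not supply its own proof of this proposition; it simply cites \cite[Lemma~6.5]{BvdH19}. Your argument is correct and is essentially the standard one: the forward direction uses that a strictly $\sqsubseteq$-simpler predecessor of $a$ must fall into a different member of the partition, and convexity then forces that entire member to one side of $a$; the reverse direction pushes a hypothetical simpler $c \in \tmmathbf{\Pi}[a]$ outside the cut $(L|R)_{\mathbf{S}}$ and uses convexity to trap an element of $L$ (or $R$) inside $\tmmathbf{\Pi}[a]$, contradicting $\tmmathbf{\Pi}[L] < a$.

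One small point worth making explicit: when you write ``some $c \in \tmmathbf{\Pi}[a]$ strictly simpler than $a$ exists,'' you are using that each convex class $\tmmathbf{\Pi}[a]$ has a $\sqsubseteq$-minimum (so that failing to be that minimum yields a genuinely $\sqsubseteq$-smaller witness, rather than merely a $\sqsubseteq$-incomparable one). In the present paper this is already available via the cited \cite[Proposition~4.16]{BvdH19}, so there is no circularity, but it is good to flag the dependence.
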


We say that $\tmmathbf{\Pi}$ is {\tmem{thin}}{\index{thin convex partition}}
if each member of $\tmmathbf{\Pi}$ has a cofinal and coinitial subset. We then
have:

\begin{proposition}
  \label{prop-thin-substructure}{\tmem{{\cite[Theorem~6.7 and
  Proposition~6.8]{BvdH19}}}} If $\tmmathbf{\Pi}$ is thin, then the class
  $\mathbf{Smp}_{\tmmathbf{\Pi}}$ is a~surreal substructure and
  $\Xi_{\mathbf{Smp}_{\tmmathbf{\Pi}}}$ has the following uniform cut
  equation:
  \[ \forall z \in \mathbf{No}, \quad \; \Xi_{\mathbf{Smp}_{\tmmathbf{\Pi}}} z
     = \{ \tmmathbf{\Pi} [\Xi_{\mathbf{Smp}_{\tmmathbf{\Pi}}} z_L]
     |\tmmathbf{\Pi} [\Xi_{\mathbf{Smp}_{\tmmathbf{\Pi}}} z_R] \}_{\mathbf{S}}
     . \]
\end{proposition}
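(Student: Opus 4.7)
The strategy is to apply the cut-representation characterization of surreal substructures recalled in the excerpt, and to exploit thinness to pass from class-sized cut data in $\mathbf{Smp}_{\tmmathbf{\Pi}}$ to set-sized cut data in $\mathbf{S}$ itself, where simplest elements of cuts are known to exist. Once the simplest element of a cut in $\mathbf{Smp}_{\tmmathbf{\Pi}}$ is produced as a specific element of $\mathbf{S}$, both the substructure property and the uniform cut equation fall out together.

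Given a cut representation $(L,R)$ in $\mathbf{Smp}_{\tmmathbf{\Pi}}$, I would use thinness to pick, for every $a \in L$, a cofinal subset $L_a \subseteq \tmmathbf{\Pi}[a]$ and, for every $b \in R$, a coinitial subset $R_b \subseteq \tmmathbf{\Pi}[b]$, each of which is a set. Setting $L' \assign \bigcup_{a \in L} L_a$ and $R' \assign \bigcup_{b \in R} R_b$, these are subsets of $\mathbf{S}$; and since $a < b$ in $\mathbf{Smp}_{\tmmathbf{\Pi}}$ forces $\tmmathbf{\Pi}[a] < \tmmathbf{\Pi}[b]$, we have $L' < R'$, so the simplest element $c \assign \{ L' | R' \}_{\mathbf{S}}$ exists.

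The heart of the argument is to show that $c$ itself lies in $\mathbf{Smp}_{\tmmathbf{\Pi}}$. By cofinality of $L_a$ in $\tmmathbf{\Pi}[a]$, the inequality $c > L'$ strengthens to $c > \tmmathbf{\Pi}[a]$ for every $a \in L$, and symmetrically $c < \tmmathbf{\Pi}[b]$ for every $b \in R$; hence the block $\tmmathbf{\Pi}[c]$ sits strictly between $\tmmathbf{\Pi}[L]$ and $\tmmathbf{\Pi}[R]$. If $c$ were not $\tmmathbf{\Pi}$-simple, then $s \assign \pi_{\tmmathbf{\Pi}}(c) \in \tmmathbf{\Pi}[c]$ would satisfy $L' < s < R'$ and $s \sqsubset c$, contradicting the minimality of $c$ in $\mathbf{S}$. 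Thus $c \in \mathbf{Smp}_{\tmmathbf{\Pi}}$, and $c$ lies in $(L|R)_{\mathbf{Smp}_{\tmmathbf{\Pi}}}$.

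To conclude, for any $t \in (L|R)_{\mathbf{Smp}_{\tmmathbf{\Pi}}}$ one has $L' < t < R'$ (since $L_a \subseteq \tmmathbf{\Pi}[a]$ lies strictly below $t$ for each $a \in L$, and symmetrically on the right), so $c \sqsubseteq t$ by the defining property of $c$ in $\mathbf{S}$. This establishes that $\mathbf{Smp}_{\tmmathbf{\Pi}}$ is a surreal substructure. For the uniform cut equation, the cuts $(L'|R')_{\mathbf{S}}$ and $(\tmmathbf{\Pi}[L] | \tmmathbf{\Pi}[R])_{\mathbf{S}}$ have the same elements by cofinality and coinitiality, so $\{ \tmmathbf{\Pi}[L] | \tmmathbf{\Pi}[R] \}_{\mathbf{S}} = c$; specializing to $(L,R) = (\Xi_{\mathbf{Smp}_{\tmmathbf{\Pi}}} z_L, \Xi_{\mathbf{Smp}_{\tmmathbf{\Pi}}} z_R)$ yields the displayed formula. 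The main subtlety in the whole argument is exactly the step placing $c$ itself (rather than only $\pi_{\tmmathbf{\Pi}}(c)$) inside $\mathbf{Smp}_{\tmmathbf{\Pi}}$: without it, the cut equation would have to be stated with a $\pi_{\tmmathbf{\Pi}}$ applied on the right, instead of as a clean cut in $\mathbf{S}$.
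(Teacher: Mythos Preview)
The paper does not supply its own proof of this proposition: it is quoted verbatim as a result from \cite[Theorem~6.7 and Proposition~6.8]{BvdH19}, with no argument given. Your proposal is therefore being compared against the approach one would expect from that reference rather than anything in the present paper.

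Your argument is correct and is essentially the natural one. You use thinness exactly where it is needed, namely to replace the possibly class-sized blocks $\tmmathbf{\Pi}[a]$ by cofinal and coinitial \emph{sets} so that the Conway bracket $\{L'|R'\}_{\mathbf{S}}$ is available; you then verify $\tmmathbf{\Pi}$-simplicity of $c$ via the minimality contradiction with $\pi_{\tmmathbf{\Pi}}(c)$, which is precisely the content of Proposition~\ref{prop-simplicity-condition} in disguise. The passage from $c=\{L'|R'\}_{\mathbf{S}}$ to $c=\{\tmmathbf{\Pi}[L]\,|\,\tmmathbf{\Pi}[R]\}_{\mathbf{S}}$ by mutual cofinality and coinitiality is clean, and since you established $\{L|R\}_{\mathbf{Smp}_{\tmmathbf{\Pi}}}=\{\tmmathbf{\Pi}[L]\,|\,\tmmathbf{\Pi}[R]\}_{\mathbf{S}}$ for an \emph{arbitrary} cut representation $(L,R)$ in $\mathbf{Smp}_{\tmmathbf{\Pi}}$, uniformity of the cut equation for $\Xi_{\mathbf{Smp}_{\tmmathbf{\Pi}}}$ follows immediately by transporting cut representations along the isomorphism $\Xi_{\mathbf{Smp}_{\tmmathbf{\Pi}}}$. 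One very minor point worth making explicit in your write-up: when you deduce $c>\tmmathbf{\Pi}[a]$ from $c>L_a$, you are implicitly using that cofinality of $L_a$ in $\tmmathbf{\Pi}[a]$ forbids $c\in\tmmathbf{\Pi}[a]$; this is immediate but deserves a word.
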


\subsection{Function groups}

A special type of thin convex partitions is that of partitions induced by
function groups acting on surreal substructures. A {\tmem{function
group}}{\index{function group}} $\mathcal{G}$ on a surreal substructure
$\mathbf{S}$ is a set-sized group of strictly increasing bijections
$\mathbf{S} \longrightarrow \mathbf{S}$ under functional composition. We see
elements $f, g$ of $\mathcal{G}$ as actions on $\mathbf{S}$ and we sometimes
write $fg$ and $f a$ instead of $f \circ g$ and~$f (a)$, where $a \in
\mathbf{S}$.

For such a function group $\mathcal{G}$, the collection
$\tmmathbf{\Pi}_{\mathcal{G}}$ of classes\label{autolab23}
\begin{eqnarray*}
  \mathcal{G} [a] & \assign & \{ b \in \mathbf{S} \suchthat \exists f, g \in
  \mathcal{G}, f a \leqslant b \leqslant g a \}
\end{eqnarray*}
with $a \in \mathbf{S}$ is a thin convex partition of $\mathbf{S}$. We write
$\mathbf{Smp}_{\mathcal{G}} \assign
\mathbf{Smp}_{\tmmathbf{\Pi}_{\mathcal{G}}}$. We have the uniform cut equation
\begin{equation}
  \forall z \in \mathbf{No}, \quad \; \Xi_{\mathbf{Smp}_{\mathcal{G}}} z = \{
  \mathcal{G} \Xi_{\mathbf{Smp}_{\mathcal{G}}} z_L |\mathcal{G}
  \Xi_{\mathbf{Smp}_{\mathcal{G}}} z_R \}_{\mathbf{S}} . \label{eq-uniform-G}
\end{equation}
Consider sets $X, Y$ of strictly increasing bijections $\mathbf{S}
\longrightarrow \mathbf{S}$, then we say that $Y$ is {\tmem{pointwise
cofinal}} with respect to $X$, and we write $X \leqangle Y$\label{autolab24},
if we have $\forall f \in X, \forall a \in \mathbf{S}, \exists g \in Y, f a
\leqslant g a$. We also define\label{autolab25}
\begin{eqnarray*}
  \langle X \rangle & \assign & \{ f_0 \circ f_1 \circ \cdots \circ f_n
  \suchthat n \in \mathbb{N}, f_0, \ldots, f_n \in X \cup X^{- 1} \} .
\end{eqnarray*}
It is easy to see that $\langle X \rangle$ is a function group on $\mathbf{S}$
and that we have $\langle X \rangle \leqangle \langle Y \rangle$ if $X
\leqangle Y$ or $X^{- 1} \leqangle Y^{- 1}$. The relation $\langle X \rangle
\leqangle \langle Y \rangle$ trivially implies $\mathbf{Smp}_{\langle Y
\rangle} \subseteq \mathbf{Smp}_{\langle X \rangle}$. If $X \leqangle Y$ and
$Y \leqangle X$, then we say that $X$ and $Y$ are {\tmem{mutually pointwise
cofinal}} and we write $X \legeangle Y$\label{autolab26}. We then have
$\mathbf{Smp}_{\langle X \rangle} = \mathbf{Smp}_{\langle Y \rangle}$.

We write $X \leqslant Y$ {\tmabbr{(resp.}} $X < Y$) if we have $\forall a \in
\mathbf{S}, \forall f \in X, \forall g \in Y, f a \leqslant g a$
{\tmabbr{(resp.}} $\forall a \in \mathbf{S}, \forall f \in X, \forall g \in Y,
f a < g a$). We also write $f < Y$ and $X < g$ instead of $\{ f \} < Y$ and $X
< \{ g \}$.

Given a function group $\mathcal{G}$ on $\mathbf{S}$, the relation defined by
$f < g \Longleftrightarrow \{ f \} < \{ g \}$ is a partial order
on~$\mathcal{G}$. We will frequently rely on the basic fact that
$(\mathcal{G}, <)$ is {\tmem{partially bi-ordered}} in the sense that
\[ \forall f, g, h \in \mathcal{G}, \quad \tmop{id}_{\mathbf{S}} < g
   \Longleftrightarrow f h < f g h. \]

\subsection{Remarkable function groups}

Each of the examples of surreal substructures from
Subsection~\ref{subsubsection-surreal-substructures} can be regarded as the
classes $\mathbf{Smp}_{\mathcal{G}}$ for actions of the following function
groups $\mathcal{G}$ acting on $\mathbf{No}$, $\mathbf{No}^{>}$ or
$\mathbf{No}^{>, \succ}$. For $c \in \mathbb{R}$ and $r \in \mathbb{R}^{>}$,
we define\label{autolab27} \label{autolab28} \label{autolab29}
\begin{eqnarray*}
  T_r & \assign & a \longmapsto a + c \hspace{2.2em} \text{acting on
  $\mathbf{No}$ or $\mathbf{No}^{>, \succ}$.}\\
  H_c & \assign & a \longmapsto ra \hspace{3em} \text{ acting on
  $\mathbf{No}^{>}$ or $\mathbf{No}^{>, \succ}$.}\\
  P_c & \assign & a \longmapsto a^r \hspace{3em} \text{ \,acting on
  $\mathbf{No}^{>}$ or $\mathbf{No}^{>, \succ}$.}
\end{eqnarray*}
Now consider\label{autolab30} \label{autolab31} \label{autolab32}
\label{autolab33} \label{autolab34}
\begin{eqnarray*}
  \mathcal{T} & \assign & \{ T_c \suchthat c \in \mathbb{R} \},\\
  \mathcal{H} & \assign & \{ H_r \suchthat r \in \mathbb{R}^{>} \},\\
  \mathcal{P} & \assign & \{ P_r \suchthat r \in \mathbb{R}^{>} \},\\
  \mathcal{E}' & \assign & \langle E_n H_r L_n : n \in \mathbb{N}, r \in
  \mathbb{R}^{>} \rangle, \hspace{1.0em} \tmop{and}\\
  \mathcal{E}^{\ast} & \assign & \{ E_n, L_n \suchthat n \in \mathbb{N} \} .
\end{eqnarray*}
Then we have the following list of correspondences $\mathcal{G} \longmapsto
\mathbf{Smp}_{\mathcal{G}}$:
\begin{itemizedot}
  \item The action of $\mathcal{T}$ on $\mathbf{No}$ ({\tmabbr{resp.}}
  $\mathbf{No}^{>, \succ}$) yields $\mathbf{No}_{\succ}$ ({\tmabbr{resp.}}
  $\mathbf{No}_{\succ}^{>}$), e.g. $\mathbf{Smp}_{\mathcal{T}} =
  \mathbf{No}_{\succ}$.
  
  \item The action of $\mathcal{H}$ on $\mathbf{No}^{>}$ ({\tmabbr{resp.}}
  $\mathbf{No}^{>, \succ}$) yields $\mathbf{Mo}$ ({\tmabbr{resp.}}
  $\mathbf{Mo}^{\succ}$).
  
  \item The action of $\mathcal{P}$ on $\mathbf{No}^{>, \succ}$ yields
  $\mathbf{Mo} \mathbin{\Yleft} \mathbf{Mo} = E_1  \mathbf{Mo}^{\succ}$.
  
  \item The action of $\mathcal{E}'$ on $\mathbf{No}^{>, \succ}$ yields
  $\mathbf{Mo}_{\omega}$.
  
  \item The action of $\mathcal{E}^{\ast}$ on $\mathbf{No}^{>, \succ}$ yields
  $\mathbf{K} \assign \mathbf{Mo}_{\omega} \mathbin{\Yleft}
  \mathbf{No}_{\succ}$ (which will coincide with $E_{\omega} 
  \mathbf{No}_{\succ}^{>}$).
\end{itemizedot}
Generalizations of those function groups will allow us to define certain
surreal substructures related to the hyperlogarithms and hyperexponentials on
$\mathbf{No}$.

\section{Hyperserial fields}\label{subsection-hyperserial-fields}

In this section, we briefly recall the definition of hyperserial fields
from~{\cite{BvdHK:hyp}} and how to construct such fields from their
hyperserial skeletons.

\subsection{Logarithmic hyperseries}

Let $x$ be a formal, infinitely large indeterminate. The field
$\mathbb{L}$\label{autolab35} of {\tmem{logarithmic
hyperseries}}{\index{logarithmic hyperseries}} of {\cite{vdH:loghyp}} is the
smallest field of well-based series that contains all ordinal real power
products of the hyperlogarithms $L_{\alpha} x$ with $\alpha \in \mathbf{On}$.
It is naturally equipped with a derivation $\partial : \mathbb{L}
\longrightarrow \mathbb{L}$ and composition law $\circ : \mathbb{L} \times
\mathbb{L}^{>, \succ} \longrightarrow \mathbb{L}$.

\paragraph{Definition}Let $\alpha$ be an ordinal. For each~$\gamma < \alpha$,
we introduce the formal hyperlogarithm {$\hl_{\gamma} \assign L_{\gamma} x$}
and define $\mathfrak{L}_{< \alpha}$\label{autolab36} to be the group of
formal power products {$\mathfrak{l}= \prod_{\gamma < \alpha}
\hl_{\gamma}^{\mathfrak{l}_{\gamma}}$} with $\mathfrak{l}_{\gamma} \in
\mathbb{R}$. This group comes with a monomial ordering~$\succ$ that is defined
by
\[ \mathfrak{l} \succ 1 \Longleftrightarrow \mathfrak{l}_{\beta} > 0
   \text{\quad for $\beta = \min \{ \gamma < \alpha \suchthat
   \mathfrak{l}_{\gamma} \neq 0 \}$} . \]
We define $\mathbb{L}_{< \alpha}$ to be the ordered field of well-based series
$\mathbb{L}_{< \alpha} \assign \mathbb{R} [[\mathfrak{L}_{<
\alpha}]]$\label{autolab37}. If $\alpha, \beta$ are ordinals with $\beta <
\alpha$, then we define $\mathfrak{L}_{[\beta, \alpha)}$ to be the subgroup of
$\mathfrak{L}_{< \alpha}$ of monomials $\mathfrak{l}$ with
$\mathfrak{l}_{\gamma} = 0$ whenever $\gamma < \beta$. As in
{\cite{vdH:loghyp}}, we write
\begin{eqnarray*}
  \mathbb{L}_{[\beta, \alpha)} & \assign & \mathbb{R} [[\mathfrak{L}_{[\beta,
  \alpha)}]],\\
  \mathfrak{L} & \assign & \bigcup_{\alpha \in \mathbf{On}} \mathfrak{L}_{<
  \alpha},\\
  \mathbb{L} & \assign & \mathbb{R} [[\mathfrak{L}]] .
\end{eqnarray*}
We have natural inclusions $\mathfrak{L}_{[\beta, \alpha)} \subseteq
\mathfrak{L}_{< \alpha} \subset \mathfrak{L}$, hence natural inclusions
$\mathbb{L}_{[\beta, \alpha)} \subseteq \mathbb{L}_{< \alpha} \subset
\mathbb{L}$.

\paragraph{Derivation on $\mathbb{L}_{< \alpha}$}The field $\mathbb{L}_{<
\alpha}$ is equipped with a derivation $\partial : \mathbb{L}_{< \alpha}
\longrightarrow \mathbb{L}_{< \alpha}$ which satisfies the Leibniz rule and
which is strongly linear. Write $\hl_{\gamma}^{\dag} \assign \prod_{\iota
\leqslant \gamma} \hl_{\iota}^{- 1} \in \mathfrak{L}_{< \alpha}$ for all
$\gamma < \alpha$. The derivative of a logarithmic hypermonomial $\mathfrak{l}
\in \mathfrak{L}_{< \alpha}$ is defined by
\begin{eqnarray*}
  \partial \mathfrak{l} & \assign & \left( \sum_{\gamma < \alpha}
  \mathfrak{l}_{\gamma}  \hl_{\gamma}^{\dag} \right) \mathfrak{l}.
\end{eqnarray*}
So $\partial \hl_{\gamma} = \frac{1}{\prod_{\iota < \gamma} \hl_{\iota}}$ for
all $\gamma < \alpha$. For $f \in \mathbb{L}_{< \alpha}$ and $k \in
\mathbb{N}$, we will sometimes write $f^{(k)} \assign \partial^k f$.

\paragraph{Composition on $\mathbb{L}_{< \alpha}$}Assume that $\alpha =
\omega^{\nu}$ for a certain ordinal $\nu$. Then the field $\mathbb{L}_{<
\alpha}$ is equipped with a composition $\circ : \mathbb{L}_{< \alpha} \times
\mathbb{L}_{< \alpha}^{>, \succ} \longrightarrow \mathbb{L}_{< \alpha}$ that
satisfies in particular:
\begin{itemizedot}
  \item For $g \in \mathbb{L}_{< \alpha}^{>, \succ}$, the map $\mathbb{L}_{<
  \alpha} \longrightarrow \mathbb{L}_{< \alpha} ; f \longmapsto f \circ g$ is
  a strongly linear embedding {\cite[Lemma~6.6]{vdH:loghyp}}.
  
  \item For $f \in \mathbb{L}_{< \alpha}$ and $g, h \in \mathbb{L}_{<
  \alpha}^{>, \succ}$, we have $g \circ h \in \mathbb{L}_{< \alpha}^{>,
  \succ}$ and $f \circ (g \circ h) = (f \circ g) \circ h$
  {\cite[Proposition~7.14]{vdH:loghyp}}.
  
  \item For $g \in \mathbb{L}_{< \alpha}^{>, \succ}$ and successor ordinals
  $\mu < \nu$, we have $\hl_{\omega^{\mu}} \circ \hl_{\omega^{\mu_-}} =
  \hl_{\omega^{\mu}} - 1$ {\cite[Lemma~5.6]{vdH:loghyp}}.
\end{itemizedot}
The same properties hold for the composition $\circ : \mathbb{L} \times
\mathbb{L}^{>, \succ} \longrightarrow \mathbb{L}$ if $\alpha$ is replaced by
$\mathbf{On}$. For $\gamma < \alpha$, the map $\mathbb{L}_{< \alpha}
\longrightarrow \mathbb{L}_{< \alpha} ; f \longmapsto f \circ \hl_{\gamma}$ is
injective, with image $\mathbb{L}_{[\gamma, \alpha)}$
{\cite[Lemma~5.11]{vdH:loghyp}}. For $g \in \mathbb{L}_{[\gamma, \alpha)}$, we
define $g^{\mathord{\uparrow} \gamma}$\label{autolab38} to be the unique
series in $\mathbb{L}_{< \alpha}$ with~$g^{\mathord{\uparrow} \gamma} \circ
\hl_{\gamma} = g$.

\subsection{Hyperserial fields}

Let $\mathfrak{M}$ be an ordered group. A {\tmem{real powering operation}} on
$\mathfrak{M}$ is a law
\[ \mathbb{R} \times \mathfrak{M} \longrightarrow \mathfrak{M}; (r,
   \mathfrak{m}) \longmapsto \mathfrak{m}^r \]
of ordered $\mathbb{R}$-vector space on $\mathfrak{M}$. Let
$\mathbb{T}=\mathbb{R} [[\mathfrak{M}]]$ be a field of well-based series with
$\mathfrak{M} \neq 1$, let $\tmmathbf{\nu} \leqslant \mathbf{On}$, and let
$\circ : \mathbb{L} \times \mathbb{T}^{>, \succ} \longrightarrow \mathbb{T}$
be a function. For $\tmmathbf{\mu} \leqslant \tmmathbf{\nu}$ , we define
$\mathfrak{M}_{\omega^{\tmmathbf{\mu}}}$ to be the class of series $s \in
\mathbb{T}^{>, \succ}$ with $\forall \gamma < \omega^{\tmmathbf{\mu}},
\hl_{\gamma} \circ s \in \mathfrak{M}^{\succ}$. We say that $(\mathbb{T},
\circ)$ is a {\tmem{hyperserial field}}{\index{hyperserial field}} if

\begin{descriptionaligned}
  \item[HF1] \label{HF1}$\mathbb{L} \longrightarrow \mathbb{T}; f \longmapsto
  f \circ s$ is a strongly linear morphism of ordered rings for each $s \in
  \mathbb{T}^{>, \succ}$.
  
  \item[HF2] $f \circ (g \circ s) = (f \circ g) \circ s$ for all $f \in
  \mathbb{L}$, $g \in \mathbb{L}^{>, \succ}$, and $s \in \mathbb{T}^{>,
  \succ}$.
  
  \item[HF3] $f \circ (t + \delta) = \sum_{k \in \mathbb{N}} \frac{f^{(k)}
  \circ t}{k!} \delta^k$ for all $f \in \mathbb{L}$, $t \in \mathbb{T}^{>,
  \succ}$, and $\delta \in \mathbb{T}$ with $\delta \prec t$.
  
  \item[HF4] $\hl_{\omega^{\mu}}^{\mathord{\uparrow} \gamma} \circ s <
  \hl_{\omega^{\mu}}^{\mathord{\uparrow} \gamma} \circ t$ for all ordinals
  $\mu$, $\gamma < \omega^{\mu}$, and $s, t \in \mathbb{T}^{>, \succ}$ with $s
  < t$.
  
  \item[HF5] The map $\mathbb{R}^{>} \times \mathfrak{M}^{\succ} \!\!
  \rightarrow \mathfrak{M}; \left( r, \!\! \mathfrak{m} \right) \mapsto
  \mathfrak{m}^r \assign \hl_0^r \circ \mathfrak{m}$ extends to a real
  powering operation on~$\mathfrak{M}$.
  
  \item[HF6] $\hl_1 \circ (st) = \hl_1 \circ s + \hl_1 \circ t$ for all $s, t
  \in \mathbb{T}^{>, \succ}$.{\nopagebreak}
  
  \item[HF7] \label{HF7}$\tmop{supp} \hl_1 \circ \mathfrak{m} \succ 1$ for all
  $\mathfrak{m} \in \mathfrak{M}^{\succ}$; {\nopagebreak}
  
  {\noindent}$\tmop{supp} \hl_{\omega^{\mu}} \circ \mathfrak{a} \succ \left(
  \hl_{\gamma} \circ \mathfrak{a} \right)^{- 1}$ for all $1 \leqslant \mu
  <\tmmathbf{\nu}$, $\gamma < \omega^{\mu}$ and $\mathfrak{a} \in
  \mathfrak{M}_{\omega^{\mu}}$.
\end{descriptionaligned}

For each $\mu \in \mathbf{On}$, we define the function $L_{\omega^{\mu}} :
\mathfrak{M}_{\omega^{\mu}} \longrightarrow \mathbb{T}; \mathfrak{a}
\longmapsto \hl_{\omega^{\mu}} \circ \mathfrak{a}$. The
{\tmem{skeleton}}{\index{skeleton of a hyperserial field}} of~$(\mathbb{T},
\circ)$ is defined to be the structure $(\mathbb{T}, (L_{\omega^{\mu}})_{\mu
\in \mathbf{On}})$ equipped with the real power operation
from~{\tmstrong{HF5}}.

We say that $(\mathbb{T}, \circ)$ is {\tmem{confluent}}{\index{confluent
hyperserial field}} if for all $\mu \in \mathbf{On}$ with $\mu \leqslant
\tmmathbf{\nu}$, we have
\[ \forall s \in \mathbb{T}^{>, \succ}, \exists \mathfrak{a} \in
   \mathfrak{M}_{\omega^{\mu}}, \exists \gamma < \omega^{\mu}, \quad
   \hl_{\gamma} \circ s \asymp \hl_{\gamma} \circ \mathfrak{a}. \]
In particular $(\mathbb{L}, \circ)$ is a confluent hyperserial field.

\subsection{Hyperserial skeletons}

It turns out that each hyperlogarithm $L_{\omega^{\mu}}$ on a hyperserial
field $\mathbb{T}$ can uniquely be reconstructed from its restriction to the
subset of $L_{< \omega^{\mu}}$-atomic hyperseries (here we say that $f \in
\mathbb{T}^{>, \succ}$ is $L_{< \omega^{\mu}}$\mbox{-}atomic if $L_{\gamma} f
\in \mathfrak{M}$ for all $\gamma < \omega^{\mu}$). One of the main ideas
behind~{\cite{vdH:loghyp}} is to turn this fact into a way to
{\tmem{construct}} hyperserial fields. This leads to the definition of a
hyperserial skeleton as a~field $\mathbb{T}$ with partially defined
hyperlogarithms $L_{\omega^{\mu}}$, which satisfy suitable counterparts of the
above axioms {\tmstrong{\ref{HF1}}} until {\tmstrong{\ref{HF7}}}.

More precisely, let $\mathbb{T}=\mathbb{R} [[\mathfrak{M}]]$ be a field of
well-based series and fix $\tmmathbf{\nu} \in \mathbf{On}^{>} \cup \{
\mathbf{On} \}$. A{\tmem{~hyperserial skeleton}}{\index{hyperserial skeleton}}
on $\mathbb{T}$ of {\tmem{force}} $\tmmathbf{\nu}$ consists of a family of
partial functions $L_{\omega^{\mu}}$\label{autolab39} for $\mu
<\tmmathbf{\nu}$, called {\tmem{(hyper)logarithms}}, which satisfy a list of
axioms that we will describe now.

First of all, the domains $\mathfrak{M}_{\omega^{\mu}} \assign \tmop{dom}
L_{\omega^{\mu}}$\label{autolab40} on which the partial
functions~$L_{\omega^{\mu}}$ are defined should satisfy the following axioms:

\begin{tmframed}
  Domains of definition:
  \begin{description}
    \item[\tmem{$\mathbf{DD}_0$}] $\tmop{dom} L_1 =\mathfrak{M}^{\succ}$;
    
    \item[\tmem{$\mathbf{DD}_{\mu}$}] \label{DD-mu}$\tmop{dom}
    L_{\omega^{\mu}} = \bigcap_{\eta < \mu} \tmop{dom} L_{\omega^{\eta}}$, if
    $\mu$ is a non-zero limit ordinal;
    
    \item[\tmem{$\mathbf{DD}_{\mu}$}] $\tmop{dom} L_{\omega^{\mu}} = \left\{ s
    \in \mathbb{T}: L_{\omega^{\mu_-}}^{\circ n} (s) \in \tmop{dom} \;
    L_{\omega^{\mu_-}} \text{ for all $n$} \right\}$, if $\mu$ is a successor
    ordinal.
  \end{description}
\end{tmframed}

It will be convenient to also define the class
$\mathfrak{M}_{\omega^{\tmmathbf{\nu}}}$ by
\begin{align*}
     \mathfrak{M}_{\omega^{\tmmathbf{\nu}}} & \assign & \left\{ s \in
     \mathbb{T}: L_{\omega^{\tmmathbf{\nu}_-}}^{\circ n} (s) \in
     \mathfrak{M}_{\omega^{\tmmathbf{\nu}_-}} \text{ for all $n$} \right\} &&
     \text{if $\tmmathbf{\nu}$ is a successor ordinal}\\
     \mathfrak{M}_{\omega^{\tmmathbf{\nu}}} & \assign & \bigcap_{\mu
     <\tmmathbf{\nu}} \mathfrak{M}_{\omega^{\mu}} && \text{if
     $\tmmathbf{\nu}$ is a non-zero limit ordinal.}
   \end{align*}
Consider an ordinal $\gamma < \omega^{\tmmathbf{\nu}}$ written in Cantor
normal form $\gamma = \sum_{i = 1}^r \omega^{\eta_i} n_i$ where $\eta_1 >
\eta_2 > \cdots > \eta_r$ and $n_1, \ldots, n_r < \omega$. We let $L_{\gamma}$
denote the partial function
\begin{eqnarray}
  L_{\gamma} & = & L_{\omega^{\eta_1}}^{\circ n_1} \circ \cdots \circ
  L_{\omega^{\eta_r}}^{\circ n_r} .  \label{eq-general-hyperlog-Cantor}
\end{eqnarray}
It follows from the definition that for all $\tmmathbf{\mu} \leqslant
\tmmathbf{\nu}$, the class $\mathfrak{M}_{\omega^{\tmmathbf{\mu}}}$ consists
of those series $s \in \mathbb{T}^{>, \succ}$ for which $s \in \tmop{dom}
L_{\gamma}$ and $L_{\gamma} s \in \mathfrak{M}^{\succ}$ for all $\gamma <
\omega^{\tmmathbf{\mu}}$. We call such series $L_{<
\omega^{\tmmathbf{\mu}}}$-{\tmem{atomic}}{\index{$L_{< \beta}$-atomic
series}}.

Secondly, the hyperlogarithms $L_{\omega^{\mu}}$ with $\mu <\tmmathbf{\nu}$
should satisfy the following axioms:

\begin{tmframed}
  {\tmstrong{Axioms for the logarithm}}{\smallskip}
  
  {\noindent}Functional equation:
  
  \begin{descriptioncompact}
    \item[\tmem{$\mathbf{FE}_0$}] \label{functional-eq-0}$\forall
    \mathfrak{m}, \mathfrak{n} \in \mathfrak{M}_1, L_1
    (\mathfrak{m}\mathfrak{n}) = L_1 \mathfrak{m}+ L_1 \mathfrak{n}$.
  \end{descriptioncompact}
  
  Asymptotics:
  
  \begin{descriptioncompact}
    \item[$\mathbf{A}_0$] \label{asymptotics-0}$\forall r \in \mathbb{R}^{>},
    \forall \mathfrak{m} \in \mathfrak{M}_1, L_1 \mathfrak{m} \prec
    \mathfrak{m}$.
  \end{descriptioncompact}
  
  Monotonicity:
  
  \begin{descriptioncompact}
    \item[\tmem{$\mathbf{M}_0$}] \label{monotonicity-0}$\forall \mathfrak{m},
    \mathfrak{n} \in \mathfrak{M}_1, \mathfrak{m} \prec \mathfrak{n}
    \Longrightarrow L_1 \mathfrak{m}< L_1 \mathfrak{n}$.
  \end{descriptioncompact}
  
  Regularity:
  
  \begin{descriptioncompact}
    \item[$\mathbf{R}_0$] \label{regularity-0}$\forall \mathfrak{m} \in
    \mathfrak{M}_1, \tmop{supp} L_1 \mathfrak{m} \succ 1$.
  \end{descriptioncompact}
  
  Surjective logarithm:
  
  \begin{descriptioncompact}
    \item[$\mathbf{SL}$] \label{surjective-log}$\forall \varphi \in
    \mathbb{T}_{\succ}^{>}, \exists \mathfrak{m} \in \mathfrak{M}_1, \varphi =
    L_1 \mathfrak{m}$.
  \end{descriptioncompact}
\end{tmframed}

\begin{tmframed}
  {\tmstrong{Axioms for the hyperlogarithms}} (for each $\mu \in \mathbf{On}$
  with $0 < \mu <\tmmathbf{\nu}$ and $\beta \assign \omega^{\mu}$){\smallskip}
  
  {\noindent}Functional equation:\label{autolab41}
  
  \begin{descriptioncompact}
    \item[\tmem{$\mathbf{FE}_{\mu}$}] \label{functional-eq}$\forall
    \mathfrak{a} \in \mathfrak{M}_{\beta}, L_{\beta} L_{\beta_{/ \omega}}
    \mathfrak{a}= L_{\beta} \mathfrak{a}- 1$ if $\mu$ is a successor ordinal.
  \end{descriptioncompact}
  
  Asymptotics:\label{autolab42}
  
  \begin{descriptioncompact}
    \item[$\mathbf{A}_{\mu}$] \label{asymptotics}$\forall \gamma < \beta,
    \forall \mathfrak{a} \in \mathfrak{M}_{\beta}, L_{\beta} \mathfrak{a}<
    L_{\gamma} \mathfrak{a}$.
  \end{descriptioncompact}
  
  Monotonicity:\label{autolab43}
  
  \begin{descriptioncompact}
    \item[\tmem{$\mathbf{M}_{\mu}$}] \label{monotonicity}$\forall
    \mathfrak{a}, \mathfrak{b} \in \mathfrak{M}_{\beta}, \forall \gamma <
    \beta, \mathfrak{a} \prec \mathfrak{b} \Longrightarrow L_{\beta}
    \mathfrak{a}+ (L_{\gamma} \mathfrak{a})^{- 1} < L_{\beta} \mathfrak{b}-
    (L_{\gamma} \mathfrak{b})^{- 1}$.
  \end{descriptioncompact}
  
  Regularity:\label{autolab44}
  
  \begin{descriptioncompact}
    \item[$\mathbf{R}_{\mu}$] \label{regularity}$\forall \mathfrak{a} \in
    \mathfrak{M}_{\beta}, \forall \gamma < \beta, \tmop{supp} L_{\beta}
    \mathfrak{a} \succ (L_{\gamma} \mathfrak{a})^{- 1}$.
  \end{descriptioncompact}
\end{tmframed}

Finally, for $\mu \leqslant \tmmathbf{\nu}$ with $\mu \in \mathbf{On}$, we
also need the following axiom

\begin{tmframed}
  Infinite products:\label{autolab45}
  
  \begin{descriptioncompact}
    \item[\tmem{$\mathbf{P}_{\mu}$}] \label{infinite-products}$\forall
    \mathfrak{a} \in \mathfrak{M}_{\beta}, \forall \mathfrak{l} \in
    \mathfrak{L}_{< \beta}^{\succ}, \sum_{\gamma < \beta}
    \mathfrak{l}_{\gamma} L_{\gamma + 1} \mathfrak{a} \in L_1
    \mathfrak{M}^{\succ}$.
  \end{descriptioncompact}
\end{tmframed}

Note that \hyperref[surjective-log]{\textbf{SL}} and \hyperref[regularity-0]{$\mathbf{R_0}$} together imply $L_1
\mathfrak{M}^{\succ} =\mathbb{T}_{\succ}^{>}$, whence $\mathbf{P}_{\mu}$
automatically holds. This will in particular be the case for $\mathbf{No}$
(see Section~\ref{section-transserial-structure}).

In summary, we have:

\begin{definition}
  \emph{\cite[Definition~3.3]{BvdHK:hyp}} Given $\tmmathbf{\nu} \in
  \mathbf{On}^{>} \cup \{ \mathbf{On} \}$, we say that the structure $(\mathbb{T},
  (L_{\omega^{\mu}})_{\mu <\tmmathbf{\nu}})$ is a
  {\textit{{\tmstrong{hyperserial skeleton}}{\index{hyperserial field
  skeleton}}}} of force $\tmmathbf{\nu}$ if it satisfies \hyperref[DD-mu]{$\mathbf{DD_{\mu}}$},
  \hyperref[functional-eq]{$\mathbf{FE_{\mu}}$}, \hyperref[asymptotics]{$\mathbf{A_{\mu}}$}, \hyperref[monotonicity]{$\mathbf{M_{\mu}}$}, and
  \hyperref[regularity]{$\mathbf{R_{\mu}}$} for all~$\mu <\tmmathbf{\nu}$, as well as
  \hyperref[infinite-products]{$\mathbf{P_{\mu}}$} for all ordinals~{$\mu \leqslant \tmmathbf{\nu}$}.
\end{definition}

Assume that $\mathbb{T}$ is a hyperserial skeleton of force $\tmmathbf{\nu}$.
The partial logarithm $L_1 : \mathfrak{M}_1 \longrightarrow \mathbb{T}$
extends naturally into a strictly increasing morphism $(\mathbb{T}^{>},
\times, <) \longrightarrow (\mathbb{T}, +, <)$, which we call the
{{\tmem{logarithm}}} and denote by $L_1$ or~$\log$
{\cite[Section~4.1]{BvdHK:hyp}}. If $\mathbb{T}$ satisfies
\hyperref[surjective-log]{$\mathbf{SL}$}, then this extended logarithm is actually an isomorphism
{\cite[Proposition~2.3.8]{Schm01}}. In that case, for any $s \in
\mathbb{T}^{>}$ and $r \in \mathbb{R}$, we define {$s^r \assign \exp (r \log
s) \in \mathbb{T}^{>}$}.

\subsection{Confluence}

\begin{definition}
  \label{def-confluence}{\tmem{{\cite[Definition~3.5]{BvdHK:hyp}}}} Given a
  hyperserial skeleton $\mathbb{T}=\mathbb{R} [[\mathfrak{M}]]$ of force $\nu
  \in \mathbf{On}^{>}$ and $\mu < \nu$, we inductively define the notion of
  $\mu$-{\tmstrong{{\tmem{confluence}}}}{\index{$\mu$-confluence}} in
  conjunction with the definition of functions $\mathfrak{d}_{\omega^{\mu}} :
  \mathbb{T}^{>, \succ} \longrightarrow \mathfrak{M}_{\omega^{\mu}}$, as
  follows.
  \begin{itemizedot}
    \item The field $\mathbb{T}$ is said $0$-confluent if $\mathfrak{M}$ is
    non-trivial. The function $\mathfrak{d}_1$ maps every positive infinite
    series $s \in \mathbb{T}^{>, \succ}$ onto its dominant monomial
    $\mathfrak{d}_s$. For each $s \in \mathbb{T}^{>, \succ}$, we write
    \begin{eqnarray*}
      \mathcal{E}_1 [s] & \assign & \{ t \in \mathbb{T}^{>, \succ} \suchthat t
      \asymp s \} .
    \end{eqnarray*}
  \end{itemizedot}
  Let $\mu \leqslant \tmmathbf{\nu}$ be such that $\mathbb{T}$ is
  $\eta$-confluent for all $\eta < \mu$ and let $s \in \mathbb{T}^{>, \succ}$.
  \begin{itemizedot}
    \item If $\mu$ is a successor ordinal, then we write
    $\mathcal{E}_{\omega^{\mu}} [s]$ for the class of series $t$ with
    \begin{eqnarray*}
      \left( L_{\omega^{\mu_-}} \circ \mathfrak{d}_{\omega^{\mu_-}}
      \right)^{\circ n} (s) & \asymp & \left( L_{\omega^{\mu_-}} \circ
      \mathfrak{d}_{\omega^{\mu_-}} \right)^{\circ n} (t)
    \end{eqnarray*}
    for a certain $n \in \mathbb{N}$.
    
    \item If $\mu$ is a limit ordinal, then we write
    $\mathcal{E}_{\omega^{\mu}} [s]$ for the class of series $t$ with
    \begin{eqnarray*}
      L_{\omega^{\eta}} \mathfrak{d}_{\omega^{\eta}} (s) & \asymp &
      L_{\omega^{\eta}} \mathfrak{d}_{\omega^{\eta}} (t)
    \end{eqnarray*}
    for a certain $\eta < \mu$.
  \end{itemizedot}
  We say that $\mathbb{T}$ is {\tmem{{\tmstrong{$\mu$-confluent}}}} if each
  class $\mathcal{E}_{\omega^{\mu}} [s]$\label{autolab46} contains a $L_{<
  \omega^{\mu}}$-atomic element; we then define~$\mathfrak{d}_{\omega^{\mu}}
  (s)$\label{autolab47} to be this element.
\end{definition}

This inductive definition is sound. Indeed, if $\mu \leqslant \nu + 1$ and
$\mathbb{T}$ is $\eta$-confluent for all $\eta < \mu$, then the functions
$\mathfrak{d}_{\omega^{\eta}} : \mathbb{T}^{>, \succ} \longrightarrow
\mathfrak{M}_{\omega^{\eta}}$ with $\eta < \mu$ are well-defined and
non-decreasing. Thus, for~$\eta < \mu$, the collection of
$\mathcal{E}_{\omega^{\eta}} [s]$ with $s \in \mathbb{T}^{>, \succ}$ forms a
partition of $\mathbb{T}^{>, \succ}$ into convex subclasses.

We say that $\mathbb{T}$ is {\tmem{confluent}} if it is $\nu$-confluent. If
$\mathbb{T}$ has force $\mathbf{On}$, then we say that $\mathbb{T}$ is
$\mathbf{On}$\mbox{-}confluent, or {\tmem{confluent}}, if $(\mathbb{T},
(L_{\omega^{\eta}})_{\eta < \mu})$ is $\mu$-confluent for all $\mu \in
\mathbf{On}$.

\subsection{Correspondence between fields and
skeletons}\label{subsubsection-field-skeleton}

\begin{proposition}
  \label{th-skeleton-field}{\tmem{{\cite[Theorem~1.1]{BvdHK:hyp}}}} If
  $(\mathbb{T}, (L_{\omega^{\mu}})_{\mu \in \mathbf{On}})$ is a confluent
  hyperserial skeleton, then there is a unique function $\circ : \mathbb{L}
  \times \mathbb{T}^{>, \succ} \longrightarrow \mathbb{T}$ with
  \[ \forall \mu \in \mathbf{On}, \forall \mathfrak{a} \in
     \mathfrak{M}_{\omega^{\mu}}, \quad \hl_{\omega^{\mu}} \circ \mathfrak{a}=
     L_{\omega^{\mu}} \mathfrak{a} \]
  such that $(\mathbb{T}, \circ)$ is a confluent hyperserial field.
\end{proposition}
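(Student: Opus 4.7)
The plan is to proceed by transfinite induction on $\mu \in \mathbf{On}$, first extending each partial hyperlogarithm $L_{\omega^{\mu}}$ from $\mathfrak{M}_{\omega^{\mu}}$ to all of $\mathbb{T}^{>, \succ}$, and then assembling the composition $\circ$ by strong $\mathbb{R}$-linearity in the first argument. Uniqueness is essentially forced: the prescribed identity $\hl_{\omega^{\mu}} \circ \mathfrak{a}= L_{\omega^{\mu}} \mathfrak{a}$ on atomic $\mathfrak{a}$ fixes the values on the generators of $\mathbb{L}$ along atomic series, axiom \hyperref[HF1]{\textbf{HF1}} extends this to all of $\mathbb{L}$, and axiom \textbf{HF3} together with $\mu$-confluence determines the values of $L_{\omega^{\mu}}$ at arbitrary $s \in \mathbb{T}^{>, \succ}$ via Taylor expansion around the atomic representative $\mathfrak{d}_{\omega^{\mu}}(s)$.

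For the base case, I would extend $L_1$ to $\mathbb{T}^{>}$ in the usual Hahn-series fashion: factor $s = c\,\mathfrak{d}_s (1+\varepsilon)$ with $c \in \mathbb{R}^{>}$ and $\varepsilon \prec 1$, and set
\begin{equation*}
   \log s \;\assign\; L_1 \mathfrak{d}_s + \log c + \sum_{k\geqslant 1} \frac{(-1)^{k+1}}{k}\,\varepsilon^{k}.
\end{equation*}
The axioms \hyperref[functional-eq-0]{$\mathbf{FE}_0$}, \hyperref[monotonicity-0]{$\mathbf{M}_0$}, and \hyperref[regularity-0]{$\mathbf{R}_0$} give that this extension is a strictly increasing group morphism $(\mathbb{T}^{>}, \times) \longrightarrow (\mathbb{T}, +)$, while \hyperref[infinite-products]{$\mathbf{P}_0$} supplies the real powering operation $\mathfrak{m} \longmapsto \mathfrak{m}^{r}$ on $\mathfrak{M}$ needed for \textbf{HF5}. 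For the inductive step at $\mu > 0$, given $s \in \mathbb{T}^{>, \succ}$, I use $\mu$-confluence to pick the atomic witness $\mathfrak{a} \assign \mathfrak{d}_{\omega^{\mu}}(s)$ and an ordinal $\gamma < \omega^{\mu}$ such that $L_{\gamma} s \asymp L_{\gamma} \mathfrak{a}$, so $\delta \assign L_{\gamma} s - L_{\gamma} \mathfrak{a}$ satisfies $\delta \prec L_{\gamma} \mathfrak{a}$. The defining formula is then dictated by \textbf{HF3}:
\begin{equation*}
   L_{\omega^{\mu}} s \;\assign\; \sum_{k\geqslant 0} \frac{\bigl(\hl_{\omega^{\mu}}^{\mathord{\uparrow} \gamma}\bigr)^{(k)} \circ L_{\gamma} \mathfrak{a}}{k!}\,\delta^{k},
\end{equation*}
where the successive derivatives live in $\mathbb{L}_{[\gamma, \omega^{\mu})}$ and are composed with the already-extended lower-order hyperlogarithms. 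Axiom \hyperref[regularity]{$\mathbf{R}_{\mu}$} ensures the supports of the summands are strictly decreasing, so the Taylor sum is well-based; and \hyperref[functional-eq]{$\mathbf{FE}_{\mu}$}, \hyperref[monotonicity]{$\mathbf{M}_{\mu}$}, \hyperref[asymptotics]{$\mathbf{A}_{\mu}$} together give independence of the choice of $(\gamma, \mathfrak{a})$ by comparing two representatives along the confluence class.

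Once all $L_{\omega^{\mu}}$ have been extended, I assemble $\circ$ by setting $L_{\gamma}$ for general ordinals via~(\ref{eq-general-hyperlog-Cantor}), defining $\mathfrak{l} \circ s \assign \prod_{\gamma} (L_{\gamma} s)^{\mathfrak{l}_{\gamma}}$ for a hypermonomial $\mathfrak{l} = \prod_{\gamma} \hl_{\gamma}^{\mathfrak{l}_{\gamma}} \in \mathfrak{L}$ using the real powers from \textbf{HF5}, and finally $f \circ s \assign \sum_{\mathfrak{l}} f_{\mathfrak{l}}\, (\mathfrak{l} \circ s)$ for $f \in \mathbb{L}$. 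The routine part is verifying well-basedness of the families $(\mathfrak{l} \circ s)_{\mathfrak{l} \in \tmop{supp} f}$ and checking \textbf{HF1}, \textbf{HF4}--\textbf{HF7}; each reduces to the corresponding skeletal axiom after unpacking the Taylor expansions, and the extended $\mu$-confluence of $(\mathbb{T}, \circ)$ is automatic because the extended hyperlogarithms agree with the original ones on atomic arguments. The main obstacle is the associativity axiom \textbf{HF2}, $f \circ (g \circ s) = (f \circ g) \circ s$. My approach is to reduce, by strong $\mathbb{R}$-linearity in $f$ and density of power products in $\mathfrak{L}$, to the case where $f = \hl_{\omega^{\mu}}$, and then to argue by induction on $\mu$: at the inductive step, both sides are expanded via the Taylor formula \textbf{HF3} around an atomic witness $\mathfrak{a}$ of $g \circ s$, and one matches them term by term using \hyperref[functional-eq]{$\mathbf{FE}_{\mu}$} together with the corresponding associativity in $\mathbb{L}$ itself (\cite[Proposition~7.14]{vdH:loghyp}). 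Making this matching rigorous, especially when $g$ involves infinite sums and the Taylor series for $g \circ s$ has to be re-summed to match the Taylor series of $(f \circ g) \circ s$, is where the bulk of the technical work lies.
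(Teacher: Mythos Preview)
The paper does not prove this proposition at all: it is quoted verbatim as {\cite[Theorem~1.1]{BvdHK:hyp}} and used as a black box throughout. So there is no ``paper's own proof'' to compare against here.

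Your sketch is a reasonable high-level outline of the construction carried out in {\cite{BvdHK:hyp}}: extend each $L_{\omega^{\mu}}$ from $\mathfrak{M}_{\omega^{\mu}}$ to $\mathbb{T}^{>,\succ}$ by Taylor expansion around the atomic representative supplied by confluence, build $\circ$ on hypermonomials by real powering, and push to all of $\mathbb{L}$ by strong linearity. You correctly identify \textbf{HF2} as the hard axiom. One point to tighten: in your inductive extension formula you write $(\hl_{\omega^{\mu}}^{\uparrow\gamma})^{(k)} \circ L_{\gamma}\mathfrak{a}$, but at this stage of the induction you only have the \emph{partial} hyperlogarithms $L_{\omega^{\eta}}$ for $\eta<\mu$ already extended to $\mathbb{T}^{>,\succ}$, so you need to check that the derivatives $(\hl_{\omega^{\mu}}^{\uparrow\gamma})^{(k)}$ lie in $\mathbb{L}_{<\omega^{\mu}}$ (not just $\mathbb{L}_{[\gamma,\omega^{\mu})}$ in a way that involves $\hl_{\omega^{\mu}}$ itself) --- this is true because $\partial\hl_{\omega^{\mu}} = \prod_{\iota<\omega^{\mu}}\hl_{\iota}^{-1}$ contains no factor $\hl_{\omega^{\mu}}$, but it is worth saying. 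Also, the independence of the extension from the choice of $(\gamma,\mathfrak{a})$ is not just a matter of ``comparing representatives'': one needs a Taylor re-expansion lemma showing that expanding around $L_{\gamma}\mathfrak{a}$ and around $L_{\gamma'}\mathfrak{a}$ (with $\gamma<\gamma'$) give the same answer, which in {\cite{BvdHK:hyp}} is handled by a separate compatibility argument. These are not gaps so much as places where ``routine'' hides a fair amount of work.
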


Assume now that $\mathbb{T}$ is only a hyperserial skeleton of force
$\tmmathbf{\nu} \in \mathbf{On}^{>} \cup \{ \mathbf{On} \}$ and that $\mu$ is
an ordinal with $0 < \mu <\tmmathbf{\nu}$ such that $(\mathbb{T},
(L_{\omega^{\eta}})_{\eta < \mu})$ is $\mu$-confluent. Let $\beta \assign
\omega^{\mu}$. By {\cite[Definition~4.11 and Lemma~4.12]{BvdHK:hyp}}, the
partial function $L_{\beta}$ naturally extends into a function $\mathbb{T}^{>,
\succ} \longrightarrow \mathbb{T}^{>, \succ}$ that we still denote by
$L_{\beta}$. This extended function is strictly increasing, by`
{\cite[Corollary~4.17]{BvdHK:hyp}}. If $\mu$ is a~successor ordinal, then it
satisfies the functional equation
\begin{equation}
  \forall s \in \mathbb{T}^{>, \succ}, \quad L_{\beta} L_{\beta_{/ \omega}} s
  = L_{\beta} s - 1 \label{eq-functional-total},
\end{equation}
by {\cite[Proposition~4.13]{BvdHK:hyp}}. For $\gamma < \beta$, we have a
strictly increasing function $L_{\gamma} : \mathbb{T}^{>, \succ}
\longrightarrow \mathbb{T}^{>, \succ}$ obtained as a composition of functions
$L_{\omega^{\eta}}$ with $\eta < \mu$, as in
(\ref{eq-general-hyperlog-Cantor}). By {\cite[Proposition~4.7]{BvdHK:hyp}}, we
have
\begin{eqnarray*}
  \mathcal{E}_{\beta} [s] & = & \{ t \in \mathbb{T}^{>, \succ} \suchthat
  \exists \gamma < \beta, L_{\gamma} t \asymp L_{\gamma} s \} .
\end{eqnarray*}
\subsection{Hyperexponentiation}\label{subsubsection-hyperexponentiation}

In a traditional transseries field $\mathbb{T}$, the transmonomials are
characterized by the fact that, for any $f \in \mathbb{T}^{>}$, we have
\begin{eqnarray}
  f \in \mathfrak{M} & \Longleftrightarrow & \tmop{supp} \log f \succ 1. 
  \label{mon-charac}
\end{eqnarray}
In particular, the logarithm $\log : \mathbb{T}^{>} \longrightarrow
\mathbb{T}$ is surjective as soon as $\exp \varphi$ is defined for all
{$\varphi \in \mathbb{T}$} with $\tmop{supp} \varphi \succ 1$. In hyperserial
fields, similar properties hold for $L_{< \omega^{\eta}}$-atomic elements with
respect to the hyperexponential $E_{\omega^{\eta}}$, as we will recall now.

Given $\tmmathbf{\nu} \in \mathbf{On}^{>} \cup \{ \mathbf{On} \}$, let
$\mathbb{T}$ be a confluent hyperserial skeleton $\mathbb{T}$ of force
$\tmmathbf{\nu}$. By {\cite[Theorem~4.1]{BvdHK:hyp}}, we have a composition
$\circ : \mathbb{L}_{< \omega^{\tmmathbf{\nu}}} \times \mathbb{T}^{>, \succ}
\longrightarrow \mathbb{T}$. Given $\eta <\tmmathbf{\nu}$, the extended
function {$L_{\omega^{\eta}} : \mathbb{T}^{>, \succ} \longrightarrow
\mathbb{T}^{>, \succ}$} is strictly increasing and hence injective.
Consequently, $L_{\omega^{\eta}}$ has a partially defined functional inverse
that we denote by $E_{\omega^{\eta}}$\label{autolab48}.

The characterization~(\ref{mon-charac}) generalizes as follows:

\begin{definition}
  \emph{{\cite[Definition~7.10]{BvdHK:hyp}}} We say that $\varphi \in
  \mathbb{T}^{>, \succ}$ is {\tmem{$\textbf{1}${\tmstrong{-truncated}}}} if
  \begin{eqnarray*}
    \tmop{supp} \varphi & \succ & 1.
  \end{eqnarray*}
  Given $0 < \eta < \nu$, we say that a series $\varphi \in \mathbb{T}^{>,
  \succ}$ is
  {\tmstrong{{\tmem{$\omega^{\eta}$-truncated}}}}{\index{$\beta$-truncated
  series}} if
  \[ \forall \mathfrak{m} \in \tmop{supp} \varphi, \quad \mathfrak{m} \prec 1
     \Longrightarrow \left( \forall \gamma < \omega^{\eta}, \varphi <
     \hl_{\omega^{\eta}}^{\mathord{\uparrow} \gamma} \circ \mathfrak{m}^{- 1}
     \right) . \]
  For any $\beta = \omega^{\eta} < \omega^{\tmmathbf{\nu}}$, we write
  $\mathbb{T}_{\succ, \beta}$\label{autolab49} for the class of
  $\beta$-truncated series in $\mathbb{T}$.
\end{definition}

\begin{proposition}
  {\tmem{{\cite[Corollary~7.21]{BvdHK:hyp}}}} For $f \in \mathbb{T}^{>,
  \succ}$ and $\beta = \omega^{\eta} < \omega^{\tmmathbf{\nu}}$, we have
  \begin{eqnarray*}
    f \in \mathfrak{M}_{\beta} & \Longleftrightarrow & L_{\beta} f \in
    \mathbb{T}_{\succ, \beta} .
  \end{eqnarray*}
\end{proposition}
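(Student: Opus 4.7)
The plan is to prove this by transfinite induction on $\eta$.

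In the base case $\eta = 0$ (so $\beta = 1$), we have $\mathfrak{M}_1 = \mathfrak{M}^{\succ}$ by $\mathbf{DD}_0$, and being $1$-truncated just means being purely large. Writing the standard decomposition $f = c\mathfrak{d}(1+\varepsilon)$ with $c \in \mathbb{R}^{>}$, $\mathfrak{d} = \mathfrak{d}_f \in \mathfrak{M}^{\succ}$, and $\varepsilon \prec 1$, the extended logarithm gives
\[ L_1 f = L_1 \mathfrak{d} + \log c + \sum_{k \geq 1} \frac{(-1)^{k+1}}{k} \varepsilon^k. \]
By $\mathbf{R}_0$ the first summand is purely large, while $\log c$ contributes the monomial $1$ unless $c = 1$, and the series in $\varepsilon$ contributes small monomials unless $\varepsilon = 0$. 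Hence $L_1 f$ is purely large if and only if $c = 1$ and $\varepsilon = 0$, i.e.\ iff $f = \mathfrak{d} \in \mathfrak{M}^{\succ}$.

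For the inductive step with $\eta > 0$ and $\beta = \omega^{\eta}$, the central identity is $\hl_{\omega^{\eta}}^{\mathord{\uparrow} \gamma} \circ L_{\gamma} f = L_{\beta} f$ for $f \in \mathfrak{M}_{\beta}$ and $\gamma < \beta$; this comes from applying \textbf{HF2} to the relation $\hl_{\omega^{\eta}}^{\mathord{\uparrow} \gamma} \circ \hl_{\gamma} = \hl_{\omega^{\eta}}$ holding in $\mathbb{L}$. Combined with the strict monotonicity of $\hl_{\omega^{\eta}}^{\mathord{\uparrow} \gamma}$ on $\mathbb{T}^{>, \succ}$ provided by \textbf{HF4}, this identity converts the regularity axiom $\mathbf{R}_{\mu}$ (namely $\tmop{supp} L_{\beta} f \succ (L_{\gamma} f)^{-1}$ for every $\gamma < \beta$) into the required $\beta$-truncatedness condition: any small $\mathfrak{m} \prec 1$ in $\tmop{supp} L_{\beta} f$ satisfies $\mathfrak{m}^{-1} \prec L_{\gamma} f$ for all $\gamma < \beta$, whence the comparison of $L_{\beta} f$ with $\hl_{\omega^{\eta}}^{\mathord{\uparrow} \gamma} \circ \mathfrak{m}^{-1}$ follows by monotonicity. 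For the backward direction, I argue by contrapositive: if $f \notin \mathfrak{M}_{\beta}$, choose a minimal $\gamma_0 < \beta$ with $L_{\gamma_0} f \notin \mathfrak{M}$; decomposing $L_{\gamma_0} f = c \mathfrak{d}(1+\varepsilon)$ non-trivially and using \textbf{HF3} (Taylor expansion), the non-monomial part of $L_{\gamma_0} f$ is propagated by the successive $L_{\omega^{\cdot}}$ operations of~(\ref{eq-general-hyperlog-Cantor}) into small monomials in $\tmop{supp} L_{\beta} f$ that violate the $\beta$-truncated bound.

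The main obstacle is handling the inductive step uniformly across successor and limit values of $\eta$. For $\eta = \xi + 1$ a successor, the functional equation $\mathbf{FE}_{\mu}$, i.e.\ $L_{\beta} L_{\omega^{\xi}} \mathfrak{a} = L_{\beta} \mathfrak{a} - 1$, allows cascading reduction from $\omega^{\eta}$ down to $\omega^{\xi}$ and thus access to the inductive hypothesis; for limit $\eta$, the identity $\mathfrak{M}_{\beta} = \bigcap_{\eta' < \eta} \mathfrak{M}_{\omega^{\eta'}}$ combined with the corresponding cofinal structure of the $\beta$-truncatedness condition delivers the reduction. Care is also needed to verify that the small monomials produced by \textbf{HF3} in the backward direction do not cancel across the iterated hyperlogarithms; here the strong linearity \textbf{HF1} of composition, together with axiom $\mathbf{P}_{\mu}$, serves to prevent such cancellation.
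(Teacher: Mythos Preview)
The paper does not give a proof of this proposition at all: it is simply quoted as Corollary~7.21 of~\cite{BvdHK:hyp}, in the section recalling prerequisites on hyperserial fields. So there is no proof in the paper to compare against; any argument you give is necessarily your own.

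On the substance of your sketch: the base case and the forward implication for $\eta > 0$ are essentially correct. Indeed, the paper itself remarks immediately after the proposition that, by~\cite[Lemma~7.14]{BvdHK:hyp}, a series $\varphi$ with $E_{\beta}\varphi$ defined is $\beta$-truncated if and only if $\tmop{supp}\varphi \succ (L_{\gamma} E_{\beta}\varphi)^{-1}$ for all $\gamma<\beta$; so for $f\in\mathfrak{M}_{\beta}$ and $\varphi=L_{\beta}f$, this is exactly $\mathbf{R}_{\mu}$. Your monotonicity argument via $\hl_{\beta}^{\uparrow\gamma}\circ L_{\gamma}f = L_{\beta}f$ is a direct reproof of that lemma in this special case.

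The backward implication, however, is only a sketch, and the hard part is not filled in. Saying that the non-monomial part of $L_{\gamma_0}f$ is ``propagated by the successive $L_{\omega^{\cdot}}$ operations into small monomials violating the $\beta$-truncated bound'' is the crux, and you do not actually show it: you would need to control precisely which monomial survives in $\tmop{supp} L_{\beta}f$ and to exhibit a $\gamma<\beta$ for which the inequality $L_{\beta}f < \hl_{\beta}^{\uparrow\gamma}\circ\mathfrak{m}^{-1}$ fails. Your appeal to \textbf{HF1} and $\mathbf{P}_{\mu}$ to rule out cancellation is not an argument; neither axiom says anything directly about the support of an iterated hyperlogarithm of a non-atomic element. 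The actual proof in~\cite{BvdHK:hyp} proceeds through a careful analysis of the projection $\sharp_{\beta}$ and the classes $\mathcal{L}_{\beta}[s]$ (see the discussion around~(\ref{eq-L-class}) and~(\ref{eq-hyperexp-proj}) in the present paper), rather than by tracking individual monomials through Taylor expansions. If you want a self-contained proof, that machinery is what you are missing.
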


In general, we have~$\mathbb{T}_{\succ, \beta} +\mathbb{R}^{\geqslant}
\subseteq \mathbb{T}_{\succ, \beta}$. Whenever $\eta$ is a successor ordinal,
we even have
\begin{eqnarray}
  \mathbb{T}_{\succ, \beta} +\mathbb{R} & = & \mathbb{T}_{\succ, \beta} 
  \label{eq-succ-truncated}
\end{eqnarray}
Let $\varphi$ be a series such that $E_{\beta} \varphi$ is defined. By
{\cite[Lemma~7.14]{BvdHK:hyp}}, the series $\varphi$ is $\beta$-truncated if
and only if
\[ \forall \gamma < \beta, \quad \tmop{supp} \varphi \succ (L_{\gamma}
   E_{\beta} \varphi)^{- 1} . \]
For $\mu <\tmmathbf{\nu}$, the axiom \hyperref[regularity]{$\mathbf{R_0}$} is therefore equivalent
to the inclusion $L_{\omega^{\mu}} \mathfrak{M}_{\omega^{\mu}} \subseteq
\mathbb{T}_{\succ, \omega^{\mu}}$. For $s \in \mathbb{T}^{>, \succ}$, there is
a unique $\trianglelefteqslant$-maximal truncation $\sharp_{\beta} (s)$ of $s$
which is $\beta$-truncated. By {\cite[Propositions~6.16 and~6.17]{BvdHK:hyp}},
the classes\label{autolab50}
\begin{eqnarray}
  \mathcal{L}_{\beta} [s] & \assign & \left\{ t \in s +\mathbb{T}^{\prec}
  \suchthat t = s \text{, or $\exists \gamma < \beta, \left( t <
  \hl_{\beta}^{\mathord{\uparrow} \gamma} \circ | s - t |^{- 1} \right)$}
  \right\}  \label{eq-L-class}
\end{eqnarray}
with$\hspace{1.4em} s \in \mathbb{T}^{>, \succ}$ form a partition of
$\mathbb{T}^{>, \succ}$ into convex subclasses. Moreover, the series
$\sharp_{\beta} (s)$\label{autolab51} is both the unique $\beta$-truncated
element and the $\trianglelefteqslant$-minimum of $\mathcal{L}_{\beta} [s]$.
If $E_{\beta} s$ is defined, then we have the following simplified definition
{\cite[Proposition~7.19]{BvdHK:hyp}} of the class $\mathcal{L}_{\beta} [s]$:
\begin{eqnarray}
  \mathcal{L}_{\beta} [s] & \assign & \left\{ t \in \mathbb{T}^{>, \succ}
  \suchthat \exists \gamma < \beta, t - s \prec \frac{1}{L_{\gamma} E_{\beta}
  s} \right\} .  \label{L-when-E}
\end{eqnarray}
The following shows that the existence of $E_{\beta}$ on $\mathbb{T}^{>,
\succ}$ is essentially equivalent to its existence on $\mathbb{T}_{\succ,
\beta}$.

\begin{proposition}
  \label{th-bijective-hyperlogarithm}{\tmem{{\cite[Corollary~7.24]{BvdHK:hyp}}}}
  Let $\tmmathbf{\mu} \leqslant \tmmathbf{\nu}$ and assume that for $\eta
  <\tmmathbf{\mu}$, the function $E_{\omega^{\eta}}$ is defined
  on~$\mathbb{T}_{\succ, \omega^{\eta}}$. Then each hyperlogarithm
  $L_{\omega^{\eta}}$ for $\eta <\tmmathbf{\mu}$ is bijective.
\end{proposition}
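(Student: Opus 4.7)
The plan is to establish injectivity and surjectivity of the extended hyperlogarithm $L_{\beta}\assign L_{\omega^{\eta}}:\mathbb{T}^{>,\succ}\longrightarrow\mathbb{T}^{>,\succ}$ separately, for each $\eta<\tmmathbf{\mu}$. Injectivity will be immediate: the excerpt already records, via {\cite[Corollary~4.17]{BvdHK:hyp}}, that this extended $L_{\beta}$ is strictly increasing on $\mathbb{T}^{>,\succ}$, hence injective.

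For surjectivity, fix $\eta<\tmmathbf{\mu}$, set $\beta\assign\omega^{\eta}$, and let $t\in\mathbb{T}^{>,\succ}$ be arbitrary; I must construct $s\in\mathbb{T}^{>,\succ}$ with $L_{\beta}s=t$. My first step is to replace $t$ by its $\beta$-truncation $\sharp_{\beta}(t)\in\mathbb{T}_{\succ,\beta}$, which is well-defined since the classes $\mathcal{L}_{\beta}[\cdot]$ partition $\mathbb{T}^{>,\succ}$. The standing hypothesis then supplies $\mathfrak{a}\assign E_{\beta}\sharp_{\beta}(t)\in\mathfrak{M}_{\beta}$ with $L_{\beta}\mathfrak{a}=\sharp_{\beta}(t)$. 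Since both $t$ and $L_{\beta}\mathfrak{a}$ lie in the common class $\mathcal{L}_{\beta}[t]$ and $E_{\beta}L_{\beta}\mathfrak{a}=\mathfrak{a}$ exists, the simplified description~(\ref{L-when-E}) furnishes $\gamma<\beta$ with $\theta\assign t-L_{\beta}\mathfrak{a}\prec 1/L_{\gamma}\mathfrak{a}$.

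It remains to absorb the perturbation $\theta$ into the argument, i.e.\ to find $\delta\in\mathbb{T}$ with $\delta\prec\mathfrak{a}$ and $L_{\beta}(\mathfrak{a}+\delta)=t$. By \textbf{HF3} applied to $\hl_{\beta}$ at $\mathfrak{a}$, this reduces to solving
\[ \sum_{k\geqslant 1}\frac{1}{k!}(\partial^{k}\hl_{\beta}\circ\mathfrak{a})\,\delta^{k}\;=\;\theta. \]
The leading coefficient $\partial\hl_{\beta}\circ\mathfrak{a}=(\prod_{\iota<\beta}L_{\iota}\mathfrak{a})^{-1}$ is nonzero, so I would formally invert it against $\theta$ and iterate to build $\delta$ as a well-based series; then $s\assign\mathfrak{a}+\delta$ would be the desired preimage. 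Uniqueness of the Taylor expansion, combined with strict monotonicity of the extended $L_{\beta}$, forces $s$ to be the unique element of $\mathcal{L}_{\beta}[\mathfrak{a}]$ mapping to $t$.

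The main obstacle will be this last iteration: producing the well-based $\delta$ via a Hahn-series implicit-function argument. One has to control the supports of the successive approximants so that the scheme stabilizes coefficientwise, using strong linearity of $f\longmapsto f\circ\mathfrak{a}$ (from \textbf{HF1}) together with the domination $\theta\prec 1/L_{\gamma}\mathfrak{a}$ obtained above to keep each iterate inside $\mathfrak{a}\mathbb{T}^{\prec}$. Once well-basedness of $\delta$ is secured, \textbf{HF3} certifies $L_{\beta}(\mathfrak{a}+\delta)=t$, completing surjectivity and hence bijectivity of $L_{\omega^{\eta}}$ for every $\eta<\tmmathbf{\mu}$.
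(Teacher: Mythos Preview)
The paper does not prove this proposition: it is quoted as {\cite[Corollary~7.24]{BvdHK:hyp}}, so there is no argument in the present paper to compare against. Your outline is the natural strategy and tracks the structure of the proof in~{\cite{BvdHK:hyp}}: reduce to an $L_{<\beta}$-atomic base point $\mathfrak{a}=E_{\beta}\sharp_{\beta}(t)$ via the standing hypothesis, then absorb the residual $\theta=t-L_{\beta}\mathfrak{a}$ by a Taylor-based perturbation using~\textbf{HF3}.

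That said, what you have written is a sketch rather than a proof: you explicitly flag ``the main obstacle'' (producing a well-based~$\delta$ by an implicit-function iteration) and then stop. Two concrete points would need to be addressed. First, the bound $\theta\prec(L_{\gamma}\mathfrak{a})^{-1}$ from~(\ref{L-when-E}) is not by itself strong enough to make the naive first-order iteration contract: for $\gamma\geqslant 1$ one has $(L_{\gamma}\mathfrak{a})^{-1}\succ\mathfrak{a}\cdot(\partial\hl_{\beta}\circ\mathfrak{a})$, so $\theta/(\partial\hl_{\beta}\circ\mathfrak{a})\prec\mathfrak{a}$ does not follow directly. The standard remedy is to pass to the coordinate $u=L_{\gamma}s$, i.e.\ to write $L_{\beta}=\hl_{\beta}^{\uparrow\gamma}\circ L_{\gamma}$ via~\textbf{HF2} and invert $\hl_{\beta}^{\uparrow\gamma}$ around $L_{\gamma}\mathfrak{a}$, where the required smallness of the perturbation is automatic. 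Second, the fixed-point scheme itself requires a genuine well-basedness argument (support control along a Neumann-type iteration using strong linearity from~\textbf{HF1}); this is precisely the technical content carried out in~{\cite{BvdHK:hyp}}, and you have identified it correctly but not supplied it.
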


If Proposition~\ref{th-bijective-hyperlogarithm} holds, then we say that
$\mathbb{T}$ is a (confluent) {\tmem{hyperserial field of force}}
$(\tmmathbf{\nu}, \tmmathbf{\mu})${\index{hyperserial field of force
$(\tmmathbf{\nu}, \tmmathbf{\mu})$}}. Since every function $L_{\gamma}, \gamma
< \omega^{\tmmathbf{\mu}}$ is then a strictly increasing bijection
$\mathbb{T}^{>, \succ} \longrightarrow \mathbb{T}^{>, \succ}$, we obtain
\begin{eqnarray}
  \mathcal{E}_{\lambda} [s] & = & \{ t \in \mathbb{T}^{>, \succ} \suchthat
  \exists \gamma < \lambda, \exists r_0, r_1 \in \mathbb{R}^{>}, E_{\gamma}
  (r_0 L_{\gamma} s) < t < E_{\gamma} (r_1 L_{\gamma} s) \}, 
  \label{exp-class-char}
\end{eqnarray}
for each ordinal $\lambda = \omega^{\iota}$ with $\iota \leqslant
\tmmathbf{\mu}$. By {\cite[Corollary~7.23]{BvdHK:hyp}}, for all $s \in
\mathbb{T}^{>, \succ}$, we have
\begin{eqnarray}
  E_{\beta} (\sharp_{\beta} (s)) & = & \mathfrak{d}_{\beta} (E_{\beta} s) . 
  \label{eq-hyperexp-proj}
\end{eqnarray}
\section{The transseries field
$\mathbf{No}$}\label{section-transserial-structure}

Recall that $\mathbf{No}$ is identified with the ordered field of well-based
series $\mathbb{R} [[\mathbf{Mo}]]$. In this section, we describe, in the
first level $\nu = 1$ of our hierarchy, the properties of $\mathbf{No}$
equipped with the Kruskal-Gonshor logarithm.

\subsection{Surreal exponentiation}

In {\cite[Chapter~10]{Gon86}}, Gonshor defines the exponential function $\exp
: \mathbf{No} \longrightarrow \mathbf{No}^{>}$, relying on partial Taylor sums
of the real exponential function. For $a \in \mathbf{No}$ and $n \in
\mathbb{N}$, write
\begin{eqnarray*}
  {}[a]_n & \assign & \sum_{k \leqslant n} \frac{a^k}{k!} .
\end{eqnarray*}
We then have the recursive definition for all numbers $a \in \mathbf{No}$:
\[\exp a \assign \left\{ \exp (a_L)  [a -
   a_L]_{\mathbb{N}}, \exp (a_R)  [a - a_R]_{2\mathbb{N}+ 1} | \frac{\exp
   a_R}{[a_R - a]_{\mathbb{N}}}, \frac{\exp a_L}{[a_L - a]_{2\mathbb{N}+ 1}}
   \right\} . \]
We will sometimes write $\mathe^a$ instead of $\exp a$. The function $\exp :
(\mathbf{No}, +, <) \longrightarrow (\mathbf{No}^{>}, \times, <)$ is an isomorphism {\cite[Corollary~10.1, Corollary~10.3]{Gon86}} which
satisfies:
\begin{itemizedot}
  \item $\exp$ coincides with the natural exponential on $\mathbb{R} \subseteq
  \mathbf{No}$ {\cite[Theorem~10.2]{Gon86}}.
  
  \item $\mathe^{\mathbf{No}_{\succ}} = \mathbf{Mo}$
  {\cite[Theorems~10.7,~10.8 and~10.9]{Gon86}}.
\end{itemizedot}
We define $\log : \mathbf{No}^{>} \longrightarrow \mathbf{No}$ to be the
functional inverse of $\exp$, and we set $L_1 \assign \log
\mathrel{\upharpoonleft} \mathbf{Mo}^{\succ}$. Given an ordinal $\alpha$, we
understand that $\omega^{\alpha}$ still stands for the $\alpha$-th ordinal
power of $\omega$ from section~\ref{ordinal-sec} and warn the reader that
$\omega^{\alpha}$ does not necessarily coincide with $\mathe^{\alpha \log
\omega}$.

Together, the above facts imply that $L_1$ satisfies the axioms
\hyperref[functional-eq-0]{$\mathbf{FE_0}$}, \hyperref[asymptotics-0]{$\mathbf{A_0}$}, \hyperref[monotonicity-0]{$\mathbf{M_0}$},
\hyperref[regularity-0]{$\mathbf{R_0}$} and \hyperref[surjective-log]{$\mathbf{SL}$}. Therefore, $(\mathbf{No}, L_1)$
is a hyperserial skeleton of force $1$. The extension of $L_1$ to
$\mathbf{No}^{>}$ from section~\ref{subsubsection-field-skeleton} coincides
with~$\log$. It was shown in {\cite{EvdD01}} that $(\mathbf{No}, +, \times, <,
\exp)$ is an elementary extension of~{$(\mathbb{R}, +, \times, <, \exp)$}. See
{\cite{MM17,Ber20,BKMM}} for more details on $\exp$ and $\log$.

\

\subsection{$\mathbf{No}$ as a transseries field}

Berarducci and Mantova identified the class $\mathbf{Mo}_{\omega}$ of
$\log$-atomic numbers as $\mathbf{Mo}_{\omega} = \mathbf{Smp}_{\mathcal{E}}$
{\cite[Corollary~5.17]{BM18}} and showed that $(\mathbf{No}, L_1)$ is
$1$-confluent {\cite[Corollary~5.11]{BM18}}. Thus $(\mathbf{No}, L_1)$ is a
confluent hyperserial skeleton of force $(1, 1)$. Thanks to
{\cite[Theorem~1.1]{BvdHK:hyp}}, it is therefore equipped with a composition
law $\mathbb{L}_{< \omega} \times \mathbf{No}^{>, \succ} \longrightarrow
\mathbf{No}$. See {\cite{Schm01,BM19}} for further details on extensions of
this composition law to exponential extensions of $\mathbb{L}_{< \omega}$.

Berarducci and Mantova also proved {\cite[Theorem 8.10]{BM18}} that
$\mathbf{No}$ is a field of transseries in the sense of
{\cite{vdH:phd,Schm01}}. In other words $(\mathbf{No}, L_1)$ satisfies the axiom
$\textbf{T4}$\label{autolab52} of {\cite[Definition~2.2.1]{Schm01}}. We plan
to prove in subsequent work that $(\mathbf{No}, (L_{\omega^{\mu}})_{\mu \in
\mathbf{On}})$ satisfies a generalized version of $\textbf{T4}$.

\section{Hyperserial structure on
$\mathbf{No}$}\label{section-hyperserial-structure}

We have seen in section~\ref{section-transserial-structure} that
$(\mathbf{No}, L_1)$ is a confluent hyperserial skeleton of force $(\nu, \nu)$
for~{$\nu = 1$}. The aim of this section is to extend this result to any
ordinal $\nu$. More precisely, we will define a sequence
$(L_{\omega^{\mu}})_{\mu \in \mathbf{On}}$ of partial functions on
$\mathbf{No}$ such that for each ordinal $\nu$, the structure~{$(\mathbf{No},
(L_{\omega^{\mu}})_{\mu < \nu})$} is a confluent hyperserial skeleton of force
$(\nu, \nu)$, and $L_1$ coincides with Gonshor's logarithm.

\subsection{Remarkable group actions on
$\mathbf{No}$}\label{rem-subclasses}\label{subsection-remarkable-group-actions}

Assume for the moment that we can define $L_{\gamma}$ and $E_{\gamma}$ as
bijective strictly increasing functions on~$\mathbf{No}^{>, \succ}$ for all
ordinals $\gamma$. This is the case already for $\gamma < \omega$. Let us
introduce several useful groups that act on $\mathbf{No}$, as well as several
remarkable subclasses of $\mathbf{No}$.

Given an ordinal $\nu$, we write $\alpha = \omega^{\nu}$ and we consider the
function groups\label{autolab53} \label{autolab54}
\begin{eqnarray*}
  \mathcal{E}_{\alpha}' & = & \langle E_{\gamma} H_r L_{\gamma} \suchthat
  \gamma < \alpha \wedge r \in \mathbb{R}^{>} \rangle\\
  \mathcal{E}_{\alpha}^{\ast} & = & \langle E_{\gamma}, P_r \suchthat \gamma <
  \alpha \wedge r \in \mathbb{R}^{>} \rangle .
\end{eqnarray*}
where $E_{\gamma}$, $H_s, P_s$ and $L_{\gamma}$ act on $\mathbf{No}^{>,
\succ}$. We also define\label{autolab55} \label{autolab56}
\begin{eqnarray*}
  \mathcal{L}_{\alpha}' & = & L_{\alpha} \mathcal{E}_{\alpha}' E_{\alpha}\\
  \mathcal{L}_{\alpha}^{\ast} & = & L_{\alpha} \mathcal{E}_{\alpha}^{\ast}
  E_{\alpha} .
\end{eqnarray*}
We write $L_{< \lambda} \assign \{ L_{\gamma} \suchthat \gamma < \lambda \}$
and $E_{< \lambda} \assign \{ E_{\gamma} : \gamma < \lambda \}$ for each
$\lambda \leqslant \alpha$. In the case when $\alpha = 1$, note that
\begin{eqnarray*}
  \mathcal{E}_1' & = & \mathcal{H}\\
  \mathcal{E}_1^{\ast} & = & \mathcal{P}\\
  \mathcal{L}_1' & = & \mathcal{T}\\
  \mathcal{L}_1^{\ast} & = & \mathcal{H}.
\end{eqnarray*}
By Proposition~\ref{prop-thin-substructure} and the fact the set-sized
function groups $\mathcal{E}_{\alpha}'$, $\mathcal{E}_{\alpha}^{\ast}$,
$\mathcal{L}_{\alpha}'$, and $\mathcal{L}_{\alpha}^{\ast}$ induce thin
partitions of $\mathbf{No}^{>, \succ}$, we may define the following surreal
substructures\label{autolab57} \label{autolab58} \label{autolab59}
\label{autolab60}
\begin{eqnarray*}
  \mathbf{Mo}_{\alpha}' & \assign & \mathbf{Smp}_{\mathcal{E}_{\alpha}'}\\
  \mathbf{Mo}_{\alpha}^{\ast} & \assign &
  \mathbf{Smp}_{\mathcal{E}_{\alpha}^{\ast}}\\
  \mathbf{Tr}_{\alpha} & \assign & \mathbf{Smp}_{\mathcal{L}_{\alpha}'}\\
  \mathbf{Tr}_{\alpha}^{\ast} & \assign &
  \mathbf{Smp}_{\mathcal{L}_{\alpha}^{\ast}} .
\end{eqnarray*}
Here we note that $\mathbf{Mo}_1'$ corresponds to the class
$\mathbf{Mo}^{\succ} = \mathbf{Mo}_1$ of infinite monomials in $\mathbf{No}$
and~$\pi_{\mathcal{E}_1}'$ maps positive infinite numbers to their dominant
monomial. Similarly, $\mathbf{Tr}_1$ coincides with~$\textbf{}
\mathbf{No}_{\succ}^{>}$ and $\pi_{\mathcal{L}_1'}$ maps $a \in
\mathbf{No}^{>, \succ}$ to $a_{\succ}$. In
sections~\ref{section-hyperserial-structure}
and~\ref{section-useful-identities}, we will prove the following identities, where $r$ ranges among real numbers.
\begin{align*}
  \mathbf{Mo}_{\alpha}' & =  \mathbf{Mo}_{\alpha} &&
  \text{[Proposition~\ref{prop-I3}]}\\
  \pi_{\mathcal{E}_{\alpha}'} & =  \mathfrak{d}_{\alpha} &&
  \text{[Proposition~\ref{prop-I3}]}\\
  \mathbf{Tr}_{\alpha} & = \mathbf{No}_{\succ, \alpha} = L_{\alpha} 
  \mathbf{Mo}_{\alpha} &&
  \text{[Proposition~\ref{identification-prop}]}\\
  \pi_{\mathcal{L}_{\alpha}'} & = \sharp_{\alpha} &&
  \text{[Proposition~\ref{identification-prop}]}\\
  \mathbf{Tr}_{\alpha}^{\ast} & = \mathbf{Tr}_{\alpha} \quad
  \text{if $\nu$ is a limit
  ordinal} && \text{[Lemma~\ref{lem-limit-hypermon}]}\\
  \mathbf{Tr}_{\alpha}^{\ast} & = \mathbf{No}_{\succ}^{>} \quad
  \text{if $\nu$ is a successor
  ordinal} && \text{[Lemma~\ref{lem-Tr'-successor-identity}]}\\
  \Xi_{\mathbf{No}_{\succ, \alpha}} T_r & = T_r
  \Xi_{\mathbf{No}_{\succ, \alpha}} \quad \text{if $\nu$ is a successor
  ordinal} && \text{[Lemma~\ref{lem-Tr'-translation-invariant}]}\\
  \Xi_{\mathbf{Mo}_{\alpha}} T_r & = E_{\alpha}
  T_r L_{\alpha} \Xi_{\mathbf{Mo}_{\alpha}} \quad \text{if $\nu$ is a
  successor ordinal} && \text{[Proposition~\ref{prop-hyperexp-hypermon-identity}]}\\
  \mathbf{Mo}_{\alpha}^{\ast} & = \mathbf{Mo}_{\alpha} \mathbin{\Yleft}
  \mathbf{No}_{\succ} \quad \text{if $\nu$ is a successor ordinal} && \text{[Proposition~\ref{prop-hyperexp-J-right-factor}]}\\
  \mathbf{Mo}_{\alpha}^{\ast} & = E_{\alpha}  \mathbf{Tr}_{\alpha}^{\ast} .
  && \text{[Proposition~\ref{prop-hyperexp-star-relation}]}
\end{align*}
The first and third identities imply in particular that the classes
$\mathbf{Mo}_{\alpha}'$ and $\mathbf{No}_{\succ, \alpha}$, as defined in
section~\ref{subsection-hyperserial-fields} when seeing $\mathbf{No}$ as a
hyperserial field, are in fact surreal substructures.

\subsection{Inductive setting}\label{subsection-inductive-setting}

For the definition of the partial hyperlogarithm $L_{\omega^{\mu}}$, we will
proceed by induction on $\mu$. Let $\mu$ be an ordinal. Until the end of this
section we make the following induction hypotheses:

\begin{tmframed}
  {\tmstrong{\begin{center}
    Induction hypotheses
  \end{center}}}
  
   \begin{descriptionaligned}
    \item[\tmem{$\mathbf{I}_{1, \mu}$}] \label{ind-ax1}For $\eta < \mu$, the
    partial hyperlogarithm $L_{\omega^{\eta}}$ is defined; we have $L_1 = \log
    \upharpoonleft \mathbf{Mo}^{\succ}$ and~{$(\mathbf{No},
    (L_{\omega^{\eta}})_{\eta < \mu})$} is a confluent hyperserial skeleton of
    force $(\mu, \mu)$.
    
    \item[\tmem{$\mathbf{I}_{2, \mu}$}] \label{ind-ax2}For $r, s \in
    \mathbb{R}$ with $1 < s$ and for $\gamma, \rho < \omega^{\mu}$ with
    $\gamma < \rho$, we have
    \[ \forall a \in \mathbf{No}^{>, \succ}, \quad E_{\gamma}  (rL_{\gamma} a)
       < E_{\rho} (sL_{\rho} a) . \]
    \item[\tmem{$\mathbf{I}_{3, \mu}$}] \label{ind-ax3}For $\eta \leqslant
    \mu$, the class $\mathbf{Mo}_{\omega^{\eta}}'$ is that of $L_{<
    \omega^{\eta}}$-atomic surreal numbers, i.e. $\mathbf{Mo}_{\omega^{\eta}}'
    = \mathbf{Mo}_{\omega^{\eta}}$.
  \end{descriptionaligned}
\end{tmframed}

These induction hypotheses require a few additional explanations. Assuming
that \hyperref[ind-ax1]{$\mathbf{I_{1,\mu}}$} holds, the partial functions $L_{\omega^{\eta}}$ with $\eta
< \mu$ extend into strictly increasing bijections $L_{\omega^{\eta}} :
\mathbf{No}^{>, \succ} \longrightarrow \mathbf{No}^{>, \succ}$, by the results
from section~\ref{subsection-hyperserial-fields}.
Using~(\ref{eq-Cantor-form-composition}), this allows us to define a strictly
increasing bijection $L_{\gamma} : \mathbf{No}^{>, \succ} \longrightarrow
\mathbf{No}^{>, \succ}$ for any $\gamma < \mu$ and we denote by $E_{\gamma}$
its functional inverse. In particular, this ensures that the hypotheses
\hyperref[ind-ax2]{$\mathbf{I_{2,\mu}}$} and~\hyperref[ind-ax3]{$\mathbf{I_{3,\mu}}$} make sense.

\begin{remark}
  In addition to the above induction hypotheses, we will implicitly assume
  that our hyperlogarithms $L_{\omega^{\eta}}$ for $\eta < \mu$ are always
  defined by (\ref{eq-poor-hyperlog}) below. In particular, our construction
  of $L_{\omega^{\mu}}$ is {\tmem{not}} relative to any potential construction
  of the preceding hyperlogarithms~$L_{\omega^{\eta}}$ with~$\eta < \mu$ that
  would satify the induction hypotheses \hyperref[ind-ax1]{$\mathbf{I_{1,\mu}}$}, \hyperref[ind-ax2]{$\mathbf{I_{2,\mu}}$},
  and~\hyperref[ind-ax3]{$\mathbf{I_{3,\mu}}$}. Instead, we define {\tmem{one}} specific family of
  functions $(L_{\omega^{\eta}})_{\eta \in \mathbf{On}}$ that satisfy our
  requirements, as well as the additional identities listed in
  subsection~\ref{subsection-remarkable-group-actions}.
\end{remark}

\begin{proposition}
  \label{prop-initial-case}The axioms  $\mathbf{I}_{1, 1}$, $\mathbf{I}_{2,
  1}$ and $\mathbf{I}_{3, 1}$ hold for $(\mathbf{No}, L_1)$.
\end{proposition}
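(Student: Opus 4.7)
The plan is to verify the three induction hypotheses in turn. Axioms $\mathbf{I}_{1, 1}$ and $\mathbf{I}_{3, 1}$ will be essentially immediate from material already collected in Section~\ref{section-transserial-structure} and Section~\ref{subsection-remarkable-group-actions}, while $\mathbf{I}_{2, 1}$ requires a short induction on the gap $\rho - \gamma$.

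For $\mathbf{I}_{1, 1}$, the only nonvacuous statement is that $(\mathbf{No}, L_1)$ is a confluent hyperserial skeleton of force $(1, 1)$. The axioms $\mathbf{FE}_0$, $\mathbf{A}_0$, $\mathbf{M}_0$, $\mathbf{R}_0$ and $\mathbf{SL}$ for Gonshor's logarithm are established in~\cite{Gon86} and were recalled in Section~\ref{section-transserial-structure}; $1$-confluence is due to Berarducci and Mantova \cite[Corollary~5.11]{BM18}; and since $E_1 = \exp$ is defined on all of $\mathbf{No}$, Proposition~\ref{th-bijective-hyperlogarithm} upgrades the force to $(1, 1)$. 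For $\mathbf{I}_{3, 1}$, only the cases $\eta = 0$ and $\eta = 1$ arise. When $\eta = 0$, the function group $\mathcal{E}_1' = \langle E_0 H_r L_0 : r \in \mathbb{R}^{>}\rangle$ reduces to $\mathcal{H}$, so $\mathbf{Mo}_1' = \mathbf{Smp}_{\mathcal{H}} = \mathbf{Mo}^{\succ}$, which is precisely the class $\mathbf{Mo}_1$ of $L_{<1}$-atomic surreal numbers. When $\eta = 1$, we have $\mathcal{E}_{\omega}' = \langle E_n H_r L_n : n \in \mathbb{N}, r \in \mathbb{R}^{>}\rangle = \mathcal{E}'$, and $\mathbf{Mo}_\omega' = \mathbf{Smp}_{\mathcal{E}'} = \mathbf{Mo}_\omega$ is exactly the Berarducci--Mantova identification \cite[Corollary~5.17]{BM18}.

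The substantive task is $\mathbf{I}_{2, 1}$: for any $r \in \mathbb{R}$, $s > 1$, integers $\gamma < \rho < \omega$, and $a \in \mathbf{No}^{>, \succ}$, we must show $E_\gamma(r L_\gamma a) < E_\rho(s L_\rho a)$. First apply the strictly increasing bijection $L_\gamma$ to both sides; using $L_\rho = L_{\rho - \gamma} \circ L_\gamma$ and hence $E_\rho = E_\gamma \circ E_{\rho - \gamma}$, the desired inequality reduces to $r c < E_n(s L_n c)$, where $c \assign L_\gamma a \in \mathbf{No}^{>, \succ}$ and $n \assign \rho - \gamma \geq 1$. I would prove this by induction on $n$. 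For $n = 1$, $E_1(s L_1 c) = c^s$; since $c$ exceeds every real and $s - 1 > 0$, the series $c^{s - 1} = \exp((s - 1) \log c)$ is again positive infinite, and hence $c^s = c \cdot c^{s - 1} > r c$ for every real $r$. For $n \geq 2$, write $E_n(s L_n c) = \exp(E_{n - 1}(s L_{n - 1}(L_1 c)))$; applying the inductive hypothesis with $c$ replaced by $L_1 c \in \mathbf{No}^{>, \succ}$ and $r$ replaced by $2$ yields $E_{n - 1}(s L_{n - 1}(L_1 c)) > 2 \log c$, whence $E_n(s L_n c) > c^2 > r c$.

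The only mild subtlety is bookkeeping: one must track that each $L_\gamma$ restricts to a bijection $\mathbf{No}^{>, \succ} \longrightarrow \mathbf{No}^{>, \succ}$, which is ensured by the force $(1, 1)$ in $\mathbf{I}_{1, 1}$ together with the observation that $\log$ sends any surreal above every real to another surreal above every real. No genuine obstacle is anticipated.
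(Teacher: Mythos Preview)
Your proof is correct and follows essentially the same route as the paper. For $\mathbf{I}_{1,1}$ and $\mathbf{I}_{3,1}$ you invoke the same results from Section~\ref{section-transserial-structure} and~\cite{BM18} (you are slightly more thorough in also spelling out the trivial $\eta=0$ case of $\mathbf{I}_{3,1}$). For $\mathbf{I}_{2,1}$, the paper argues via the function-group inequality $H_r = E_1 T_{\log r} L_1 < E_1 H_s L_1$ and then conjugates and chains to obtain $E_n H_r L_n < E_{n+1} H_s L_{n+1}$ for all $n$; your reduction by applying $L_\gamma$ and your induction on $n=\rho-\gamma$ amount to the same argument unpacked pointwise, with the same base case $rc < c^s$.
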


\begin{proof}
  Section~\ref{section-transserial-structure} shows that $\mathbf{I}_{1, 1}$
  holds. Consider $r, s \in \mathbb{R}^{>}$ with $s > 1$. On $\mathbf{No}^{>,
  \succ}$, we have $T_{\log r} < H_s$, hence $H_r = E_1 T_{\log r} L_1 < E_1
  H_s L_1$. It follows that we have $E_n H_r L_n < E_{n + 1} H_s L_{n + 1}$ on
  $\mathbf{No}^{>, \succ}$ for all $n \in \mathbb{N}$. This implies that
  $\mathbf{I}_{2, 1}$ holds. Finally, $\mathbf{I}_{3, 1}$ is valid because of
  the relation~$\mathbf{Mo}_{\omega} = \mathbf{Smp}_{\mathcal{E}}$.
\end{proof}

\begin{proposition}
  \label{prop-limit-case}Let $\nu$ be a limit ordinal and assume that
  \hyperref[ind-ax1]{$\mathbf{I_{1,\mu}}$}, \hyperref[ind-ax2]{$\mathbf{I_{2,\mu}}$}, and~\hyperref[ind-ax3]{$\mathbf{I_{3,\mu}}$} hold for all $\mu < \nu$.
  Then $\mathbf{I}_{1, \nu}$, $\mathbf{I}_{2, \nu}$, and $\mathbf{I}_{3, \nu}$
  hold.
\end{proposition}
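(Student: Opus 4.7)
The plan is to establish the three hypotheses in order of increasing depth, exploiting throughout that since $\nu$ is a limit ordinal, any ordinal below $\omega^{\nu}$ lies below $\omega^{\mu}$ for some $\mu < \nu$, so that finitary data about ordinals or functions at level $\nu$ can always be captured at some level $\mu < \nu$.

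The axiom $\mathbf{I}_{2, \nu}$ is essentially immediate: given $\gamma, \rho < \omega^{\nu}$ with $\gamma < \rho$ and real $r, s$ with $1 < s$, pick $\mu < \nu$ with $\rho < \omega^{\mu}$ and apply $\mathbf{I}_{2, \mu}$. For $\mathbf{I}_{3, \nu}$ I would show $\mathbf{Mo}_{\omega^{\nu}}' = \bigcap_{\mu < \nu} \mathbf{Mo}_{\omega^{\mu}}'$; combined with $\mathbf{DD}_{\nu}$ and $\mathbf{I}_{3, \mu}$ for $\mu < \nu$, the right-hand side equals $\bigcap_{\mu < \nu} \mathbf{Mo}_{\omega^{\mu}} = \mathbf{Mo}_{\omega^{\nu}}$. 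The key observation is that any $f \in \mathcal{E}_{\omega^{\nu}}'$ is a finite composition of generators $E_{\gamma} H_r L_{\gamma}$ with $\gamma < \omega^{\nu}$ and $r \in \mathbb{R}^{>}$, and the finitely many indices $\gamma$ involved all lie below $\omega^{\mu}$ for some $\mu < \nu$; hence $\mathcal{E}_{\omega^{\nu}}' = \bigcup_{\mu < \nu} \mathcal{E}_{\omega^{\mu}}'$ as a directed union, and an element $a$ is $\sqsubseteq$-minimal in its $\mathcal{E}_{\omega^{\nu}}'$-convex class precisely when it is $\sqsubseteq$-minimal in every $\mathcal{E}_{\omega^{\mu}}'$-convex class.

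The main work lies in $\mathbf{I}_{1, \nu}$. The skeleton axioms $\mathbf{DD}_{\mu}, \mathbf{FE}_{\mu}, \mathbf{A}_{\mu}, \mathbf{M}_{\mu}, \mathbf{R}_{\mu}$ for $\mu < \nu$ are directly inherited from $\mathbf{I}_{1, \mu + 1}$; the axiom $\mathbf{P}_{\nu}$ is automatic from $\mathbf{SL}$ and $\mathbf{R}_0$, as noted in the text; and bijectivity of each $L_{\omega^{\eta}}$ with $\eta < \nu$ is inherited from $\mathbf{I}_{1, \eta + 1}$. It remains to establish $\nu$-confluence. Given $s \in \mathbf{No}^{>, \succ}$, I set $\mathfrak{a} \assign \pi_{\mathcal{E}_{\omega^{\nu}}'} (s)$. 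By $\mathbf{I}_{3, \nu}$, just established, $\mathfrak{a}$ lies in $\mathbf{Mo}_{\omega^{\nu}}' = \mathbf{Mo}_{\omega^{\nu}}$ and is therefore $L_{< \omega^{\nu}}$-atomic. To verify $\mathfrak{a} \in \mathcal{E}_{\omega^{\nu}} [s]$, I choose $f, g \in \mathcal{E}_{\omega^{\nu}}'$ with $f s \leqslant \mathfrak{a} \leqslant g s$ and select $\mu < \nu$ such that $f, g \in \mathcal{E}_{\omega^{\mu}}'$. I then argue by induction on word length that for every $h \in \mathcal{E}_{\omega^{\mu}}'$, the element $h s$ belongs to $\mathcal{E}_{\omega^{\mu}} [s] = \{ t \in \mathbf{No}^{>, \succ} \suchthat \exists \gamma < \omega^{\mu}, L_{\gamma} t \asymp L_{\gamma} s \}$: on a single generator $E_{\gamma} H_r L_{\gamma}$ we have $L_{\gamma} (h s) = r L_{\gamma} s \asymp L_{\gamma} s$, and the inductive step follows from the transitivity of $\mathcal{E}_{\omega^{\mu}}$-equivalence, which holds because the classes $\mathcal{E}_{\omega^{\mu}} [t]$ partition $\mathbf{No}^{>, \succ}$. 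Convexity then yields $\mathfrak{a} \in \mathcal{E}_{\omega^{\mu}} [s] \subseteq \mathcal{E}_{\omega^{\nu}} [s]$, which completes the confluence step.

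The main obstacle is this last verification of $\mathcal{E}_{\omega^{\mu}}' [s] \subseteq \mathcal{E}_{\omega^{\mu}} [s]$: although the computation for a single generator is immediate, the induction on word length relies on the transitivity supplied by the $\mu$-confluence at lower levels and the partition characterization of $\mathcal{E}_{\omega^{\mu}}$-classes recalled after Definition~\ref{def-confluence}.
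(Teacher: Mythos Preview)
Your proof is correct and follows essentially the same route as the paper: reduce $\mathbf{I}_{2,\nu}$ to lower levels, obtain $\mathbf{I}_{3,\nu}$ via $\mathbf{Mo}_{\omega^{\nu}}' = \bigcap_{\mu<\nu}\mathbf{Mo}_{\omega^{\mu}}' = \bigcap_{\mu<\nu}\mathbf{Mo}_{\omega^{\mu}} = \mathbf{Mo}_{\omega^{\nu}}$, and for $\mathbf{I}_{1,\nu}$ take $\mathfrak{a}=\pi_{\mathcal{E}_{\omega^{\nu}}'}(s)\in\mathbf{Mo}_{\omega^{\nu}}$ and check it lies in $\mathcal{E}_{\omega^{\nu}}[s]$. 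The only cosmetic differences are that the paper cites {\cite[Proposition~6.28]{BvdH19}} for the intersection identity where you argue it directly from the directed union $\mathcal{E}_{\omega^{\nu}}'=\bigcup_{\mu<\nu}\mathcal{E}_{\omega^{\mu}}'$, and in the confluence step the paper invokes $\mathbf{I}_{2,\nu}$ to squeeze $\mathfrak{a}$ between two single generators $E_{\beta}H_{1/2}L_{\beta}\,a$ and $E_{\beta}H_{2}L_{\beta}\,a$ (so that $L_{\beta}\mathfrak{a}\asymp L_{\beta}a$ is immediate), whereas you bypass $\mathbf{I}_{2,\nu}$ and instead use transitivity of the $\mathcal{E}_{\omega^{\mu}}$-partition to absorb compositions of generators.
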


\begin{proof}
  The statement $\mathbf{I}_{2, \nu}$ follows immediately by induction.
  Towards $\mathbf{I}_{3, \nu}$, note that we have $\mathbf{Mo}_{\alpha} =
  \bigcap_{\eta < \nu} \mathbf{Mo}_{\omega^{\eta}} = \bigcap_{\eta < \nu}
  \mathbf{Mo}_{\omega^{\eta}}'$ by $\mathbf{I}_{1, \eta}$ (and thus
  $\mathbf{DD}_{\eta}$) and $\mathbf{I}_{3, \eta}$ for all $\eta < \nu$. By
  {\cite[Proposition~6.28]{BvdH19}}, we have $\mathbf{Mo}_{\alpha}' =
  \bigcap_{\eta < \nu} \mathbf{Mo}_{\omega^{\eta}}' = \mathbf{Mo}_{\alpha}$.
  So $\mathbf{I}_{3, \nu}$ holds.
  
  By $\mathbf{I}_{1, \eta}$ for all $\eta < \nu$, we need only justify that
  $(\mathbf{No}, (L_{\omega^{\eta}})_{\eta < \nu})$ is $\nu$-confluent to
  deduce that $\mathbf{I}_{1, \nu}$ holds. For $a \in \mathbf{No}^{>, \succ}$,
  by $\mathbf{I}_{2, \nu}$, there are a $\mathfrak{a} \in
  \mathbf{Mo}_{\alpha}' = \mathbf{Mo}_{\alpha}$ and a $\beta \assign
  \omega^{\eta} < \alpha$ with $E_{\beta} \left( 1 / 2 L_{\beta} a \right)
  \leqslant \mathfrak{a} \leqslant E_{\beta} (2 L_{\beta} a)$. We deduce that
  $L_{\beta} a \asymp L_{\beta} \mathfrak{a}$, thus $\mathfrak{a} \in
  \mathcal{E}_{\beta} [a]$. This concludes the proof.
\end{proof}

From now on, we assume that \hyperref[ind-ax1]{$\mathbf{I_{1,\mu}}$}, \hyperref[ind-ax2]{$\mathbf{I_{2,\mu}}$}, and~\hyperref[ind-ax3]{$\mathbf{I_{3,\mu}}$}
are satisfied for $\mu \geqslant 1$ and we define
\begin{eqnarray*}
  \nu & \assign & \mu + 1\\
  \alpha & \assign & \omega^{\nu}\\
  \beta & \assign & \omega^{\mu} .
\end{eqnarray*}
The remainder of the section is dedicated to the definition of $L_{\beta}$ and
the proof of the inductive hypotheses {\tmstrong{I}}\tmrsub{$1, \nu$},
{\tmstrong{I}}\tmrsub{$2, \nu$}, and~{\tmstrong{I}}\tmrsub{$3, \nu$} for
$\nu$. In combination with Propositions~\ref{prop-initial-case}
and~\ref{subsection-defining-hyperlogarithms}, this will complete our
induction and the proof of Theorem~\ref{th-confluent-hyperserial-field}.

\subsection{Defining the
hyperlogarithm}\label{subsection-defining-hyperlogarithms}

Recall that we have $\mathbf{Mo}_{\beta}' = \mathbf{Mo}_{\beta}$ by
\hyperref[ind-ax3]{$\mathbf{I_{3,\mu}}$}. In particular $\mathbf{Mo}_{\beta}$ is a surreal substructure.
Consider a~$\eta < \nu$. The skeleton $(\mathbf{No},
(L_{\omega^{\iota}})_{\iota < \eta})$ is a confluent hyperserial skeleton of
force $(\eta, \eta)$ by \hyperref[ind-ax1]{$\mathbf{I_{1,\mu}}$}. So for $a \in \mathbf{No}^{>, \succ}$,
(\ref{exp-class-char}) and \hyperref[ind-ax2]{$\mathbf{I_{2,\mu}}$} yield $\mathcal{E}_{\omega^{\eta}}
[a] =\mathcal{E}_{\omega^{\eta}}' [a]$.

In view of \hyperref[asymptotics]{$\mathbf{A_{\mu}}$} and \hyperref[monotonicity]{$\mathbf{M_{\mu}}$}, the simplest way to
define $L_{\beta}$ is {\tmem{via}} the cut equation:
\begin{equation}
  \forall \mathfrak{a} \in \mathbf{Mo}_{\beta}, \quad L_{\beta} \mathfrak{a}
  \assign \left\{ \mathbb{R}, L_{\beta} \mathfrak{a}' + \frac{1}{L_{< \beta}
  \mathfrak{a}'} : \mathfrak{a}' \in
  \mathfrak{a}_L^{\smash{\mathbf{Mo}_{\beta}}} | L_{\beta}
  \mathfrak{a}_R^{\smash{\mathbf{Mo}_{\beta}}} - \frac{1}{L_{< \beta}
  \mathfrak{a}}, L_{< \beta} \mathfrak{a} \right\} . \label{eq-poor-hyperlog}
\end{equation}
Note the asymmetry between left and right options $L_{\beta} \mathfrak{a}' +
(L_{< \beta} \mathfrak{a}')^{- 1}$ and $L_{\beta} \mathfrak{a}'' - (L_{<
\beta} \mathfrak{a})^{- 1}$ (instead of $L_{\beta} \mathfrak{a}'' - (L_{<
\beta} \mathfrak{a}'')^{- 1}$) for generic $\mathfrak{a}' \in
\mathfrak{a}_L^{\smash{\mathbf{Mo}_{\beta}}}$ and $\mathfrak{a}'' \in
\mathfrak{a}_R^{\smash{\mathbf{Mo}_{\beta}}}$. In Corollary~\ref{cor-true-def}
below, we will derive a more symmetric but equivalent cut equation
for~$L_{\beta}$, as promised in the introduction. For now, we prove that
(\ref{eq-poor-hyperlog}) is warranted and that \hyperref[asymptotics]{$\mathbf{A_{\mu}}$}, \hyperref[monotonicity]{$\mathbf{M_{\mu}}$}, and \hyperref[regularity]{$\mathbf{R_{\mu}}$} hold.

\begin{proposition}
  \label{prop-good-hyperlog-def}The function $L_{\beta}$ is well-defined on
  $\mathbf{Mo}_{\beta}$ and, for $\mathfrak{a} \in \mathbf{Mo}_{\beta}$, we
  have
  
  $\mathbf{H}_{\mathfrak{a}} : \quad \left( \forall \mathfrak{a}' \in
  \mathfrak{a}_L^{\smash{\mathbf{Mo}_{\beta}}}, L_{\beta} \mathfrak{a}' +
  \frac{1}{L_{< \beta} \mathfrak{a}'} < L_{\beta} \mathfrak{a}- \frac{1}{L_{<
  \beta} \mathfrak{a}} \right)$ \text{ and } 
  
  $\left( \forall \mathfrak{a}'' \in
  \mathfrak{a}_R^{\smash{\mathbf{Mo}_{\beta}}}, L_{\beta} \mathfrak{a}+
  \frac{1}{L_{< \beta} \mathfrak{a}} < L_{\beta} \mathfrak{a}'' -
  \frac{1}{L_{< \beta} \mathfrak{a}''} \right)$.
\end{proposition}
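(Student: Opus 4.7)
The plan is to proceed by $\sqsubseteq$-induction on $\mathfrak{a} \in \mathbf{Mo}_\beta$. Fix $\mathfrak{a}$ and assume that $L_\beta\mathfrak{b}$ has been defined by (\ref{eq-poor-hyperlog}) and that $\mathbf{H}_\mathfrak{b}$ holds for every $\mathfrak{b} \in \mathfrak{a}_{\sqsubset}^{\mathbf{Mo}_\beta}$. I must show, first, that (\ref{eq-poor-hyperlog}) is a valid cut representation (every left option strictly below every right option), and second, that $\mathbf{H}_\mathfrak{a}$ itself holds. Note that the latter is in fact stronger than the former: once $\mathbf{H}_\mathfrak{a}$ is established for the $\mathfrak{a}',\mathfrak{a}''$ considered, the mixed left-right comparison between a generic left option and a generic right option comes for free by transitivity through $L_\beta\mathfrak{a}$ once it is known to exist.

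For well-definedness, the comparisons that involve a real number on one side are easy: since $\mathfrak{a} \in \mathbf{Mo}_\beta \subseteq \mathbf{No}^{>,\succ}$, the axiom \hyperref[asymptotics]{$\mathbf{A_\mu}$} for the already-constructed skeleton gives $L_\gamma\mathfrak{a} > \mathbb{R}$ for all $\gamma < \beta$, and the induction hypothesis applied to $\mathfrak{a}''$ guarantees $L_\beta\mathfrak{a}'' > \mathbb{R}$ (as $\mathbb{R}$ appears as a left option in its own cut), so $L_\beta\mathfrak{a}'' - (L_\delta\mathfrak{a})^{-1} > \mathbb{R}$ since $(L_\delta\mathfrak{a})^{-1}$ is a positive infinitesimal. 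For the remaining comparisons between $L_\beta\mathfrak{a}' + (L_\delta\mathfrak{a}')^{-1}$ on the left and either $L_\beta\mathfrak{a}'' - (L_\rho\mathfrak{a})^{-1}$ or $L_\gamma\mathfrak{a}$ on the right, I would pass to a $\sqsubseteq$-simplest common ancestor $\mathfrak{c}\in\mathbf{Mo}_\beta$ of $\mathfrak{a}',\mathfrak{a}''$ (respectively $\mathfrak{a}',\mathfrak{a}$), strictly simpler than both, to which $\mathbf{H}_\mathfrak{c}$ applies and yields the desired separation.

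For $\mathbf{H}_\mathfrak{a}$ itself, the key ingredient is the identification $\mathbf{Mo}_\beta = \mathbf{Smp}_{\mathcal{E}_\beta'}$ coming from \hyperref[ind-ax3]{$\mathbf{I_{3,\mu}}$}, combined with \hyperref[ind-ax2]{$\mathbf{I_{2,\mu}}$}. Indeed, if $\mathfrak{a}',\mathfrak{a}\in\mathbf{Mo}_\beta$ with $\mathfrak{a}'<\mathfrak{a}$, then $\mathfrak{a}'$ and $\mathfrak{a}$ are the simplest elements of two distinct $\mathcal{E}_\beta'$-classes, so $\mathfrak{a} > E_\gamma(rL_\gamma\mathfrak{a}')$ for every $\gamma<\beta$ and every real $r>0$. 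Applying the strictly increasing function $L_\gamma$ yields $L_\gamma\mathfrak{a} > rL_\gamma\mathfrak{a}'$, i.e.\ $L_\gamma\mathfrak{a}/L_\gamma\mathfrak{a}' > \mathbb{R}$, and by \hyperref[ind-ax2]{$\mathbf{I_{2,\mu}}$} the same separation persists when $\gamma$ is replaced by $\delta<\beta$. Since by construction $L_\beta\mathfrak{a}' < L_\gamma\mathfrak{a}'$ for every $\gamma<\beta$, I can estimate $L_\beta\mathfrak{a} - L_\beta\mathfrak{a}'$ from below by a quantity that dominates both $(L_\delta\mathfrak{a}')^{-1}$ and $(L_\rho\mathfrak{a})^{-1}$ for any $\delta,\rho<\beta$, yielding the first half of $\mathbf{H}_\mathfrak{a}$. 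The right half, involving $\mathfrak{a}''$, follows symmetrically, using that the right options contain $L_{<\beta}\mathfrak{a}$ (hence $L_\beta\mathfrak{a} < L_\gamma\mathfrak{a}$) and a dual asymptotic separation between $\mathfrak{a}$ and $\mathfrak{a}''$.

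The principal obstacle is the asymmetry of (\ref{eq-poor-hyperlog}): the right options employ the correction $(L_{<\beta}\mathfrak{a})^{-1}$ rather than $(L_{<\beta}\mathfrak{a}'')^{-1}$, so $\mathbf{H}_\mathfrak{a}$ is not self-dual and the second half cannot be reduced to the first by symmetry alone. Overcoming this requires genuinely exploiting confluence of the skeleton below level $\beta$, to turn the "positive infinite separation" between $\mathcal{E}_\beta'$-classes into the precise infinitesimal-scale estimate needed to absorb both correction terms simultaneously.
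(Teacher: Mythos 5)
You correctly set up the $\sqsubseteq$-induction on $\mathbf{Mo}_\beta$ and correctly identify the asymmetry of (\ref{eq-poor-hyperlog}) as the crux, but you stop exactly where the content begins: you acknowledge the obstacle and wave at ``genuinely exploiting confluence'' to ``absorb both correction terms'' without supplying the estimate that does the absorbing. That estimate is the entire proof, and your sketch does not reach it.

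The paper's mechanism is more elementary than you suggest and invokes neither \hyperref[ind-ax2]{$\mathbf{I_{2,\mu}}$} nor \hyperref[ind-ax3]{$\mathbf{I_{3,\mu}}$} in this argument. Since $\beta = \omega^\mu$, for each $\gamma < \beta$ one also has $\gamma + 1 < \beta$; combining $\hl_{\gamma+1} \prec \frac{1}{2}\hl_\gamma$ with $(L_\gamma\mathfrak{a}')^{-1} \succ (L_\gamma\mathfrak{a}'')^{-1}, (L_\gamma\mathfrak{a})^{-1}$ (which holds because $\mathfrak{a}' < \mathfrak{a} < \mathfrak{a}''$ are distinct $L_{<\beta}$-atomic monomials) gives the level-shift estimate
\[
  \frac{1}{L_{\gamma+1}\mathfrak{a}'} \;\succ\; \frac{2}{L_\gamma\mathfrak{a}'} \;>\; \frac{1}{L_\gamma\mathfrak{a}'} + \frac{1}{L_\gamma\mathfrak{a}''} + \frac{1}{L_\gamma\mathfrak{a}}.
\]
This is the missing step. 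It upgrades the \emph{symmetric} inequality $L_\beta\mathfrak{a}' + (L_{<\beta}\mathfrak{a}')^{-1} < L_\beta\mathfrak{a}'' - (L_{<\beta}\mathfrak{a}'')^{-1}$ --- obtained directly from $\mathbf{H}_{\mathfrak{a}'}$ or $\mathbf{H}_{\mathfrak{a}''}$, since $\mathfrak{a}', \mathfrak{a}'' \sqsubset \mathfrak{a}$ forces one to be simpler than the other --- into the \emph{asymmetric} inequality $L_\beta\mathfrak{a}' + (L_{<\beta}\mathfrak{a}')^{-1} < L_\beta\mathfrak{a}'' - (L_{<\beta}\mathfrak{a})^{-1}$ required to validate (\ref{eq-poor-hyperlog}), and then squeezes $L_\beta\mathfrak{a} \pm (L_{<\beta}\mathfrak{a})^{-1}$ between the same bounds to deliver $\mathbf{H}_\mathfrak{a}$. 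Your appeal to $\mathbf{Mo}_\beta = \mathbf{Smp}_{\mathcal{E}_\beta'}$ and the ``positive infinite separation'' of $\mathcal{E}_\beta'$-classes is true but never converted into the infinitesimal-scale bound needed, so the proof has a hole exactly where it matters.

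A secondary error: your ``simplest common ancestor strictly simpler than both'' is not what the induction supplies, and the separation it would give is the wrong one. The common simplest element of $\mathfrak{a}'$ and $\mathfrak{a}''$ is whichever of the two is simpler, and the resulting inequality from $\mathbf{H}$ carries the correction $(L_{<\beta}\mathfrak{a}'')^{-1}$, which is \emph{smaller} than the $(L_{<\beta}\mathfrak{a})^{-1}$ appearing on the right of (\ref{eq-poor-hyperlog}); without the level-shift estimate the former does not imply the latter. Similarly, for the comparison of left options against the right option $L_{<\beta}\mathfrak{a}$, one cannot appeal to $\mathbf{H}$ of a common ancestor of $\mathfrak{a}'$ and $\mathfrak{a}$, since $L_\beta\mathfrak{a}$ is the quantity being defined; the paper settles this case directly via $L_\beta\mathfrak{a}' + (L_{<\beta}\mathfrak{a}')^{-1} \asymp L_\beta\mathfrak{a}' \prec L_{<\beta}\mathfrak{a}'$ and monotonicity.
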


\begin{proof}
  We prove this by induction on $(\mathbf{Mo}_{\beta}, \sqsubseteq)$. Let
  $\mathfrak{a} \in \mathbf{Mo}_{\beta}$ such that $\mathbf{H}_{\mathfrak{b}}$
  holds for all~$\mathfrak{b} \in
  \mathfrak{a}_{\sqsubset}^{\smash{\mathbf{Mo}_{\beta}}}$. Let $\mathfrak{a}'
  \in \mathfrak{a}_L^{\smash{\mathbf{Mo}_{\beta}}}$ and $\mathfrak{a}'' \in
  \mathfrak{a}_R^{\smash{\mathbf{Mo}_{\beta}}}$. We have $\mathfrak{a}' \in
  (\mathfrak{a}'')_L^{\smash{\mathbf{Mo}_{\beta}}}$ or $\mathfrak{a}'' \in
  (\mathfrak{a}')_R^{\smash{\mathbf{Mo}_{\beta}}}$, so
  $\mathbf{H}_{\mathfrak{a}'}$ or $\mathbf{H}_{\mathfrak{a}''}$ yields
  \begin{eqnarray*}
    L_{\beta} \mathfrak{a}' + \frac{1}{L_{< \beta} \mathfrak{a}'} & < &
    L_{\beta} \mathfrak{a}'' - \frac{1}{L_{< \beta} \mathfrak{a}''} .
  \end{eqnarray*}
  For $\gamma < \beta$, we have $\hl_{\gamma + 1} \prec \frac{1}{2} 
  \hl_{\gamma}$ and $\frac{1}{L_{\gamma} \mathfrak{a}'} \succ
  \frac{1}{L_{\gamma} \mathfrak{a}''}, \frac{1}{L_{\gamma} \mathfrak{a}}$,
  whence
  \[ \widespacing{\frac{1}{L_{\gamma + 1} \mathfrak{a}'} \succ
     \frac{2}{L_{\gamma} \mathfrak{a}'} > \frac{1}{L_{\gamma} \mathfrak{a}'} +
     \frac{1}{L_{\gamma} \mathfrak{a}''} + \frac{1}{L_{\gamma} \mathfrak{a}},}
  \]
  for all $\gamma < \beta$. Hence,
  \begin{eqnarray*}
    L_{\beta} \mathfrak{a}' + \frac{1}{L_{< \beta} \mathfrak{a}'} & < &
    L_{\beta} \mathfrak{a}'' - \frac{1}{L_{< \beta} \mathfrak{a}} .
  \end{eqnarray*}
  We clearly have $L_{\beta} \mathfrak{a}'' - \frac{1}{L_{< \beta}
  \mathfrak{a}} \asymp L_{\beta} \mathfrak{a}'' >\mathbb{R}$. Finally,
  \begin{eqnarray*}
    L_{\beta} \mathfrak{a}' + \frac{1}{L_{< \beta} \mathfrak{a}'} & \asymp &
    L_{\beta} \mathfrak{a}' \hspace{1.2em} \prec \hspace{1.2em} L_{< \beta}
    \mathfrak{a}',
  \end{eqnarray*}
  so $L_{\beta} \mathfrak{a}' + \frac{1}{L_{< \beta} \mathfrak{a}'} < L_{<
  \beta} \mathfrak{a}$. This shows that $L_{\beta} \mathfrak{a}$ is defined
  and
  \[ \widespacing{L_{\beta} \mathfrak{a}' + \frac{1}{L_{< \beta}
     \mathfrak{a}'} < L_{\beta} \mathfrak{a}< L_{\beta} \mathfrak{a}'' -
     \frac{1}{L_{< \beta} \mathfrak{a}} .} \]
  Since $\mathfrak{a}' <\mathfrak{a}<\mathfrak{a}''$, it follows that
  \[ \widespacing{L_{\beta} \mathfrak{a}' + \frac{1}{L_{< \beta}
     \mathfrak{a}'} < L_{\beta} \mathfrak{a} \pm \frac{1}{L_{< \beta}
     \mathfrak{a}} < L_{\beta} \mathfrak{a}'' - \frac{1}{L_{< \beta}
     \mathfrak{a}} .} \]
  By induction, this proves $\mathbf{H}_{\mathfrak{a}}$ for all $\mathfrak{a}
  \in \mathbf{Mo}_{\beta}$.
\end{proof}

\begin{proposition}
  \label{prop-monotonicity}The axiom \hyperref[monotonicity]{$\mathbf{M_{\mu}}$} holds.{\nopagebreak}
\end{proposition}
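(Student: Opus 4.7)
The plan is to deduce $\mathbf{M}_\mu$ directly from the ``adjacent'' inequalities $\mathbf{H}_\mathfrak{a}$ established in Proposition~\ref{prop-good-hyperlog-def}, by routing through a common ancestor. Given $\mathfrak{a}, \mathfrak{b} \in \mathbf{Mo}_\beta$ with $\mathfrak{a} \prec \mathfrak{b}$, the monomials are positive, so $\mathfrak{a} < \mathfrak{b}$. By \hyperref[ind-ax3]{$\mathbf{I_{3,\mu}}$}, the class $\mathbf{Mo}_\beta$ is a surreal substructure, and the general cut remark recalled in Section~\ref{subsection-surreal-substructures} guarantees a unique $\sqsubseteq$-maximal element $\mathfrak{c} \in \mathbf{Mo}_\beta$ with $\mathfrak{c} \sqsubseteq \mathfrak{a}, \mathfrak{b}$; moreover $\mathfrak{a} \leqslant \mathfrak{c} \leqslant \mathfrak{b}$.

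I then distinguish three cases. If $\mathfrak{c} = \mathfrak{a}$, then $\mathfrak{a} \sqsubset \mathfrak{b}$ with $\mathfrak{a} < \mathfrak{b}$, so $\mathfrak{a} \in \mathfrak{b}_L^{\smash{\mathbf{Mo}_\beta}}$, and the left half of $\mathbf{H}_\mathfrak{b}$ yields
\[ L_\beta \mathfrak{a} + \tfrac{1}{L_{<\beta}\mathfrak{a}} \; < \; L_\beta \mathfrak{b} - \tfrac{1}{L_{<\beta}\mathfrak{b}}. \]
Symmetrically, if $\mathfrak{c} = \mathfrak{b}$, then $\mathfrak{b} \in \mathfrak{a}_R^{\smash{\mathbf{Mo}_\beta}}$ and the right half of $\mathbf{H}_\mathfrak{a}$ gives the same inequality. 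In the remaining case $\mathfrak{a} < \mathfrak{c} < \mathfrak{b}$, we have simultaneously $\mathfrak{c} \in \mathfrak{a}_R^{\smash{\mathbf{Mo}_\beta}}$ and $\mathfrak{c} \in \mathfrak{b}_L^{\smash{\mathbf{Mo}_\beta}}$; the right half of $\mathbf{H}_\mathfrak{a}$ and the left half of $\mathbf{H}_\mathfrak{b}$ then chain through $\mathfrak{c}$, using the trivial inequality $L_\beta \mathfrak{c} - (L_{<\beta}\mathfrak{c})^{-1} < L_\beta \mathfrak{c} + (L_{<\beta}\mathfrak{c})^{-1}$.

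Since $L_\beta \mathfrak{a} + (L_{<\beta}\mathfrak{a})^{-1}$ and $L_\beta \mathfrak{b} - (L_{<\beta}\mathfrak{b})^{-1}$ stand for the sets of values as $\gamma$ ranges over $\gamma < \beta$, the inequality $<$ between them is pointwise; specializing to any fixed $\gamma < \beta$ in both instances produces exactly the statement of \hyperref[monotonicity]{$\mathbf{M_{\mu}}$}. I do not expect serious obstacles: the only point requiring care is that the common ancestor $\mathfrak{c}$ must be produced inside $\mathbf{Mo}_\beta$ (rather than in $\mathbf{No}$), which is precisely what the surreal substructure property of $\mathbf{Mo}_\beta$ supplies. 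The asymmetry of~(\ref{eq-poor-hyperlog}) is harmless here because both halves of $\mathbf{H}_\mathfrak{a}$ put $L_{<\beta}$ of the \emph{simpler} monomial in the outer terms, which is exactly how we use them when $\mathfrak{c}$ is the simpler ancestor.
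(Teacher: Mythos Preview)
Your proof is correct and follows essentially the same approach as the paper: both route through the $\sqsubseteq$-maximal common ancestor $\mathfrak{c} \in \mathbf{Mo}_\beta$ of $\mathfrak{a}$ and $\mathfrak{b}$ and then apply the appropriate halves of $\mathbf{H}_{\mathfrak{a}}$ and $\mathbf{H}_{\mathfrak{b}}$. The paper's presentation collapses your three cases into two conditionals (``if $\mathfrak{a}<\mathfrak{c}$'' and ``if $\mathfrak{c}<\mathfrak{b}$'') plus the observation that they cannot both fail, but the logic is identical.
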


\begin{proof}
  Let $\mathfrak{a}, \mathfrak{b} \in \mathbf{Mo}_{\beta}$ with $\mathfrak{a}
  \prec \mathfrak{b}$. Since $\mathbf{Mo}_{\beta}$ is a surreal substructure,
  there is a $\mathfrak{c} \in \mathbf{Mo}_{\beta}$ with $\mathfrak{c}
  \sqsubseteq \mathfrak{a}, \mathfrak{b}$ and~$\mathfrak{a} \leqslant
  \mathfrak{c} \leqslant \mathfrak{b}$. If $\mathfrak{a}<\mathfrak{c}$, then
  we have $L_{\beta} \mathfrak{a}+ (L_{< \beta} \mathfrak{a})^{- 1} <
  L_{\beta} \mathfrak{c}- (L_{< \beta} \mathfrak{c})^{- 1}$ by
  $\mathbf{H}_{\mathfrak{a}}$. If $\mathfrak{c}<\mathfrak{b}$, then we have
  $L_{\beta} \mathfrak{c}+ (L_{< \beta} \mathfrak{c})^{- 1} < L_{\beta}
  \mathfrak{b}- (L_{< \beta} \mathfrak{b})^{- 1}$ by
  $\mathbf{H}_{\mathfrak{b}}$. We cannot have both $\mathfrak{a}=\mathfrak{c}$
  and $\mathfrak{c}=\mathfrak{b}$, so this proves that~$L_{\beta}
  \mathfrak{a}+ (L_{< \beta} \mathfrak{a})^{- 1} < L_{\beta} \mathfrak{b}-
  (L_{< \beta} \mathfrak{b})^{- 1}$. Therefore \hyperref[monotonicity]{$\mathbf{M_{\mu}}$} holds.
\end{proof}

\begin{proposition}
  The axiom \hyperref[asymptotics]{$\mathbf{A_{\mu}}$} holds.
\end{proposition}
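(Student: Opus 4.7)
The plan is essentially immediate from the shape of the defining cut equation~(\ref{eq-poor-hyperlog}) together with Proposition~\ref{prop-good-hyperlog-def}. Fix $\mathfrak{a} \in \mathbf{Mo}_{\beta}$ and $\gamma < \beta$. I want to show $L_{\beta} \mathfrak{a} < L_{\gamma} \mathfrak{a}$.

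First, I would note that Proposition~\ref{prop-good-hyperlog-def} guarantees that the pair of left and right option classes appearing in (\ref{eq-poor-hyperlog}) is a valid cut representation in $\mathbf{No}$, so $L_{\beta} \mathfrak{a}$ is a well-defined surreal number lying strictly between the left and right options: that is, every left option of the cut is strictly less than $L_{\beta} \mathfrak{a}$ and every right option of the cut is strictly greater than $L_{\beta} \mathfrak{a}$. This is just the defining property of the Conway bracket $\{L|R\}$ when $L < R$.

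Next, I would observe that the right options of the cut defining $L_{\beta} \mathfrak{a}$ include the entire family $L_{< \beta} \mathfrak{a} = \{L_{\delta} \mathfrak{a} : \delta < \beta\}$. Since $\gamma < \beta$, the element $L_{\gamma} \mathfrak{a}$ is one of these right options, and therefore $L_{\beta} \mathfrak{a} < L_{\gamma} \mathfrak{a}$. As $\gamma$ was arbitrary, this establishes \hyperref[asymptotics]{$\mathbf{A_{\mu}}$}.

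There is no real obstacle here: the axiom was \emph{built into} the right-hand side of the cut equation by including $L_{< \beta} \mathfrak{a}$ among the right options, and Proposition~\ref{prop-good-hyperlog-def} has already carried out the genuine work, namely verifying that this cut representation is consistent (so that the bracket notation makes sense and the inequalities $L < \{L|R\} < R$ do hold).
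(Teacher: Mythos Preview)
Your proof is correct and takes essentially the same approach as the paper: the paper's own proof is the single sentence ``The rightmost options in~(\ref{eq-poor-hyperlog}) directly yield~$\mathbf{A}_{\mu}$,'' which is exactly your observation that $L_{<\beta}\mathfrak{a}$ sits among the right options of the defining cut, so $L_\beta\mathfrak{a}<L_\gamma\mathfrak{a}$ for every $\gamma<\beta$.
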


\begin{proof}
  The rightmost options in (\ref{eq-poor-hyperlog}) directly yield
  \hyperref[asymptotics]{$\mathbf{A_{\mu}}$}.
\end{proof}

\begin{proposition}
  \label{prop-regularity}The axiom \hyperref[regularity]{$\mathbf{R_{\mu}}$} holds.
\end{proposition}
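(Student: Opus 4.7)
The plan is to perturb $L_\beta\mathfrak{a}$ by real multiples of $(L_\gamma\mathfrak{a})^{-1}$, show that such perturbations remain in the cut defining $L_\beta\mathfrak{a}$, and then deduce the desired support bound via a simplicity argument. Fix $\mathfrak{a} \in \mathbf{Mo}_\beta$ and $\gamma < \beta$, and write $(L\mid R)$ for the cut on the right of~(\ref{eq-poor-hyperlog}). I aim to prove that $L_\beta\mathfrak{a} + c(L_\gamma\mathfrak{a})^{-1}$ belongs to $(L\mid R)$ for every $c \in \mathbb{R}$.

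The outer options cause no trouble: $L_\beta\mathfrak{a} + c(L_\gamma\mathfrak{a})^{-1}$ is positive infinite, and by \hyperref[asymptotics]{$\mathbf{A_\mu}$} it is still dominated by every $L_\eta\mathfrak{a}$ with $\eta < \beta$, the infinitesimal perturbation being unable to close the $\asymp$-gap. For an inner right option $L_\beta\mathfrak{a}'' - (L_\rho\mathfrak{a})^{-1}$, observe that $\mathfrak{a} \prec \mathfrak{a}''$ because $\mathfrak{a}<\mathfrak{a}''$ are distinct positive monomials. Setting $\gamma' := \max(\gamma,\rho) + 1 < \beta$ and applying the just-proved monotonicity axiom \hyperref[monotonicity]{$\mathbf{M_\mu}$} at level $\gamma'$ gives
\[ L_\beta\mathfrak{a}'' - L_\beta\mathfrak{a} \;>\; (L_{\gamma'}\mathfrak{a})^{-1} + (L_{\gamma'}\mathfrak{a}'')^{-1}. \]
Iterated composition with $L_1 = \log$ is strictly slowly-growing by \hyperref[asymptotics-0]{$\mathbf{A_0}$}, so $L_{\gamma'}\mathfrak{a} \prec L_\eta\mathfrak{a}$ for every $\eta < \gamma'$; consequently $(L_{\gamma'}\mathfrak{a})^{-1}$ dominates every real multiple of both $(L_\gamma\mathfrak{a})^{-1}$ and $(L_\rho\mathfrak{a})^{-1}$, yielding $L_\beta\mathfrak{a}'' - L_\beta\mathfrak{a} > c(L_\gamma\mathfrak{a})^{-1} + (L_\rho\mathfrak{a})^{-1}$. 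The inner left options $L_\beta\mathfrak{a}' + (L_\rho\mathfrak{a}')^{-1}$ are handled symmetrically, using $\mathfrak{a}' \prec \mathfrak{a}$ and monotonicity at a correspondingly chosen $\gamma'$.

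Once the perturbation claim is in hand, $L_\beta\mathfrak{a} + c(L_\gamma\mathfrak{a})^{-1}$ lies in the cut of which $L_\beta\mathfrak{a}$ is the $\sqsubseteq$-simplest element, so $L_\beta\mathfrak{a} \sqsubseteq L_\beta\mathfrak{a} + c(L_\gamma\mathfrak{a})^{-1}$ for every real $c$. The standard bridge from the simplicity relation to the series representation of surreal numbers (cf.\ \cite[Section~2]{BvdH19}) then yields $\tmop{supp}(L_\beta\mathfrak{a}) \succ (L_\gamma\mathfrak{a})^{-1}$, which is exactly the statement of \hyperref[regularity]{$\mathbf{R_\mu}$}.

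The main obstacle is this final simplicity-to-support transfer: passing from ``$L_\beta\mathfrak{a} \sqsubseteq L_\beta\mathfrak{a} + cu$ for every $c \in \mathbb{R}$'' to ``$\tmop{supp}(L_\beta\mathfrak{a}) \succ u$'' hinges on the detailed interplay between Gonshor sign sequences and the series identification $\mathbf{No}=\mathbb{R}[[\mathbf{Mo}]]$. The rest reduces to careful bookkeeping with the monotonicity axiom together with the strict domination of successive iterates of the logarithm.
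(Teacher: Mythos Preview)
Your perturbation argument is sound: using $\mathbf{M}_\mu$ at a level $\gamma' > \max(\gamma,\rho)$ does show that $L_\beta\mathfrak{a} + c(L_\gamma\mathfrak{a})^{-1}$ stays inside the cut~(\ref{eq-poor-hyperlog}) for every real $c$, and hence that $L_\beta\mathfrak{a} \sqsubseteq L_\beta\mathfrak{a} + c(L_\gamma\mathfrak{a})^{-1}$. But the step you flag as ``the main obstacle'' really is one, and the citation to \cite[Section~2]{BvdH19} does not discharge it. What you need is: if $a \sqsubseteq a + cu$ for every $c \in \mathbb{R}$ and $u$ is a monomial, then $\tmop{supp}(a) \succ u$. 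This is true, and the proof is short once stated: if some $\mathfrak{n} \preccurlyeq u$ lay in $\tmop{supp}(a)$, then choosing $c$ so that $a - a_{\succ u}$ and $(a+cu) - a_{\succ u}$ have opposite signs (possible since $cu$ dominates the tail below $u$) puts $a$ and $a+cu$ on opposite sides of their common truncation $a_{\succ u} \sqsubset a$, whence $a \not\sqsubseteq a+cu$. You should include this argument rather than defer it, since it carries the entire weight of the conclusion.

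The paper takes the dual route and thereby sidesteps this transfer entirely. Rather than showing that perturbations of $\varphi = L_\beta\mathfrak{a}$ remain in the cut, it fixes an infinitesimal $\mathfrak{m} \in \tmop{supp}\varphi$, notes that the truncation $\varphi_{\succ\mathfrak{m}}$ is strictly $\sqsubseteq$-simpler than $\varphi$ and must therefore \emph{fail} one of the cut conditions, and then reads off $\mathfrak{m} \succ (L_{<\beta}\mathfrak{a})^{-1}$ from whichever option is violated. This uses only the standard fact that series truncations are $\sqsubseteq$-predecessors, not the converse-flavoured statement your argument rests on. Both approaches are valid; the paper's is more self-contained, while yours makes the role of $\mathbf{M}_\mu$ more explicit.
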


\begin{proof}
  Let $\mathfrak{a} \in \mathbf{Mo}_{\beta}$ and write $\varphi \assign
  L_{\beta} \mathfrak{a}$. Let $\mathfrak{m} \in \tmop{supp} \varphi$ with
  $\mathfrak{m} \prec 1$. We have $\varphi < L_{< \beta} \mathfrak{a}$ and
  $\varphi_{\succ \mathfrak{m}} \asymp \varphi$ so $\varphi_{\succ
  \mathfrak{m}} < L_{< \beta} \mathfrak{a}$. Moreover $\varphi_{\succ
  \mathfrak{m}}$ is positive infinite. The number $\varphi_{\succ
  \mathfrak{m}}$ is strictly simpler than $\varphi$, so $\varphi_{\succ
  \mathfrak{m}}$ does not lie in the cut which defines $L_{\beta}
  \mathfrak{a}$ in (\ref{eq-poor-hyperlog}). Therefore, there is an
  {$\mathfrak{a}' \in \mathfrak{a}_L^{\smash{\mathbf{Mo}_{\beta}}}$} or an
  $\mathfrak{a}'' \in \mathfrak{a}_R^{\smash{\mathbf{Mo}_{\beta}}}$ and an
  ordinal $\gamma < \beta$ with $\varphi_{\succ \mathfrak{m}} \leqslant
  L_{\beta} \mathfrak{a}' + (L_{\gamma} \mathfrak{a}')^{- 1}$ or
  $\varphi_{\succ \mathfrak{m}} \geqslant L_{\beta} \mathfrak{a}'' -
  (L_{\gamma} \mathfrak{a})^{- 1}$. Consider the first case. We have
  $L_{\beta} \mathfrak{a}' + (L_{< \beta} \mathfrak{a}')^{- 1} < \varphi
  \leqslant \varphi_{\succ \mathfrak{m}} + \varphi_{\mathfrak{m}}
  \mathfrak{m}+ \delta$ for a certain $\delta \prec \mathfrak{m}$. So
  $\varphi_{\mathfrak{m}} > 0$ and
  \begin{eqnarray*}
    \frac{1}{L_{< \beta} \mathfrak{a}'} & < & \frac{1}{L_{\gamma}
    \mathfrak{a}'} + \varphi_{\mathfrak{m}} \mathfrak{m}.
  \end{eqnarray*}
  For $\rho < \beta$ with $\gamma < \rho$, we have $(L_{\rho}
  \mathfrak{a}')^{- 1} \succ (L_{\gamma} \mathfrak{a}')^{- 1}$ so $(L_{\rho}
  \mathfrak{a}')^{- 1} - (L_{\gamma} \mathfrak{a}')^{- 1} \asymp (L_{\rho}
  \mathfrak{a}')^{- 1}$. We deduce that $(L_{\rho} \mathfrak{a}')^{- 1}
  \preccurlyeq \mathfrak{m}$ for all such $\rho$. It follows that $(L_{\rho}
  \mathfrak{a})^{- 1} \preccurlyeq \mathfrak{m}$ for all $\rho < \beta$. In
  the second case, we directly get $\mathfrak{m} \succ (L_{\gamma}
  \mathfrak{a})^{- 1}$. This proves that we always have $\mathfrak{m} \succ
  (L_{< \beta} \mathfrak{a})^{- 1}$. In other words $\tmop{supp} \varphi \succ
  (L_{< \beta} \mathfrak{a})^{- 1}$, whence \hyperref[regularity]{$\mathbf{R_{\mu}}$} holds.
\end{proof}

\begin{proposition}
  \label{cor-hyperlog-uniform}If $\mu$ is a successor ordinal, then the cut
  equation \tmtextup{(\ref{eq-poor-hyperlog})} is uniform.
\end{proposition}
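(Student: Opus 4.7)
The plan is to prove uniformity by reducing it, via Proposition~4.11(b) of \cite{BvdH19}, to two separate checks: a strict inequality valid for generic cut representations, and the mutual cofinality/coinitiality of the corresponding option sets relative to those of the canonical representation. Concretely, given a cut representation $(L,R)$ of $\mathfrak{a}=\{L\mid R\}_{\mathbf{Mo}_{\beta}}$ in $\mathbf{Mo}_{\beta}$, that proposition tells us $L$ is cofinal in $\mathfrak{a}_{L}^{\mathbf{Mo}_{\beta}}$ and $R$ is coinitial in $\mathfrak{a}_{R}^{\mathbf{Mo}_{\beta}}$, and since the ``absolute'' options $\mathbb{R}$ on the left and $L_{<\beta}\mathfrak{a}$ on the right depend only on $\mathfrak{a}$ (hence only on $(L,R)$), this reduces the task to comparing the $\mathfrak{a}'$\nobreakdash-options and $\mathfrak{a}''$\nobreakdash-options with their canonical counterparts.

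For the strict inequality $\lambda(L,R)<\rho(L,R)$, I would fix arbitrary $\mathfrak{a}'\in L$, $\mathfrak{a}''\in R$, and $\gamma_{1},\gamma_{2}<\beta$. Since $\mathfrak{a}'\prec\mathfrak{a}\prec\mathfrak{a}''$ in $\mathbf{Mo}$, two applications of \hyperref[monotonicity]{$\mathbf{M_{\mu}}$} give
\[
L_{\beta}\mathfrak{a}'+(L_{\gamma_{1}}\mathfrak{a}')^{-1}<L_{\beta}\mathfrak{a}-(L_{\gamma_{1}}\mathfrak{a})^{-1}\quad\text{and}\quad L_{\beta}\mathfrak{a}+(L_{\gamma_{2}}\mathfrak{a})^{-1}<L_{\beta}\mathfrak{a}''-(L_{\gamma_{2}}\mathfrak{a}'')^{-1}.
\]
From the first, $L_{\beta}\mathfrak{a}'+(L_{\gamma_{1}}\mathfrak{a}')^{-1}<L_{\beta}\mathfrak{a}$; from the second, $L_{\beta}\mathfrak{a}<L_{\beta}\mathfrak{a}''-(L_{\gamma_{2}}\mathfrak{a}'')^{-1}$, and then $L_{\beta}\mathfrak{a}''-L_{\beta}\mathfrak{a}>(L_{\gamma_{2}}\mathfrak{a})^{-1}+(L_{\gamma_{2}}\mathfrak{a}'')^{-1}>(L_{\gamma_{2}}\mathfrak{a})^{-1}$ forces $L_{\beta}\mathfrak{a}<L_{\beta}\mathfrak{a}''-(L_{\gamma_{2}}\mathfrak{a})^{-1}$. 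Concatenating gives the required bound. The auxiliary inequalities involving $\mathbb{R}$ and $L_{<\beta}\mathfrak{a}$ are handled using \hyperref[asymptotics]{$\mathbf{A_{\mu}}$} together with the fact that $L_{\beta}\mathfrak{a}''$ is positive infinite.

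For the cofinality step, the decisive tool is that $\mathbf{Mo}_{\beta}$ is a surreal substructure (by \hyperref[ind-ax3]{$\mathbf{I_{3,\mu}}$}), so for any $\mathfrak{a}'\in\mathbf{Mo}_{\beta}$ with $\mathfrak{a}'<\mathfrak{a}$ there exists a $\sqsubseteq$\nobreakdash-maximal $\mathfrak{c}\in\mathbf{Mo}_{\beta}$ with $\mathfrak{c}\sqsubseteq\mathfrak{a}',\mathfrak{a}$ and $\mathfrak{a}'\leqslant\mathfrak{c}\leqslant\mathfrak{a}$. Either $\mathfrak{a}'=\mathfrak{c}$, in which case $\mathfrak{a}'\in\mathfrak{a}_{L}^{\mathbf{Mo}_{\beta}}$; or $\mathfrak{a}'<\mathfrak{c}<\mathfrak{a}$, in which case $\mathfrak{c}\in\mathfrak{a}_{L}^{\mathbf{Mo}_{\beta}}$ and \hyperref[monotonicity]{$\mathbf{M_{\mu}}$} yields $L_{\beta}\mathfrak{a}'+(L_{\gamma}\mathfrak{a}')^{-1}<L_{\beta}\mathfrak{c}+(L_{\gamma}\mathfrak{c})^{-1}$. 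In the reverse direction, cofinality of $L$ in $\mathfrak{a}_{L}^{\mathbf{Mo}_{\beta}}$ combined with \hyperref[monotonicity]{$\mathbf{M_{\mu}}$} bounds any canonical left option above by some left option from $L$. A symmetric analysis applied to $\mathfrak{a}''\in R$ and to $\mathfrak{a}_{R}^{\mathbf{Mo}_{\beta}}$, together with the identity of the $L_{<\beta}\mathfrak{a}$ options, establishes mutual coinitiality on the right. Both cuts therefore have the same simplest element, namely $L_{\beta}\mathfrak{a}$.

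The step I expect to be most delicate is the verification that the $\sqsubseteq$\nobreakdash-maximal common subelement $\mathfrak{c}$ of $\mathfrak{a}'$ and $\mathfrak{a}$ produced inside $\mathbf{Mo}_{\beta}$ really does witness the required strict inequality (in particular that the case $\mathfrak{c}=\mathfrak{a}$ forces $\mathfrak{a}'\sqsubseteq\mathfrak{a}$, so cannot occur if we had assumed $\mathfrak{a}'\notin\mathfrak{a}_{L}^{\mathbf{Mo}_{\beta}}$). The place where the successor hypothesis on $\mu$ enters is more subtle: in this case the functional equation \hyperref[functional-eq]{$\mathbf{FE_{\mu}}$} (available only for successor $\mu$) and the fact that $L_{\beta_{/\omega}}$ stabilises $\mathbf{Mo}_{\beta}$ provide the coherent shifting behaviour of options needed to match the asymmetric form of (\ref{eq-poor-hyperlog}); in the limit case one would instead need to appeal to the symmetric version (\ref{Lalpha-eq}), whose equivalence to (\ref{eq-poor-hyperlog}) is only established later (Corollary~\ref{cor-true-def}).
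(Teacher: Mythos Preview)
Your overall strategy has a genuine gap: mutual cofinality of the option sets is \emph{not} the right target, because it can fail outright. Take $\mathfrak{a}=\omega$, the $\sqsubseteq$-minimum of $\mathbf{Mo}_{\beta}$. Then $\mathfrak{a}_L^{\mathbf{Mo}_{\beta}}=\varnothing$, so the canonical left options reduce to $\mathbb{R}$. But for a nontrivial cut representation $(L,R)$ with some $\mathfrak{l}\in L$ (e.g.\ $\mathfrak{l}=L_{\beta_{/\omega}}\omega$ when $\mu$ is a successor), the left option $L_{\beta}\mathfrak{l}+(L_{\gamma}\mathfrak{l})^{-1}$ is positive infinite and therefore exceeds every canonical left option. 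The two option sets are not mutually cofinal, yet the cuts still share $L_{\beta}\omega$ as simplest element. What the paper does instead is prove the double inclusion $L_{\beta}\mathfrak{a}\sqsubseteq\varphi$ and $\varphi\sqsubseteq L_{\beta}\mathfrak{a}$ directly, by checking that each of $\varphi$ and $L_{\beta}\mathfrak{a}$ lies in the other's defining cut.

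Your case analysis for the $\sqsubseteq$-maximal common ancestor $\mathfrak{c}$ is also inverted: $\mathfrak{c}=\mathfrak{a}$ means $\mathfrak{a}\sqsubseteq\mathfrak{l}$ (so $\mathfrak{l}$ is an \emph{extension} of $\mathfrak{a}$), not $\mathfrak{l}\sqsubseteq\mathfrak{a}$; this case genuinely occurs and cannot be dismissed. The paper handles it by observing that then $\mathfrak{a}\in\mathfrak{l}_R^{\mathbf{Mo}_{\beta}}$... no, rather: either $\mathfrak{b}\sqsubset\mathfrak{l}$ (so $\mathfrak{b}\in\mathfrak{l}_R^{\mathbf{Mo}_{\beta}}$ and $\mathbf{H}_{\mathfrak{l}}$ gives $L_{\beta}\mathfrak{l}+(L_{<\beta}\mathfrak{l})^{-1}<L_{\beta}\mathfrak{b}\leqslant L_{\beta}\mathfrak{a}$), or $\mathfrak{l}=\mathfrak{b}\in\mathfrak{a}_L^{\mathbf{Mo}_{\beta}}$ and $\mathbf{H}_{\mathfrak{a}}$ applies. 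Either way one bounds the new left option by $L_{\beta}\mathfrak{a}$ itself, not by a canonical left option. Finally, your account of where the successor hypothesis enters is off: \hyperref[functional-eq]{$\mathbf{FE}_{\mu}$} is proved \emph{after} this proposition (it uses it), so it cannot be invoked here. The proof in the paper makes no explicit use of the successor hypothesis; the restriction in the statement is because the limit case is handled later (Proposition~\ref{prop-uniform-hyperlog}) once confluence and the $\mathcal{L}_{\beta}$-classes are available.
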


\begin{proof}
  Let $(\mathfrak{L}_{\mathfrak{a}}, \mathfrak{R}_{\mathfrak{a}})$ be a cut
  representation in $\mathbf{Mo}_{\beta}$ and set $\mathfrak{a} \assign \{
  \mathfrak{L}_{\mathfrak{a}} |\mathfrak{R}_{\mathfrak{a}}
  \}_{\mathbf{Mo}_{\beta}}$. For $\mathfrak{l} \in
  \mathfrak{L}_{\mathfrak{a}}$, we have $L_{\beta} \mathfrak{l}< L_{\beta}
  \mathfrak{a} \prec L_{< \beta} \mathfrak{a}$ so $L_{\beta} \mathfrak{l}<
  L_{< \beta} \mathfrak{a}$. For $\mathfrak{r} \in
  \mathfrak{R}_{\mathfrak{a}}$, we have $L_{\beta} \mathfrak{l}+ (L_{< \beta}
  \mathfrak{l})^{- 1} < L_{\beta} \mathfrak{r}$ by \hyperref[monotonicity]{$\mathbf{M_{\mu}}$}. Since
  $\mathfrak{l} \prec \mathfrak{a}$, it follows that \[L_{\beta} \mathfrak{l}+
  (L_{< \beta} \mathfrak{l})^{- 1} < L_{\beta} \mathfrak{r}- (L_{< \beta}
  \mathfrak{a})^{- 1}.\] We may thus define the number
  \begin{eqnarray*}
    \varphi & \assign & \left\{ \mathbb{R}, L_{\beta} \mathfrak{l}+
    \frac{1}{L_{< \beta} \mathfrak{l}} \suchthat \mathfrak{l} \in
    \mathfrak{L}_{\mathfrak{a}} | L_{\beta} \mathfrak{R}_{\mathfrak{a}} -
    \frac{1}{L_{< \beta} \mathfrak{a}}, L_{< \beta} \mathfrak{a} \right\} .
  \end{eqnarray*}
  In order to show that (\ref{eq-poor-hyperlog}) is uniform, we need to prove
  that $L_{\beta} \mathfrak{a}= \varphi$, for any choice of the cut
  representation $(\mathfrak{L}_{\mathfrak{a}}, \mathfrak{R}_{\mathfrak{a}})$.
  We will do so by proving that $L_{\beta} \mathfrak{a} \sqsubseteq \varphi$
  and $\varphi \sqsubseteq L_{\beta} \mathfrak{a}$.
  
  Recall that $(\mathfrak{L}_{\mathfrak{a}}, \mathfrak{R}_{\mathfrak{a}})$ is
  cofinal with respect to $\left( \mathfrak{a}_L^{\smash{\mathbf{Mo}_{\beta}}}
  |\mathfrak{a}_R^{\smash{\mathbf{Mo}_{\beta}}} \right)$ and that $L_{\beta}$
  is strictly increasing. Consequently, we have
  \begin{eqnarray*}
    \varphi & < & L_{\beta} \mathfrak{a}_R^{\smash{\mathbf{Mo}_{\beta}}} -
    (L_{< \beta} \mathfrak{a})^{- 1} .
  \end{eqnarray*}
  Given $\mathfrak{a}' \in \mathfrak{a}_L^{\smash{\mathbf{Mo}_{\beta}}}$,
  there is an $\mathfrak{l} \in \mathfrak{L}_{\mathfrak{a}}$ with
  $\mathfrak{a}' \leqslant \mathfrak{l}$. In view of \hyperref[monotonicity]{$\mathbf{M_{\mu}}$}, we have
  \[L_{\beta} \mathfrak{a}' + (L_{\gamma} \mathfrak{a}')^{- 1} \leqslant
  L_{\beta} \mathfrak{l}+ (L_{\gamma} \mathfrak{l})^{- 1}\] for all $\gamma <
  \beta$, so $\varphi > \left\{ L_{\beta} \mathfrak{a}' + (L_{< \beta}
  \mathfrak{a}')^{- 1} \suchthat \mathfrak{a}' \in
  \mathfrak{a}_L^{\smash{\mathbf{Mo}_{\beta}}} \right\}$. This proves that
  $\varphi$ lies in the cut defining $L_{\beta} \mathfrak{a}$ as per
  (\ref{eq-poor-hyperlog}), whence $L_{\beta} \mathfrak{a} \sqsubseteq
  \varphi$.
  
  Conversely, in order to prove that $\varphi \sqsubseteq L_{\beta}
  \mathfrak{a}$, it suffices to show that $L_{\beta} \mathfrak{a}$ lies in the
  cut
  \[ \left( L_{\beta} \mathfrak{l}+ \frac{1}{L_{< \beta} \mathfrak{l}}
     \suchthat \mathfrak{l} \in \mathfrak{L}_{\mathfrak{a}} | L_{\beta}
     \mathfrak{R}_{\mathfrak{a}} - \frac{1}{L_{< \beta} \mathfrak{a}} \right)
     . \]
  Let $\mathfrak{l} \in \mathfrak{L}_{\mathfrak{a}}$ and let $\mathfrak{b} \in
  \mathbf{Mo}_{\beta}$ be $\sqsubseteq$-maximal with $\mathfrak{b} \sqsubseteq
  \mathfrak{l}, \mathfrak{a}$. We have $\mathfrak{l} \leqslant \mathfrak{b}
  \leqslant \mathfrak{a}$, whence $L_{\beta} \mathfrak{b} \leqslant L_{\beta}
  \mathfrak{a}$, by~\hyperref[monotonicity]{$\mathbf{M_{\mu}}$}. If $\mathfrak{b} \sqsubset
  \mathfrak{l}$, then $\mathfrak{b} \in
  \mathfrak{l}_R^{\smash{\mathbf{Mo}_{\beta}}}$, so
  $\mathbf{H}_{\mathfrak{l}}$ yields $L_{\beta} \mathfrak{l}+ (L_{< \beta}
  \mathfrak{l})^{- 1} < L_{\beta} \mathfrak{b}$ and $L_{\beta} \mathfrak{l}+
  (L_{< \beta} \mathfrak{l})^{- 1} < L_{\beta} \mathfrak{a}$. Otherwise
  $\mathfrak{l}=\mathfrak{b} \in
  \mathfrak{a}_L^{\smash{\mathbf{Mo}_{\beta}}}$, so
  $\mathbf{H}_{\mathfrak{a}}$ yields $L_{\beta} \mathfrak{l}+ (L_{< \beta}
  \mathfrak{l})^{- 1} < L_{\beta} \mathfrak{a}$. This proves that {$\{
  L_{\beta} \mathfrak{l}+ (L_{< \beta} \mathfrak{l})^{- 1} \suchthat
  \mathfrak{l} \in \mathfrak{L}_{\mathfrak{a}} \} < L_{\beta} \mathfrak{a}$}.
  
  Let $\mathfrak{r} \in \mathfrak{R}_{\mathfrak{a}}$ and let $\mathfrak{c} \in
  \mathbf{Mo}_{\beta}$ be $\sqsubseteq$-maximal with $\mathfrak{c} \sqsubseteq
  \mathfrak{r}, \mathfrak{a}$. As above, if $\mathfrak{c} \sqsubset
  \mathfrak{a}$, then $\mathfrak{c} \in
  \mathfrak{a}_R^{\smash{\mathbf{Mo}_{\beta}}}$ so $\mathbf{H}_{\mathfrak{a}}$
  yields $L_{\beta} \mathfrak{a}< L_{\beta} \mathfrak{c}- (L_{< \beta}
  \mathfrak{a})^{- 1}$, whence \[L_{\beta} \mathfrak{a}< L_{\beta}
  \mathfrak{r}- (L_{< \beta} \mathfrak{a})^{- 1}.\] Otherwise
  $\mathfrak{a}=\mathfrak{c} \in \mathfrak{r}_L^{\smash{\mathbf{Mo}_{\beta}}}$.
  Therefore $\mathbf{H}_{\mathfrak{r}}$ yields $L_{\beta} \mathfrak{r}> L_{\beta}
  \mathfrak{a}+ (L_{< \beta} \mathfrak{a})^{- 1}$. We deduce that $L_{\beta}
  \mathfrak{a}< L_{\beta} \mathfrak{R}_{\mathfrak{a}} - (L_{< \beta}
  \mathfrak{a})^{- 1}$ and we conclude by induction.
\end{proof}

\subsection{Functional equation}\label{subsection-functional-equation}

In this subsection we derive \hyperref[functional-eq]{$\mathbf{FE_{\mu}}$}, under the assumption that
$\mu$ is a successor ordinal. We start with the following inequality. Write $\rho \assign \beta_{/ \omega}$

\begin{lemma}
  \label{lem-not-too-strong-hyperexp}If $\mu > 1$, then we have $E_{< \rho} < E_{\rho} H_2 L_{\rho}$ on
  $\mathbf{No}^{>, \succ}$.
\end{lemma}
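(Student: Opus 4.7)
The statement amounts to showing $E_{\gamma} a < E_{\rho}(2 L_{\rho} a)$ for every $\gamma < \rho = \omega^{\mu_{-}}$ and every $a \in \mathbf{No}^{>, \succ}$. The approach combines the inductive hypothesis \hyperref[ind-ax2]{$\mathbf{I_{2,\mu}}$}, which controls $E_{\gamma}(r L_{\gamma} a)$ in terms of $E_{\rho}(s L_{\rho} a)$ for $r \in \mathbb{R}^{>}$ and $s > 1$, with the functional equation \hyperref[functional-eq]{$\mathbf{FE_{\mu_{-}}}$} (available via \hyperref[ind-ax1]{$\mathbf{I_{1,\mu}}$} since $\mu_{-} < \mu$) extended to all of $\mathbf{No}^{>, \succ}$ via~(\ref{eq-functional-total}). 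The key quantitative input is that $L_{\rho} a \in \mathbf{No}^{>, \succ}$, hence $L_{\rho} a > n$ for every $n \in \mathbb{N}$, so $2 L_{\rho} a$ strictly dominates any finite shift $L_{\rho} a + n$.

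I would first treat the case where $\mu_{-} = \tau + 1$ is a successor ordinal, so that $\rho_{/\omega} = \omega^{\tau}$. Iterating the identity $L_{\rho} \circ L_{\rho_{/\omega}} = L_{\rho} - 1$ yields $E_{\rho}(L_{\rho} a + n) = E_{\rho_{/\omega}}^{\circ n}(a)$ for every $n \in \mathbb{N}$, whence
\[ E_{\rho}(2 L_{\rho} a) \; > \; E_{\rho}(L_{\rho} a + n) \; = \; E_{\rho_{/\omega}}^{\circ n}(a) \qquad \text{for every } n \in \mathbb{N}. \]
Decomposing $\gamma = \rho_{/\omega} \cdot k + \gamma'$ with $k \in \mathbb{N}$ and $\gamma' < \rho_{/\omega}$ (possible since $\rho = \rho_{/\omega} \cdot \omega$) rewrites $E_{\gamma} a$ as $E_{\gamma'}(E_{\rho_{/\omega}}^{\circ k}(a))$. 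A secondary induction at the smaller scale $\rho_{/\omega}$, using \hyperref[ind-ax2]{$\mathbf{I_{2,\mu}}$} with appropriate real multipliers together with the functional equation at that scale, then places $E_{\gamma} a$ below some $E_{\rho_{/\omega}}^{\circ m}(a)$, and hence below $E_{\rho}(2 L_{\rho} a)$.

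When $\mu_{-}$ is instead a limit ordinal, the Cantor normal form of $\gamma < \rho$ supplies some $\eta < \mu_{-}$ with $\gamma < \omega^{\eta + 1} < \rho$. Arguing by an auxiliary induction on $\rho$, the lemma applied at $\omega^{\eta + 1}$ gives $E_{\gamma} a < E_{\omega^{\eta + 1}}(2 L_{\omega^{\eta + 1}} a)$. Since $\omega^{\eta + 1} < \rho < \omega^{\mu}$, the hypothesis \hyperref[ind-ax2]{$\mathbf{I_{2,\mu}}$} applied with $r = s = 2$ to the pair $\omega^{\eta + 1} < \rho$ yields $E_{\omega^{\eta + 1}}(2 L_{\omega^{\eta + 1}} a) < E_{\rho}(2 L_{\rho} a)$, and chaining the two inequalities completes the proof in this case.

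The hardest step is the secondary induction in the successor sub-case: one must absorb sub-hyperexponential contributions coming from $E_{\gamma'}$ with $\gamma' < \rho_{/\omega}$ nonzero into finitely many additional iterations of $E_{\rho_{/\omega}}$, while controlling behaviour near fixed points of $E_{\rho_{/\omega}}$ (such as $\omega$, where a naive pointwise bound like $E_{\gamma'} < E_{\rho_{/\omega}}$ can fail). This forces one to combine \hyperref[functional-eq]{$\mathbf{FE_{\mu_{-}}}$} and \hyperref[ind-ax2]{$\mathbf{I_{2,\mu}}$} carefully, using real dilations in the $\mathbf{I_{2,\mu}}$-term to compensate for the finite ``jump'' $+n$ produced by each application of \hyperref[functional-eq]{$\mathbf{FE_{\mu_{-}}}$}; this is the point where most of the technical work resides.
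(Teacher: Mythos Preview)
Your plan is workable in outline but is substantially more complicated than needed, and it contains two concrete errors.

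First, in the successor sub-case you decompose $\gamma = \rho_{/\omega}\cdot k + \gamma'$ and write $E_{\gamma} a = E_{\gamma'}\bigl(E_{\rho_{/\omega}}^{\circ k}(a)\bigr)$. The composition goes the other way: with the paper's convention~(\ref{eq-Cantor-form-composition}) one has $L_{\gamma} = L_{\gamma'} \circ L_{\omega^{\tau}}^{\circ k}$ (the largest exponent is applied first), hence $E_{\gamma} = E_{\omega^{\tau}}^{\circ k} \circ E_{\gamma'}$. Second, your worry about ``fixed points of $E_{\rho_{/\omega}}$ (such as $\omega$)'' is misplaced: $\omega$ is not a fixed point (indeed $L_{\omega^{\tau}}\omega < \omega$ strictly), and in the ambient confluent skeleton one has $L_{\delta} < L_{\gamma}$ on $\mathbf{No}^{>,\succ}$ whenever $\gamma < \delta$, which immediately gives $E_{\gamma'} < E_{\rho_{/\omega}}$ for all $\gamma' < \rho_{/\omega}$. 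So the ``hardest step'' you anticipate is in fact trivial once the composition order is fixed, and no secondary induction is needed.

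More to the point, the paper avoids your case split and the whole descent to scale $\rho_{/\omega}$ by a single three-line argument. Given $\gamma<\rho=\omega^{\mu_-}$, pick $\eta<\mu_-$ and $n<\omega$ with $\gamma<\omega^{\eta}n$; then
\[
E_{\gamma} \; < \; E_{\omega^{\eta} n} \; = \; E_{\omega^{\eta+1}}\,T_n\,L_{\omega^{\eta+1}} \; < \; E_{\omega^{\eta+1}}\,H_2\,L_{\omega^{\eta+1}} \; \leqslant \; E_{\rho}\,H_2\,L_{\rho},
\]
using~(\ref{eq-functional-total}) for the equality, the elementary bound $T_n<H_2$ on $\mathbf{No}^{>,\succ}$ (since $a+n<2a$ for $a$ positive infinite) for the second inequality, and a single application of \hyperref[ind-ax2]{$\mathbf{I_{2,\mu}}$} with $r=s=2$ for the last (noting $\eta+1\leqslant\mu_-<\mu$, with equality $\omega^{\eta+1}=\rho$ handled trivially). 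The insight you are missing is precisely this $T_n<H_2$ step, which converts the additive shift coming from the functional equation directly into the multiplicative $H_2$ without any further iteration; this works uniformly regardless of whether $\mu_-$ is a successor or a limit.
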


\begin{proof}
 For $\gamma < \rho$, there are $\eta < \mu_-$ and $n < \omega$
  with $\gamma < \omega^{\eta} n$. We have
  \[ \widespacing{E_{\gamma} < E_{\omega^{\eta} n} = E_{\omega^{\eta + 1}} T_n
     L_{\omega^{\eta + 1}} < E_{\omega^{\eta + 1}} H_2 L_{\omega^{\eta + 1}}}
  \]
  on $\mathbf{No}^{>, \succ}$ by (\ref{eq-functional-total}). Note that $\eta
  + 1 \leqslant \mu_- < \mu$, so \hyperref[ind-ax2]{$\mathbf{I_{2,\mu}}$} yields
  \begin{eqnarray*}
    E_{\omega^{\eta + 1}} H_2 L_{\omega^{\eta + 1}} & \leqslant & E_{\rho} H_2 L_{\rho},
  \end{eqnarray*}
  whence~$E_{\gamma} < E_{\rho} H_2 L_{\rho}$.
\end{proof}

Let $\mathfrak{a} \in \mathbf{Mo}_{\beta}$. Since $\mathbf{Mo}_{\beta}$ is a
surreal substructure, we may consider the $L_{< \beta}$-atomic number
\begin{eqnarray*}
  \mathfrak{b} & \assign & \left\{ L_{\rho}
  \mathfrak{a}_L^{\smash{\mathbf{Mo}_{\beta}}} |L_{\rho}
  \mathfrak{a}_R^{\smash{\mathbf{Mo}_{\beta}}}, \mathfrak{a}
  \right\}_{\mathbf{Mo}_{\beta}} .
\end{eqnarray*}
We claim that $\mathfrak{b}= L_{\rho} \mathfrak{a}$. Assume that
$\mu = 1$ and write $\mathfrak{a}= \Xi_{\mathbf{Mo}_{\omega}} a$. We have
\begin{align*}
  \log \mathfrak{a} & = \Xi_{\mathbf{Mo}_{\omega}} (a - 1) &&
  \text{(by {\cite[Proposition~2.5]{vdH:bm}})}\\
  & = \Xi_{\mathbf{Mo}_{\omega}}  \{ a_L - 1| a_R - 1, a \} &&
  \text{(by (\ref{eq-uniform-sum}))}\\
  & = \{ \Xi_{\mathbf{Mo}_{\omega}} (a_L - 1) | \Xi_{\mathbf{Mo}_{\omega}}
  (a_R - 1), \Xi_{\mathbf{Mo}_{\omega}} a \}_{\mathbf{Mo}_{\omega}}\\
  & = \{ \log \Xi_{\mathbf{Mo}_{\omega}} a_L | \log
  \Xi_{\mathbf{Mo}_{\omega}} a_R, \Xi_{\mathbf{Mo}_{\omega}} a
  \}_{\mathbf{Mo}_{\omega}} && \text{(by
  {\cite[Proposition~2.5]{vdH:bm}})}\\
  & = \{ \log \mathfrak{a}_L^{\mathbf{Mo}_{\omega}} | \log
  \mathfrak{a}_R^{\mathbf{Mo}_{\omega}}, \mathfrak{a}
  \}_{\mathbf{Mo}_{\omega}}\\
  & = \mathfrak{b}.
\end{align*}
Assume now that $\mu > 1$. The function $L_{\rho}$ is strictly
increasing with $L_{\rho} < \tmop{id}_{\mathbf{No}^{>, \succ}}$.
Therefore
\begin{eqnarray*}
  L_{\rho} \mathfrak{a} & \in & \left( L_{\rho}
  \mathfrak{a}_L^{\smash{\mathbf{Mo}_{\beta}}} |L_{\rho}
  \mathfrak{a}_R^{\smash{\mathbf{Mo}_{\beta}}}, \mathfrak{a}
  \right)_{\mathbf{Mo}_{\beta}},
\end{eqnarray*}
so~$\mathfrak{b} \sqsubseteq L_{\rho} \mathfrak{a}$. Since
$\mathfrak{a} \in \mathbf{Mo}_{\beta}$, the cut equation
(\ref{eq-poor-hyperlog}) for $\mu_-$ yields
\begin{eqnarray}
  L_{\rho} \mathfrak{a} = \left\{ \mathbb{R}, L_{\rho} \mathfrak{a}' + \frac{1}{L_{< \beta} \mathfrak{a}'} \suchthat
  \mathfrak{a}' \in \mathfrak{a}_L^{\smash{\mathbf{Mo}_{\rho}}}
  |L_{\rho} \mathfrak{a}_R^{\smash{\mathbf{Mo}_{\rho}}} - \frac{1}{L_{< \beta} \mathfrak{a}}, L_{< \rho}
  \mathfrak{a} \right\}.  \label{eq-subhyperlog}
\end{eqnarray}
Given $\mathfrak{a}' \in \mathfrak{a}_L^{\smash{\mathbf{Mo}_{rho}}}$, we have $\mathfrak{d}_{\beta} (\mathfrak{a}') \in
\mathfrak{a}_L^{\smash{\mathbf{Mo}_{\beta}}}$ and $\mathfrak{a}' \in
\mathcal{E}_{\beta} [\mathfrak{d}_{\beta} (\mathfrak{a}')]$. We deduce that
\[ \widespacing{L_{\rho} \mathfrak{a}' \in L_{\rho}
   \mathcal{E}_{\beta} [\mathfrak{d}_{\beta} (\mathfrak{a}')]
   =\mathcal{E}_{\beta} [L_{\rho} \mathfrak{d}_{\beta}
   (\mathfrak{a}')] .} \]
Moreover, by definition, we have
\[ \widespacing{\mathfrak{b}>\mathcal{E}_{\beta}' [L_{\rho}
   \mathfrak{d}_{\beta} (\mathfrak{a}')] =\mathcal{E}_{\beta} [L_{\rho} \mathfrak{d}_{\beta} (\mathfrak{a}')],} \]
so $\mathfrak{b} \succ L_{\rho} \mathfrak{a}'$. Symmetric
arguments yield~$\mathfrak{b} \prec L_{\rho}
\mathfrak{a}_R^{\smash{\mathbf{Mo}_{\rho}}}$.
Lemma~\ref{lem-not-too-strong-hyperexp} implies that {$L_{< \rho}
\mathfrak{a} \subseteq \mathcal{E}_{\beta} [\mathfrak{a}]$}, whence
$\mathfrak{d}_{\beta} (L_{< \rho} \mathfrak{a}) = \{ \mathfrak{a}
\}$. We get $\mathfrak{b}<\mathcal{E}_{\beta} \mathfrak{d}_{\beta} (L_{<
\rho} \mathfrak{a})$, whence $\mathfrak{b}< L_{< \rho}
\mathfrak{a}$. Thus $\mathfrak{b}$ lies in the cut defining $L_{\rho} \mathfrak{a}$ in (\ref{eq-subhyperlog}), so~$L_{\rho}
\mathfrak{a} \sqsubseteq \mathfrak{b}$. This proves our claim that
\begin{equation}
  \forall \mathfrak{a} \in \mathbf{Mo}_{\beta}, \quad L_{\rho}
  \mathfrak{a}= \left\{ L_{\rho}
  \mathfrak{a}_L^{\smash{\mathbf{Mo}_{\beta}}} |L_{\rho}
  \mathfrak{a}_R^{\smash{\mathbf{Mo}_{\beta}}}, \mathfrak{a}
  \right\}_{\mathbf{Mo}_{\beta}} . \label{eq-weaker-hyperlog}
\end{equation}
We now derive \hyperref[functional-eq]{$\mathbf{FE_{\mu}}$}.

\begin{proposition}
  \label{prop-functional-equation}For $\mathfrak{a} \in \mathbf{Mo}_{\beta}$,
  we have $L_{\beta} L_{\rho} \mathfrak{a}= L_{\beta}
  \mathfrak{a}- 1$.
\end{proposition}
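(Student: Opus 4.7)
I proceed by $\sqsubseteq$-induction on $\mathfrak{a} \in \mathbf{Mo}_{\beta}$, assuming $L_{\beta} L_{\rho} \mathfrak{b} = L_{\beta} \mathfrak{b} - 1$ for every $\mathfrak{b} \sqsubset \mathfrak{a}$ in $\mathbf{Mo}_{\beta}$. By the identity~(\ref{eq-weaker-hyperlog}) just established, $L_{\rho} \mathfrak{a}$ lies in $\mathbf{Mo}_{\beta}$ with cut representation $\bigl(L_{\rho} \mathfrak{a}_L^{\smash{\mathbf{Mo}_{\beta}}} \mid L_{\rho} \mathfrak{a}_R^{\smash{\mathbf{Mo}_{\beta}}}, \mathfrak{a}\bigr)_{\mathbf{Mo}_{\beta}}$. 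Since $\mu$ is a successor ordinal, the cut equation~(\ref{eq-poor-hyperlog}) for $L_{\beta}$ is uniform by Proposition~\ref{cor-hyperlog-uniform}. Combining these two facts and substituting the induction hypothesis into the left and right options yields an explicit cut representation of $L_{\beta} L_{\rho} \mathfrak{a}$: its left options are $\mathbb{R}$ together with $L_{\beta} \mathfrak{a}' - 1 + (L_{<\beta} L_{\rho} \mathfrak{a}')^{-1}$ for $\mathfrak{a}' \in \mathfrak{a}_L^{\smash{\mathbf{Mo}_{\beta}}}$, and its right options are $L_{\beta} \mathfrak{a}'' - 1 - (L_{<\beta} L_{\rho} \mathfrak{a})^{-1}$ for $\mathfrak{a}'' \in \mathfrak{a}_R^{\smash{\mathbf{Mo}_{\beta}}}$, together with $L_{\beta} \mathfrak{a} - (L_{<\beta} L_{\rho} \mathfrak{a})^{-1}$ and $L_{<\beta} L_{\rho} \mathfrak{a}$.

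In parallel, the uniform sum equation~(\ref{eq-uniform-sum}) applied to the cut representation of $L_{\beta} \mathfrak{a}$ from~(\ref{eq-poor-hyperlog}) produces a cut representation of $L_{\beta} \mathfrak{a} - 1$: its left options are $\mathbb{R}$ and $L_{\beta} \mathfrak{a}' + (L_{<\beta} \mathfrak{a}')^{-1} - 1$ for $\mathfrak{a}' \in \mathfrak{a}_L^{\smash{\mathbf{Mo}_{\beta}}}$, and its right options are $L_{\beta} \mathfrak{a}$, $L_{\beta} \mathfrak{a}'' - (L_{<\beta} \mathfrak{a})^{-1} - 1$ for $\mathfrak{a}'' \in \mathfrak{a}_R^{\smash{\mathbf{Mo}_{\beta}}}$, and $L_{<\beta} \mathfrak{a} - 1$. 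Since both $L_{\beta} L_{\rho} \mathfrak{a}$ and $L_{\beta} \mathfrak{a} - 1$ are $\sqsubseteq$-minimal in their respective cuts in $\mathbf{No}$, it suffices to show that the two cuts determine the same convex class of surreal numbers, i.e., that their left options are mutually cofinal and their right options are mutually coinitial.

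The main obstacle lies in comparing the sets $L_{<\beta} \mathfrak{x}$ and $L_{<\beta} L_{\rho} \mathfrak{x}$ for $\mathfrak{x} \in \{\mathfrak{a}\} \cup \mathfrak{a}_L^{\smash{\mathbf{Mo}_{\beta}}} \cup \mathfrak{a}_R^{\smash{\mathbf{Mo}_{\beta}}}$; these sets are generally unequal, since for $\gamma = \omega^{\mu_-} k + \gamma'$ with $k < \omega$ and $0 < \gamma' < \rho$, the value $L_{\gamma} L_{\rho} \mathfrak{x} = L_{\rho}^{k} L_{\gamma'} L_{\rho} \mathfrak{x}$ is not of the form $L_{\delta} \mathfrak{x}$ for any $\delta < \beta$. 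However, in the special case $\gamma' = 0$ we have $L_{\gamma} L_{\rho} \mathfrak{x} = L_{\omega^{\mu_-}(k+1)} \mathfrak{x}$ by Cantor normal form, and since the ordinals $\omega^{\mu_-}(k+1)$ for $k < \omega$ are cofinal in $\beta = \omega^{\mu_-} \cdot \omega$, these common values form a subset of both $L_{<\beta} \mathfrak{x}$ and $L_{<\beta} L_{\rho} \mathfrak{x}$ which is cofinal in each. After inversion, the two sets of options are therefore mutually cofinal in $\mathbf{No}^{>}$. Combined with axiom \hyperref[monotonicity]{$\mathbf{M_{\mu}}$} and the fact that the constant $1$ is negligible beside the positive infinite $L_{\beta} \mathfrak{a}$, this forces equality of the two cuts. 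The additional right options $L_{\beta} \mathfrak{a} - (L_{<\beta} L_{\rho} \mathfrak{a})^{-1}$ and $L_{\beta} \mathfrak{a}$ are compatible since both strictly exceed $L_{\beta} \mathfrak{a} - 1$, closing the induction.
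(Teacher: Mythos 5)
Your proof is correct and follows the same route as the paper: $\sqsubseteq$-induction, using~(\ref{eq-weaker-hyperlog}), uniformity of~(\ref{eq-poor-hyperlog}) via Proposition~\ref{cor-hyperlog-uniform}, the inductive hypothesis, and~(\ref{eq-uniform-sum}), followed by a comparison of the resulting cut representations of $L_{\beta} L_{\rho} \mathfrak{a}$ and $L_{\beta}\mathfrak{a}-1$. Your one genuine addition is to make explicit the mutual cofinality of $(L_{<\beta}\mathfrak{x})^{-1}$ and $(L_{<\beta} L_{\rho}\mathfrak{x})^{-1}$ via the common cofinal subfamily indexed by $\omega^{\mu_-}(k+1)$, a step the paper performs silently when it rewrites $\frac{1}{L_{<\beta} L_{\rho}\mathfrak{a}'}$ as $\frac{1}{L_{<\beta}\mathfrak{a}'}$ between the two display lines; this is a useful clarification and the rest of your argument is sound.
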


\begin{proof}
  We prove this by induction on $(\mathbf{Mo}_{\beta}, \sqsubseteq)$. Let
  $\mathfrak{a} \in \mathbf{Mo}_{\beta}$ be such that the result holds
  on~$\mathfrak{a}_{\sqsubset}^{\smash{\mathbf{Mo}_{\beta}}}$. By
  (\ref{eq-weaker-hyperlog}), we have
  \begin{eqnarray*}
    L_{\beta_{/ \omega}} \mathfrak{a} & = & \left\{ L_{\rho}
    \mathfrak{a}_L^{\smash{\mathbf{Mo}_{\beta}}} |L_{\rho}
    \mathfrak{a}_R^{\smash{\mathbf{Mo}_{\beta}}}, \mathfrak{a}
    \right\}_{\mathbf{Mo}_{\beta}} .
  \end{eqnarray*}
  Let $\mathfrak{a}'$ and $\mathfrak{a}''$ range in
  $\mathfrak{a}_L^{\smash{\mathbf{Mo}_{\beta}}}$ and
  $\mathfrak{a}_R^{\smash{\mathbf{Mo}_{\beta}}}$ respectively.
  Proposition~\ref{cor-hyperlog-uniform} and our induction hypothesis yield:
  \begin{eqnarray*}
    L_{\beta} L_{\rho} \mathfrak{a} & = \left\{ \mathbb{R},
    L_{\beta} L_{\rho} \mathfrak{a}' + \frac{1}{L_{< \beta}
    L_{\rho} \mathfrak{a}'} | L_{\beta} L_{\rho}
    \mathfrak{a}'' - \frac{1}{L_{< \beta} L_{\rho} \mathfrak{a}},
    L_{\beta} \mathfrak{a}- \frac{1}{L_{< \beta} \mathfrak{a}}, L_{< \beta}
    L_{\rho} \mathfrak{a} \right\}\\
    & = \left\{ \mathbb{R}, L_{\beta} \mathfrak{a}' - 1 + \frac{1}{L_{<
    \beta} \mathfrak{a}'} | L_{\beta} \mathfrak{a}'' - 1 - \frac{1}{L_{<
    \beta} \mathfrak{a}}, L_{\beta} \mathfrak{a}- \frac{1}{L_{< \beta}
    \mathfrak{a}}, L_{< \beta} \mathfrak{a} \right\} .
  \end{eqnarray*}
  On the other hand, we have
  \begin{eqnarray*}
    L_{\beta} \mathfrak{a}- 1 & = & \left\{ \mathbb{R}- 1, L_{\beta}
    \mathfrak{a}' + \frac{1}{L_{< \beta} \mathfrak{a}'} - 1 | L_{\beta}
    \mathfrak{a}'' - \frac{1}{L_{< \beta} \mathfrak{a}} - 1, L_{< \beta}
    \mathfrak{a}- 1, L_{\beta} \mathfrak{a} \right\}\\
    & = & \left\{ \mathbb{R}, L_{\beta} \mathfrak{a}' + \frac{1}{L_{< \beta}
    \mathfrak{a}'} - 1 | L_{\beta} \mathfrak{a}'' - \frac{1}{L_{< \beta}
    \mathfrak{a}} - 1, L_{\beta} \mathfrak{a}, L_{< \beta} \mathfrak{a}
    \right\} .
  \end{eqnarray*}
  In order to conclude that~$L_{\beta} L_{\rho} \mathfrak{a}=
  L_{\beta} \mathfrak{a}- 1$, it remains to show that $L_{\beta} \mathfrak{a}-
  1 < L_{\beta} \mathfrak{a}- (L_{< \beta} \mathfrak{a})^{- 1}$ and that
  $L_{\beta} L_{\rho} \mathfrak{a}< L_{\beta} \mathfrak{a}$. The
  first inequality holds because $(L_{< \beta} \mathfrak{a})^{- 1}$ is a set
  of infinitesimal numbers. An easy induction shows that $L_{\rho}
  a < a$ for all $a \in \mathbf{No}^{>, \succ}$. The second inequality
  follows, because~$L_{\beta}$ is strictly increasing on
  $\mathbf{Mo}_{\beta}$. This completes our inductive proof.
\end{proof}

Combining our results so far, we have proved that $(\mathbf{No},
(L_{\omega^{\eta}})_{\eta < \nu})$ is a hyperserial skeleton of force $\nu$.

\subsection{Confluence}\label{subsection-confluence}

We next prove that $(\mathbf{No}, (L_{\omega^{\eta}})_{\eta < \nu})$ is
$\nu$-confluent.

\begin{lemma}
  \label{lem-limit-hypermon}If $\mu$ is a non-zero limit ordinal, then the
  function groups $\mathcal{E}_{\beta}'$ and $\mathcal{E}_{\beta}^{\ast}$ are
  mutually pointwise cofinal. In particular, we have $\mathbf{Mo}_{\beta} =
  \mathbf{Mo}_{\beta}^{\ast}$ and $\mathbf{Tr}_{\beta} =
  \mathbf{Tr}_{\beta}^{\ast}$.
\end{lemma}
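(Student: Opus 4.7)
The plan is to establish the mutual pointwise cofinality $\mathcal{E}_{\beta}' \legeangle \mathcal{E}_{\beta}^{\ast}$ directly, by bounding each generator of one group pointwise on $\mathbf{No}^{>, \succ}$ by an element of the other. Once this is done, the two identities in the ``in particular'' clause will follow: $\mathbf{Mo}_{\beta} = \mathbf{Mo}_{\beta}^{\ast}$ combines \hyperref[ind-ax3]{$\mathbf{I_{3,\mu}}$} (which gives $\mathbf{Mo}_{\beta} = \mathbf{Mo}_{\beta}'$) with the general fact, recalled just after the definition of $\legeangle$, that mutual pointwise cofinality forces equal $\mathbf{Smp}$-classes; and $\mathbf{Tr}_{\beta} = \mathbf{Tr}_{\beta}^{\ast}$ follows because conjugation by the strictly increasing bijection $E_{\beta}$ carries mutual cofinality of $\mathcal{E}_{\beta}', \mathcal{E}_{\beta}^{\ast}$ to mutual cofinality of $\mathcal{L}_{\beta}' = L_{\beta}\mathcal{E}_{\beta}' E_{\beta}$ and $\mathcal{L}_{\beta}^{\ast} = L_{\beta}\mathcal{E}_{\beta}^{\ast} E_{\beta}$.

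For $\mathcal{E}_{\beta}^{\ast} \leqangle \mathcal{E}_{\beta}'$, it suffices to bound each generator of $\mathcal{E}_{\beta}^{\ast}$. The generator $P_r$ rewrites on $\mathbf{No}^{>, \succ}$ as $E_1 H_r L_1$, which already lies in $\mathcal{E}_{\beta}'$ since $1 < \beta$. For a generator $E_{\gamma}$ with $\gamma < \beta$, I would use the fact that $\mu$ is a non-zero limit to pick $\eta < \mu$ with $\eta + 1 < \mu$ and some $n \in \mathbb{N}$ with $\gamma < \omega^{\eta} n$. Iterating the functional equation $L_{\omega^{\eta + 1}} L_{\omega^{\eta}} = L_{\omega^{\eta + 1}} - 1$ yields $E_{\omega^{\eta} n} = E_{\omega^{\eta + 1}} T_n L_{\omega^{\eta + 1}}$, exactly as in the proof of Lemma~\ref{lem-not-too-strong-hyperexp}. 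For any $a \in \mathbf{No}^{>, \succ}$, the value $L_{\omega^{\eta + 1}}(a)$ is itself positive infinite, so $T_n L_{\omega^{\eta + 1}}(a) < H_2 L_{\omega^{\eta + 1}}(a)$. Combining this with monotonicity of $E_{\gamma}$ in $\gamma$ on $\mathbf{No}^{>, \succ}$ (a consequence of $\mathbf{A}_\eta$ for various $\eta$ together with (\ref{eq-Cantor-form-composition})) gives $E_{\gamma}(a) < E_{\omega^{\eta + 1}} H_2 L_{\omega^{\eta + 1}}(a)$, and the right-hand side is in $\mathcal{E}_{\beta}'$ because $\omega^{\eta + 1} < \beta$.

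For $\mathcal{E}_{\beta}' \leqangle \mathcal{E}_{\beta}^{\ast}$, I would bound each generator $E_{\gamma} H_r L_{\gamma}$ of $\mathcal{E}_{\beta}'$. If $r \leqslant 1$, then $r L_{\gamma}(a) \leqslant L_{\gamma}(a)$ for every $a \in \mathbf{No}^{>, \succ}$, so $E_{\gamma} H_r L_{\gamma}(a) \leqslant a$, which is the action of the identity, a member of $\mathcal{E}_{\beta}^{\ast}$. If $r > 1$, then for $a \in \mathbf{No}^{>, \succ}$ the value $L_{\gamma}(a)$ is positive infinite and in particular exceeds $r$, so $r L_{\gamma}(a) < L_{\gamma}(a)^r = P_r L_{\gamma}(a)$; applying $E_{\gamma}$ yields $E_{\gamma} H_r L_{\gamma}(a) < E_{\gamma} P_r L_{\gamma}(a)$, and the right-hand side lies in $\mathcal{E}_{\beta}^{\ast}$ since $E_{\gamma}$, $L_{\gamma} = E_{\gamma}^{-1}$, and $P_r$ are all generators.

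The hardest part is Direction~1: finding a suitable bound in $\mathcal{E}_{\beta}'$ requires both the functional-equation identity $E_{\omega^{\eta} n} = E_{\omega^{\eta + 1}} T_n L_{\omega^{\eta + 1}}$ and monotonicity of $E_{\gamma}$ in $\gamma$. The limit hypothesis on $\mu$ is used exactly to ensure that for every $\gamma < \beta$ there is room $\omega^{\eta + 1} < \beta$ above; without this there would be no candidate level at which to exponentiate to absorb $E_{\gamma}$ into $\mathcal{E}_{\beta}'$.
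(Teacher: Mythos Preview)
Your proof is correct and follows essentially the same approach as the paper. For the direction $\mathcal{E}_{\beta}^{\ast} \leqangle \mathcal{E}_{\beta}'$ your argument is nearly identical to the paper's (the paper simply takes $n=1$, noting that since $\mu$ is a limit one can choose $\eta$ with $\rho < \omega^{\eta}$ directly). For the direction $\mathcal{E}_{\beta}' \leqangle \mathcal{E}_{\beta}^{\ast}$ the paper uses the slightly cleaner one-line bound $E_{\gamma} H_r L_{\gamma} < E_{\gamma}$ (since $H_r < E_{\gamma}$ on $\mathbf{No}^{>,\succ}$), avoiding your case split on $r$, but your argument via $H_r < P_r$ is equally valid.
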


\begin{proof}
  For $\gamma \in (0, \beta)$ and $r \in \mathbb{R}^{>}$, we have $E_{\gamma}
  H_r L_{\gamma} < E_{\gamma}$ since $H_r < E_{\gamma}$. We have
  \begin{eqnarray*}
    \{ L_{\rho}, E_{\rho} \suchthat \rho \in (0, \beta) \} & \legeangle &
    \mathcal{E}_{\beta}^{\ast},
  \end{eqnarray*}
  whereas \hyperref[ind-ax2]{$\mathbf{I_{2,\mu}}$} yields
  \begin{eqnarray*}
    \{ E_{\rho} H_r L_{\rho} \suchthat \rho \in (0, \beta) \} & \legeangle &
    \mathcal{E}_{\beta}' .
  \end{eqnarray*}
  Therefore $\mathcal{E}_{\beta}' \leqangle \mathcal{E}_{\beta}^{\ast}$. For
  $\rho < \beta$, there is $\eta < \mu$ with $\rho < \omega^{\eta}$. By
  (\ref{eq-functional-total}), we have
  \[ \widespacing{E_{\rho} < E_{\omega^{\eta}} = E_{\omega^{\eta + 1}} T_1
     L_{\omega^{\eta + 1}} < E_{\omega^{\eta + 1}} H_2 L_{\omega^{\eta + 1}},}
  \]
  which proves the inequality $\mathcal{E}_{\beta}^{\ast} \leqangle
  \mathcal{E}_{\beta}'$.
\end{proof}

\begin{lemma}
  \label{lem-confluence}For each $a \in \mathbf{No}^{>, \succ}$, any
  $\sqsubseteq$-minimal element of $\mathcal{E}_{\alpha} [a]$ is $L_{<
  \alpha}$-atomic.
\end{lemma}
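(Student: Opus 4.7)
The plan is to prove that $L_\beta^{\circ k}\mathfrak{a} \in \mathbf{Mo}_\beta$ for every $k \geq 0$, where $\beta \assign \omega^\mu$; by \hyperref[DD-mu]{$\mathbf{DD_{\mu+1}}$}, this is exactly the condition for $\mathfrak{a}$ to be $L_{<\alpha}$-atomic. The case $k = 0$ asserts $\mathfrak{a} \in \mathbf{Mo}_\beta$, while the remaining cases $k \geq 1$ require a more delicate transfer of $\sqsubseteq$-minimality under iterated applications of $L_\beta$.

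For $k = 0$, I would exploit the inclusion $\mathcal{E}_\beta[\mathfrak{a}] \subseteq \mathcal{E}_\alpha[\mathfrak{a}]$: if $t$ is $\mathcal{E}_\beta$-equivalent to $\mathfrak{a}$, then $\mathfrak{d}_\beta(t) = \mathfrak{d}_\beta(\mathfrak{a})$ (since $\mathfrak{d}_\beta$ is constant on $\mathcal{E}_\beta$-classes, by $\mu$-confluence from \hyperref[ind-ax1]{$\mathbf{I_{1,\mu}}$}), so $(L_\beta\mathfrak{d}_\beta)(t) = (L_\beta\mathfrak{d}_\beta)(\mathfrak{a})$, placing $t$ in $\mathcal{E}_\alpha[\mathfrak{a}]$. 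By \hyperref[ind-ax1]{$\mathbf{I_{1,\mu}}$}, the class $\mathcal{E}_\beta[\mathfrak{a}]$ contains an $L_{<\beta}$-atomic element, which by \hyperref[ind-ax3]{$\mathbf{I_{3,\mu}}$} together with the identification $\mathbf{Mo}_\beta = \mathbf{Smp}_{\mathcal{E}_\beta'}$ is the unique $\sqsubseteq$-minimum $\mathfrak{b}$ of $\mathcal{E}_\beta[\mathfrak{a}]$. The $\sqsubseteq$-minimality of $\mathfrak{a}$ in $\mathcal{E}_\alpha[\mathfrak{a}]$ forces $\mathfrak{a} \sqsubseteq \mathfrak{b}$ (as $\mathfrak{b} \in \mathcal{E}_\alpha[\mathfrak{a}]$), while the $\sqsubseteq$-minimality of $\mathfrak{b}$ in $\mathcal{E}_\beta[\mathfrak{a}]$ forces $\mathfrak{b} \sqsubseteq \mathfrak{a}$; hence $\mathfrak{a} = \mathfrak{b} \in \mathbf{Mo}_\beta$.

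For $k \geq 1$ I would proceed by induction, assuming $\mathfrak{a}_k \assign L_\beta^{\circ k}\mathfrak{a} \in \mathbf{Mo}_\beta$ and deriving $\mathfrak{a}_{k+1} \assign L_\beta\mathfrak{a}_k \in \mathbf{Mo}_\beta$. Applying the $k = 0$ argument to $\mathfrak{a}_{k+1}$ in its own class reduces the task to verifying that $\mathfrak{a}_{k+1}$ is $\sqsubseteq$-minimal in $\mathcal{E}_\alpha[\mathfrak{a}_{k+1}]$. I would argue this contrapositively: given a hypothetical $\mathfrak{c} \sqsubset \mathfrak{a}_{k+1}$ inside $\mathcal{E}_\alpha[\mathfrak{a}_{k+1}]$, the uniform cut equation (\ref{eq-poor-hyperlog}) supplied by Proposition~\ref{cor-hyperlog-uniform} (whose hypothesis that $\mu$ is a successor is satisfied here) would let me construct, from a cut representation of $\mathfrak{c}$, an element $\mathfrak{a}'_k \sqsubset \mathfrak{a}_k$ inside $\mathbf{Mo}_\beta$ whose image $L_\beta\mathfrak{a}'_k$ sits in $\mathcal{E}_\alpha[\mathfrak{a}_k]$; iterating this trace-back $k$ times would yield $\mathfrak{a}' \sqsubset \mathfrak{a}$ in $\mathcal{E}_\alpha[\mathfrak{a}]$, contradicting minimality of $\mathfrak{a}$.

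The main obstacle is the preimage construction in the inductive step. Since the hyperexponential $E_\beta$ is not yet available as a total function on $\mathbf{No}^{>, \succ}$ at this stage of the induction, the preimage of $\mathfrak{c}$ under $L_\beta$ must be built directly from cut data inside $\mathbf{Mo}_\beta$. The asymmetric correction terms $(L_{<\beta}\mathfrak{a}')^{-1}$ on the left and $(L_{<\beta}\mathfrak{a})^{-1}$ on the right of (\ref{eq-poor-hyperlog}) require careful case analysis, and verifying that the constructed preimage is strictly simpler than $\mathfrak{a}_k$ (rather than merely $\sqsubseteq$-comparable) will rest essentially on the uniformity of the cut equation established in Proposition~\ref{cor-hyperlog-uniform}.
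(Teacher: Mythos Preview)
Your overall strategy---show $\mathfrak{a}\in\mathbf{Mo}_\beta$ via the inclusion $\mathcal{E}_\beta[\mathfrak{a}]\subseteq\mathcal{E}_\alpha[\mathfrak{a}]$, then show that applying $L_\beta$ preserves $\sqsubseteq$-minimality within $\mathcal{E}_\alpha$-classes---is exactly the paper's, and your $k=0$ step is correct. But the inductive step has real gaps.

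First, a minor point: your parenthetical ``whose hypothesis that $\mu$ is a successor is satisfied here'' is wrong. In the ambient inductive setting $\mu$ is an arbitrary ordinal $\geq 1$, and Proposition~\ref{cor-hyperlog-uniform} only covers the successor case. The paper's argument does not need uniformity; it works directly with the defining cut (\ref{eq-poor-hyperlog}) for $L_\beta\mathfrak{a}_k$.

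More seriously, the ``preimage construction'' is the wrong mental picture. The simpler element $\mathfrak{c}\sqsubset L_\beta\mathfrak{a}_k$ need not lie in $L_\beta\mathbf{Mo}_\beta$ (indeed $E_\beta$ is not yet available), so there is no preimage to build. What the paper does instead is observe that $\mathfrak{c}$, being strictly simpler than $L_\beta\mathfrak{a}_k$, must violate one of the bounds in (\ref{eq-poor-hyperlog}). If $\mathfrak{c}<L_\beta\mathfrak{a}_k$ then $\mathfrak{c}\preccurlyeq L_\beta\mathfrak{a}'$ for some $\mathfrak{a}'\in(\mathfrak{a}_k)_L^{\mathbf{Mo}_\beta}$, and one argues directly that $\mathfrak{a}'\in\mathcal{E}_\alpha[\mathfrak{a}_k]$, contradicting minimality of $\mathfrak{a}_k$. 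If $\mathfrak{c}>L_\beta\mathfrak{a}_k$ then either $\mathfrak{c}\succcurlyeq L_\beta\mathfrak{a}''$ for some $\mathfrak{a}''\in(\mathfrak{a}_k)_R^{\mathbf{Mo}_\beta}$ (handled symmetrically), or $\mathfrak{c}\geq L_\gamma\mathfrak{a}_k$ for some $\gamma<\beta$. This last case is the delicate one and is entirely absent from your proposal: it requires a sub-case split according to whether $\mu$ is a limit ordinal (invoking Lemma~\ref{lem-limit-hypermon}) or a successor (invoking the functional equation, Proposition~\ref{prop-functional-equation}), and the contradiction is with $\mathfrak{c}\in\mathcal{E}_\alpha[L_\beta\mathfrak{a}_k]$ rather than with minimality of $\mathfrak{a}_k$.

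Finally, ``iterating the trace-back $k$ times'' is a red herring. Once you strengthen the inductive hypothesis to ``$\mathfrak{a}_k$ is $\sqsubseteq$-minimal in $\mathcal{E}_\alpha[\mathfrak{a}_k]$'' (equivalently: the class of such minima is $L_\beta$-closed, which is how the paper phrases it), one step suffices; there is no need to chase the contradiction all the way down to $\mathfrak{a}_0$.
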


\begin{proof}
  Let $\mathfrak{A}$ denote the class of numbers $\mathfrak{a} \in
  \mathbf{No}^{>, \succ}$ that are $\sqsubseteq$-minimal in
  $\mathcal{E}_{\alpha} [\mathfrak{a}]$. Any such $\sqsubseteq$-minimal number
  $\mathfrak{a}$ is also $\sqsubseteq$-minimal in $\mathcal{E}_{\beta}'
  [\mathfrak{a}] =\mathcal{E}_{\beta} [\mathfrak{a}] \subseteq
  \mathcal{E}_{\alpha} [\mathfrak{a}]$, hence $L_{< \beta}$-atomic. Thus
  $L_{\beta}$ is defined on $\mathfrak{A}$. It is enough to prove that
  $\mathfrak{A}$ is closed under $L_{\beta}$ in order to obtain that
  $\mathfrak{A} \subseteq \mathbf{Mo}_{\alpha}$.
  
  Consider $\mathfrak{a} \in \mathfrak{A}$, and recall that we have
  \begin{eqnarray}
    L_{\beta} \mathfrak{a} & = & \left\{ \mathbb{R}, L_{\beta} \mathfrak{a}' +
    \frac{1}{L_{< \beta} \mathfrak{a}'} \suchthat \mathfrak{a}' \in
    \mathfrak{a}_L^{\smash{\mathbf{Mo}_{\beta}}} | L_{\beta}
    \mathfrak{a}_R^{\smash{\mathbf{Mo}_{\beta}}} - \frac{1}{L_{< \beta}
    \mathfrak{a}}, L_{< \beta} \mathfrak{a} \right\} .  \label{eq-confluence}
  \end{eqnarray}
  Assume for contradiction that $L_{\beta} \mathfrak{a}$ is not
  $\sqsubseteq$-minimal in $\mathcal{E}_{\alpha} [L_{\beta} \mathfrak{a}]$. So
  there is a $\mathfrak{b} \in \mathcal{E}_{\alpha} [L_{\beta} \mathfrak{a}]$
  with~$\mathfrak{b} \sqsubset L_{\beta} \mathfrak{a}$. This implies that
  $\mathfrak{b}$ lies outside the cut defining $L_{\beta} \mathfrak{a}$, so
  $\mathfrak{b}$ is larger than a right option of (\ref{eq-confluence}) or
  smaller than a left option of (\ref{eq-confluence}).
  
  Assume first that $\mathfrak{b}< L_{\beta} \mathfrak{a}$. So there is an
  $\mathfrak{a}' \in \mathfrak{a}_L^{\smash{\mathbf{Mo}_{\beta}}}$ with
  $\mathfrak{b} \preccurlyeq L_{\beta} \mathfrak{a}'$. We have
  $\mathfrak{d}_{\alpha} (\mathfrak{a}) =\mathfrak{d}_{\alpha} (\mathfrak{b})$
  so there is an $n \in \mathbb{N}$ with
  \begin{eqnarray*}
    (L_{\beta} \circ \mathfrak{d}_{\beta})^{\circ n} (\mathfrak{b}) & \asymp &
    (L_{\beta} \circ \mathfrak{d}_{\beta})^{\circ n} (L_{\beta} \mathfrak{a})
    .
  \end{eqnarray*}
  Thus
  \begin{eqnarray*}
    (L_{\beta} \circ \mathfrak{d}_{\beta})^{\circ (n + 1)} (\mathfrak{a}') &
    \asymp & (L_{\beta} \circ \mathfrak{d}_{\beta})^{\circ (n + 1)}
    (\mathfrak{a}) .
  \end{eqnarray*}
  This contradicts the $\sqsubseteq$-minimality of~$\mathfrak{a}$.
  
  Now consider the other case when~$\mathfrak{b}> L_{\beta} \mathfrak{a}$. In
  particular, $\mathfrak{b}$ must be larger than a right option of
  (\ref{eq-confluence}). Symmetric arguments imply that we cannot have
  $\mathfrak{b} \succcurlyeq L_{\beta} \mathfrak{a}''$ for some
  $\mathfrak{a}'' \in \mathfrak{a}_R^{\smash{\mathbf{Mo}_{\beta}}}$. So there
  must exist a $\gamma < \beta$ with $\mathfrak{b} \geqslant L_{\gamma}
  \mathfrak{a}$. If $\mu$ is a limit ordinal, then $\gamma < \mu_-$ so
  Lemma~\ref{lem-limit-hypermon} yields $\mathfrak{d}_{\beta} (L_{\gamma}
  \mathfrak{a}) =\mathfrak{a}$, whence~$\mathfrak{d}_{\beta} (\mathfrak{b})
  \succcurlyeq \mathfrak{a}$. If $\mu$ is a successor ordinal, then there is a
  $k \in \mathbb{N}$ with $\gamma \leqslant \beta_{/ \omega} k$, so
  \[ \widespacing{\mathfrak{d}_{\beta} (\mathfrak{b}) \geqslant
     \mathfrak{d}_{\beta} (L_{(\beta_{/ \omega}) k} \mathfrak{a}) =
     L_{(\beta_{/ \omega}) k} \mathfrak{a}} \]
  and Proposition~\ref{prop-functional-equation} yields $L_{\beta}
  \mathfrak{d}_{\beta} (\mathfrak{b}) \succcurlyeq L_{\beta} \mathfrak{a}- k
  \succcurlyeq L_{\beta} \mathfrak{a}$. In both cases, we thus have $L_{\beta}
  \mathfrak{d}_{\beta} (\mathfrak{b}) \succcurlyeq L_{\beta} \mathfrak{a}$.
  For any integer $n > 1$, we deduce that
  \[ \widespacing{(L_{\beta} \circ \mathfrak{d}_{\beta})^{\circ n}
     (\mathfrak{b}) \geqslant (L_{\beta} \circ \mathfrak{d}_{\beta})^{\circ n}
     (\mathfrak{a}) > (L_{\beta} \circ \mathfrak{d}_{\beta})^{\circ (n + 1)}
     (\mathfrak{a}) = (L_{\beta} \circ \mathfrak{d}_{\beta})^{\circ n}
     (L_{\beta} \mathfrak{a}) .} \]
  This contradicts the fact that $\mathfrak{b}$ lies in $\mathcal{E}_{\alpha}
  [L_{\beta} \mathfrak{a}]$.
  
  We have shown that the cases $\mathfrak{b}< L_{\beta} \mathfrak{a}$ and
  $\mathfrak{b}> L_{\beta} \mathfrak{a}$ both lead to a contradiction.
  Consequently, $L_{\beta} \mathfrak{a}$ is $\sqsubseteq$-minimal in
  $\mathcal{E}_{\alpha} [L_{\beta} \mathfrak{a}]$ and we conclude that
  $L_{\beta} \mathfrak{A} \subseteq \mathfrak{A}$, as claimed.
\end{proof}

\begin{corollary}
  \label{cor-confluence}$(\mathbf{No}, (L_{\omega^{\eta}})_{\eta < \nu})$ is
  $\nu$-confluent.
\end{corollary}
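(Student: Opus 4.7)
The plan is simply to combine Lemma~\ref{lem-confluence} with the well-foundedness of the simplicity relation $\sqsubseteq$ on $\mathbf{No}$. Unwinding Definition~\ref{def-confluence} in the successor case $\nu = \mu + 1$, $\nu$-confluence of $(\mathbf{No}, (L_{\omega^{\eta}})_{\eta < \nu})$ amounts to the assertion that, for every $s \in \mathbf{No}^{>, \succ}$, the class
\[ \mathcal{E}_{\alpha}[s] \;=\; \{ t \in \mathbf{No}^{>, \succ} \suchthat \exists n \in \mathbb{N}, \ (L_{\beta} \circ \mathfrak{d}_{\beta})^{\circ n}(s) \asymp (L_{\beta} \circ \mathfrak{d}_{\beta})^{\circ n}(t) \} \]
contains at least one $L_{< \alpha}$-atomic element. (Nothing additional is required to define $\mathfrak{d}_{\alpha}$, which was the secondary output of the inductive definition of confluence, but this follows once existence is secured.)

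For a fixed $s$, I would first observe that $\mathcal{E}_{\alpha}[s]$ is non-empty since it visibly contains $s$ itself. Because $(\mathbf{No}, \sqsubseteq)$ is well-founded, every non-empty subclass of $\mathbf{No}$ admits a $\sqsubseteq$-minimal element, so I pick such an $\mathfrak{a} \in \mathcal{E}_{\alpha}[s]$. Lemma~\ref{lem-confluence} then immediately yields that $\mathfrak{a}$ is $L_{< \alpha}$-atomic, and thus $\mathcal{E}_{\alpha}[s]$ does contain an $L_{< \alpha}$-atomic element, which is precisely $\nu$-confluence. There is no real obstacle here: all the work has been absorbed into Lemma~\ref{lem-confluence}, which establishes closure of the class of $\sqsubseteq$-minimal representatives under $L_{\beta}$ (and hence, together with the induction hypothesis identifying $\mathbf{Mo}_{\beta}' = \mathbf{Mo}_{\beta}$ from \hyperref[ind-ax3]{$\mathbf{I_{3,\mu}}$}, under every $L_{\gamma}$ with $\gamma < \alpha$). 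Once this is in hand, the corollary is a one-line consequence of the well-foundedness of $\sqsubseteq$.
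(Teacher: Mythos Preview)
Your proof is correct and follows essentially the same route as the paper: note that each class $\mathcal{E}_{\alpha}[s]$ has a $\sqsubseteq$-minimal element by well-foundedness, then invoke Lemma~\ref{lem-confluence} to conclude that this minimal element is $L_{<\alpha}$-atomic. The paper also explicitly records that $\mu$-confluence is already given by the induction hypothesis \hyperref[ind-ax1]{$\mathbf{I_{1,\mu}}$}, which you leave implicit; otherwise the arguments coincide.
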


\begin{proof}
  We already know that $(\mathbf{No}, (L_{\omega^{\eta}})_{\eta < \mu})$ is
  $\mu$-confluent by \ref{ind-ax1}. Recall that $(\mathbf{No}, \sqsubseteq)$
  is well-founded, so each class $\mathcal{E}_{\alpha} [a]$ for $a \in
  \mathbf{No}^{>, \succ}$ contains a $\sqsubseteq$-minimal element.
  Lemma~\ref{lem-confluence} therefore implies that $\mathbf{No}$ is
  $\nu$-confluent.
\end{proof}

The corollary implies that $(\mathbf{No}, (L_{\omega^{\eta}})_{\eta < \nu})$
is a confluent hyperserial skeleton of force $\nu$. Moreover, the class
$\mathbf{No}_{\succ, \beta}$ is that of $\trianglelefteqslant$-minima and thus
$\sqsubseteq$-minima in the convex classes
\begin{eqnarray*}
  \mathcal{L}_{\beta} [a] & = & \left\{ b \in a + \mathbf{No}^{\prec}
  \suchthat b = a \vee \left( \exists \gamma < \beta, a <
  \hl_{\beta}^{\mathord{\uparrow} \gamma} \circ | a - b |^{- 1} \right)
  \right\},
\end{eqnarray*}
for $a \in \mathbf{No}^{>, \succ}$. In other words, we have
$\mathbf{No}_{\succ, \beta} = \mathbf{Smp}_{\mathcal{L}_{\beta}}$. In order to
conclude that $\mathbf{No}_{\succ, \beta}$ is a~surreal substructure, we still
need to prove that the convex partition $\mathcal{L}_{\beta}$ is thin. This
will be done at the end of section~\ref{subsection-hyperexponentials} below.

\begin{proposition}
  \label{prop-uniform-hyperlog}The cut equation
  \tmtextup{(\ref{eq-poor-hyperlog})} is uniform.
\end{proposition}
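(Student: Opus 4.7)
Since Proposition~\ref{cor-hyperlog-uniform} already establishes uniformity of~(\ref{eq-poor-hyperlog}) when $\mu$ is a successor ordinal, my plan is to handle the remaining limit case. The main observation is that the argument given for Proposition~\ref{cor-hyperlog-uniform} does not actually rely on the successor hypothesis, so it carries through unchanged.

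Concretely, I would fix $\mathfrak{a} \in \mathbf{Mo}_\beta$ and a cut representation $(\mathfrak{L}_\mathfrak{a}, \mathfrak{R}_\mathfrak{a})$ of $\mathfrak{a}$ in $\mathbf{Mo}_\beta$, and form
\[
\varphi \assign \left\{ \mathbb{R},\, L_\beta \mathfrak{l} + \frac{1}{L_{<\beta}\mathfrak{l}} : \mathfrak{l} \in \mathfrak{L}_\mathfrak{a} \,\middle|\, L_\beta \mathfrak{R}_\mathfrak{a} - \frac{1}{L_{<\beta}\mathfrak{a}},\, L_{<\beta}\mathfrak{a} \right\}.
\]
The first task is to verify that $\varphi$ is a well-defined cut, i.e.\ that its left options lie strictly below its right options. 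This follows by the same chain of inequalities as in Proposition~\ref{cor-hyperlog-uniform}, which invokes only monotonicity $\mathbf{M}_\mu$ (Proposition~\ref{prop-monotonicity}), asymptotics $\mathbf{A}_\mu$, and the fact that distinct elements of $\mathbf{Mo}_\beta \subseteq \mathbf{Mo}$ are automatically $\prec$-comparable.

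The second task is to prove $L_\beta \mathfrak{a} = \varphi$ by a double simplicity argument. For $L_\beta \mathfrak{a} \sqsubseteq \varphi$, I check that $\varphi$ lies in the cut~(\ref{eq-poor-hyperlog}) defining $L_\beta \mathfrak{a}$, using the cofinality of $(\mathfrak{L}_\mathfrak{a}, \mathfrak{R}_\mathfrak{a})$ with respect to $(\mathfrak{a}_L^{\mathbf{Mo}_\beta}, \mathfrak{a}_R^{\mathbf{Mo}_\beta})$ combined with strict monotonicity of $L_\beta$. For $\varphi \sqsubseteq L_\beta \mathfrak{a}$, given $\mathfrak{l} \in \mathfrak{L}_\mathfrak{a}$ I take the $\sqsubseteq$-maximal $\mathfrak{b} \in \mathbf{Mo}_\beta$ with $\mathfrak{b} \sqsubseteq \mathfrak{l}, \mathfrak{a}$ (available because $\mathbf{Mo}_\beta$ is a surreal substructure by \hyperref[ind-ax3]{$\mathbf{I_{3,\mu}}$}), and apply $\mathbf{H}_\mathfrak{l}$ or $\mathbf{H}_\mathfrak{a}$ from Proposition~\ref{prop-good-hyperlog-def} depending on whether $\mathfrak{b} \sqsubset \mathfrak{l}$ or $\mathfrak{b} = \mathfrak{l}$; a symmetric treatment handles each $\mathfrak{r} \in \mathfrak{R}_\mathfrak{a}$.

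The main ``obstacle'' is really just the bookkeeping step of verifying that neither $\mathbf{FE}_\mu$ nor Proposition~\ref{prop-functional-equation} (both of which genuinely require $\mu$ to be a successor) is ever invoked in the proof of Proposition~\ref{cor-hyperlog-uniform}. Since the only inputs used there are $\mathbf{A}_\mu$, $\mathbf{M}_\mu$, Proposition~\ref{prop-good-hyperlog-def}, cofinality of cut representations, and the surreal-substructure property of $\mathbf{Mo}_\beta$ --- all available under the induction hypotheses regardless of whether $\mu$ is successor or limit --- the argument is complete.
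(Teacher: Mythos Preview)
Your proposal is correct. The observation that the proof of Proposition~\ref{cor-hyperlog-uniform} never actually invokes the successor hypothesis on $\mu$ is accurate: the only ingredients used there are $\mathbf{A}_\mu$, $\mathbf{M}_\mu$, the properties $\mathbf{H}_{\mathfrak{b}}$ from Proposition~\ref{prop-good-hyperlog-def}, cofinality of cut representations, and the fact that $\mathbf{Mo}_\beta$ is a surreal substructure, all of which hold under $\mathbf{I}_{1,\mu}$--$\mathbf{I}_{3,\mu}$ regardless of whether $\mu$ is a successor or a limit.

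The paper's proof is close to yours but organised slightly differently. For the direction $L_\beta\mathfrak{a}\sqsubseteq\varphi$ it explicitly says ``as in the proof of Proposition~\ref{cor-hyperlog-uniform}'', which is exactly your argument. For the well-definedness of $\varphi$ and the direction $\varphi\sqsubseteq L_\beta\mathfrak{a}$, however, the paper exploits the fact that confluence (Corollary~\ref{cor-confluence}) is now established: this makes the convex classes $\mathcal{L}_\beta[\,\cdot\,]$ and the description~(\ref{L-when-E}) available, so one can argue directly that $\mathcal{L}_\beta[L_\beta\mathfrak{L}_\mathfrak{a}]<\mathcal{L}_\beta[L_\beta\mathfrak{a}]<\mathcal{L}_\beta[L_\beta\mathfrak{R}_\mathfrak{a}]$ and read off that $L_\beta\mathfrak{a}$ lies in the cut defining $\varphi$. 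Your route is more elementary and self-contained (it avoids the $\mathcal{L}_\beta$ machinery entirely); the paper's route is a bit more conceptual and foreshadows how the $\mathcal{L}_\beta$ partition will be used in the sequel. Either way the conclusion is the same.
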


\begin{proof}
  Let $(\mathfrak{L}_{\mathfrak{a}}, \mathfrak{R}_{\mathfrak{a}})$ be a cut
  representation in $\mathbf{Mo}_{\beta}$ and set $\mathfrak{a} \assign \{
  \mathfrak{L}_{\mathfrak{a}} |\mathfrak{R}_{\mathfrak{a}}
  \}_{\mathbf{Mo}_{\beta}}$. We have
  \[ \widespacing{\mathcal{L}_{\beta} [L_{\beta} \mathfrak{L}_{\mathfrak{a}}]
     <\mathcal{L}_{\beta} [L_{\beta} \mathfrak{a}] <\mathcal{L}_{\beta}
     [L_{\beta} \mathfrak{R}_{\mathfrak{a}}] .} \]
  By (\ref{L-when-E}), this shows that
  \begin{eqnarray*}
    L_{\beta} \mathfrak{a} & \in & \left( \mathbb{R}, L_{\beta} \mathfrak{l}+
    \frac{1}{L_{< \beta} \mathfrak{l}} \suchthat \mathfrak{l} \in
    \mathfrak{L}_{\mathfrak{a}} | L_{\beta} \mathfrak{R}_{\mathfrak{a}} -
    \frac{1}{L_{< \beta} \mathfrak{a}}, L_{< \beta} \mathfrak{a} \right) .
  \end{eqnarray*}
  In particular, the number
  \begin{eqnarray*}
    \varphi & \assign & \left\{ \mathbb{R}, L_{\beta} \mathfrak{l}+
    \frac{1}{L_{< \beta} \mathfrak{l}} \suchthat \mathfrak{l} \in
    \mathfrak{L}_{\mathfrak{a}} | L_{\beta} \mathfrak{R}_{\mathfrak{a}} -
    \frac{1}{L_{< \beta} \mathfrak{a}}, L_{< \beta} \mathfrak{a} \right\}
  \end{eqnarray*}
  is well-defined, with $\varphi \sqsubseteq L_{\beta} \mathfrak{a}$. As in
  the proof of Proposition~\ref{cor-hyperlog-uniform}, we have $L_{\beta}
  \mathfrak{a} \sqsubseteq \varphi$, whence $\varphi = L_{\beta}
  \mathfrak{a}$. We conclude that the cut equation (\ref{eq-poor-hyperlog}) is
  uniform.
\end{proof}

\subsection{Hyperexponentials}\label{subsection-hyperexponentials}

We have shown that $(\mathbf{No}, (L_{\omega^{\eta}})_{\eta < \nu})$ is a
hyperserial skeleton of force $(\nu, \mu)$. In order to prove that
$(\mathbf{No}, (L_{\omega^{\eta}})_{\eta < \nu})$ has force $(\nu, \nu)$, it
remains to prove that every $\beta$-truncated number $\varphi$ has
a~hyperexponential $E_{\beta} \varphi$. This is the purpose of this
subsection.

\begin{proposition}
  \label{prop-surjective-hyperlog}We have $L_{\beta}  \mathbf{Mo}_{\beta} =
  \mathbf{No}_{\succ, \beta}$, and $E_{\beta}$ has the following cut equation
  for all $\varphi \in \mathbf{No}_{\succ, \beta}$:
  
  \begin{equation}
    \label{eq-poor-hyperexp} 
    E_{\beta} \varphi = \left\{ E_{< \beta} \varphi, \: E_{< \beta}
    \left( \frac{1}{\varphi_R^{\mathbf{No}_{\succ, \beta}} - \varphi} \right),
    \mathcal{E}_{\beta}' E_{\beta} \varphi_L^{\smash{\mathbf{No}_{\succ,
    \beta}}} | \mathcal{E}_{\beta}' E_{\beta}
    \varphi_R^{\smash{\mathbf{No}_{\succ, \beta}}} \right\} .
  \end{equation}
\end{proposition}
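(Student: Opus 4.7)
The inclusion $L_{\beta} \mathbf{Mo}_{\beta} \subseteq \mathbf{No}_{\succ, \beta}$ is immediate from Proposition~\ref{prop-regularity}: as noted in section~\ref{subsubsection-hyperexponentiation}, the axiom \hyperref[regularity]{$\mathbf{R_{\mu}}$} is precisely equivalent to this inclusion. The substance of the proposition lies in the reverse inclusion, which I would establish together with the cut equation (\ref{eq-poor-hyperexp}) by transfinite induction along the restriction of $\sqsubseteq$ to $\mathbf{No}_{\succ, \beta}$, which is well-founded because $(\mathbf{No}, \sqsubseteq)$ is.

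Fix $\varphi \in \mathbf{No}_{\succ, \beta}$ and suppose that for every $\psi \sqsubset \varphi$ in $\mathbf{No}_{\succ, \beta}$ there is a unique $E_{\beta} \psi \in \mathbf{Mo}_{\beta}$ with $L_{\beta} E_{\beta} \psi = \psi$. Let $(L, R)$ denote the left and right option sets on the right-hand side of (\ref{eq-poor-hyperexp}). Checking $L < R$ is mostly routine: strict monotonicity of $L_{\beta}$ on $\mathbf{Mo}_{\beta}$ (Proposition~\ref{prop-monotonicity}), together with the fact that distinct elements of $\mathbf{Smp}_{\mathcal{E}_{\beta}'}$ lie in disjoint $\mathcal{E}_{\beta}'$-orbits, yields $\mathcal{E}_{\beta}' E_{\beta} \varphi_L^{\mathbf{No}_{\succ, \beta}} < \mathcal{E}_{\beta}' E_{\beta} \varphi_R^{\mathbf{No}_{\succ, \beta}}$; the remaining left options $E_{< \beta} \varphi$ and $E_{< \beta}(1/(\varphi_R^{\mathbf{No}_{\succ, \beta}} - \varphi))$ lie below every right option thanks to \hyperref[ind-ax2]{$\mathbf{I_{2,\mu}}$} and the $\beta$-truncation of $\varphi$. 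Set $\mathfrak{a} \assign \{ L \,|\, R \}$.

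The main obstacle is to show $\mathfrak{a} \in \mathbf{Mo}_{\beta}$, i.e., that $\mathfrak{a}$ is $L_{< \beta}$-atomic. By \hyperref[ind-ax3]{$\mathbf{I_{3,\mu}}$} we have $\mathbf{Mo}_{\beta} = \mathbf{Smp}_{\mathcal{E}_{\beta}'}$, so Proposition~\ref{prop-simplicity-condition} reduces the task to showing $\mathcal{E}_{\beta}'[L] < \mathfrak{a} < \mathcal{E}_{\beta}'[R]$. The right bound is automatic since $R$ is already closed under the action of $\mathcal{E}_{\beta}'$. For the left bound, one checks that for every generator $E_{\gamma} H_r L_{\gamma}$ with $\gamma < \beta$ and $r \in \mathbb{R}^{>}$, and every $\ell \in L$, the translate $E_{\gamma} H_r L_{\gamma}(\ell)$ still lies strictly below some right option; via \hyperref[ind-ax2]{$\mathbf{I_{2,\mu}}$} this amounts to elementary comparisons between iterated hyperexponentials of differing strengths applied to $\varphi$, to $1/(\varphi'' - \varphi)$ for $\varphi'' \in \varphi_R^{\mathbf{No}_{\succ, \beta}}$, and to the previously constructed atomic monomials $E_{\beta} \varphi'$. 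It is here that the specific choice of left options in (\ref{eq-poor-hyperexp}) is essential, so this step demands careful bookkeeping.

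Finally, to prove $L_{\beta} \mathfrak{a} = \varphi$, I would observe that since $\mathfrak{a} \in \mathbf{Mo}_{\beta}$, the pair $(E_{\beta} \varphi_L^{\mathbf{No}_{\succ, \beta}}, E_{\beta} \varphi_R^{\mathbf{No}_{\succ, \beta}})$ furnishes a cut representation of $\mathfrak{a}$ in $\mathbf{Mo}_{\beta}$, cofinal with respect to $(\mathfrak{a}_L^{\mathbf{Mo}_{\beta}}, \mathfrak{a}_R^{\mathbf{Mo}_{\beta}})$. Applying the uniform cut equation of Proposition~\ref{prop-uniform-hyperlog} to this representation and substituting the inductive identities $L_{\beta} E_{\beta} \psi = \psi$ expresses $L_{\beta} \mathfrak{a}$ as a cut whose left options have the form $\psi + (L_{< \beta} E_{\beta} \psi)^{-1}$ and whose right options have the forms $\psi' - (L_{< \beta} \mathfrak{a})^{-1}$ together with $L_{< \beta} \mathfrak{a}$. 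The $\beta$-truncation of $\varphi$ ensures that the correction terms $\pm (L_{< \beta}(\cdot))^{-1}$ are absorbed, so this cut reduces to the canonical cut representation of $\varphi$ in $\mathbf{No}_{\succ, \beta}$. This gives $L_{\beta} \mathfrak{a} = \varphi$, closing the induction and yielding both the equality $L_{\beta} \mathbf{Mo}_{\beta} = \mathbf{No}_{\succ, \beta}$ and the cut equation (\ref{eq-poor-hyperexp}) by construction.
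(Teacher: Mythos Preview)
Your overall strategy matches the paper's: induct on $(\mathbf{No}_{\succ,\beta},\sqsubseteq)$, define $\mathfrak{a}$ by the cut, check $\mathfrak{a}\in\mathbf{Mo}_\beta$ via Proposition~\ref{prop-simplicity-condition}, then compute $L_\beta\mathfrak{a}$ using the uniform cut equation (Proposition~\ref{prop-uniform-hyperlog}). The first three steps are essentially right.

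The gap is in the last step. You assert that $(E_\beta\varphi_L^{\mathbf{No}_{\succ,\beta}},E_\beta\varphi_R^{\mathbf{No}_{\succ,\beta}})$ is a cut representation of $\mathfrak{a}$ in $\mathbf{Mo}_\beta$, but this is not justified: you have only shown that $\mathfrak{a}$ lies between these two sets, not that it is the $\sqsubseteq$-simplest element of $\mathbf{Mo}_\beta$ with that property. The left options $E_{<\beta}\varphi$ and $E_{<\beta}\bigl((\varphi''-\varphi)^{-1}\bigr)$ were introduced precisely to pin down $\mathfrak{a}$, and dropping them may yield a strictly simpler atomic element. (Think of the base case $\varphi_L^{\mathbf{No}_{\succ,\beta}}=\varphi_R^{\mathbf{No}_{\succ,\beta}}=\varnothing$: your reduced cut would give $\omega$, and you have not shown that $\mathfrak{a}=\omega$.) The paper does \emph{not} discard these options; it rewrites them inside $\mathbf{Mo}_\beta$ as $\mathfrak{d}_\beta(\varphi)$, $\mathfrak{d}_\beta\bigl((\varphi''-\varphi)^{-1}\bigr)$ in the limit case and $E_{\beta_{/\omega}\mathbb{N}}\mathfrak{d}_\beta(\cdot)$ in the successor case, and then applies Proposition~\ref{prop-uniform-hyperlog} to this full cut representation. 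This produces additional left options of the form $\mathcal{L}_\beta[L_\beta\mathfrak{d}_\beta(\varphi)]$ (or $\mathcal{L}_\beta[L_\beta^{\uparrow\gamma}\mathfrak{d}_\beta(\varphi)]$) in the expression for $L_\beta\mathfrak{a}$, and the real work lies in checking that $\varphi$ dominates these extra terms. That verification is where the case split between $\mu$ limit and $\mu$ successor enters, and it is where the option $E_{<\beta}\bigl((\varphi''-\varphi)^{-1}\bigr)$ is actually used (to obtain $\varphi<\varphi''-(L_{<\beta}\mathfrak{a})^{-1}$). Your final paragraph waves all of this away.
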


\begin{proof}
  We prove the result by induction on $(\mathbf{No}_{\succ, \beta},
  \sqsubseteq)$. Let $\varphi \in \mathbf{No}_{\succ, \beta}$ such that
  $E_{\beta}$ is defined on $\varphi_{\sqsubset}^{\smash{\mathbf{No}_{\succ,
  \beta}}}$ with the given equation. We will first show that the number
  \begin{eqnarray}
    \mathfrak{a} & \assign & \left\{ E_{< \beta} \varphi, E_{< \beta} \left(
    \frac{1}{\varphi_R^{\smash{\mathbf{No}_{\succ, \beta}}} - \varphi}
    \right), \mathcal{E}_{\beta}' E_{\beta}
    \varphi_L^{\smash{\mathbf{No}_{\succ, \beta}}} | \mathcal{E}_{\beta}'
    E_{\beta} \varphi_R^{\smash{\mathbf{No}_{\succ, \beta}}} \right\} 
    \label{eq-a}
  \end{eqnarray}
  is well-defined. We will then prove that $L_{\beta} \mathfrak{a}= \varphi$.
  
  Let $\varphi' \in \varphi_L^{\smash{\mathbf{No}_{\succ, \beta}}}$ and
  $\varphi'' \in \varphi_R^{\smash{\mathbf{No}_{\succ, \beta}}}$. If $\varphi'
  \in (\varphi'')_L^{\smash{\mathbf{No}_{\succ, \beta}}}$, then $E_{\beta}
  \varphi'' >\mathcal{E}_{\beta}' E_{\beta} \varphi'$ by the definition
  of~$E_{\beta} \varphi''$. So $\mathcal{E}_{\beta}' E_{\beta} \varphi'
  <\mathcal{E}_{\beta}' E_{\beta} \varphi''$. Otherwise, we have $\varphi''
  \in (\varphi')_R^{\smash{\mathbf{No}_{\succ, \beta}}}$, whence
  $\mathcal{E}_{\beta}' E_{\beta} \varphi'' > E_{\beta} \varphi'$ by
  definition of $E_{\beta} \varphi'$, so $\mathcal{E}_{\beta}' E_{\beta}
  \varphi' <\mathcal{E}_{\beta}' E_{\beta} \varphi''$. So we always have
  \begin{eqnarray*}
    \mathcal{E}_{\beta}' E_{\beta} \varphi_L^{\smash{\mathbf{No}_{\succ,
    \beta}}} & < & \mathcal{E}_{\beta}' E_{\beta}
    \varphi_R^{\smash{\mathbf{No}_{\succ, \beta}}} .
  \end{eqnarray*}
  We also have $E_{< \beta} \varphi'' < E_{\beta} \varphi''$, so~$E_{< \beta}
  \varphi <\mathcal{E}_{\beta}' E_{\beta} \varphi''$. This proves that $E_{<
  \beta} \varphi <\mathcal{E}_{\beta}' E_{\beta}
  \varphi_R^{\smash{\mathbf{No}_{\succ, \beta}}}$. It remains to show that
  \begin{eqnarray*}
    E_{< \beta} \left( \frac{1}{\varphi_R^{\smash{\mathbf{No}_{\succ, \beta}}}
    - \varphi} \right) & < & \mathcal{E}_{\beta}' E_{\beta} \left(
    \varphi_R^{\smash{\mathbf{No}_{\succ, \beta}}} \right) .
  \end{eqnarray*}
  Note that $\varphi_R^{\smash{\mathbf{No}_{\succ, \beta}}}
  >\mathcal{L}_{\beta} [\varphi]$, so by the definition of
  $\mathcal{L}_{\beta} [\varphi]$, we have
  \begin{equation}
    \label{ineq-truncated} \widespacing{L_{\beta}^{\mathord{\uparrow} < \beta}
    \left( \frac{1}{\varphi_R^{\smash{\mathbf{No}_{\succ, \beta}}} - \varphi}
    \right) < \varphi < \varphi_R^{\smash{\mathbf{No}_{\succ, \beta}}} .}
  \end{equation}
  Hence $E_{< \beta} \left( \left( \varphi_R^{\smash{\mathbf{No}_{\succ,
  \beta}}} - \varphi \right)^{- 1} \right) < E_{\beta}
  \varphi_R^{\smash{\mathbf{No}_{\succ, \beta}}}$, which completes the proof
  that $\mathfrak{a}$ is well-defined.
  
  Let us now prove that~$L_{\beta} \mathfrak{a}= \varphi$. Note that
  $\mathfrak{a} \in \mathbf{Mo}_{\beta}$ by
  Proposition~\ref{prop-simplicity-condition}. First assume that $\mu$ is
  a~limit ordinal. Lemma~\ref{lem-limit-hypermon} yields $\langle E_{< \beta}
  \rangle \legeangle \mathcal{E}_{\beta}$, so we may write
  \begin{eqnarray*}
    \mathfrak{a} & = & \left\{ \mathfrak{d}_{\beta} (\varphi),
    \mathfrak{d}_{\beta} \left( \frac{1}{\varphi_R^{\smash{\mathbf{No}_{\succ,
    \beta}}} - \varphi} \right), E_{\beta}
    \varphi_L^{\smash{\mathbf{No}_{\succ, \beta}}} | E_{\beta}
    \varphi_R^{\smash{\mathbf{No}_{\succ, \beta}}}
    \right\}_{\mathbf{Mo}_{\beta}} .
  \end{eqnarray*}
  By (\ref{L-when-E}), for $b \in \mathbf{No}^{>, \succ}$ the classes that
  $\mathcal{L}_{\beta} [L_{\beta} b]$ and $L_{\beta} b \pm (L_{< \beta} b)^{-
  1}$ are mutually cofinal and coinitial. Moreover, we have $L_{\beta}
  E_{\beta} \psi = \psi$ for all $\psi \in
  \varphi_{\sqsubset}^{\smash{\mathbf{No}_{\succ, \beta}}}$, by our hypothesis
  on $\varphi$. Hence, Proposition~\ref{prop-uniform-hyperlog} and
  (\ref{L-when-E}) imply that $L_{\beta} \mathfrak{a}$ is the simplest positive infinite element in the cut
  \begin{eqnarray*}
    \left( \mathcal{L}_{\beta}
    [L_{\beta} \mathfrak{d}_{\beta} (\varphi)], \mathcal{L}_{\beta} \left[
    L_{\beta} \mathfrak{d}_{\beta} \left(
    \frac{1}{\varphi_R^{\smash{\mathbf{No}_{\succ, \beta}}} - \varphi} \right)
    \right], \mathcal{L}_{\beta} \left[ \varphi_L^{\smash{\mathbf{No}_{\succ,
    \beta}}} \right] | \varphi_R^{\smash{\mathbf{No}_{\succ, \beta}}} -
    \frac{1}{L_{< \beta} \mathfrak{a}} \right) .
  \end{eqnarray*}
  Note that $L_{\beta} \mathfrak{a} \in \left(
  \varphi_L^{\smash{\mathbf{No}_{\succ, \beta}}} |
  \varphi_R^{\smash{\mathbf{No}_{\succ, \beta}}} \right)_{\mathbf{No}_{\succ,
  \beta}}$, so $\varphi \sqsubseteq L_{\beta} \mathfrak{a}$. Note also that $L_{\beta}
  \mathfrak{d}_{\beta} (\varphi) \in \mathcal{L}_{\beta} [L_{\beta} \varphi] <
  \varphi$. We have
  \begin{eqnarray*}
    L_{\beta} \mathfrak{d}_{\beta} \left(
    \frac{1}{\varphi_R^{\smash{\mathbf{No}_{\succ, \beta}}} - \varphi} \right)
    & \in & L_{\beta} \mathcal{E}_{\beta}' \left[
    \frac{1}{\varphi_R^{\smash{\mathbf{No}_{\succ, \beta}}} - \varphi}
    \right],
  \end{eqnarray*}
  where
  \begin{align*}
    L_{\beta} \mathcal{E}_{\beta}' \left[
    \frac{1}{\varphi_R^{\smash{\mathbf{No}_{\succ, \beta}}} - \varphi} \right]
    & = L_{\beta} \mathcal{E}_{\beta}^{\ast} \left[
    \frac{1}{\varphi_R^{\smash{\mathbf{No}_{\succ, \beta}}} - \varphi} \right]
    \hspace*{\fill} && \text{(by Lemma~\ref{lem-limit-hypermon})}\\
    & \legeangle L_{\beta}^{\mathord{\uparrow} \mathord{<} \beta} \left(
    \frac{1}{\varphi_R^{\smash{\mathbf{No}_{\succ, \beta}}} - \varphi}
    \right)\\
    & < \varphi . \hspace*{\fill} && \text{(by (\ref{ineq-truncated}))}
  \end{align*}
  So $L_{\beta} \mathfrak{d}_{\beta} \left(
  \varphi_R^{\smash{\mathbf{No}_{\succ, \beta}}} - \varphi \right)^{- 1} <
  \varphi$. Since $\varphi \in \mathbf{No}_{\succ, \alpha}$, Proposition~\ref{prop-simplicity-condition} yields the inequality
  $\mathcal{L}_{\beta} \left[ \varphi_L^{\smash{\mathbf{No}_{\succ, \beta}}}
  \right] < \varphi$.
  Finally, we have $\mathfrak{a}> E_{< \beta} \left( \left(
  \varphi_R^{\smash{\mathbf{No}_{\succ, \beta}}} - \varphi \right)^{- 1}
  \right)$ by definition. So $\varphi_R^{\smash{\mathbf{No}_{\succ, \beta}}} - (L_{< \beta}
  \mathfrak{a})^{- 1} > \varphi$. This proves that $L_{\beta} \mathfrak{a}
  \sqsubseteq \varphi$, so $L_{\beta} \mathfrak{a}= \varphi$.
  
  Assume now that $\mu$ is a successor ordinal. For all $b \in \mathbf{No}^{>,
  \succ}$, the sets $E_{< \beta} \varphi$, $E_{< \beta} \mathfrak{d}_{\beta}
  (\varphi)$, and $E_{\beta_{/ \omega} \mathbb{N}} \mathfrak{d}_{\beta}
  (\varphi)$ are mutually cofinal. So we can rewrite (\ref{eq-a}) as
  \begin{eqnarray*}
    \mathfrak{a} & = & \left\{ E_{\beta_{/ \omega} \mathbb{N}}
    \mathfrak{d}_{\beta} (\varphi), E_{\beta_{/ \omega} \mathbb{N}}
    \mathfrak{d}_{\beta} \left( \frac{1}{\varphi_R^{\smash{\mathbf{No}_{\succ,
    \beta}}} - \varphi} \right), \mathcal{E}_{\beta}' E_{\beta}
    \varphi_L^{\smash{\mathbf{No}_{\succ, \beta}}} | \mathcal{E}_{\beta}'
    E_{\beta} \varphi_R^{\smash{\mathbf{No}_{\succ, \beta}}} \right\}\\
    & = & \left\{ E_{\beta_{/ \omega} \mathbb{N}} \mathfrak{d}_{\beta}
    (\varphi), E_{\beta_{/ \omega} \mathbb{N}} \mathfrak{d}_{\beta} \left(
    \frac{1}{\varphi_R^{\smash{\mathbf{No}_{\succ, \beta}}} - \varphi}
    \right), E_{\beta} \varphi_L^{\smash{\mathbf{No}_{\succ, \beta}}} |
    E_{\beta} \varphi_R^{\smash{\mathbf{No}_{\succ, \beta}}}
    \right\}_{\mathbf{Mo}_{\beta}} .
  \end{eqnarray*}
  As in the limit case, Proposition~\ref{prop-uniform-hyperlog} implies that $L_{\beta} \mathfrak{a}$ is the simplest element in the cut
  \[
    \left( \mathcal{L}_{\beta}
    \left[ L_{\beta}^{\mathord{\uparrow} \gamma} \mathfrak{d}_{\beta}
    (\varphi) \right], \mathcal{L}_{\beta} \left[
    L_{\beta}^{\mathord{\uparrow} \gamma} \mathfrak{d}_{\beta} \left(
    \frac{1}{\varphi_R^{\smash{\mathbf{No}_{\succ, \beta}}} - \varphi} \right)
    \right], \mathcal{L}_{\beta} \left[ \varphi_L^{\smash{\mathbf{No}_{\succ,
    \beta}}} \right] | \varphi_R^{\smash{\mathbf{No}_{\succ, \beta}}} -
    \frac{1}{L_{\gamma} \mathfrak{a}} \right)
  \]
  where $\gamma$ ranges in $\beta$.
  Let $\gamma < \beta$. There is an $n \in \mathbb{N}$ with $\gamma < \beta_{/
  \omega} n$. Since $L_{\beta} \varphi < \varphi - (n + 1)$, we have
  \[ \widespacing{\varphi > L_{\beta}^{\mathord{\mathord{\uparrow}} \beta_{/
     \omega}  (n + 1)} \mathfrak{d}_{\beta} (\varphi) \geqslant
     L_{\beta}^{\mathord{\mathord{\uparrow}} \gamma} \mathfrak{d}_{\beta}
     (\varphi) + 1.} \]
  In particular $\varphi >\mathcal{L}_{\beta} \left[
  L_{\beta}^{\mathord{\mathord{\uparrow}} \gamma} \mathfrak{d}_{\beta}
  (\varphi) \right]$. 
  
  We saw (\ref{ineq-truncated}) that
  $L_{\beta}^{\mathord{\mathord{\uparrow}} \gamma} \mathfrak{d}_{\beta} \left(
  \left( \varphi_R^{\smash{\mathbf{No}_{\succ, \beta}}} - \varphi \right)^{-
  1} \right) < \varphi$, so $\varphi$ lies strictly above the class $\mathcal{L}_{\beta} \left[
  L_{\beta}^{\mathord{\mathord{\uparrow}} \gamma} \mathfrak{d}_{\beta} \left(
  \left( \varphi_R^{\smash{\mathbf{No}_{\succ, \beta}}} - \varphi \right)^{-
  1} \right) \right]$. We obtain the inequalities
  \[ \widespacing{\mathcal{L}_{\beta} \left[
     \varphi_L^{\smash{\mathbf{No}_{\succ, \beta}}} \right] < \varphi <
     \varphi_R^{\smash{\mathbf{No}_{\succ, \beta}}} - (L_{< \beta}
     \mathfrak{a})^{- 1}} \]
  in a similar way as in the limit case.
  
  We conclude that $\varphi = L_{\beta} \mathfrak{a}$ holds in general. It
  follows by induction that the formula for $E_{\beta}$ is valid. In
  particular $L_{\beta} : \mathbf{Mo}_{\beta} \longrightarrow
  \mathbf{No}_{\succ, \beta}$ is surjective.
\end{proof}

With Proposition~\ref{prop-surjective-hyperlog}, we have completed the proof
of $\mathbf{I}_{1, \nu}$. By (\ref{exp-class-char}), we have
$\mathcal{E}_{\beta \omega} [a] =\mathcal{E}_{\beta \omega}' [a]$ for all $a
\in \mathbf{No}^{>, \succ}$. Given $a \in \mathbf{No}_{\succ, \beta}$, we also
deduce from (\ref{L-when-E}) that the set $a \pm (L_{< \beta} E_{\beta} a)^{-
1}$ is cofinal and coinitial in $\mathcal{L}_{\beta} [a]$. The convex
partition defined by $\mathcal{L}_{\beta}$ is thus thin. By
Proposition~\ref{prop-thin-substructure}, the class $\mathbf{No}_{\succ,
\beta}$ is a surreal substructure whowe parametrisation $\Xi \assign\Xi_{\mathbf{No}_{\succ, \beta}}$ uniform cut equation
\begin{equation}
  \forall a \in \mathbf{No}, \quad \Xi a = \{
  \mathbb{R}, \mathcal{L}_{\beta} [\Xi a_L]
  |\mathcal{L}_{\beta} [\Xi a_R] \}
  \label{eq-truncated}
\end{equation}
For $a \in \mathbf{No}$, we have $\mathcal{L}_{\beta}
[\Xi a] < \Xi a_R$,
so $\Xi a < \Xi a_R
- (L_{< \beta} E_{\beta} \Xi a)^{- 1}$. We deduce
that the following is equivalent to (\ref{eq-truncated}):
\begin{eqnarray}
  \Xi a & = & \left\{ \mathbb{R},
  \Xi a' + \frac{1}{L_{< \beta} E_{\beta}
  \Xi a'} |
  \Xi a'' - \frac{1}{L_{< \beta} E_{\beta}
  \Xi a} \right\}  \label{eq-truncated-2}
\end{eqnarray}
(where $a'$ and $a''$ range in $a_L$ and $a_R$ respectively).
\subsection{End of the inductive
proof}\label{subsection-end-of-the-inductive-proof}

We now prove $\mathbf{I}_{2, \nu}$, $\mathbf{I}_{3, \nu}$ and
Theorem~\ref{th-confluent-hyperserial-field}.

\begin{lemma}
  \label{lem-limit-asymptotics}If $\mu$ is a limit ordinal, then we have
  $E_{\beta} T_1 L_{\beta} > E_{< \beta}$ on $\mathbf{No}^{>, \succ}$.
\end{lemma}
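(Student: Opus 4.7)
The plan is to establish the equivalent inequality $L_{\beta}(E_{\gamma}(a)) < L_{\beta}(a) + 1$ for every $\gamma < \beta = \omega^{\mu}$ and every $a \in \mathbf{No}^{>, \succ}$; this will imply $E_{\gamma}(a) < E_{\beta}(L_{\beta}(a) + 1) = E_{\beta} T_1 L_{\beta}(a)$ since at this point of the induction $L_{\beta}$ has been set up as a strictly increasing bijection of $\mathbf{No}^{>, \succ}$ (by~\hyperref[ind-ax1]{$\mathbf{I_{1,\mu}}$} together with Proposition~\ref{prop-surjective-hyperlog} and Corollary~\ref{cor-confluence}). Since $\mu$ is a limit ordinal and $\gamma < \omega^{\mu}$, I can pick $\eta < \mu$ with $\gamma < \omega^{\eta}$ and $\eta + 1 < \mu$. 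Monotonicity of the hyperexponentials in their index, which follows from the asymptotic axioms $\mathbf{A}$, then gives $E_{\gamma}(a) \leqslant E_{\omega^{\eta}}(a)$, so it suffices to prove that $L_{\beta}(E_{\omega^{\eta}}(a)) < L_{\beta}(a) + 1$.

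The key observation I would rely on is that $E_{\omega^{\eta}}(a)$ lies in the same $\mathcal{E}_{\beta}$-class as $a$. Indeed, the functional equation~\hyperref[functional-eq]{$\mathbf{FE_{\eta+1}}$}, applicable because $\eta + 1 < \mu$, yields $L_{\omega^{\eta+1}}(E_{\omega^{\eta}}(a)) = L_{\omega^{\eta+1}}(a) + 1$. Since this is a finite shift of a positive infinite surreal number, $L_{\omega^{\eta+1}}(E_{\omega^{\eta}}(a)) \asymp L_{\omega^{\eta+1}}(a)$. Taking $\gamma' \assign \omega^{\eta+1} < \beta$ in the characterization of $\mathcal{E}_{\beta}$-classes recalled in section~\ref{subsubsection-field-skeleton} therefore places $E_{\omega^{\eta}}(a)$ inside $\mathcal{E}_{\beta}[a]$. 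In particular the two numbers share a common $L_{< \beta}$-atomic representative $\mathfrak{a} \assign \mathfrak{d}_{\beta}(a) = \mathfrak{d}_{\beta}(E_{\omega^{\eta}}(a))$.

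To finish, I would invoke the fact that the extended hyperlogarithm $L_{\beta} : \mathbf{No}^{>, \succ} \longrightarrow \mathbf{No}^{>, \succ}$ restricts to a strictly increasing bijection from $\mathcal{E}_{\beta}[\mathfrak{a}]$ onto the monad $\mathcal{L}_{\beta}[L_{\beta}(\mathfrak{a})]$, a consequence of the identities relating $\mathfrak{d}_{\beta}$, $\sharp_{\beta}$, and $E_{\beta}$ recorded in section~\ref{subsubsection-hyperexponentiation} (notably~\eqref{eq-hyperexp-proj}). Both $L_{\beta}(a)$ and $L_{\beta}(E_{\omega^{\eta}}(a))$ therefore belong to this single monad, and from the simplified description~\eqref{L-when-E} the difference of any two elements of $\mathcal{L}_{\beta}[L_{\beta}(\mathfrak{a})]$ is bounded by some $1/L_{\gamma''}(\mathfrak{a})$ with $\gamma'' < \beta$, hence infinitesimal. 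Consequently $L_{\beta}(E_{\omega^{\eta}}(a)) - L_{\beta}(a) \prec 1$, which is strictly less than $1$, completing the argument. The main subtlety is the clean invocation of the ``$\mathcal{E}_{\beta}$-class to $\mathcal{L}_{\beta}$-monad'' correspondence for the extended $L_{\beta}$; once this is granted from the confluence theory of~\cite{BvdHK:hyp}, the rest of the proof is essentially a short reduction.
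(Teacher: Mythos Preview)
Your proof is correct. Both your argument and the paper's hinge on the same fact---that for limit $\mu$, the functions $E_{\gamma}$ with $\gamma<\beta$ preserve $\mathcal{E}_{\beta}$-classes---but you approach it from the dual side. The paper first shows that $E_{\beta}T_1L_{\beta}(a)$ leaves the class $\mathcal{E}_{\beta}[a]$ (via $\sharp_{\beta}(L_{\beta}a+1)>\sharp_{\beta}(L_{\beta}a)$ and~(\ref{eq-hyperexp-proj})), then invokes Lemma~\ref{lem-limit-hypermon} to say $E_{<\beta}a\subseteq\mathcal{E}_{\beta}[a]$. You instead rederive the latter inclusion inline via~$\mathbf{FE}_{\eta+1}$, push through $L_{\beta}$ to the $\mathcal{L}_{\beta}$-monad, and read off the infinitesimal diameter from~(\ref{L-when-E}). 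The paper's route is shorter because it packages the ``$E_{<\beta}$ stays in the class'' step into the already-proved Lemma~\ref{lem-limit-hypermon}; your route is self-contained but repeats that content. The correspondence $L_{\beta}\big(\mathcal{E}_{\beta}[a]\big)=\mathcal{L}_{\beta}[L_{\beta}a]$ you rely on is exactly {\cite[Proposition~7.22]{BvdHK:hyp}}, which the paper also uses elsewhere (see the proof of Proposition~\ref{identification-prop}), so this is not an extra assumption.
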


\begin{proof}
  Let $a \in \mathbf{No}^{>, \succ}$. We have $\sharp_{\beta} (L_{\beta} a +
  1) > \sharp_{\beta} (L_{\beta} a)$, so (\ref{eq-hyperexp-proj}) yields
  \[ \widespacing{\mathfrak{d}_{\beta} (E_{\beta} (L_{\beta} a + 1)) =
     E_{\beta} (\sharp_{\beta} (L_{\beta} a + 1)) \succ E_{\beta}
     (\sharp_{\beta} (L_{\beta} a)) =\mathfrak{d}_{\beta} (a) .} \]
  We deduce that $E_{\beta} (L_{\beta} a + 1) >\mathcal{E}_{\beta} a$ so
  $E_{\beta} (L_{\beta} a + 1) > E_{< \beta} a$ by
  Lemma~\ref{lem-limit-hypermon}.
\end{proof}

\begin{proposition}
  \label{prop-I-3}For $r, s \in \mathbb{R}$ with $s > 1$ and $\gamma < \rho <
  \alpha$, we have $E_{\gamma} H_r L_{\gamma} < E_{\rho} H_s L_{\rho}$ on
  $\mathbf{No}^{>, \succ}$, i.e. $\mathbf{I}_{2, \nu}$ holds.
\end{proposition}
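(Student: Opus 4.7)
My plan is to prove the inequality by case analysis on the position of $\rho$ relative to $\beta = \omega^{\mu}$. The case $\rho < \beta$ forces $\gamma < \beta$ and follows immediately from the induction hypothesis $\mathbf{I}_{2,\mu}$. The substantive case is $\rho \geq \beta$: since $\rho < \alpha = \beta\omega$, the Cantor normal form of $\rho$ reads $\rho = \beta n + \rho'$ with $n \in \mathbb{N}^{>0}$ and $\rho' < \beta$, and analogously $\gamma = \beta m + \gamma'$ when $\gamma \geq \beta$. Under this decomposition one has $L_{\rho} = L_{\beta}^{\circ n} \circ L_{\rho'}$ and $E_{\rho} = E_{\rho'} \circ E_{\beta}^{\circ n}$.

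The heart of the argument is the sub-inequality
\[
  E_{\gamma} H_r L_{\gamma} \;<\; E_{\beta} H_s L_{\beta} \qquad (\gamma < \beta,\ r > 0,\ s > 1)
\]
on $\mathbf{No}^{>, \succ}$. As recorded at the beginning of Subsection~\ref{subsection-defining-hyperlogarithms}, $\mathbf{I}_{2,\mu}$ together with (\ref{exp-class-char}) gives $\mathcal{E}_{\beta}[a] = \mathcal{E}_{\beta}'[a]$ for every $a \in \mathbf{No}^{>, \succ}$, so $E_{\gamma} H_r L_{\gamma}(a)$ lies in the convex class $\mathcal{E}_{\beta}[a]$. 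It therefore suffices to show $E_{\beta}(sL_{\beta} a) \notin \mathcal{E}_{\beta}[a]$ whenever $s > 1$. By the characterization (\ref{L-when-E}), the difference $sL_{\beta}a - L_{\beta}a = (s-1)L_{\beta}a$ is positive infinite, while every candidate bound $(L_{\gamma} E_{\beta} L_{\beta} a)^{-1} = (L_{\gamma} a)^{-1}$ with $\gamma < \beta$ is infinitesimal, so $sL_{\beta} a$ and $L_{\beta} a$ lie in distinct $\mathcal{L}_{\beta}$-classes. The characterization of $\sharp_{\beta}(t)$ as the unique $\beta$-truncated element of $\mathcal{L}_{\beta}[t]$ then yields $\sharp_{\beta}(sL_{\beta} a) \neq \sharp_{\beta}(L_{\beta} a)$. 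Applying (\ref{eq-hyperexp-proj}) and the injectivity of $E_{\beta}$ gives $\mathfrak{d}_{\beta}(E_{\beta}(sL_{\beta} a)) \neq \mathfrak{d}_{\beta}(a)$, i.e.\ $E_{\beta}(sL_{\beta} a) \notin \mathcal{E}_{\beta}[a]$. Since $E_{\beta}(sL_{\beta} a) > a$ and $\mathcal{E}_{\beta}[a]$ is convex, $E_{\beta} H_s L_{\beta}(a)$ lies strictly above the entire class, hence above $E_{\gamma} H_r L_{\gamma}(a)$.

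To handle $\rho \geq \beta$ in general, I would iterate this argument at higher levels. When $\gamma < \beta \leq \rho$, one combines the sub-inequality with the analogous bound $E_{\beta} H_s L_{\beta} \leq E_{\rho} H_s L_{\rho}$, which follows from a $\mathcal{E}_{\beta n}$-class argument applied to the decomposition $L_{\rho} = L_{\beta}^{\circ n} \circ L_{\rho'}$. When $\beta \leq \gamma < \rho$ with $\gamma = \beta m + \gamma'$ and $\rho = \beta n + \rho'$ and $m \leq n$, the common outer $E_{\beta}^{\circ m}$ and inner $L_{\beta}^{\circ m}$ peel off by strict monotonicity, reducing the case $m = n$ (where $\gamma' < \rho' < \beta$) to $\mathbf{I}_{2,\mu}$, and the case $m < n$ to the sub-inequality at the shifted level $\beta(m+1)$. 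The main obstacle I anticipate is the rigorous verification of the class-separations in these higher-level iterates: while the argument at level $\beta$ is clean, relying on the infinite/infinitesimal dichotomy from (\ref{L-when-E}), checking that the analogous separation $E_{\beta(k+1)}(sL_{\beta(k+1)} a) \notin \mathcal{E}_{\beta(k+1)}[a]$ persists requires combining the preceding sub-inequality with the functional equation $\mathbf{FE}_{\mu}$ (Proposition~\ref{prop-functional-equation}) in the successor case, and with Lemma~\ref{lem-limit-asymptotics} together with Lemma~\ref{lem-limit-hypermon} in the limit case.
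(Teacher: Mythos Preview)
Your core sub-inequality $E_{\gamma'} H_r L_{\gamma'} < E_{\beta} H_s L_{\beta}$ (for $\gamma' < \beta$, $r>0$, $s>1$) is correct and is in fact a cleaner variant of what the paper does. The paper establishes the slightly stronger base inequality $E_{\iota} H_r L_{\iota} < E_{\beta}(E_{\theta} H_s L_{\theta}) L_{\beta}$ by splitting on whether $\mu$ is a successor (where it uses $\mathbf{FE}_{\mu}$ to rewrite $E_{\beta} T_p L_{\beta}$ as $E_{\beta_{/\omega} p}$) or a limit (where it invokes Lemma~\ref{lem-limit-asymptotics}). Your $\mathcal{L}_{\beta}$-class argument via~(\ref{L-when-E}) and~(\ref{eq-hyperexp-proj}) handles both cases at once; it is essentially the proof of Lemma~\ref{lem-limit-asymptotics} with $T_1$ replaced by $H_s$, and it yields the paper's base inequality immediately since $E_{\theta} H_s L_{\theta} \geq H_s$ (trivial for $\theta=0$, and an instance of $\mathbf{I}_{2,\mu}$ for $0<\theta<\beta$).

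Where your proposal goes astray is the iteration. The obstacle you anticipate is illusory, and the machinery you invoke (``$\mathcal{E}_{\beta n}$-class arguments'', which are not even defined since $\beta n$ is not of the form $\omega^{\eta}$, plus further appeals to $\mathbf{FE}_{\mu}$ and Lemma~\ref{lem-limit-asymptotics}) is unnecessary. Once the base inequality $E_{\beta}(E_{\theta} H_s L_{\theta}) L_{\beta} > E_{\iota} H_r L_{\iota}$ is in hand, the rest is pure monotonicity: conjugating by the strictly increasing bijections $E_{\beta}^{\circ k}, L_{\beta}^{\circ k}$ gives
\[
  E_{\beta(k+1)} E_{\theta} H_s L_{\theta} L_{\beta(k+1)} \;>\; E_{\beta k} E_{\iota} H_r L_{\iota} L_{\beta k}
\]
for every $k\geq 0$, and specializing $\iota=\theta$, $r=s$ shows that the left-hand sides increase with $k$. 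With $\gamma = \beta m + \iota$ and $\rho = \beta n + \theta$, the case $m=n$ is exactly your reduction to $\mathbf{I}_{2,\mu}$, while for $m<n$ one telescopes the displayed inequalities from $k=m$ up to $k=n-1$. This is precisely the paper's ``easy induction on $k$''.
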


\begin{proof}
  Throughout this proof, we consider inequalities and equalities of functions
  on $\mathbf{No}^{>, \succ}$. Write $\gamma = \beta m + \iota$ and $\rho =
  \beta n + \theta$ where $m, n < \omega$ and $\iota, \theta < \beta$. We have
  \begin{eqnarray*}
    E_{\gamma} H_r L_{\gamma} & = & E_{\beta m} E_{\iota} H_r L_{\iota}
    L_{\beta m} \text{\quad and}\\
    E_{\rho} H_r L_{\rho} & = & E_{\beta n} E_{\theta} H_s L_{\theta} L_{\beta
    n} .
  \end{eqnarray*}
  If $m = n$, then $\iota < \theta$, so \hyperref[ind-ax2]{$\mathbf{I_{2,\mu}}$} yields $E_{\iota} H_r
  L_{\iota} < E_{\theta} H_s L_{\theta}$, whence $E_{\gamma} H_r L_{\gamma} <
  E_{\rho} H_s L_{\rho}$. Assume that~{$m < n$}. If $\mu_-$ is a successor
  ordinal, then there is $p < \omega$ with $\iota < \beta_{/ \omega} p$. By
  \hyperref[ind-ax2]{$\mathbf{I_{2,\mu}}$}, we have $E_{\theta} H_s L_{\theta} \geqslant H_s > T_p$. So
  $E_{\beta}  (E_{\theta} H_s L_{\theta}) L_{\beta} > E_{\beta} T_p L_{\beta}
  = E_{\beta_{/ \omega} p}$. We conclude by noting that $E_{\beta_{/ \omega}
  p} > E_{\iota} > E_{\iota} H_r L_{\iota}$. If $\mu_-$ is a~limit ordinal,
  then $E_{\theta} H_s L_{\theta} > T_1$ so $E_{\beta}  (E_{\theta} H_s
  L_{\theta}) L_{\beta} > E_{\iota} > E_{\iota} H_r L_{\iota}$ by
  Lemma~\ref{lem-limit-asymptotics}. It follows that for $k \in
  \mathbb{N}^{>}$, we have $E_{\beta (k + 1)} E_{\theta} H_s L_{\theta}
  L_{\beta (k + 1)} > E_{\beta k} E_{\iota} H_r L_{\iota} L_{\beta k}$. An
  easy induction on $k$ yields the~result.
\end{proof}

\begin{proposition}
  \label{prop-I3}$\mathbf{Mo}_{\alpha}'$ is the class of $L_{< \alpha}$-atomic
  numbers, i.e. $\mathbf{I}_{3, \nu}$ holds.
\end{proposition}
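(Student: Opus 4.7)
The plan is to show $\mathbf{Mo}_\alpha' = \mathbf{Mo}_\alpha$ by exhibiting both as the class of $\sqsubseteq$-minimal representatives of a common convex partition of $\mathbf{No}^{>, \succ}$. Since Proposition~\ref{prop-surjective-hyperlog} has already promoted $(\mathbf{No}, (L_{\omega^\eta})_{\eta < \nu})$ to a confluent hyperserial skeleton of force $(\nu, \nu)$, I may freely use the characterisation~(\ref{exp-class-char}) and the total bijectivity of each $L_\gamma$ for $\gamma < \alpha$.

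First I would verify $\mathcal{E}_\alpha[a] = \mathcal{E}_\alpha'[a]$ for every $a \in \mathbf{No}^{>, \succ}$. One containment is immediate: any $b$ with $E_\gamma(r_0 L_\gamma a) < b < E_\gamma(r_1 L_\gamma a)$ for some $\gamma < \alpha$ and $r_0, r_1 \in \mathbb{R}^{>}$ lies in $\mathcal{E}_\alpha'[a]$. Conversely, a composition of generators $E_{\gamma_i} H_{r_i} L_{\gamma_i}$ with $\gamma_i < \alpha$ can be bounded above and below by a single such generator: because $\alpha = \beta \omega$ is a limit ordinal, the indices $\gamma_i$ admit a common upper bound $\gamma < \alpha$, and Proposition~\ref{prop-I-3} ($\mathbf{I}_{2, \nu}$) permits the absorption of a finite composition into one $E_\gamma H_r L_\gamma$. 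Combined with~(\ref{exp-class-char}), this gives $\mathcal{E}_\alpha'[a] = \mathcal{E}_\alpha[a]$.

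Next I would observe that $L_\beta$ maps $\mathbf{Mo}_\alpha$ into $\mathbf{Mo}_\alpha$. Given $\mathfrak{a} \in \mathbf{Mo}_\alpha$ and $\gamma < \alpha$, write $\gamma = \beta n + \iota$ with $n < \omega$ and $\iota < \beta$; the Cantor-normal-form formula~(\ref{eq-general-hyperlog-Cantor}) and the identity $L_\delta \circ L_\varepsilon = L_{\varepsilon + \delta}$ yield $L_\gamma L_\beta = L_{\gamma'}$ for some $\gamma' < \alpha$ (either $\gamma' = \beta(n+1)$ if $n \geq 1$, or $\gamma' = \beta + \iota$ if $n = 0$). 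Hence $L_\gamma L_\beta \mathfrak{a} = L_{\gamma'} \mathfrak{a} \in \mathfrak{M}^\succ$, so $L_\beta \mathfrak{a} \in \mathbf{Mo}_\alpha$. Iterating, $L_\beta^{\circ n} \mathfrak{a} \in \mathbf{Mo}_\alpha$ for all $n \in \mathbb{N}$. With this in hand, the crucial uniqueness assertion follows: if $\mathfrak{a}, \mathfrak{b} \in \mathbf{Mo}_\alpha$ lie in the same $\mathcal{E}_\alpha$-class, then $\mathfrak{d}_\beta$ is the identity on both, so the successor definition of $\mathcal{E}_\alpha[\cdot]$ produces $n \in \mathbb{N}$ with $L_\beta^{\circ n} \mathfrak{a} \asymp L_\beta^{\circ n} \mathfrak{b}$; since both are monomials in $\mathbf{Mo}$, the relation $\asymp$ forces equality, and injectivity of $L_\beta$ on $\mathbf{No}^{>, \succ}$ gives $\mathfrak{a} = \mathfrak{b}$.

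It remains to assemble these facts. By Corollary~\ref{cor-confluence}, each class $\mathcal{E}_\alpha[a]$ admits a $\sqsubseteq$-minimum, and Lemma~\ref{lem-confluence} ensures that this minimum lies in $\mathbf{Mo}_\alpha$. Combined with the uniqueness just established, each $\mathcal{E}_\alpha$-class contains exactly one element of $\mathbf{Mo}_\alpha$, namely its $\sqsubseteq$-minimum. Since the partition of $\mathbf{No}^{>, \succ}$ by $\mathcal{E}_\alpha'$ coincides with the partition by $\mathcal{E}_\alpha$, the $\mathcal{E}_\alpha'$-simplest elements are precisely the elements of $\mathbf{Mo}_\alpha$, i.e.\ $\mathbf{Mo}_\alpha' = \mathbf{Mo}_\alpha$. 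The principal difficulty is the uniqueness step, whose success hinges on the verification that $L_\beta$ preserves $L_{< \alpha}$-atomicity; this is a small but delicate piece of ordinal bookkeeping that exploits the specific arithmetic $\alpha = \beta \omega$ of the successor case.
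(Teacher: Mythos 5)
Your overall strategy — existence of an $L_{<\alpha}$-atomic representative in each $\mathcal{E}_{\alpha}$-class from Lemma~\ref{lem-confluence}/Corollary~\ref{cor-confluence}, plus a uniqueness argument built on ``two monomials $\asymp$ implies equal'' — matches the paper's, and the proof is correct. The paper argues uniqueness a bit more directly: given $\mathfrak{a} \in \mathbf{Mo}_{\alpha}$ and $\mathfrak{b} := \pi_{\mathcal{E}_{\alpha}'}(\mathfrak{a})$, it invokes $\mathbf{I}_{2,\nu}$ to produce a single $\gamma < \alpha$ with $E_{\gamma}(rL_{\gamma}\mathfrak{a}) < \mathfrak{b} < E_{\gamma}(sL_{\gamma}\mathfrak{a})$, hence $L_{\gamma}\mathfrak{a} \asymp L_{\gamma}\mathfrak{b}$, and concludes. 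You instead unwind Definition~\ref{def-confluence} (the successor case with iterated $L_{\beta} \circ \mathfrak{d}_{\beta}$); this also works.

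Two warnings about your Step~2. First, the ``identity'' $L_{\delta} \circ L_{\varepsilon} = L_{\varepsilon + \delta}$ is false in general: already $L_{\omega} \circ L_{1} = T_{-1} \circ L_{\omega} \neq L_{\omega} = L_{1 + \omega}$ by~(\ref{eq-functional-total}). It happens to hold in your specific application (applying $L_{\gamma}$ with $\gamma < \alpha$ on the \emph{outside} of $L_{\beta}$) because no Cantor-normal-form absorption occurs there, but invoking it as a general identity is incorrect, and your computed $\gamma'$ for the case $n \geq 1$ should be $\beta(n{+}1) + \iota$, not $\beta(n{+}1)$. Second, and more importantly, the entire step is unnecessary: for $\nu = \mu + 1$ the class $\mathbf{Mo}_{\alpha} = \mathfrak{M}_{\omega^{\nu}}$ is \emph{by definition} (the successor clause of $\mathbf{DD}$) the class of $s$ with $L_{\beta}^{\circ n}(s) \in \mathbf{Mo}_{\beta}$ for all $n$, so $L_{\beta}$ preserves it trivially, with no ordinal bookkeeping at all. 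The paper's proof sidesteps this entirely by working with a single $\gamma$ from $\mathbf{I}_{2,\nu}$ rather than iterating $L_{\beta}$. So the step you flag as the ``principal difficulty'' is actually free, while the load-bearing ingredients are the ones you already have (confluence and the coincidence $\mathcal{E}_{\alpha} = \mathcal{E}_{\alpha}'$).
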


\begin{proof}
  Let $a \in \mathbf{No}^{>, \succ}$. By Corollary~\ref{cor-confluence}, the
  simplest element of $\mathcal{E}_{\alpha} [a]$ is $L_{< \alpha}$-atomic.
  Since $\mathcal{E}_{\alpha} [a] =\mathcal{E}_{\alpha}' [a]$, we deduce that
  $\mathbf{Mo}_{\alpha}' \subseteq \mathbf{Mo}_{\alpha}$.
  
  Conversely, given $\mathfrak{a} \in \mathbf{Mo}_{\alpha}$, we
  have~$\mathfrak{b} \assign \pi_{\mathcal{E}_{\alpha}'} (\mathfrak{a}) \in
  \mathbf{Mo}_{\alpha}' \subseteq \mathbf{Mo}_{\alpha}$. Now $\mathfrak{b} \in
  \mathcal{E}_{\alpha}' [\mathfrak{a}]$, so by $\mathbf{I}_{2, \nu}$, there
  are $r, s \in \mathbb{R}^{>}$ and $\gamma < \alpha$ with~$E_{\gamma}
  (rL_{\gamma} \mathfrak{a}) <\mathfrak{b}< E_{\gamma} (sL_{\gamma}
  \mathfrak{a})$. Hence, $L_{\gamma} \mathfrak{b} \asymp L_{\gamma}
  \mathfrak{a}$, $L_{\gamma} \mathfrak{b}= L_{\gamma} \mathfrak{a}$ and
  $\mathfrak{b}=\mathfrak{a}$. We conclude that $\mathfrak{a} \in
  \mathbf{Mo}_{\alpha}'$.
\end{proof}

In particular, the class $\mathbf{Mo}_{\alpha}$ is a surreal substructure. We
have proved $\mathbf{I}_{1, \nu}, \mathbf{I}_{2, \nu}$, and $\mathbf{I}_{3,
\nu}$, so we obtain the following by induction:

\begin{theorem}
  The field $(\mathbf{No}, (L_{\omega^{\eta}})_{\eta \in \mathbf{On}})$ is a
  confluent hyperserial skeleton of force $(\mathbf{On}, \mathbf{On})$.
\end{theorem}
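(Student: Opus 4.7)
The plan is to establish the theorem by transfinite induction on $\mu$, proving at stage $\mu$ the three induction hypotheses \hyperref[ind-ax1]{$\mathbf{I}_{1,\mu}$}, \hyperref[ind-ax2]{$\mathbf{I}_{2,\mu}$}, and \hyperref[ind-ax3]{$\mathbf{I}_{3,\mu}$} from section~\ref{subsection-inductive-setting}. Aggregating \hyperref[ind-ax1]{$\mathbf{I}_{1,\mu}$} across all $\mu \in \mathbf{On}$ yields exactly the desired statement, since axiom \hyperref[infinite-products]{$\mathbf{P}_{\mu}$} comes for free from \hyperref[surjective-log]{$\mathbf{SL}$} together with \hyperref[regularity-0]{$\mathbf{R}_0$}, as already noted after the definition of a hyperserial skeleton.

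The base case $\mu = 1$ is supplied by Proposition~\ref{prop-initial-case}, drawing on Gonshor's exponential and the Berarducci--Mantova identification $\mathbf{Mo}_{\omega} = \mathbf{Smp}_{\mathcal{E}}$. The limit ordinal case is handled by Proposition~\ref{prop-limit-case}, which realizes $\mathbf{Mo}_{\omega^{\nu}}$ as an intersection of the $\mathbf{Mo}_{\omega^{\eta}}$ for $\eta < \nu$ and derives $\nu$-confluence directly from \hyperref[ind-ax2]{$\mathbf{I}_{2,\nu}$}. The substantive step is the successor case $\nu \assign \mu + 1$ with $\beta \assign \omega^{\mu}$, which occupies sections~\ref{subsection-defining-hyperlogarithms}--\ref{subsection-end-of-the-inductive-proof}. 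There, one defines $L_{\beta}$ on $\mathbf{Mo}_{\beta}$ by the cut equation~(\ref{eq-poor-hyperlog}); Proposition~\ref{prop-good-hyperlog-def} verifies that this is well-posed along with the key comparison $\mathbf{H}_{\mathfrak{a}}$, after which Propositions~\ref{prop-monotonicity} and~\ref{prop-regularity} deliver \hyperref[monotonicity]{$\mathbf{M}_{\mu}$}, \hyperref[asymptotics]{$\mathbf{A}_{\mu}$}, and \hyperref[regularity]{$\mathbf{R}_{\mu}$}. Proposition~\ref{prop-functional-equation} supplies \hyperref[functional-eq]{$\mathbf{FE}_{\mu}$}, Corollary~\ref{cor-confluence} upgrades the structure to $\nu$-confluence, and Proposition~\ref{prop-surjective-hyperlog} gives surjectivity of $L_{\beta} : \mathbf{Mo}_{\beta} \longrightarrow \mathbf{No}_{\succ, \beta}$; together these yield \hyperref[ind-ax1]{$\mathbf{I}_{1,\nu}$}. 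Propositions~\ref{prop-I-3} and~\ref{prop-I3} then close the induction with \hyperref[ind-ax2]{$\mathbf{I}_{2,\nu}$} and \hyperref[ind-ax3]{$\mathbf{I}_{3,\nu}$}.

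The hard part has been absorbed into the successor step: the delicate point is that the asymmetric cut equation~(\ref{eq-poor-hyperlog}) must simultaneously force the sharp monotonicity inequalities of \hyperref[monotonicity]{$\mathbf{M}_{\mu}$}, respect the functional equation $L_{\beta} L_{\beta_{/ \omega}} \mathfrak{a} = L_{\beta} \mathfrak{a} - 1$, and produce enough $L_{< \alpha}$-atomic elements to witness confluence at level $\nu$, all while interacting correctly with the pre-existing $L_{\omega^{\eta}}$ for $\eta < \mu$ through \hyperref[ind-ax2]{$\mathbf{I}_{2,\mu}$}. With those technicalities already resolved in the preceding subsections, the final theorem follows by a routine transfinite induction combining Propositions~\ref{prop-initial-case}, \ref{prop-limit-case}, and the successor-step package just described.
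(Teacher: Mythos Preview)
Your proposal is correct and follows essentially the same approach as the paper: the theorem is obtained by transfinite induction on $\mu$, with Proposition~\ref{prop-initial-case} for the base case, Proposition~\ref{prop-limit-case} for limit ordinals, and the chain of results in sections~\ref{subsection-defining-hyperlogarithms}--\ref{subsection-end-of-the-inductive-proof} for the successor step. The paper's own proof of the theorem is simply the one-line observation that $\mathbf{I}_{1,\nu}$, $\mathbf{I}_{2,\nu}$, and $\mathbf{I}_{3,\nu}$ have been established, so the result follows by induction.
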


Combining this with Propositions~\ref{th-skeleton-field}
and~\ref{th-bijective-hyperlogarithm}, we obtain
Theorem~\ref{th-confluent-hyperserial-field}. Let us finally show
that~$(\mathbf{No}, \circ)$ contains only one $L_{< \mathbf{On}}$-atomic
element.

\begin{proposition}
  \label{prop-omega-atomic}The number $\omega$ is the only $L_{<
  \mathbf{On}}$-atomic element in $\textbf{No}$. For all $a \in
  \mathbf{No}^{>, \succ}$, there is $\gamma \in \mathbf{On}$ with $L_{\gamma}
  a \asymp L_{\gamma} \omega$.
\end{proposition}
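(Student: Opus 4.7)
The plan is to reduce the uniqueness statement to the asymptotic clause, and then prove the asymptotic clause by a simplicity/well-foundedness argument using confluence. Granting that $\omega$ is $L_{<\mathbf{On}}$-atomic (a routine transfinite induction on $\alpha$: since $\omega$ is the $\sqsubseteq$-minimum of $\mathbf{No}^{>,\succ}$ and each $\mathbf{Mo}_{\omega^\alpha}= \mathbf{Smp}_{\mathcal{E}_{\omega^\alpha}'}$ is a surreal substructure of $\mathbf{No}^{>,\succ}$, the element $\omega$ lies in every $\mathbf{Mo}_{\omega^\alpha}$), uniqueness follows from the asymptotic clause: if $\mathfrak{a}$ is $L_{<\mathbf{On}}$-atomic and $L_\gamma \mathfrak{a}\asymp L_\gamma \omega$ for some $\gamma$, then both $L_\gamma\mathfrak{a}$ and $L_\gamma\omega$ are monomials, so their asymptotic equivalence forces $L_\gamma \mathfrak{a}=L_\gamma\omega$, and the injectivity of $L_\gamma$ on $\mathbf{No}^{>,\succ}$ yields $\mathfrak{a}=\omega$.

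For the asymptotic clause, fix $a\in\mathbf{No}^{>,\succ}$ and set $\ell_a \assign \ell(a)$, the Conway length of $a$. For each ordinal $\alpha$, let $\mathfrak{a}_\alpha\assign \mathfrak{d}_{\omega^\alpha}(a)\in \mathbf{Mo}_{\omega^\alpha}$ denote the unique $L_{<\omega^\alpha}$-atomic representative of $\mathcal{E}_{\omega^\alpha}[a]$ supplied by Theorem~\ref{th-confluent-hyperserial-field}. Since $\mathfrak{a}_\alpha$ is the $\sqsubseteq$-minimum of the convex class $\mathcal{E}_{\omega^\alpha}[a]\ni a$, it is an initial segment of every element of the class; in particular $\mathfrak{a}_\alpha\sqsubseteq a$, so $\ell(\mathfrak{a}_\alpha)\leqslant \ell_a$. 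It therefore suffices to find an $\alpha$ with $\mathfrak{a}_\alpha=\omega$: then $a\in\mathcal{E}_{\omega^\alpha}[\omega]$, and the characterization~(\ref{exp-class-char}) supplies a $\gamma<\omega^\alpha$ with $L_\gamma a\asymp L_\gamma\omega$.

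Suppose instead that $\mathfrak{a}_\alpha\neq \omega$ for every $\alpha$. Because $\omega$ is the $\sqsubseteq$-minimum of $\mathbf{Mo}_{\omega^\alpha}$, we have $\omega\sqsubset \mathfrak{a}_\alpha$ in $\mathbf{Mo}_{\omega^\alpha}$, and the $\sqsubseteq$-path from $\omega$ to $\mathfrak{a}_\alpha$ must pass through the immediate successor $\Xi_{\mathbf{Mo}_{\omega^\alpha}}(\epsilon_\alpha)$ of $\omega$ on the correct side, where $\epsilon_\alpha\in\{-1,+1\}$ according to the sign of $\mathfrak{a}_\alpha-\omega$. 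This yields $\Xi_{\mathbf{Mo}_{\omega^\alpha}}(\epsilon_\alpha)\sqsubseteq \mathfrak{a}_\alpha$, so $\ell(\Xi_{\mathbf{Mo}_{\omega^\alpha}}(\epsilon_\alpha))\leqslant \ell_a$ for every $\alpha$. The entire proof therefore reduces to the following key lemma, which is the main obstacle to overcome: the Conway length $\ell(\Xi_{\mathbf{Mo}_{\omega^\alpha}}(\pm 1))$ is unbounded as $\alpha$ ranges over $\mathbf{On}$.

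I would establish the key lemma by transfinite induction on $\alpha$, combining the uniform cut equation~(\ref{eq-uniform-G}) for $\Xi_{\mathbf{Mo}_{\omega^\alpha}}$ with the explicit description $\mathcal{E}_{\omega^\alpha}' = \langle E_\gamma H_r L_\gamma : \gamma<\omega^\alpha,\, r\in\mathbb{R}^{>}\rangle$. At a successor step $\alpha=\alpha_-+1$, the orbit $\mathcal{E}_{\omega^\alpha}'\omega$ strictly enlarges $\mathcal{E}_{\omega^{\alpha_-}}'\omega$ by including all hyperexponential iterates $E_{\omega^{\alpha_-}}^{\circ n}\omega$; the uniform cut formula then expresses $\Xi_{\mathbf{Mo}_{\omega^\alpha}}(1)$ as the simplest monomial strictly beyond this enlarged orbit, which must be at least as complex as $E_{\omega^{\alpha_-}}$ applied to the previous-level witness, yielding a strict increase in Conway length via identity~(\ref{eq-hyperexp-proj}) and the length-growth properties of the truncation operator $\sharp_{\omega^{\alpha_-}}$. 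Limit steps are handled by passing to suprema. Once $\ell(\Xi_{\mathbf{Mo}_{\omega^\alpha}}(\pm 1))>\ell_a$, the bound $\ell(\Xi_{\mathbf{Mo}_{\omega^\alpha}}(\epsilon_\alpha))\leqslant \ell_a$ is violated, forcing $\mathfrak{a}_\alpha=\omega$ and completing the proof.
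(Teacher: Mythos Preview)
Your overall plan is sound and the reduction to the key lemma is correct: the simplicity bound $\mathfrak{a}_\alpha\sqsubseteq a$ does hold (by convexity of $\mathcal{E}_{\omega^\alpha}[a]$), and unboundedness of $\ell(\Xi_{\mathbf{Mo}_{\omega^\alpha}}(\pm 1))$ would indeed force $\mathfrak{a}_\alpha=\omega$ for some $\alpha$. However, your sketch of the key lemma contains a concrete error: the class $\mathcal{E}'_{\omega^\alpha}[\omega]=\mathcal{E}_{\omega^\alpha}[\omega]$ does \emph{not} contain the iterates $E_{\omega^{\alpha_-}}^{\circ n}\omega$. Indeed, applying $(L_{\omega^{\alpha_-}}\circ\mathfrak{d}_{\omega^{\alpha_-}})^{\circ m}$ to $E_{\omega^{\alpha_-}}\omega$ and to $\omega$ yields $L_{\omega^{\alpha_-}}^{\circ(m-1)}\omega$ and $L_{\omega^{\alpha_-}}^{\circ m}\omega$ respectively, and these are never $\asymp$. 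In fact, by Lemma~\ref{lem-hyperexp-hypermonomial-identity} one has $\Xi_{\mathbf{Mo}_{\omega^\alpha}}(1)=E_{\omega^{\alpha_-}}\omega$ exactly; so the iterate you claim lies inside the orbit is precisely the element you are trying to cut out. The appeal to ``length-growth properties of $\sharp_{\omega^{\alpha_-}}$'' is likewise unsubstantiated by anything in the paper.

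Repairing the key lemma essentially forces you onto the paper's direct route. From the cut equation~(\ref{eq-rich-hyperexp-2}) at $\varphi=\omega$ one reads off $E_{\omega^\nu}\omega=\{E_{<\omega^\nu}\omega\mid\varnothing\}$, so the $E_{\omega^\nu}\omega$ form a strictly increasing, hence cofinal, family of ordinals; dually $L_{\omega^\nu}\omega=\{\varnothing\mid L_{<\omega^\nu}\omega\}_{\mathbf{No}^{>,\succ}}$ is $\omega$ followed by minuses, and these are coinitial in $\mathbf{No}^{>,\succ}$. Given any $a$, one then sandwiches $L_{\omega^\nu}\omega<a<E_{\omega^\nu}\omega$ and applies $L_{\omega^{\nu+1}}$ together with the functional equation to obtain $|L_{\omega^{\nu+1}}a-L_{\omega^{\nu+1}}\omega|<1$, hence $L_{\omega^{\nu+1}}a\asymp L_{\omega^{\nu+1}}\omega$. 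Once this cofinality/coinitiality is established, your length-bound machinery becomes an unnecessary detour: the asymptotic clause follows in one line, and your key lemma is in fact a consequence of it rather than a stepping stone toward it.
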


\begin{proof}
  The number $\omega$ lies in $\mathbf{Mo}_{\smash{\omega^{\mu}}}$ for all
  $\mu \in \mathbf{On}$, so it is $L_{< \mathbf{On}}$-atomic. For $\nu \in
  \mathbf{On}$, the number $E_{\omega^{\nu}} \omega = \left\{ E_{<
  \omega^{\nu}} \omega \: | \: \varnothing \right\}$ is an ordinal. As a sign
  sequence, the number $L_{\omega^{\nu}} \omega = \left\{ \varnothing \: | \:
  L_{< \omega^{\nu}} \omega \right\}_{\mathbf{No}^{>, \succ}}$ is $\omega$
  followed by a string containing only minuses {\cite[Lemma~2.6]{vdH:bm}}.
  Since the sequences $(E_{\omega^{\nu}} \omega)_{\nu \in \mathbf{On}}$ and
  $(L_{\omega^{\nu}} \omega)_{\nu \in \mathbf{On}}$ are strictly increasing
  and strictly decreasing respectively, the classes $\{ E_{\omega^{\nu}}
  \omega \suchthat \nu \in \mathbf{On} \}$ and $\{ L_{\omega^{\nu}} \omega
  \suchthat \nu \in \mathbf{On} \}$ are respectively cofinal and coinitial in
  $\mathbf{No}^{>, \succ} = \{ a \in \mathbf{No} \suchthat \omega \sqsubseteq
  a \}$. Thus for $a \in \mathbf{No}^{>, \succ}$, there is $\nu \in
  \mathbf{On}$ with $E_{\omega^{\nu}} \omega > a > L_{\omega^{\nu}} \omega$,
  whence~$L_{\omega^{\nu + 1}} \omega \asymp L_{\omega^{\nu + 1}} a$.
\end{proof}

\section{Remarkable identities}\label{section-useful-identities}

In this section, we give various identities regarding the function groups
introduced in Section~\ref{rem-subclasses}. In what follows, $\nu$ is a
non-zero ordinal and $\alpha \assign \omega^{\nu}$.

\subsection{Simplified cut equations for $L_{\alpha}$ and $E_{\alpha}$}

Given $\varphi \in \mathbf{No}^{>, \succ}$, let $E_{\vartriangleleft \alpha}
\assign \{ E_{(\alpha_{/ \omega}) n} \varphi \suchthat n \in \mathbb{N} \}$ if
$\nu$ is a successor ordinal and $E_{\vartriangleleft \alpha} \varphi \assign
\{ \varphi \}$ if $\nu$ is a~limit ordinal. In this subsection, we will derive
the following simplified cut equations for $L_{\alpha}$, for 
$\mathfrak{a} \in \mathbf{Mo}_{\alpha}$ and $E_{\alpha}$ for~$\varphi \in \mathbf{No}_{\succ, \alpha}$:
\begin{eqnarray}
  L_{\alpha}
  \mathfrak{a} & = & \{ L_{\alpha} \mathfrak{a}_L^{\mathbf{Mo}_{\alpha}}
  |L_{\alpha} \mathfrak{a}_R^{\mathbf{Mo}_{\alpha}}, L_{< \alpha} \mathfrak{a}
  \}_{\mathbf{No}_{\succ, \alpha}}  \label{eq-rich-hyperlog}\\
  & = & \left\{ \mathbb{R}, L_{\alpha} \mathfrak{a}' + \frac{1}{L_{< \alpha}
  \mathfrak{a}'}  |
  L_{\alpha} \mathfrak{a}'' - \frac{1}{L_{< \alpha} \mathfrak{a}''}, L_{<
  \alpha} \mathfrak{a} \right\}, 
  \label{eq-rich-hyperlog-2}\\
  E_{\alpha}
  \varphi & = & \{ E_{\vartriangleleft \alpha} \mathfrak{d}_{\alpha}
  (\varphi), E_{\alpha} \varphi_L^{\mathbf{No}_{\succ, \alpha}} | E_{\alpha}
  \varphi_R^{\mathbf{No}_{\succ, \alpha}} \}_{\mathbf{Mo}_{\alpha}} 
  \label{eq-rich-hyperexp}\\
  & = & \{ E_{< \alpha} \varphi, \mathcal{E}_{\alpha} E_{\alpha}
  \varphi_L^{\mathbf{No}_{\succ, \alpha}} | \mathcal{E}_{\alpha} E_{\alpha}
  \varphi_R^{\mathbf{No}_{\succ, \alpha}} \}  \label{eq-rich-hyperexp-2}
\end{eqnarray}
(where $\mathfrak{a}'$ and $\mathfrak{a}''$ range in
  $\mathfrak{a}_L^{\mathbf{Mo}_{\alpha}}$ and $\mathfrak{a}_R^{\mathbf{Mo}_{\alpha}}$ respectively).
For all $a \in \mathbf{No}^{>, \succ}$, the set $E_{\vartriangleleft \alpha}
\mathfrak{d}_{\alpha} (a)$ contains only $L_{< \alpha}$-atomic numbers, so
(\ref{eq-rich-hyperexp}) is indeed a cut equation of the form $\{ \rho |
\lambda \}_{\mathbf{Mo}_{\alpha}}$.

\begin{remark}
  The changes with respect to (\ref{eq-poor-hyperlog}) and
  (\ref{eq-poor-hyperexp}) lie in the occurrence of $\mathfrak{a}''$ instead
  of~$\mathfrak{a}$ in~(\ref{eq-rich-hyperlog-2}) and the (related) absence of
  the left option $E_{< \alpha} ((\varphi_R^{\mathbf{No}_{\succ, \alpha}} -
  \varphi)^{- 1})$ in (\ref{eq-rich-hyperexp-2}).
  So~(\ref{eq-rich-hyperlog-2}) and~(\ref{eq-rich-hyperexp-2}) give lighter
  sets of conditions than those in~(\ref{eq-poor-hyperlog})
  and~(\ref{eq-poor-hyperexp}) to define $L_{\alpha}$ and $E_{\alpha}$. This
  seemingly meager simplification will be crucial in further work. Indeed,
  combined with Proposition~\ref{prop-nearly-extensive}, this allows one to
  determine large classes of numbers $a, b$ with ${a \sqsubseteq b}
  \Longrightarrow E_{\alpha} a \sqsubseteq E_{\alpha} b$.
\end{remark}

First note that the cut equations (\ref{eq-rich-hyperlog}) and
(\ref{eq-rich-hyperexp}) if they hold are uniform (see
{\cite[Remark~1]{BvdHM:surhyp}}). Moreover, we claim that
(\ref{eq-rich-hyperlog}, \,\ref{eq-rich-hyperlog-2}) are equivalent and that
(\ref{eq-rich-hyperexp}, \,\ref{eq-rich-hyperexp-2}) are equivalent. Indeed,
recall that for a thin convex partition $\tmmathbf{\Pi}$ of a surreal
substructure $\mathbf{S}$ and any cut representation $(L, R)$ in
$\mathbf{Smp}_{\tmmathbf{\Pi}}$, one has
\begin{eqnarray*}
  \{ L|R \}_{\mathbf{Smp}_{\tmmathbf{\Pi}}} & = & \{ \tmmathbf{\Pi} [L] |
  \nobracket \tmmathbf{\Pi} [R] \} \nobracket_{\mathbf{S}} .
\end{eqnarray*}
For $\mathfrak{a}' \in \mathfrak{a}_L^{\mathbf{Mo}_{\alpha}}$ and
$\mathfrak{a}'' \in \mathfrak{a}_R^{\mathbf{Mo}_{\alpha}}$ the classes
$L_{\alpha} \mathfrak{a}' + (L_{< \alpha} \mathfrak{a}')^{- 1}$ and
$\mathcal{L}_{\alpha} [L_{\alpha} \mathfrak{a}']$ are mutually cofinal
by~(\ref{L-when-E}). Similarly, $L_{\alpha} \mathfrak{a}'' - (L_{< \alpha}
\mathfrak{a}'')^{- 1}$ and $\mathcal{L}_{\alpha} [L_{\alpha} \mathfrak{a}'']$
are mutually coinitial. By Lemma~\ref{lem-limit-hypermon}, the classes $E_{<
\alpha} \varphi$ and $\mathcal{E}_{\alpha} [E_{\vartriangleleft \alpha}
\mathfrak{d}_{\alpha} (\varphi)]$ are mutually cofinal. So it is enough to
prove that (\ref{eq-rich-hyperlog}) and~(\ref{eq-rich-hyperexp}) are valid cut
equations for $L_{\alpha}$ and $E_{\alpha}$ respectively.

\begin{lemma}
  If $\nu$ is a successor ordinal, then the identities
  {\tmem{(\ref{eq-rich-hyperlog})}} and {\tmem{(\ref{eq-rich-hyperexp})}}
  hold.
\end{lemma}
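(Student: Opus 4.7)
The plan is to prove both identities by induction on $\sqsubseteq$, reducing each to its ``poor'' counterpart from Propositions~\ref{prop-uniform-hyperlog} and~\ref{prop-surjective-hyperlog}. Throughout I exploit the successor hypothesis $\nu=\mu+1$ via the functional equation~\hyperref[functional-eq]{$\mathbf{FE_\mu}$} (Proposition~\ref{prop-functional-equation}), which gives $L_\alpha L_{\alpha_{/\omega}}^{\circ n}\mathfrak{a}=L_\alpha\mathfrak{a}-n$ on $\mathbf{Mo}_\alpha$ and, dually, $E_{(\alpha_{/\omega}) n}\circ E_\alpha=E_\alpha\circ T_n$. The cofinality remarks made just before the lemma already reduce~(\ref{eq-rich-hyperlog}) and~(\ref{eq-rich-hyperexp}) to their $\mathbf{No}$-equivalents~(\ref{eq-rich-hyperlog-2}) and~(\ref{eq-rich-hyperexp-2}), so it suffices to establish these two equalities.

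For~(\ref{eq-rich-hyperlog-2}) I fix $\mathfrak{a}\in\mathbf{Mo}_\alpha$ and compare options with~(\ref{eq-poor-hyperlog}). The left options coincide; on the right, the poor option $L_\alpha\mathfrak{a}''-(L_{<\alpha}\mathfrak{a})^{-1}$ is replaced by the rich option $L_\alpha\mathfrak{a}''-(L_{<\alpha}\mathfrak{a}'')^{-1}$. Since $\mathfrak{a}''\succ\mathfrak{a}$ gives $(L_{<\alpha}\mathfrak{a}'')^{-1}\prec(L_{<\alpha}\mathfrak{a})^{-1}$, the rich options are pointwise strictly larger, so the poor cut is contained in the rich cut and $L_\alpha\mathfrak{a}$ lies in the rich cut. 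For uniqueness I must rule out any strict $\sqsubseteq$-ancestor $y$ of $L_\alpha\mathfrak{a}$ in the rich cut. Such a $y$ lies outside the poor cut, so $L_\alpha\mathfrak{a}''-(L_{<\alpha}\mathfrak{a})^{-1}\leqslant y<L_\alpha\mathfrak{a}''-(L_{<\alpha}\mathfrak{a}'')^{-1}$ for some $\mathfrak{a}''\in\mathfrak{a}_R^{\mathbf{Mo}_\alpha}$; via Proposition~\ref{prop-surjective-hyperlog} I write $y=L_\alpha\mathfrak{b}$ with $\mathfrak{a}\prec\mathfrak{b}\prec\mathfrak{a}''$, and invoking the inductive hypothesis~(\ref{eq-rich-hyperlog}) at the simpler element $\mathfrak{a}''$ together with the functional equation, I confine $\mathfrak{b}$ to a $\sqsubseteq$-descendant of some iterate $L_{\alpha_{/\omega}}^{\circ n}\mathfrak{a}''$ in $\mathbf{Mo}_\alpha$; a direct computation of $L_\alpha\mathfrak{b}=L_\alpha\mathfrak{a}''-n+\varepsilon$ (with $\varepsilon$ controlled by the inductive form of the cut for $L_{\alpha_{/\omega}}^{\circ n}\mathfrak{a}''$) then contradicts $y\sqsubsetneq L_\alpha\mathfrak{a}$.

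For~(\ref{eq-rich-hyperexp-2}) the only discrepancy with~(\ref{eq-poor-hyperexp}) is the extra left option $E_{<\alpha}((\varphi_R^{\mathbf{No}_{\succ,\alpha}}-\varphi)^{-1})$. Applying~(\ref{ineq-truncated}) together with the successor-case identity $E_{(\alpha_{/\omega}) n}\circ E_\alpha=E_\alpha\circ T_n$ (itself a reformulation of~(\ref{eq-succ-truncated})), I bound each such option above by an element of the form $E_\alpha(\varphi+n)$ with $n\in\mathbb{N}$; since $\varphi+n$ lies in $\mathbf{No}_{\succ,\alpha}$ by~(\ref{eq-succ-truncated}) and is strictly below some $\varphi''\in\varphi_R^{\mathbf{No}_{\succ,\alpha}}$ whenever $n$ is chosen smaller than the gap $\varphi''-\varphi$, this places the extra option below some $\mathcal{E}_\alpha E_\alpha\varphi_L^{\mathbf{No}_{\succ,\alpha}}$-option, so its removal does not change the simplest element. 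The main obstacle in both halves is the uniqueness step: transporting simplicity along $L_\alpha$ between $\mathbf{Mo}_\alpha$ and $\mathbf{No}_{\succ,\alpha}$ when $L_\alpha$ is not yet known to preserve~$\sqsubseteq$. The successor assumption resolves this precisely because the iterates $L_{\alpha_{/\omega}}^{\circ n}\mathfrak{a}''$ supply an explicit $\sqsubseteq$-coherent cofinal family inside each relevant $\mathcal{L}_\alpha$-class, bridging the two substructures in a way that will not be available in the limit case.
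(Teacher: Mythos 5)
Your strategy differs from the paper's and, as written, has gaps in both halves. For~(\ref{eq-rich-hyperlog}), the paper does not attempt to rule out simpler elements of the rich cut directly; it takes $\varphi$ to be the simplest element of the rich cut and shows that $\varphi$ satisfies the missing constraints of~(\ref{eq-poor-hyperlog}). The only non-trivial point is $\varphi<L_\alpha\mathfrak{a}''-(L_{<\alpha}\mathfrak{a})^{-1}$, handled by a case split on whether $\mathfrak{a}''\in\mathcal{E}_\alpha^{\ast}[\mathfrak{a}]$; in the hard case, the successor hypothesis yields $L_\alpha\mathfrak{a}''-(L_{<\alpha}\mathfrak{a})^{-1}>L_\alpha\mathfrak{a}+1$, and one bounds $\varphi\leqslant L_\alpha\mathfrak{a}+1$ by inspecting the cut for $L_\alpha\mathfrak{a}+1$ via~(\ref{eq-uniform-sum}). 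Your argument instead posits a strict ancestor $y=L_\alpha\mathfrak{b}$ of $L_\alpha\mathfrak{a}$ in the rich cut and asserts that $\mathfrak{b}$ is confined to a $\sqsubseteq$-descendant of some iterate $L_{\alpha_{/\omega}}^{\circ n}\mathfrak{a}''$. This is not justified: from $L_\alpha\mathfrak{a}''-(L_\gamma\mathfrak{a})^{-1}\leqslant L_\alpha\mathfrak{b}<L_\alpha\mathfrak{a}''-(L_{<\alpha}\mathfrak{a}'')^{-1}$ one can get $\mathfrak{b}\in\mathcal{E}_\alpha^{\ast}[\mathfrak{a}'']\setminus\mathcal{E}_\alpha[\mathfrak{a}'']$, but that class contains many $L_{<\alpha}$-atomic numbers bearing no $\sqsubseteq$-relation to the iterates $L_{\alpha_{/\omega}}^{\circ n}\mathfrak{a}''$; and even if $\mathfrak{b}$ were such a descendant, you would still face precisely the difficulty you flag at the end, namely that $L_\alpha$ is not known to preserve $\sqsubseteq$, so the form $L_\alpha\mathfrak{b}=L_\alpha\mathfrak{a}''-n+\varepsilon$ does not follow.

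For~(\ref{eq-rich-hyperexp}) the bound points the wrong way. You majorize the extra left option $E_{<\alpha}((\varphi_R^{\mathbf{No}_{\succ,\alpha}}-\varphi)^{-1})$ from \emph{above} by $E_\alpha(\varphi+n)$; but $E_\alpha(\varphi+n)>E_\alpha\varphi$, whereas every retained left option of~(\ref{eq-rich-hyperexp-2}), namely each element of $E_{<\alpha}\varphi$ and of $\mathcal{E}_\alpha E_\alpha\varphi_L^{\mathbf{No}_{\succ,\alpha}}$, lies \emph{below} $E_\alpha\varphi$. An upper bound sitting above $E_\alpha\varphi$ cannot show that the deleted left option is dominated by the retained ones, which is what would be needed to conclude that removing it leaves the simplest element unchanged. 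The paper avoids this comparison altogether: having established~(\ref{eq-rich-hyperlog}) and its uniformity, it sets $\mathfrak{b}\assign\{ E_{\alpha_{/\omega}\mathbb{N}}\mathfrak{d}_\alpha(\psi),E_\alpha\psi_L^{\mathbf{No}_{\succ,\alpha}} | E_\alpha\psi_R^{\mathbf{No}_{\succ,\alpha}} \}_{\mathbf{Mo}_\alpha}$ and verifies $L_\alpha\mathfrak{b}=\psi$ by applying the uniform cut equation for $L_\alpha$ to this cut representation; that route works and is the one to take. (A small misattribution: $E_{(\alpha_{/\omega})n}\circ E_\alpha=E_\alpha\circ T_n$ is a consequence of the functional equation~(\ref{eq-functional-total}), not of~(\ref{eq-succ-truncated}).)
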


\begin{proof}
  Let $\mathfrak{a} \in \mathbf{Mo}_{\alpha}$ and set
  \begin{eqnarray*}
    \varphi & \assign & \{ L_{\alpha} \mathfrak{a}_L^{\mathbf{Mo}_{\alpha}}
    |L_{\alpha} \mathfrak{a}_R^{\mathbf{Mo}_{\alpha}}, L_{< \alpha}
    \mathfrak{a} \}_{\mathbf{No}_{\succ, \alpha}}\\
    & = & \left\{ \mathbb{R}, L_{\alpha} \mathfrak{a}' + \frac{1}{L_{<
    \alpha} \mathfrak{a}'} \suchthat \mathfrak{a}' \in
    \mathfrak{a}_L^{\mathbf{Mo}_{\alpha}} |L_{\alpha} \mathfrak{a}'' -
    \frac{1}{L_{< \alpha} \mathfrak{a}''}, L_{< \alpha} \mathfrak{a} \suchthat
    \mathfrak{a}'' \in \mathfrak{a}_R^{\mathbf{Mo}_{\alpha}} \right\} .
  \end{eqnarray*}
  We have $\mathcal{L}_{\alpha} [L_{\alpha}
  \mathfrak{a}_L^{\mathbf{Mo}_{\alpha}}] < \varphi < L_{< \alpha}
  \mathfrak{a}$ so in view of (\ref{eq-poor-hyperlog}), it is enough to prove
  that $\varphi < L_{\alpha} \mathfrak{a}_R^{\mathbf{Mo}_{\alpha}} - (L_{<
  \alpha} \mathfrak{a})^{- 1}$ to conclude that $\varphi = L_{\alpha}
  \mathfrak{a}$. Let $\mathfrak{a}'' \in
  \mathfrak{a}_R^{\mathbf{Mo}_{\alpha}}$. If $\mathfrak{a}'' \in
  \mathcal{E}_{\alpha}^{\ast} [\mathfrak{a}]$, then the inequality $\varphi <
  L_{\alpha} \mathfrak{a}''$ entails $\varphi <\mathcal{L}_{\alpha}
  [L_{\alpha} \mathfrak{a}'']$ whence $\varphi < L_{\alpha} \mathfrak{a}'' -
  (L_{< \alpha} \mathfrak{a}'')^{- 1}$ and $\varphi < L_{\alpha}
  \mathfrak{a}'' - (L_{< \alpha} \mathfrak{a})^{- 1}$. Otherwise, we have
  $\mathfrak{a}< L_{< \alpha} \mathfrak{a}''$, so $L_{\alpha} \mathfrak{a}<
  L_{\alpha} \mathfrak{a}'' - 2$, and $L_{\alpha} \mathfrak{a}'' - (L_{<
  \alpha} \mathfrak{a})^{- 1} > L_{\alpha} \mathfrak{a}+ 1$. It is enough to
  prove that~$L_{\alpha} \mathfrak{a}+ 1 \geqslant \varphi$. Recall that
  \begin{eqnarray*}
    L_{\alpha} \mathfrak{a}+ 1 = \left\{ L_{\alpha} \mathfrak{a},
    L_{\alpha} \mathfrak{a}' + \frac{1}{L_{< \alpha} \mathfrak{a}'} + 1
    |L_{\alpha} \mathfrak{a}'' - \frac{1}{L_{< \alpha}
    \mathfrak{a}} + 1, L_{< \alpha} \mathfrak{a} \right\}
  \end{eqnarray*}
  by (\ref{eq-uniform-sum}), where $\mathfrak{a}'$ and $\mathfrak{a}''$ range in $\mathfrak{a}_L^{\mathbf{Mo}_{\alpha}}$ and $\mathfrak{a}_R^{\mathbf{Mo}_{\alpha}}$ respectively. We see that $L_{\alpha} \mathfrak{a}' +
  \frac{1}{L_{< \alpha} \mathfrak{a}'} < L_{\alpha} \mathfrak{a}+ 1$ for all
  $\mathfrak{a}' \in \mathfrak{a}_L^{\mathbf{Mo}_{\alpha}}$. We have $1 -
  \frac{1}{L_{< \alpha} \mathfrak{a}} \succ \frac{1}{L_{< \alpha}
  \mathfrak{a}_R^{\mathbf{Mo}_{\alpha}}}$ so $L_{\alpha}
  \mathfrak{a}_R^{\mathbf{Mo}_{\alpha}} - \frac{1}{L_{< \alpha} \mathfrak{a}}
  + 1 > \varphi$. Thus $\varphi \leqslant L_{\alpha} \mathfrak{a}+ 1$. So
  (\ref{eq-rich-hyperlog}) holds.
  
  Now let $\psi \in \mathbf{No}_{\succ, \alpha}$ and set
  \begin{eqnarray*}
    \mathfrak{b} & \assign & \{ E_{\alpha_{/ \omega} \mathbb{N}}
    \mathfrak{d}_{\alpha} (\psi), E_{\alpha} \psi_L^{\mathbf{No}_{\succ,
    \alpha}} |E_{\alpha} \psi_R^{\mathbf{No}_{\succ, \alpha}}
    \}_{\mathbf{Mo}_{\alpha}} .
  \end{eqnarray*}
  By uniformity of (\ref{eq-rich-hyperlog}), we have
  \begin{eqnarray*}
    L_{\alpha} \mathfrak{b} & = & \{ L_{\alpha} E_{\alpha_{/ \omega}
    \mathbb{N}} \mathfrak{d}_{\alpha} (\psi), \psi_L^{\mathbf{No}_{\succ,
    \alpha}} | \psi_R^{\mathbf{No}_{\succ, \alpha}}, L_{< \alpha} \mathfrak{b}
    \}_{\mathbf{No}_{\succ, \alpha}},
  \end{eqnarray*}
  whence $L_{\alpha} \mathfrak{b} \sqsupseteq \{ \psi_L^{\mathbf{No}_{\succ,
  \alpha}} | \psi_R^{\mathbf{No}_{\succ, \alpha}} \}_{\mathbf{No}_{\succ,
  \alpha}} = \psi$. Conversely, $\mathfrak{b}> E_{\alpha_{/ \omega}
  \mathbb{N}} \mathfrak{d}_{\alpha} (\psi)$ and $\mathfrak{b}> E_{< \alpha}
  \psi$, so $\psi < L_{< \alpha} \mathfrak{b}$. We have $L_{\alpha}
  E_{\alpha_{/ \omega} \mathbb{N}} \mathfrak{d}_{\alpha} (\psi) = L_{\alpha}
  \mathfrak{d}_{\alpha} (\psi) +\mathbb{N}$. Since $L_{\alpha}
  \mathfrak{d}_{\alpha} (\psi) < L_{\alpha_{/ \omega}} \mathfrak{d}_{\alpha}
  (\psi) \prec \psi$, this yields~$L_{\alpha} E_{\alpha_{/ \omega} \mathbb{N}}
  \mathfrak{d}_{\alpha} (\psi) < \psi$. This proves that $\psi$ lies in the
  cut defining $L_{\alpha} \mathfrak{b}$. We conclude that $\psi = L_{\alpha}
  \mathfrak{b}$, hence (\ref{eq-rich-hyperexp}) holds.
\end{proof}

We now assume that $\nu$ is a limit ordinal. For $z \in \mathbf{No}$, define
\begin{eqnarray*}
  F (z) & \assign & \{ \mathfrak{d}_{\alpha} (\Xi_{\mathbf{No}_{\succ,
  \alpha}} z), F (z_L) |F (z_R) \}_{\mathbf{Mo}_{\alpha}}, \quad \text{and}\\
  \Xi z & \assign & \{ \mathbb{R}, \Xi z' + (L_{< \alpha} F (z'))^{- 1}
  \suchthat z' \in z_L | \Xi z_R - (L_{< \alpha} F (z))^{- 1} \} .
\end{eqnarray*}
\begin{lemma}
  For all $z \in \mathbf{No}$, we have
  \begin{eqnarray}
    F (z) & \text{is} & \text{defined}  \label{F-defined}\\
    \Xi z & \text{is} & \text{defined}  \label{xi-defined}\\
    \Xi z & = & \Xi_{\mathbf{No}_{\succ, \alpha}} z  \label{eq-alpha}\\
    F (z) & = & E_{\alpha} \Xi z  \label{F-small}
  \end{eqnarray}
\end{lemma}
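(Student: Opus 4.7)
The plan is to prove the four claims simultaneously by induction on $(\mathbf{No}, \sqsubseteq)$. Fix $z \in \mathbf{No}$ and suppose that \textup{(\ref{F-defined})}--\textup{(\ref{F-small})} hold for every $z' \sqsubset z$; write $\varphi \assign \Xi_{\mathbf{No}_{\succ, \alpha}} z$. By the inductive hypothesis, for every $z' \in z_L \cup z_R$ we have $\Xi z' = \Xi_{\mathbf{No}_{\succ, \alpha}} z'$ and $F(z') = E_{\alpha} \Xi_{\mathbf{No}_{\succ, \alpha}} z'$. In particular, $(\Xi z_L, \Xi z_R)$ coincides with a cofinal cut representation of $\varphi$ in $\mathbf{No}_{\succ, \alpha}$, and $\mathcal{E}_\alpha' F(z_L) < \mathcal{E}_\alpha' F(z_R)$ since both $\Xi_{\mathbf{No}_{\succ, \alpha}}$ and $E_\alpha$ are strictly increasing.

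The first task is to establish \textup{(\ref{F-defined})} and \textup{(\ref{F-small})} together. Applying the cut equation \textup{(\ref{eq-poor-hyperexp})} from Proposition~\ref{prop-surjective-hyperlog} to $\varphi$ with the cofinal cut representation $(\Xi z_L, \Xi z_R)$, and invoking the inductive hypothesis to rewrite $E_\alpha \Xi z_{L,R} = F(z_{L,R})$, one obtains
\[
  E_\alpha \varphi \;=\; \bigl\{E_{<\alpha} \varphi,\; E_{<\alpha}\bigl((\Xi z_R - \varphi)^{-1}\bigr),\; \mathcal{E}_\alpha' F(z_L) \,\bigm|\, \mathcal{E}_\alpha' F(z_R)\bigr\}.
\]
Since $\nu$ is a limit ordinal, Lemma~\ref{lem-limit-hypermon} gives $\mathcal{E}_\alpha' \legeangle \mathcal{E}_\alpha^\ast$, so the $E_{<\alpha}$-options all sit in the $\mathcal{E}_\alpha$-class of $\varphi$ and are cofinal therein; its simplest element is $\mathfrak{d}_\alpha(\varphi)$. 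As the inequality $F(z_L) < \mathfrak{d}_\alpha(\varphi) < F(z_R)$ separates distinct $\mathcal{E}_\alpha$-classes, taking the simplest element of $\mathbf{Mo}_\alpha$ collapses the cut to
\[
  E_\alpha \varphi \;=\; \bigl\{\mathfrak{d}_\alpha(\varphi),\; F(z_L) \,\bigm|\, F(z_R)\bigr\}_{\mathbf{Mo}_\alpha},
\]
which is the very definition of $F(z)$. This simultaneously shows $F(z)$ is defined and that $F(z) = E_\alpha \varphi$.

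With $F(z) = E_\alpha \varphi$ in hand, substitute $F(z') = E_\alpha \Xi z'$ and $F(z) = E_\alpha \Xi_{\mathbf{No}_{\succ, \alpha}} z$ into the defining formula for $\Xi z$; using the inductive hypothesis $\Xi z' = \Xi_{\mathbf{No}_{\succ, \alpha}} z'$, the result is precisely the right-hand side of the uniform cut equation \textup{(\ref{eq-truncated-2})} characterizing $\Xi_{\mathbf{No}_{\succ, \alpha}} z$. Hence $\Xi z$ is defined and $\Xi z = \Xi_{\mathbf{No}_{\succ, \alpha}} z = \varphi$, proving \textup{(\ref{xi-defined})} and \textup{(\ref{eq-alpha})}; then \textup{(\ref{F-small})} is immediate from $F(z) = E_\alpha \varphi = E_\alpha \Xi z$.

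\paragraph{Expected main obstacle.}
The delicate point is the simplification of the full cut equation \textup{(\ref{eq-poor-hyperexp})} for $E_\alpha \varphi$ down to the three-option form defining $F(z)$. One must check that the extra left options $E_{<\alpha}\varphi$ and $E_{<\alpha}((\Xi z_R - \varphi)^{-1})$, together with the inflation of $F(z_L)$ by $\mathcal{E}_\alpha'$, are all absorbed cofinally by the single monomial option $\mathfrak{d}_\alpha(\varphi)$ once one restricts to simplest elements of $\mathbf{Mo}_\alpha$. This absorption is exactly what Lemma~\ref{lem-limit-hypermon} delivers in the limit case by equating $\mathcal{E}_\alpha'$ and $\mathcal{E}_\alpha^\ast$; bounding $E_{<\alpha}((\Xi z_R - \varphi)^{-1})$ inside $\mathcal{E}_\alpha[\varphi]$ uses the $\alpha$-truncation of $\Xi z_R$ to control $\Xi z_R - \varphi$. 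Handling this carefully is what makes the limit case work without replicating the successor-ordinal argument from Section~\ref{subsection-functional-equation}.
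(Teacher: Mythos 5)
Your proposal diverges from the paper's argument at a critical point, and the divergence introduces a genuine gap.

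The paper proves the four claims in the order: $F(z)$ defined, $\Xi z$ defined, $\Xi z = \Xi_{\mathbf{No}_{\succ,\alpha}} z$, and only \emph{then} $F(z) = E_\alpha \Xi z$. That last step is proved not by simplifying the cut equation (\ref{eq-poor-hyperexp}) for $E_\alpha\varphi$, but by computing $L_\alpha F(z)$ via the uniform cut equation from Proposition~\ref{prop-uniform-hyperlog} (applied to the cut representation of $F(z)$ in $\mathbf{Mo}_\alpha$ supplied by the definition of $F$), and showing that this computation returns $\Xi z$. The extra left option that arises there, $\mathcal{L}_\alpha[L_\alpha\mathfrak{d}_\alpha(\Xi z)]$, is easily absorbed because $L_\alpha\mathfrak{d}_\alpha(\Xi z) \prec \Xi z$. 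The recursive definition of $\Xi z$ (with $(L_{<\alpha}F(z))^{-1}$, involving $F(z)$ itself rather than $F(z'')$) is crafted precisely so that the remaining options line up.

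Your plan instead starts from (\ref{eq-poor-hyperexp}) applied to $\varphi$ and tries to collapse it to the three-option cut defining $F(z)$. This forces you to show that the left options $E_{<\alpha}((\Xi z_R - \varphi)^{-1})$ are ``absorbed,'' i.e.\ that $F(z) > E_{<\alpha}((\Xi z_R - \varphi)^{-1})$. But the only bound the framework gives (via the $\alpha$-truncation of $\Xi z_R$, i.e.\ inequality (\ref{ineq-truncated})) is $E_\gamma((\Xi z_R - \varphi)^{-1}) < E_\alpha\varphi$ for each $\gamma < \alpha$; this does \emph{not} place these options inside $\mathcal{E}_\alpha[\varphi]$ or below $\mathcal{E}_\alpha'[\mathfrak{d}_\alpha(\varphi)]$, since $\Xi z_R - \varphi$ can be so small that $E_\gamma((\Xi z_R - \varphi)^{-1})$ lies strictly above every $E_\rho\varphi$ with $\rho < \alpha$ while still being below $E_\alpha\varphi$. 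So your ``absorption'' claim is exactly the content of passing from (\ref{eq-poor-hyperexp}) to (\ref{eq-rich-hyperexp}) --- the very statement the lemma is engineered to deliver --- and cannot be invoked as an input. The paper's closing remark explicitly flags that no direct argument for this simplification is known. Your side claim that $F(z_L) < \mathfrak{d}_\alpha(\varphi) < F(z_R)$ is also unjustified and not needed; the left options are not ordered among themselves.

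The part of your proof that deduces (\ref{xi-defined}) and (\ref{eq-alpha}) from (\ref{F-small}) by matching against (\ref{eq-truncated-2}) is fine, but it depends on (\ref{F-small}) being established first, and that is precisely where the argument breaks. To repair the proof you would need to reorder as the paper does: establish (\ref{F-defined}) and (\ref{xi-defined}) by direct order comparisons that do not presuppose (\ref{F-small}), deduce (\ref{eq-alpha}) by matching cut equations, and then obtain (\ref{F-small}) by computing $L_\alpha F(z)$ with Proposition~\ref{prop-uniform-hyperlog} rather than $E_\alpha\varphi$ with (\ref{eq-poor-hyperexp}).
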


\begin{proof}
  We prove the result by induction on $(\mathbf{No}, \sqsubseteq)$. Let $z \in
  \mathbf{No}$ be such that (\ref{F-defined}), (\ref{xi-defined}),
  (\ref{eq-alpha}) and~(\ref{F-small}) hold for all $y \in \mathbf{No}$ with
  $y \sqsubset z$.
  
  For $z'' \in z_R$ and $z' \in z_L$, we have $\mathfrak{d}_{\alpha}
  (\Xi_{\mathbf{No}_{\succ, \alpha}} z) \leqslant \mathfrak{d}_{\alpha}
  (\Xi_{\mathbf{No}_{\succ, \alpha}} z'') < F (z'')$. We have $F (z') < F
  (z'')$ by definition of $F (z'')$ if $z' \in (z'')_L$ and by definition of
  $F (z')$ if $z'' \in (z')_R$. This proves that $F (z)$ is defined.
  
  Let $z' \in z_L$ and $z'' \in z_R$. If $z' \in (z'')_L$, then we have $\Xi
  z'' > \Xi z' + (L_{< \alpha} F (z'))^{- 1}$ by definition of~$\Xi z''$.
  Since $F (z') < F (z)$ and $F (z), F (z') \in \mathbf{Mo}_{\alpha}$, we have
  $L_{\gamma} F (z') \prec L_{\gamma} F (z)$ for all $\gamma < \alpha$. 
  
  We deduce that $\Xi z'' - (L_{< \alpha} F (z))^{- 1} > \Xi z' + (L_{< \alpha} F
  (z'))^{- 1}$. If $z'' \in (z')_L$, then $\Xi z' < \Xi z'' - (L_{< \alpha} F
  (z'))^{- 1}$ by definition of $\Xi z'$. Since $F (z') < F (z)$, we obtain
  $\Xi z'' - (L_{< \alpha} F (z))^{- 1} > \Xi z' + (L_{< \alpha} F (z'))^{-
  1}$. This proves that $\Xi z$ is defined.
  
  Since (\ref{eq-alpha}) and (\ref{F-small}) hold on $z_{\sqsubset}$, we have
  \begin{eqnarray*}
    \Xi z=\{ \mathbb{R}, \Xi_{\mathbf{No}_{\succ, \alpha}} z' + \frac{1}{L_{<
    \alpha} E_{\alpha} \Xi_{\mathbf{No}_{\succ, \alpha}} z'} | \Xi_{\mathbf{No}_{\succ, \alpha}} z'' - \frac{1}{L_{< \alpha}
    E_{\alpha} \Xi_{\mathbf{No}_{\succ, \alpha}} z} \}
  \end{eqnarray*}
  (for $z' \in z_L$ and $z'' \in z_R$).
  By (\ref{eq-truncated-2}), this yields $\Xi z = \Xi_{\mathbf{No}_{\succ,
  \alpha}} z$, so (\ref{eq-alpha}) holds for $z$.
  
  From (\ref{eq-alpha}), we get $\mathfrak{d}_{\alpha}
  (\Xi_{\mathbf{No}_{\succ, \alpha}} z) =\mathfrak{d}_{\alpha} (\Xi z)$. By
  Proposition~\ref{prop-uniform-hyperlog} and our assumption that
  (\ref{F-small}) holds on $z_{\sqsubset}$, we have
  \begin{eqnarray*}
    L_{\alpha} F (z) & = & \{ \mathbb{R}, \mathcal{L}_{\alpha} [L_{\alpha}
    \mathfrak{d}_{\alpha} (\Xi z)], \mathcal{L}_{\alpha} [L_{\alpha} F (z_L)]
    |L_{\alpha} F (z_R) - \frac{1}{L_{< \alpha} F (z)}, L_{< \alpha} F (z) \}\\
    & = &\{ \mathbb{R}, \mathcal{L}_{\alpha} [L_{\alpha}
    \mathfrak{d}_{\alpha} (\Xi z)], \mathcal{L}_{\alpha} [\Xi z_L] | \Xi z_R -
    \frac{1}{L_{< \alpha} F (z)}, L_{< \alpha} F (z) \} .
  \end{eqnarray*}
  Recall that $\Xi z = \{ \mathbb{R}, \mathcal{L}_{\alpha} [\Xi z_L] | \Xi z_R
  - (L_{< \alpha} F (z))^{- 1} \}$. Therefore it suffices to show that $\Xi z$
  lies in the cut $(\mathcal{L}_{\alpha} [L_{\alpha} \mathfrak{d}_{\alpha}
  (\Xi z)] |L_{< \alpha} F (z))$ to conclude that $L_{\alpha} F (z) = \Xi z$
  and thus that $F (z) = E_{\alpha} \Xi z$. Now $L_{\alpha}
  \mathfrak{d}_{\alpha} (\Xi z) <\mathcal{E}_{\alpha}^{\ast} [\Xi z]$ so
  $L_{\alpha} \mathfrak{d}_{\alpha} (\Xi z) \prec \Xi z$ and
  $\mathcal{L}_{\alpha} [L_{\alpha} \mathfrak{d}_{\alpha} (\Xi z)] < \Xi z$.
  We have $F (z) >\mathfrak{d}_{\alpha} (\Xi z)$, where $F (z) \in
  \mathbf{Mo}_{\alpha}$. Since $\nu$ is a limit ordinal,
  Lemma~\ref{lem-limit-hypermon} implies that $F (z) > E_{< \alpha} \Xi z$, so
  $\Xi z < L_{< \alpha} F (z)$. This completes the proof that~$F (z) =
  E_{\alpha} \Xi z$.
\end{proof}

\begin{corollary}
  \label{cor-true-def}The identities {\tmem{(\ref{eq-rich-hyperlog})}},
  {\tmem{(\ref{eq-rich-hyperlog-2}),}} {\tmem{(\ref{eq-rich-hyperexp})}}, and
  {\tmem{(\ref{eq-rich-hyperexp-2})}} all hold.
\end{corollary}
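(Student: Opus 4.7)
The corollary is a clean combination of the two preceding lemmas together with the cofinality equivalences recorded in the paragraph immediately before them. The plan is as follows.

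First, the equivalences (\ref{eq-rich-hyperlog})$\Longleftrightarrow$(\ref{eq-rich-hyperlog-2}) and (\ref{eq-rich-hyperexp})$\Longleftrightarrow$(\ref{eq-rich-hyperexp-2}) are already in hand: by (\ref{L-when-E}) the family $L_{\alpha}\mathfrak{a}'+(L_{<\alpha}\mathfrak{a}')^{-1}$ (resp.\ $L_{\alpha}\mathfrak{a}''-(L_{<\alpha}\mathfrak{a}'')^{-1}$) is mutually cofinal (resp.\ coinitial) with $\mathcal{L}_{\alpha}[L_{\alpha}\mathfrak{a}']$ (resp.\ $\mathcal{L}_{\alpha}[L_{\alpha}\mathfrak{a}'']$), while $E_{<\alpha}\varphi$ and $\mathcal{E}_{\alpha}[E_{\vartriangleleft\alpha}\mathfrak{d}_{\alpha}(\varphi)]$ are mutually cofinal. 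Hence it suffices to establish (\ref{eq-rich-hyperlog}) and (\ref{eq-rich-hyperexp}).

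Second, split on whether $\nu$ is a successor or a limit ordinal. When $\nu$ is a successor ordinal, the first of the two preceding lemmas yields (\ref{eq-rich-hyperlog}) and (\ref{eq-rich-hyperexp}) on the nose. When $\nu$ is a limit ordinal, invoke the second lemma: given $\varphi\in\mathbf{No}_{\succ,\alpha}$, set $z\assign\Xi_{\mathbf{No}_{\succ,\alpha}}^{-1}\varphi$ and $\mathfrak{a}\assign E_{\alpha}\varphi$. The conclusions $\Xi z=\varphi$ and $F(z)=\mathfrak{a}$, substituted back into the defining cut equation of $F$, give
\[
E_{\alpha}\varphi \;=\; \bigl\{\mathfrak{d}_{\alpha}(\varphi),\,E_{\alpha}\varphi_{L}^{\mathbf{No}_{\succ,\alpha}}\,\bigm|\,E_{\alpha}\varphi_{R}^{\mathbf{No}_{\succ,\alpha}}\bigr\}_{\mathbf{Mo}_{\alpha}},
\]
which is (\ref{eq-rich-hyperexp}) since $E_{\vartriangleleft\alpha}\mathfrak{d}_{\alpha}(\varphi)=\{\mathfrak{d}_{\alpha}(\varphi)\}$ in the limit case. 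The same substitutions in the defining cut equation of $\Xi z$, with $\mathfrak{a}'=F(z')=E_{\alpha}\varphi_{L}^{\mathbf{No}_{\succ,\alpha}}$ running over $\mathfrak{a}_{L}^{\mathbf{Mo}_{\alpha}}$ and symmetrically on the right, yield a cut representation of $L_{\alpha}\mathfrak{a}$ in $\mathbf{No}_{\succ,\alpha}$; combined with the cofinality equivalences of step one, this is (\ref{eq-rich-hyperlog}).

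The real work has been done in the two preceding lemmas. The only bookkeeping subtlety is the asymmetry between $(L_{<\alpha}F(z))^{-1}$ as it appears in the defining equation of $\Xi z$ and $(L_{<\alpha}\mathfrak{a}'')^{-1}$ as required by (\ref{eq-rich-hyperlog-2}): as $\mathfrak{a}''$ ranges over $\mathfrak{a}_{R}^{\mathbf{Mo}_{\alpha}}$, monotonicity of $L_{\alpha}$ together with (\ref{L-when-E}) makes the two families of right options mutually coinitial, so both cuts have the same simplest element in $\mathbf{No}_{\succ,\alpha}$, which is what we need.
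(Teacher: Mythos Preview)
Your derivation of (\ref{eq-rich-hyperexp}) in the limit case is fine, and the successor case is indeed handled by the first of the two preceding lemmas. The gap is in your derivation of (\ref{eq-rich-hyperlog}) in the limit case. You write that ``$\mathfrak{a}'=F(z')=E_{\alpha}\varphi_{L}^{\mathbf{No}_{\succ,\alpha}}$ runs over $\mathfrak{a}_{L}^{\mathbf{Mo}_{\alpha}}$'', i.e.\ that $E_{\alpha}\bigl((L_{\alpha}\mathfrak{a})_{L}^{\mathbf{No}_{\succ,\alpha}}\bigr)=\mathfrak{a}_{L}^{\mathbf{Mo}_{\alpha}}$. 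This would mean that $E_{\alpha}:\mathbf{No}_{\succ,\alpha}\to\mathbf{Mo}_{\alpha}$ is a $\sqsubseteq$-isomorphism, which has not been established and is in fact false: the cut equation (\ref{eq-rich-hyperexp}) carries the extra left option $\mathfrak{d}_{\alpha}(\varphi)$, so $E_{\alpha}$ is not $\sqsubseteq$-preserving. Concretely, $\omega$ is the $\sqsubseteq$-minimum of both $\mathbf{Mo}_{\alpha}$ and $\mathbf{No}_{\succ,\alpha}$, yet $L_{\alpha}\omega\neq\omega$. Thus the defining cut of $\Xi z$, rewritten through (\ref{eq-alpha}) and (\ref{F-small}), expresses $L_{\alpha}\mathfrak{a}$ in terms of $(L_{\alpha}\mathfrak{a})_{L}^{\mathbf{No}_{\succ,\alpha}}$ and $(L_{\alpha}\mathfrak{a})_{R}^{\mathbf{No}_{\succ,\alpha}}$, not in terms of $L_{\alpha}\mathfrak{a}_{L}^{\mathbf{Mo}_{\alpha}}$ and $L_{\alpha}\mathfrak{a}_{R}^{\mathbf{Mo}_{\alpha}}$ as (\ref{eq-rich-hyperlog}) requires; and the right option $L_{<\alpha}\mathfrak{a}$ does not appear either.

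The paper avoids this by running the argument in the other direction. Having established (\ref{eq-rich-hyperexp}) and noted that it is uniform, one sets $\psi\assign\{L_{\alpha}\mathfrak{a}_{L}^{\mathbf{Mo}_{\alpha}}\mid L_{\alpha}\mathfrak{a}_{R}^{\mathbf{Mo}_{\alpha}},\,L_{<\alpha}\mathfrak{a}\}_{\mathbf{No}_{\succ,\alpha}}$ and applies the uniform cut equation (\ref{eq-rich-hyperexp}) to this particular cut representation of $\psi$. This yields $E_{\alpha}\psi=\{\mathfrak{d}_{\alpha}(\psi),\mathfrak{a}_{L}^{\mathbf{Mo}_{\alpha}}\mid\mathfrak{a}_{R}^{\mathbf{Mo}_{\alpha}},E_{\alpha}L_{<\alpha}\mathfrak{a}\}_{\mathbf{Mo}_{\alpha}}$, and one checks $\mathfrak{d}_{\alpha}(\psi)<\mathfrak{a}<E_{\alpha}L_{<\alpha}\mathfrak{a}$ to conclude $E_{\alpha}\psi=\mathfrak{a}$, hence $\psi=L_{\alpha}\mathfrak{a}$. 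No identification between $(L_{\alpha}\mathfrak{a})_{L}^{\mathbf{No}_{\succ,\alpha}}$ and $L_{\alpha}\mathfrak{a}_{L}^{\mathbf{Mo}_{\alpha}}$ is needed.
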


\begin{proof}
  It is enough to prove (\ref{eq-rich-hyperlog}) and (\ref{eq-rich-hyperexp}).
  The identity (\ref{eq-rich-hyperexp}) follows from (\ref{eq-alpha})
  and~(\ref{F-small}). In order to obtain (\ref{eq-rich-hyperlog}), we
  consider $\mathfrak{a} \in \mathbf{Mo}_{\alpha}$, set $\psi \assign \{
  L_{\alpha} \mathfrak{a}_L^{\mathbf{Mo}_{\alpha}} |L_{\alpha}
  \mathfrak{a}_R^{\mathbf{Mo}_{\alpha}}, L_{< \alpha} \mathfrak{a}
  \}_{\mathbf{No}_{\succ, \alpha}}$, and we show that~$\mathfrak{a}=
  E_{\alpha} \psi$. Since (\ref{eq-rich-hyperexp}) is uniform, we have
  \begin{eqnarray*}
    E_{\alpha} \psi & = & \{ \mathfrak{d}_{\alpha} (\psi), E_{\alpha}
    L_{\alpha} \mathfrak{a}_L^{\mathbf{Mo}_{\alpha}} |E_{\alpha} L_{\alpha}
    \mathfrak{a}_R^{\mathbf{Mo}_{\alpha}}, E_{\alpha} L_{< \alpha}
    \mathfrak{a} \}_{\mathbf{Mo}_{\alpha}}\\
    & = & \{ \mathfrak{d}_{\alpha} (\psi),
    \mathfrak{a}_L^{\mathbf{Mo}_{\alpha}}
    |\mathfrak{a}_R^{\mathbf{Mo}_{\alpha}}, E_{\alpha} L_{< \alpha}
    \mathfrak{a} \}_{\mathbf{Mo}_{\alpha}} .
  \end{eqnarray*}
  We have $\mathfrak{d}_{\alpha} (\psi) <\mathfrak{a}$ because $\psi < L_{<
  \alpha} \mathfrak{a}$, and $E_{\alpha} L_{< \alpha}
  \mathfrak{a}>\mathfrak{a}$ because $E_{\alpha} > E_{< \alpha}$ on
  $\mathbf{No}^{>, \succ}$. Since $\mathfrak{a}= \{
  \mathfrak{a}_L^{\mathbf{Mo}_{\alpha}} |\mathfrak{a}_R^{\mathbf{Mo}_{\alpha}}
  \}_{\mathbf{Mo}_{\alpha}}$, we deduce that $E_{\alpha} \psi =\mathfrak{a}$.
\end{proof}

\begin{remark}
  The simplified cut equations for $E_{\alpha}, L_{\alpha}$ can be viewed as
  alternative definitions for those functions, since they hold inductively on
  their domain of definition. It is unclear how to develop our theory directly
  upon these alternative definitions. In particular, does there exists a
  direct way to see that the cut equation (\ref{eq-rich-hyperlog-2}) is
  warranted, and that the corresponding function satisfies \hyperref[regularity]{$\mathbf{R_{\mu}}$}
  and \hyperref[monotonicity]{$\mathbf{M_{\mu}}$}?
\end{remark}

\subsection{Identities involving $\mathbf{Tr}_{\alpha}$ and
$\mathbf{Tr}_{\alpha}^{\ast}$.}

\begin{proposition}
  \label{identification-prop}Defining $\mathbf{Tr}_{\alpha} \assign
  \mathbf{Smp}_{\mathcal{L}_{\alpha}'}$ as in Section~\ref{rem-subclasses}, we
  have $\mathbf{Tr}_{\alpha} = \mathbf{No}_{\succ, \alpha}$.
\end{proposition}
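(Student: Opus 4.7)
The plan is to show that $\mathcal{L}_{\alpha}$ and $\mathcal{L}_{\alpha}'$ determine the same convex partition of $\mathbf{No}^{>, \succ}$; their classes of $\sqsubseteq$-simplest elements will then coincide. At the end of Subsection~\ref{subsection-hyperexponentials} we saw that $\mathbf{No}_{\succ, \alpha} = \mathbf{Smp}_{\mathcal{L}_{\alpha}}$, while by definition $\mathbf{Tr}_{\alpha} = \mathbf{Smp}_{\mathcal{L}_{\alpha}'}$, so once the partitions agree the proposition follows.

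First I would rewrite, for $a \in \mathbf{No}^{>, \succ}$,
\[
  \mathcal{L}_{\alpha}'[a] \;=\; L_{\alpha}\bigl(\mathcal{E}_{\alpha}'[E_{\alpha} a]\bigr),
\]
using that $L_{\alpha}, E_{\alpha}$ are mutually inverse strictly increasing bijections of $\mathbf{No}^{>, \succ}$ by Theorem~\ref{th-confluent-hyperserial-field}. The opening paragraph of Subsection~\ref{subsection-defining-hyperlogarithms} combines \hyperref[ind-ax2]{$\mathbf{I_{2,\mu}}$} with~(\ref{exp-class-char}) to yield $\mathcal{E}_{\alpha}[c] = \mathcal{E}_{\alpha}'[c]$ for every $c \in \mathbf{No}^{>, \succ}$, so it suffices to establish
\[
  L_{\alpha}\bigl(\mathcal{E}_{\alpha}[E_{\alpha} a]\bigr) \;=\; \mathcal{L}_{\alpha}[a] \qquad (a \in \mathbf{No}^{>, \succ}).
\]

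Using~(\ref{exp-class-char}), $t$ lies in the left-hand class iff there exist $\gamma < \alpha$ and $r_0, r_1 \in \mathbb{R}^{>}$ with $E_{\gamma}(r_0 L_{\gamma} E_{\alpha} a) < E_{\alpha} t < E_{\gamma}(r_1 L_{\gamma} E_{\alpha} a)$; by~(\ref{L-when-E}) (available because $E_{\alpha}$ is defined on all of $\mathbf{No}^{>, \succ}$), $t$ lies in the right-hand class iff $t = a$ or $t - a \prec (L_{\gamma} E_{\alpha} a)^{-1}$ for some $\gamma < \alpha$. The equivalence follows from two uses of the Taylor axiom~\textbf{HF3}: applied to $f = \hl_{\gamma}$ at $E_{\alpha} a$ with increment $\delta = E_{\alpha} t - E_{\alpha} a$, using $\partial \hl_{\gamma} = \prod_{\iota < \gamma} \hl_{\iota}^{-1}$ and the chain-rule computation $\partial_a(L_{\gamma} E_{\alpha} a) = \prod_{\gamma \leqslant \iota < \alpha} L_{\iota} E_{\alpha} a$, a bound $t - a \prec (L_{\gamma'} E_{\alpha} a)^{-1}$ translates into $L_{\gamma} E_{\alpha} t \asymp L_{\gamma} E_{\alpha} a$ for suitably related $\gamma \leqslant \gamma' < \alpha$, and conversely, the inverse direction is obtained by applying~\textbf{HF3} to $f = \hl_{\alpha}^{\mathord{\uparrow}\gamma}$ at $L_{\gamma} E_{\alpha} a$.

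The main technical obstacle will be keeping all auxiliary ordinals strictly below~$\alpha = \omega^{\nu}$ while matching the two descriptions, and the argument will have to be split according to whether $\nu$ is a limit or a successor. In the limit case, Lemma~\ref{lem-limit-hypermon} supplies the needed cofinality between $\mathcal{E}_{\alpha}$ and $\mathcal{E}_{\alpha}^{\ast}$, so any $\gamma < \alpha$ can be replaced by some $\omega^{\eta} n$ with $\eta < \nu$ and $n \in \mathbb{N}$; in the successor case the functional equation~(\ref{eq-functional-total}) absorbs finite iterations of $L_{\alpha_{/\omega}}$ into translations by integers, which takes care of the ordinal bookkeeping. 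Once the class equality $\mathcal{L}_{\alpha}[a] = \mathcal{L}_{\alpha}'[a]$ is in hand for all $a \in \mathbf{No}^{>, \succ}$, we conclude $\mathbf{Tr}_{\alpha} = \mathbf{Smp}_{\mathcal{L}_{\alpha}'} = \mathbf{Smp}_{\mathcal{L}_{\alpha}} = \mathbf{No}_{\succ, \alpha}$.
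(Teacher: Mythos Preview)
Your overall strategy is exactly the paper's: show that the convex partitions induced by $\mathcal{L}_{\alpha}$ and $\mathcal{L}_{\alpha}'$ coincide, by writing $\mathcal{L}_{\alpha}'[a] = L_{\alpha}\bigl(\mathcal{E}_{\alpha}'[E_{\alpha} a]\bigr)$, using $\mathcal{E}_{\alpha}' = \mathcal{E}_{\alpha}$, and then identifying $L_{\alpha}\bigl(\mathcal{E}_{\alpha}[E_{\alpha} a]\bigr)$ with $\mathcal{L}_{\alpha}[a]$. The paper's proof is identical through this reduction.

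The only divergence is at the last step. The paper does not compute anything: it simply invokes \cite[Proposition~7.22]{BvdHK:hyp}, which gives $E_{\alpha}\mathcal{L}_{\alpha}[\varphi] = \mathcal{E}_{\alpha}[E_{\alpha}\varphi]$ directly, and the result follows in one line. You instead attempt to prove this equality by hand via \textbf{HF3}. That is a legitimate route (and is essentially what the cited proposition does internally), but your sketch is incomplete as written. In particular, to apply \textbf{HF3} with increment $\delta = E_{\alpha} t - E_{\alpha} a$ you first need $\delta \prec E_{\alpha} a$, which does not follow from $t - a \prec (L_{\gamma'} E_{\alpha} a)^{-1}$ without an intermediate argument controlling $E_{\alpha} t - E_{\alpha} a$ in terms of $t - a$; and your ``chain-rule computation'' $\partial_a(L_{\gamma} E_{\alpha} a) = \prod_{\gamma \leqslant \iota < \alpha} L_{\iota} E_{\alpha} a$ is a heuristic that is not justified by the axioms you have available (there is no derivation on $\mathbf{No}$ in play here, only the formal derivative on $\mathbb{L}$). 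The promised case split on whether $\nu$ is a limit or successor is also not needed in the paper's argument, which is uniform. If you want a self-contained proof, it would be cleaner to argue directly from~(\ref{L-when-E}) and~(\ref{exp-class-char}) together with the monotonicity of $L_{\alpha}$ and $E_{\alpha}$, but the shortest fix is simply to cite \cite[Proposition~7.22]{BvdHK:hyp} as the paper does.
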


\begin{proof}
  Let $\varphi \in \mathbf{No}_{\succ, \alpha}$. We have $E_{\alpha}
  \mathcal{L}_{\alpha} [\varphi] =\mathcal{E}_{\alpha} [E_{\alpha} \varphi]$
  by {\cite[Proposition~7.22]{BvdHK:hyp}}. Recall that $\mathcal{E}_{\alpha}
  [a] =\mathcal{E}_{\alpha}' [a]$ for all $a \in \mathbf{No}^{>, \succ}$. Now
  $\mathcal{E}_{\alpha}' \circ E_{\alpha} = E_{\alpha} \circ
  \mathcal{L}_{\alpha}'$ by definition of $\mathcal{L}_{\alpha}'$, so
  $E_{\alpha} \mathcal{L}_{\alpha} [\varphi] = E_{\alpha}
  \mathcal{L}_{\alpha}' [\varphi]$ and~$\mathcal{L}_{\alpha} [\varphi]
  =\mathcal{L}_{\alpha}' [\varphi]$. By definition of $\mathbf{Tr}_{\alpha}$,
  we conclude that $\mathbf{Tr}_{\alpha} = \mathbf{Smp}_{\mathcal{L}_{\alpha}}
  = \mathbf{No}_{\succ, \alpha}$.
\end{proof}

Assume that $\nu$ is a successor ordinal. Then we have $\mathbf{No}_{\succ,
\alpha} = \mathbf{No}_{\succ, \alpha} +\mathbb{R}$ by
(\ref{eq-succ-truncated}), so the functions $T_r \Xi_{\mathbf{No}_{\succ,
\alpha}}$ and $\Xi_{\mathbf{No}_{\succ, \alpha}} T_r$ are both strictly
increasing bijections from $\mathbf{No}$ onto $\mathbf{No}_{\succ, \alpha}$.

\begin{lemma}
  \label{lem-Tr'-translation-invariant}Assume that $\nu$ is a successor
  ordinal. Then for $r \in \mathbb{R}$, we have $T_r \Xi_{\mathbf{No}_{\succ,
  \alpha}} = \Xi_{\mathbf{No}_{\succ, \alpha}} T_r$ on $\mathbf{No}$.
\end{lemma}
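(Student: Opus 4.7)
My plan is to prove the equivalent statement $\Xi(a+r)=\Xi a+r$ for all $a\in\mathbf{No}$ and $r\in\mathbb{R}$ (writing $\Xi\assign\Xi_{\mathbf{No}_{\succ,\alpha}}$), by double induction on $(a,r)$ with respect to $\sqsubseteq$ in each coordinate. In the induction step, I will expand $\Xi(a+r)$ via the uniform cut equation~\eqref{eq-truncated-2}, using for $a+r$ the cofinal and coinitial cut representation $(a_L+r,\,a+r_L\mid a_R+r,\,a+r_R)$ supplied by the uniform addition~\eqref{eq-uniform-sum}; after substituting the inductive identity on the simpler pairs $(a',r),(a'',r),(a,r'),(a,r'')$, this presents $\Xi(a+r)$ as an explicit cut. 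In parallel, I will expand $\Xi a+r$ by applying~\eqref{eq-uniform-sum} to $\Xi a+r$, using for $\Xi a$ the cut representation supplied by~\eqref{eq-truncated-2}. The two presentations share the options $\Xi a'+r$, $\Xi a''+r$, $\Xi a+r'$, $\Xi a+r''$ but differ in the attached infinitesimal corrections: the $\Xi(a+r)$ cut carries terms $\pm(L_{<\alpha}E_\alpha(\Xi a'+r))^{-1}$, $\pm(L_{<\alpha}E_\alpha(\Xi a+r'))^{-1}$, and $\pm(L_{<\alpha}E_\alpha\Xi(a+r))^{-1}$, while the $\Xi a+r$ cut carries $\pm(L_{<\alpha}E_\alpha\Xi a')^{-1}$ and $\pm(L_{<\alpha}E_\alpha\Xi a)^{-1}$, with no correction at $\Xi a+r'$ or $\Xi a+r''$.

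The technical heart will be a cofinality lemma: for any $\varphi\in\mathbf{No}^{>,\succ}$ and any $s\in\mathbb{R}$, the sets $\{(L_\gamma E_\alpha\varphi)^{-1}:\gamma<\alpha\}$ and $\{(L_\gamma E_\alpha(\varphi+s))^{-1}:\gamma<\alpha\}$ are mutually cofinal. I will prove this using the successor hypothesis: writing $\nu=\sigma+1$, the identity $E_\alpha\circ T_n\circ L_\alpha=E_{\omega^{\sigma}}^{\circ n}$ (obtained by iterating the functional equation~\eqref{eq-functional-total}) gives $L_{\omega^{\sigma}\cdot n}E_\alpha(\varphi+s)=E_\alpha(\varphi+s-n)$ and $L_{\omega^{\sigma}\cdot m}E_\alpha\varphi=E_\alpha(\varphi-m)$. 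Choosing $m=n+\lceil|s|\rceil$ and invoking monotonicity of $E_\alpha$ sandwiches the two sequences in both directions, establishing the mutual cofinality.

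With this lemma in hand, I will verify that each of $\Xi a+r$ and $\Xi(a+r)$ lies strictly in the cut defining the other. For options built from $\Xi a'$ or $\Xi a''$, the cofinality lemma directly transfers the strict inequalities between the two families of infinitesimal corrections. For the mixed options built from $\Xi a+r'$ or $\Xi a+r''$, I will exploit that $r-r'$ and $r''-r$ are positive reals, hence dominate every infinitesimal. For the corrections involving $\Xi a$ versus $\Xi(a+r)$ on the right, I will observe that $\Xi(a+r)$ and $\Xi a$ necessarily lie in the same $\mathcal{E}_\alpha$-class -- their difference is squeezed between $\Xi a'-\Xi a+r$ and $\Xi a''-\Xi a+r$ and hence is bounded in absolute value by a real multiple of $\Xi a$ -- so the cofinality lemma applies once more. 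Mutual containment of the two cuts together with the $\sqsubseteq$-minimality of the Conway bracket in each then forces $\Xi a+r\sqsubseteq\Xi(a+r)$ and $\Xi(a+r)\sqsubseteq\Xi a+r$, whence equality. The main obstacle will be the cofinality lemma: its proof rests on the successor-case identity $E_\alpha T_n L_\alpha=E_{\omega^{\sigma}}^{\circ n}$, which is precisely what the successor hypothesis on $\nu$ enables and what fails for limit $\nu$.
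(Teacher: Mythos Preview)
Your overall strategy---double induction on $(a,r)$ with respect to $\sqsubseteq$, expansion of both sides via uniform cut equations, a cofinality lemma for $L_{<\alpha}E_\alpha$ under real translations (proved via $E_\alpha T_n L_\alpha = E_{\omega^\sigma}^{\circ n}$), and mutual $\sqsubseteq$-containment---is exactly the paper's approach.

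There is, however, a gap in one step. You expand $\Xi(a+r)$ via the \emph{asymmetric} equation~\eqref{eq-truncated-2}, whose right options carry the correction $(L_{<\alpha}E_\alpha\,\Xi(a+r))^{-1}$ with the \emph{current} value $\Xi(a+r)$ in the denominator. To reconcile this with the $(L_{<\alpha}E_\alpha\,\Xi a)^{-1}$ appearing on the $\Xi a+r$ side, you propose to show that $\Xi(a+r)$ and $\Xi a$ lie in the same $\mathcal{E}_\alpha$-class via the squeeze $\Xi a'+r<\Xi(a+r)<\Xi a''+r$. But this squeeze does not bound $\Xi(a+r)-\Xi a$ by a real multiple of $\Xi a$: nothing prevents $\Xi a''\succ\Xi a$, and when $r$ is a positive integer $r_R=\varnothing$, so no bound comes from the $r''$-side either. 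Even granting the $\mathcal{E}_\alpha$-class claim, your cofinality lemma is stated only for \emph{real} translations $s$, so it does not apply to the pair $(\Xi a,\Xi(a+r))$, whose difference is not yet known to be real---that is precisely what you are proving.

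The paper avoids this entirely by expanding via the \emph{symmetric} equation~\eqref{eq-truncated}: there the right options are $\mathcal{L}_\alpha[\Xi(a''+r)]$ and $\mathcal{L}_\alpha[\Xi(a+r'')]$, coinitial with $\Xi(a''+r)-(L_{<\alpha}E_\alpha\,\Xi(a''+r))^{-1}$ and $\Xi(a+r'')-(L_{<\alpha}E_\alpha\,\Xi(a+r''))^{-1}$. After the inductive substitution the denominators become $E_\alpha(\Xi a''+r)$ and $E_\alpha(\Xi a+r'')$, to which your cofinality lemma (with $s=r$ or $s=r''$, both real) applies directly. The two cuts then differ only in that $\Xi(a+r)$ carries $T_{r'}(\Xi a+\varepsilon)$ and $T_{r''}(\Xi a-\varepsilon)$ where $T_r\Xi a$ carries the bare $T_{r'}\Xi a$ and $T_{r''}\Xi a$; the paper closes by noting that $T_r\Xi a$, $T_{r'}\Xi a$, $T_{r''}\Xi a$ are all $\alpha$-truncated (successor hypothesis), hence pairwise in distinct $\mathcal{L}_\alpha$-classes. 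Switch to~\eqref{eq-truncated} and your proof goes through unchanged otherwise.
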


\begin{proof}
  Let us abbreviate $\Xi \assign \Xi_{\mathbf{No}_{\succ, \alpha}}$. We prove
  the lemma by induction on $(\mathbf{No}, \sqsubseteq) \times (\mathbb{R},
  \sqsubseteq)$. Let $(z, r) \in \mathbf{No} \times \mathbb{R}$ with
  \begin{eqnarray*}
    \Xi y + s & = & \Xi (y + s)
  \end{eqnarray*}
  whenever $(y, s) \in \mathbf{No} \times \mathbb{R}$ is strictly simpler than
  $(z, r)$. We let $z', z'', r', r''$ denote generic elements of $z_L, z_R,
  r_L, r_R$ and we note that $r', r'' \in \mathbb{R}$. By
  (\ref{eq-truncated}), we have
  \begin{eqnarray*}
    \Xi (z + r) & = & \left\{\mathbb{R}, \Xi (z' + r) + \frac{1}{L_{< \alpha} E_{\alpha}
    \Xi (z' + r)},  \Xi (z + r') + \frac{1}{L_{< \alpha}
    E_{\alpha} \Xi (z + r')} \right|\\
    &  & \left. \Xi (z + r'') - \frac{1}{L_{< \alpha}
    E_{\alpha} \Xi (z + r'')}, \Xi (z'' + r) - \frac{1}{L_{<
    \alpha} E_{\alpha} \Xi (z'' + r)} \right\}\\
    & = & \left\{\mathbb{R}, T_r \Xi z' + \frac{1}{L_{< \alpha} E_{\alpha} T_r \Xi z'},
    \hspace{1.2em} T_{r'} \Xi z + \frac{1}{L_{< \alpha} E_{\alpha} T_{r'} \Xi
    z} \right|\\
    &  & \left.  T_{r''} \Xi z - \frac{1}{L_{< \alpha}
    E_{\alpha} T_{r''} \Xi z}, T_r \Xi z'' - \frac{1}{L_{<
    \alpha} E_{\alpha} T_r \Xi z''} \right\}.
  \end{eqnarray*}
  Recall that $\nu$ is a successor ordinal. Since (\ref{eq-functional-total})
  holds for all $a \in \mathbf{No}^{>, \succ}$, the sets $L_{< \alpha}
  E_{\alpha} \mathcal{T}a$ and $L_{< \alpha} E_{\alpha} a$ are mutually
  cofinal and coinitial. Moreover $T_s (z + b) = T_s z + b$ for all $s \in
  \mathbb{R}$ and $b \in \mathbf{No}$, so
  \begin{eqnarray*}
    \Xi (z + r) & = & \left\{\mathbb{R}, T_r \left( \Xi z' + \frac{1}{L_{< \alpha}
    E_{\alpha} \Xi z'} \right), \; T_{r'} \left( \Xi z + \frac{1}{L_{< \alpha}
    E_{\alpha} \Xi z} \right) \right|\\
    &  & \left. T_{r''} \left( \Xi z - \frac{1}{L_{< \alpha}
    E_{\alpha} \Xi z} \right), \; T_r \left( \Xi z'' - \frac{1}{L_{< \alpha}
    E_{\alpha} \Xi z''} \right) \right\}.
  \end{eqnarray*}
  By (\ref{eq-uniform-sum}), the number $T_r \Xi z$ is simplest in the cut
 \[\left(\mathbb{R}, T_r \left( \Xi z' + \frac{1}{L_{< \alpha}
    E_{\alpha} \Xi z'} \right), \; T_{r'} \Xi z | T_{r''} \Xi z, \; T_r \left(
    \Xi z'' - \frac{1}{L_{< \alpha} E_{\alpha} \Xi z''} \right)
    \right).\]
  The numbers $T_r \Xi z, T_{r'} \Xi z$ and $T_{r''} \Xi z$ are
  $\alpha$-truncated so $T_r \Xi z$ lies in the cut
  \[\mathbb{R}, \left( \bigcup_{r'} T_{r'} \left( \Xi z + \frac{1}{L_{< \alpha}
     E_{\alpha} \Xi z} \right) | \bigcup_{r''} T_{r''} \left( \Xi z -
     \frac{1}{L_{< \alpha} E_{\alpha} \Xi z} \right) \right) . \]
  We deduce that $T_r \Xi z = \Xi T_r z$. The result follows by induction.
\end{proof}

\begin{lemma}
  \label{lem-Tr'-successor-identity}If $\nu$ is a successor ordinal, then we
  have $\mathcal{T} \legeangle \mathcal{L}_{\alpha}^{\ast}$ on
  $\mathbf{No}^{>, \succ}$. Consequently, {$\mathbf{Tr}^{\ast}_{\alpha} =
  \mathbf{No}_{\succ}^{>}$}.
\end{lemma}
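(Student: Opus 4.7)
The plan is to establish the mutual pointwise cofinality $\mathcal{T}\legeangle\mathcal{L}^{\ast}_{\alpha}$ on $\mathbf{No}^{>,\succ}$ and then conclude $\mathbf{Tr}^{\ast}_{\alpha}=\mathbf{Smp}_{\mathcal{L}^{\ast}_{\alpha}}=\mathbf{Smp}_{\mathcal{T}}=\mathbf{No}_{\succ}^{>}$ from the correspondence recalled in Section~3 (where $\mathcal{T}$ acting on $\mathbf{No}^{>,\succ}$ yields $\mathbf{No}_{\succ}^{>}$).

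For $\mathcal{T}\leqangle\mathcal{L}^{\ast}_{\alpha}$: since $\nu=\mu+1$ is a successor, the functional equation~(\ref{eq-functional-total}) applied to the extended $L_{\alpha}$ gives $L_{\alpha}E_{\beta}a=L_{\alpha}a+1$ for every $a\in\mathbf{No}^{>,\succ}$, where $\beta\assign\alpha_{/\omega}=\omega^{\mu}$. Iterating yields $L_{\alpha}E_{\beta}^{\circ n}E_{\alpha}=T_{n}$, so $T_{n}\in\mathcal{L}^{\ast}_{\alpha}$ for every $n\in\mathbb{Z}$. Hence each $T_{r}\in\mathcal{T}$ is dominated pointwise by $T_{\lceil r\rceil}\in\mathcal{L}^{\ast}_{\alpha}$.

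For $\mathcal{L}^{\ast}_{\alpha}\leqangle\mathcal{T}$: I would show that each generator $L_{\alpha}gE_{\alpha}$ of $\mathcal{L}^{\ast}_{\alpha}$ (with $g$ drawn from $\{E_{\gamma},L_{\gamma},P_{r},P_{1/r}:\gamma<\alpha,\,r>0\}$) is pointwise dominated on $\mathbf{No}^{>,\succ}$ by some $T_{k_{g}}$ with $k_{g}\in\mathbb{N}$. For $g=L_{\gamma}$ (and $g=P_{1/r}$ with $r>1$), the bound $T_{0}$ suffices since $L_{\gamma},P_{1/r}<\tmop{id}$ on $\mathbf{No}^{>,\succ}$. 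For $g=P_{r}$ with $r>0$, the standard inequality $a^{r}<\exp a\leqslant E_{\beta}a$ valid for infinite~$a$ yields $L_{\alpha}P_{r}E_{\alpha}<L_{\alpha}E_{\beta}E_{\alpha}=T_{1}$. For $g=E_{\gamma}$ with $\gamma<\alpha=\beta\omega$, I would pick $k\in\mathbb{N}$ with $\gamma<\beta k$ and establish the key inequality $E_{\gamma}<E_{\beta k}=E_{\beta}^{\circ k}$ on $\mathbf{No}^{>,\succ}$, from which $L_{\alpha}E_{\gamma}E_{\alpha}<T_{k}$ follows. Since every generator is strictly increasing and satisfies such a uniform additive bound, a straightforward induction on the length of a word in generators shows that every $f\in\mathcal{L}^{\ast}_{\alpha}$ is dominated pointwise by some $T_{N}\in\mathcal{T}$.

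The main obstacle is justifying the uniform comparison $E_{\gamma}<E_{\beta k}$, equivalently $L_{\gamma}>L_{\beta k}$, on all of $\mathbf{No}^{>,\succ}$ for $\gamma<\beta k$, because axiom \hyperref[asymptotics]{$\mathbf{A_{\mu}}$} delivers the analogous inequality only on $L_{<\beta}$-atomic inputs. I expect to handle this by combining the extended strict monotonicity of $L_{\omega^{\eta}}$ on $\mathbf{No}^{>,\succ}$ from~\cite[Corollary~4.17]{BvdHK:hyp} with either iterating~(\ref{eq-functional-total}) $k$ times to reduce to a comparison between $L_{\gamma}$ and $L_{\beta}$, or applying \hyperref[ind-ax2]{$\mathbf{I_{2,\mu}}$} after the substitution $a\mapsto E_{\gamma}b$ to absorb the real-factor freedom into the $E_{\beta k}$-argument; once this single uniform domination is in hand, the rest of the argument is bookkeeping.
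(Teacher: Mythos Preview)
Your proof is correct and follows essentially the same route as the paper: both directions reduce to the identity $T_n = L_{\alpha} E_{\alpha_{/\omega}}^{\circ n} E_{\alpha}$ together with $E_{\gamma} \leqslant E_{\alpha_{/\omega} n}$ for $\gamma \leqslant \alpha_{/\omega} n$. The paper simply packages your word-length induction into the single assertion that $E_{<\alpha}$ is pointwise cofinal in $\mathcal{E}^{\ast}_{\alpha}$, and it uses your ``main obstacle'' $E_{\gamma} \leqslant E_{\alpha_{/\omega} n}$ without comment (as it does elsewhere, e.g.\ in the proof of Lemma~\ref{lem-not-too-strong-hyperexp}).
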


\begin{proof}
  The set $E_{< \alpha}$ is pointwise cofinal in
  $\mathcal{E}^{\ast}_{\alpha}$. So $L_{\alpha} E_{< \alpha} E_{\alpha}$ is
  pointwise cofinal in~$\mathcal{L}^{\ast}_{\alpha}$. For $\gamma < \alpha$,
  there is $n \in \mathbb{N}$ such that $\gamma \leqslant \alpha_{/ \omega}
  n$. We have
  \[ \widespacing{L_{\alpha} E_{\gamma} E_{\alpha} \leqslant L_{\alpha}
     E_{\alpha_{/ \omega} n} E_{\alpha} = (L_{\alpha} E_{\alpha_{/ \omega}}
     E_{\alpha})^{\circ n} = (L_{\alpha} E_{\alpha} T_1)^{\circ n} = T^{\circ
     n}_1 = T_n \in \mathcal{T} \hspace{-1em} .} \]
  We deduce that $\mathcal{T} \legeangle \mathcal{L}_{\alpha}^{\ast}$ on
  $\mathbf{No}^{>, \succ}$, whence $\mathbf{Tr}^{\ast}_{\alpha} =
  \mathbf{Smp}_{\mathcal{T}} = \mathbf{No}_{\succ}^{>}$.
\end{proof}

\subsection{Identities involving $\mathbf{Mo}_{\alpha}$ and
$\mathbf{Mo}^{\ast}_{\alpha}$.}

\begin{lemma}
  \label{lem-hyperexp-hypermonomial-identity}If $\nu$ is a successor ordinal,
  then for $z \in \mathbf{No}$ we have
  \begin{eqnarray*}
    \Xi_{\mathbf{Mo}_{\alpha}} (z - 1) & = & L_{\alpha_{/ \omega}}
    \Xi_{\mathbf{Mo}_{\alpha}} z.
  \end{eqnarray*}
\end{lemma}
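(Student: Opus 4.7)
The plan is to induct on $(\mathbf{No}, \sqsubseteq)$, combining the uniform cut representation of $T_{-1}$ on $\mathbf{No}$ given by~\tmtextup{(\ref{eq-uniform-sum})} with the cut equation~\tmtextup{(\ref{eq-weaker-hyperlog})} for $L_{\alpha_{/ \omega}}$ on $\mathbf{Mo}_{\alpha}$. Fix $z \in \mathbf{No}$, set $\mathfrak{a} \assign \Xi_{\mathbf{Mo}_{\alpha}} z$, and assume the identity holds for every $y \sqsubset z$.

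First I would use~\tmtextup{(\ref{eq-uniform-sum})} to rewrite $z - 1 = \{ z_L - 1 | z_R - 1, z \}$ as a cut representation in $\mathbf{No}$. Since $\Xi_{\mathbf{Mo}_{\alpha}}$ is an isomorphism $(\mathbf{No}, \leqslant, \sqsubseteq) \longrightarrow (\mathbf{Mo}_{\alpha}, \leqslant, \sqsubseteq)$, it transports any cut representation in $\mathbf{No}$ to a cut representation in $\mathbf{Mo}_{\alpha}$, so
\[ \Xi_{\mathbf{Mo}_{\alpha}} (z - 1) = \{ \Xi_{\mathbf{Mo}_{\alpha}} (z_L - 1) | \Xi_{\mathbf{Mo}_{\alpha}} (z_R - 1), \mathfrak{a} \}_{\mathbf{Mo}_{\alpha}} . \]
By the induction hypothesis, the options coming from $z_L - 1$ and $z_R - 1$ become $L_{\alpha_{/ \omega}} \Xi_{\mathbf{Mo}_{\alpha}} z_L$ and $L_{\alpha_{/ \omega}} \Xi_{\mathbf{Mo}_{\alpha}} z_R$ respectively. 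Since $(\Xi_{\mathbf{Mo}_{\alpha}} z_L, \Xi_{\mathbf{Mo}_{\alpha}} z_R)$ is a cut representation of $\mathfrak{a}$ in $\mathbf{Mo}_{\alpha}$, it is mutually cofinal and coinitial with the canonical pair $(\mathfrak{a}_L^{\smash{\mathbf{Mo}_{\alpha}}}, \mathfrak{a}_R^{\smash{\mathbf{Mo}_{\alpha}}})$; because $L_{\alpha_{/ \omega}}$ is strictly increasing, this cofinality persists after applying it, while the extra right option $\mathfrak{a}$ is unchanged. Consequently,
\[ \Xi_{\mathbf{Mo}_{\alpha}} (z - 1) = \left\{ L_{\alpha_{/ \omega}} \mathfrak{a}_L^{\smash{\mathbf{Mo}_{\alpha}}} | L_{\alpha_{/ \omega}} \mathfrak{a}_R^{\smash{\mathbf{Mo}_{\alpha}}}, \mathfrak{a} \right\}_{\mathbf{Mo}_{\alpha}} . \]

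To close the argument I would invoke~\tmtextup{(\ref{eq-weaker-hyperlog})} applied at level $\nu$: its right-hand side is exactly this bracket, and its left-hand side is $L_{\alpha_{/ \omega}} \mathfrak{a}$. The one subtle point worth flagging is that~\tmtextup{(\ref{eq-weaker-hyperlog})} was derived in Section~\ref{subsection-functional-equation} inside the inductive step at the ordinal there called $\mu$; however by the completed induction of Theorem~\ref{th-confluent-hyperserial-field} the identity is available at every successor ordinal, and in particular for our $\nu$ with $\alpha = \omega^{\nu}$ and $\alpha_{/ \omega} = \omega^{\nu_-}$. Beyond this, no new analytic estimates are required; the main things to check carefully are that each cut manipulation takes place in $\mathbf{Mo}_{\alpha}$ rather than in $\mathbf{No}$, and that the extra right option $\mathfrak{a}$ is correctly preserved under the cofinality substitution.
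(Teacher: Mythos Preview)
Your proof is correct and follows essentially the same approach as the paper: induction on $(\mathbf{No},\sqsubseteq)$, combining the uniform cut equation~(\ref{eq-uniform-sum}) for $T_{-1}$ with~(\ref{eq-weaker-hyperlog}), just with the chain of equalities traced in the opposite direction (the paper starts from $L_{\alpha_{/\omega}}\Xi_{\mathbf{Mo}_{\alpha}}z$ and works towards $\Xi_{\mathbf{Mo}_{\alpha}}(z-1)$). One minor simplification: since $\Xi_{\mathbf{Mo}_{\alpha}}$ is an isomorphism $(\mathbf{No},\leqslant,\sqsubseteq)\to(\mathbf{Mo}_{\alpha},\leqslant,\sqsubseteq)$, you have the exact equality $\mathfrak{a}_L^{\smash{\mathbf{Mo}_{\alpha}}}=\Xi_{\mathbf{Mo}_{\alpha}}z_L$ and $\mathfrak{a}_R^{\smash{\mathbf{Mo}_{\alpha}}}=\Xi_{\mathbf{Mo}_{\alpha}}z_R$, so the cofinality argument is unnecessary.
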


\begin{proof}
  This can be seen as a converse to the proof of the identity
  (\ref{eq-weaker-hyperlog}). We proceed by induction on $(\mathbf{No},
  \sqsubseteq)$. Let $z$ be such that the relation holds on $z_{\sqsubset}$.
  By (\ref{eq-weaker-hyperlog}), we have
  \begin{eqnarray*}
    L_{\alpha_{/ \omega}} \Xi_{\mathbf{Mo}_{\alpha}} z & = & \{ L_{\alpha_{/
    \omega}}  (\Xi_{\mathbf{Mo}_{\alpha}} z)_L^{\mathbf{Mo}_{\alpha}}
    |L_{\alpha_{/ \omega}}  (\Xi_{\mathbf{Mo}_{\alpha}}
    z)_R^{\mathbf{Mo}_{\alpha}}, \Xi_{\mathbf{Mo}_{\alpha}} z
    \}_{\mathbf{Mo}_{\alpha}}\\
    & = & \{ L_{\alpha_{/ \omega}} \Xi_{\mathbf{Mo}_{\alpha}} z_L
    |L_{\alpha_{/ \omega}} \Xi_{\mathbf{Mo}_{\alpha}} z_R,
    \Xi_{\mathbf{Mo}_{\alpha}} z \}_{\mathbf{Mo}_{\alpha}}\\
    & = & \{ \Xi_{\mathbf{Mo}_{\alpha}} (z_L - 1) |
    \Xi_{\mathbf{Mo}_{\alpha}} (z_R - 1), \Xi_{\mathbf{Mo}_{\alpha}} z
    \}_{\mathbf{Mo}_{\alpha}} \\
    & = & \Xi_{\mathbf{Mo}_{\alpha}}  \{ z_L - 1| z_R - 1, z \}\\
    & = & \Xi_{\mathbf{Mo}_{\alpha}} (z - 1) \text{\quad by
    (\ref{eq-uniform-sum})} .
  \end{eqnarray*}
  We conclude by induction.
\end{proof}

Noting that $E_{\alpha_{/ \omega}} = E_{\alpha} T_1 L_{\alpha}$ on
$\mathbf{No}^{>, \succ}$, the previous relation further generalizes as
follows.

\begin{proposition}
  \label{prop-hyperexp-hypermon-identity}Assume that $\nu$ is a successor
  ordinal and let $r \in \mathbb{R}$. Then
  \begin{eqnarray}
    \Xi_{\mathbf{Mo}_{\alpha}} T_r & = & E_{\alpha} T_r L_{\alpha}
    \Xi_{\mathbf{Mo}_{\alpha}}  \label{eq-real-iterate}
  \end{eqnarray}
\end{proposition}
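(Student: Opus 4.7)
The plan is to argue by transfinite induction on $(\mathbf{No}, \sqsubseteq) \times (\mathbb{R}, \sqsubseteq)$, in direct analogy with Lemma~\ref{lem-Tr'-translation-invariant} one level up. The base case $r = -1$ is Lemma~\ref{lem-hyperexp-hypermonomial-identity}: indeed, applying $E_{\alpha}$ to both sides of \eqref{eq-functional-total} for $\beta = \alpha$ gives $L_{\alpha_{/ \omega}} = E_{\alpha} T_{-1} L_{\alpha}$ on $\mathbf{No}^{>, \succ}$, so that lemma reads $\Xi_{\mathbf{Mo}_{\alpha}}(z - 1) = E_{\alpha} T_{-1} L_{\alpha} \Xi_{\mathbf{Mo}_{\alpha}} z$.

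For the inductive step at $(z, r)$, I would expand $\Xi_{\mathbf{Mo}_{\alpha}}(z + r)$ via the uniform cut equation from Proposition~\ref{prop-thin-substructure}, using the representation $z + r = \{z_L + r,\, z + r_L \mid z + r_R,\, z_R + r\}$ supplied by \eqref{eq-uniform-sum}. Each of the pairs $(z_L, r)$, $(z, r_L)$, $(z, r_R)$, $(z_R, r)$ is strictly simpler than $(z,r)$, so the induction hypothesis rewrites all four families of options, yielding
\[
\Xi_{\mathbf{Mo}_{\alpha}}(z + r) = \bigl\{\mathcal{E}_{\alpha}' E_{\alpha} T_r L_{\alpha} \Xi_{\mathbf{Mo}_{\alpha}} z_L,\; \mathcal{E}_{\alpha}' E_{\alpha} T_{r_L} L_{\alpha} \Xi_{\mathbf{Mo}_{\alpha}} z \mid \mathcal{E}_{\alpha}' E_{\alpha} T_{r_R} L_{\alpha} \Xi_{\mathbf{Mo}_{\alpha}} z,\; \mathcal{E}_{\alpha}' E_{\alpha} T_r L_{\alpha} \Xi_{\mathbf{Mo}_{\alpha}} z_R\bigr\}_{\mathbf{No}^{>, \succ}}.
\]
Setting $\mathfrak{b} \assign E_{\alpha} T_r L_{\alpha} \Xi_{\mathbf{Mo}_{\alpha}} z$, I then identify $\mathfrak{b}$ with this simplest element. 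Membership $\mathfrak{b} \in \mathbf{Mo}_{\alpha}$ is immediate from the hypothesis that $\nu$ is a successor ordinal: by \eqref{eq-succ-truncated}, the sum $L_{\alpha} \Xi_{\mathbf{Mo}_{\alpha}} z + r$ is again $\alpha$-truncated, hence $\mathfrak{b} \in E_{\alpha} \mathbf{No}_{\succ, \alpha} = \mathbf{Mo}_{\alpha}$ by Proposition~\ref{prop-surjective-hyperlog}.

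The main obstacle is verifying that $\mathfrak{b}$ lies strictly between the claimed left and right $\mathcal{E}_{\alpha}'$-classes. Using the relation $L_{\alpha} \circ \mathcal{E}_{\alpha}' = \mathcal{L}_{\alpha}' \circ L_{\alpha}$ (from the definition of $\mathcal{L}_{\alpha}'$) together with $\mathcal{L}_{\alpha}' = \mathcal{L}_{\alpha}$ on $\mathbf{No}_{\succ, \alpha}$ (from the proof of Proposition~\ref{identification-prop}), the needed strict inequalities in $\mathbf{Mo}_{\alpha}$ translate to $\mathcal{L}_{\alpha}$-strict comparisons in $\mathbf{No}_{\succ, \alpha}$ between $L_{\alpha} \Xi_{\mathbf{Mo}_{\alpha}} z + r$ and the four truncations $L_{\alpha} \Xi_{\mathbf{Mo}_{\alpha}} z_L + r$, $L_{\alpha} \Xi_{\mathbf{Mo}_{\alpha}} z + r_L$, $L_{\alpha} \Xi_{\mathbf{Mo}_{\alpha}} z + r_R$, $L_{\alpha} \Xi_{\mathbf{Mo}_{\alpha}} z_R + r$, and these are exactly the conditions furnished by the uniform cut equation \eqref{eq-truncated-2} applied at $L_{\alpha} \Xi_{\mathbf{Mo}_{\alpha}} z + r$. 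A conceptually cleaner route, bypassing the explicit cut manipulation, would be to first prove $\Xi_{\mathbf{Mo}_{\alpha}} = E_{\alpha} \Xi_{\mathbf{No}_{\succ, \alpha}}$, i.e.\ that $L_{\alpha}$ is a simplicity-preserving bijection $\mathbf{Mo}_{\alpha} \to \mathbf{No}_{\succ, \alpha}$; granting this, \eqref{eq-real-iterate} is an immediate formal consequence of Lemma~\ref{lem-Tr'-translation-invariant} by pre-composition with $E_{\alpha}$.
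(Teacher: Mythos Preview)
Your inductive framework on $(\mathbf{No},\sqsubseteq)\times(\mathbb{R},\sqsubseteq)$ and your expansion of $\Xi_{\mathbf{Mo}_{\alpha}}(z+r)$ via \eqref{eq-uniform-sum} and \eqref{eq-uniform-G} match the paper exactly. However, two points prevent the argument from closing.

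First, the ``cleaner route'' rests on a false identity. The claim $\Xi_{\mathbf{Mo}_{\alpha}} = E_{\alpha}\,\Xi_{\mathbf{No}_{\succ,\alpha}}$ fails already at $0$: both $\mathbf{Mo}_{\alpha}$ and $\mathbf{No}_{\succ,\alpha}$ have $\omega$ as their $\sqsubseteq$-minimum, so the right-hand side evaluates to $E_{\alpha}\omega$, which is strictly larger than $\omega=\Xi_{\mathbf{Mo}_{\alpha}}(0)$. Equivalently, $L_{\alpha}$ is an order-isomorphism $\mathbf{Mo}_{\alpha}\to\mathbf{No}_{\succ,\alpha}$ but not a $\sqsubseteq$-isomorphism; the cut equation \eqref{eq-rich-hyperexp} carries the extra left options $E_{\vartriangleleft\alpha}\mathfrak{d}_{\alpha}(\varphi)$ precisely because of this.

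Second, in the main route you only verify that $\mathfrak{b}\assign E_{\alpha}T_{r}L_{\alpha}\Xi_{\mathbf{Mo}_{\alpha}}z$ lies in the cut defining $\Xi_{\mathbf{Mo}_{\alpha}}(z+r)$, which yields $\Xi_{\mathbf{Mo}_{\alpha}}(z+r)\sqsubseteq\mathfrak{b}$ but not equality. The reverse direction is the substantive step, and it is exactly here that the paper does additional work: it computes an explicit cut for $\mathfrak{b}=\phi_{r}(\mathfrak{a})$ by applying \eqref{eq-rich-hyperlog}, Lemma~\ref{lem-Tr'-translation-invariant}, and then the (uniform) cut equation for $E_{\alpha}$. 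This cut for $\phi_{r}(\mathfrak{a})$ contains, beyond the four families you listed, the extra options $E_{<\alpha}L_{\alpha}\mathfrak{a}$ on the left and $E_{\alpha}L_{<\alpha}\mathfrak{a}$ on the right. One must then check that $\Xi_{\mathbf{Mo}_{\alpha}}(z+r)$ sits between these extra options; the paper does this by sandwiching via Lemma~\ref{lem-hyperexp-hypermonomial-identity}, choosing $n\in\mathbb{N}$ with $|r|\le n$ and using $\Xi_{\mathbf{Mo}_{\alpha}}(z\pm n)=E_{\alpha_{/\omega}}^{\pm n}\mathfrak{a}$. Your appeal to \eqref{eq-truncated-2} ``applied at $L_{\alpha}\Xi_{\mathbf{Mo}_{\alpha}}z+r$'' does not supply this, since \eqref{eq-truncated-2} is a cut equation for the parametrisation $\Xi_{\mathbf{No}_{\succ,\alpha}}$, not a cut representation of an arbitrary element of $\mathbf{No}_{\succ,\alpha}$; invoking it here presupposes the very identity $L_{\alpha}\mathfrak{a}+r=\Xi_{\mathbf{No}_{\succ,\alpha}}(\cdot)$ whose options you would need to identify.
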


\begin{proof}
  We proceed by induction. Let $(z, r) \in \mathbf{No} \times \mathbb{R}$ be
  such that
  \begin{eqnarray*}
    \Xi_{\mathbf{Mo}_{\alpha}} T_s y & = & E_{\alpha} T_s L_{\alpha}
    \Xi_{\mathbf{Mo}_{\alpha}} y
  \end{eqnarray*}
  for all strictly simpler $(y, s) \in \mathbf{No} \times \mathbb{R}$ with
  respect to the product order $\mathord{\sqsubseteq} \times
  \mathord{\sqsubseteq}$. For $s \in \mathbb{R}$, let $\phi_s$ be the function
  $b \longmapsto E_{\alpha} T_s L_{\alpha} b$ on $\mathbf{No}^{>, \succ}$ and
  let $\mathfrak{a} \assign \Xi_{\mathbf{Mo}_{\alpha}} z$. By
  (\ref{eq-uniform-sum}) and (\ref{eq-uniform-G}), we have
  \begin{eqnarray*}
    \Xi_{\mathbf{Mo}_{\alpha}} (z + r) & = & \left\{\mathbb{R},
    \mathcal{E}_{\alpha} \Xi_{\mathbf{Mo}_{\alpha}} (z_L + r),
    \mathcal{E}_{\alpha} \Xi_{\mathbf{Mo}_{\alpha}} (z + r_L)
    \right|\\
    &  & \quad \left. \mathcal{E}_{\alpha} \Xi_{\mathbf{Mo}_{\alpha}} (z_R + r),
    \mathcal{E}_{\alpha} \Xi_{\mathbf{Mo}_{\alpha}} (z + r_R) \right\}\\
    & = & \{ \mathbb{R}, \mathcal{E}_{\alpha} \phi_r
    (\mathfrak{a}_L^{\mathbf{Mo}_{\alpha}}), \mathcal{E}_{\alpha} \phi_{r_L}
    (\mathfrak{a}) |\mathcal{E}_{\alpha} \phi_r
    (\mathfrak{a}_R^{\mathbf{Mo}_{\alpha}}), \mathcal{E}_{\alpha} \phi_{r_R}
    (\mathfrak{a}) \} .
  \end{eqnarray*}
  By (\ref{eq-rich-hyperlog}), Lemma~\ref{lem-Tr'-translation-invariant} and
  (\ref{eq-uniform-sum}), we have:
  \begin{eqnarray*}
    T_r L_{\alpha} \mathfrak{a} & = & \{ T_r L_{\alpha}
    \mathfrak{a}^{\mathbf{Mo}_{\alpha}}_L, T_{r_L} L_{\alpha}
    \mathfrak{a}|T_{r_R} L_{\alpha} \mathfrak{a}, T_r L_{\alpha}
    \mathfrak{a}^{\mathbf{Mo}_{\alpha}}_R, L_{< \alpha} \mathfrak{a}
    \}_{\mathbf{Tr}_{\alpha}} .
  \end{eqnarray*}
  We deduce that
  \begin{eqnarray*}
    \phi_r (\mathfrak{a}) & = & \left\{ E_{< \alpha} T_r L_{\alpha} \mathfrak{a},
    \mathcal{E}_{\alpha} \phi_r (\mathfrak{a}^{\mathbf{Mo}_{\alpha}}_L),
    \mathcal{E}_{\alpha} \phi_{r_L} (\mathfrak{a}) \right| \\ & & \left. \mathcal{E}_{\alpha}
    \phi_{r_R} (\mathfrak{a}), \mathcal{E}_{\alpha} \phi_r
    (\mathfrak{a}_R^{\mathbf{Mo}_{\alpha}}), \mathcal{E}_{\alpha} E_{\alpha}
    L_{< \alpha} \mathfrak{a} \right\}\\
    & = & \left\{ E_{< \alpha} L_{\alpha} \mathfrak{a}, \mathcal{E}_{\alpha}
    \phi_r (\mathfrak{a}^{\mathbf{Mo}_{\alpha}}_L), \mathcal{E}_{\alpha}
    \phi_{r_L} (\mathfrak{a}) \right| \\ & & \left. \mathcal{E}_{\alpha} \phi_{r_R} (\mathfrak{a}),
    \mathcal{E}_{\alpha} \phi_r (\mathfrak{a}_R^{\mathbf{Mo}_{\alpha}}),
    E_{\alpha} L_{< \alpha} \mathfrak{a} \right\} .
  \end{eqnarray*}
  It is enough to prove that $E_{< \alpha} L_{\alpha} \mathfrak{a}<
  \Xi_{\mathbf{Mo}_{\alpha}} (z + r) < E_{\alpha} L_{< \alpha} \mathfrak{a}$
  to conclude that $\phi_r (\mathfrak{a}) = \Xi_{\mathbf{Mo}_{\alpha}} (z +
  r)$. Towards this, fix an $n \in \mathbb{N}$ with $- n \leqslant r \leqslant
  n$. Lemma~\ref{lem-hyperexp-hypermonomial-identity} yields
  \begin{eqnarray*}
    \Xi_{\mathbf{Mo}_{\alpha}} (z + r) & \leqslant &
    \Xi_{\mathbf{Mo}_{\alpha}} (z + n) \hspace{1.2em} = \hspace{1.2em}
    E_{\alpha_{/ \omega^n}} \mathfrak{a} \hspace{1.2em} < \hspace{1.2em}
    E_{\alpha} L_{< \alpha} \mathfrak{a}\\
    \Xi_{\mathbf{Mo}_{\alpha}} (z + r) & \geqslant &
    \Xi_{\mathbf{Mo}_{\alpha}} (z - n) \hspace{1.2em} = \hspace{1.2em}
    L_{\alpha_{/ \omega^n}} \mathfrak{a} \hspace{1.2em} > \hspace{1.2em} E_{<
    \alpha} L_{\alpha} \mathfrak{a}.
  \end{eqnarray*}
  We conclude by induction that (\ref{eq-real-iterate}) holds.
\end{proof}

\begin{remark}
  For $r, s \in \mathbb{R}$, we have $\phi_{r + s} = \phi_r \circ \phi_s$, and
  $\phi_1 = E_{\alpha_{/ \omega}}$. Therefore we can see $(\phi_r)_{r \in
  \mathbb{R}}$ as a system of fractional and real iterates of the
  hyperexponential function $E_{\alpha_{/ \omega}}$ on~$\mathbf{No}^{>,
  \succ}$. The previous proposition shows that the action of those iterates on
  $L_{< \alpha}$-atomic numbers reduces to translations, modulo the
  parametrization $\Xi_{\mathbf{Mo}_{\alpha}}$. In particular, one can compute
  the functional square root of $\exp$ on $\mathbf{Mo}_{\omega}$ in terms of
  sign sequences using the material from {\cite{Bag:signseq}}.
\end{remark}

{\color[HTML]{800000}\tmcolor{black}{\begin{proposition}
  \label{prop-hyperexp-J-right-factor}If $\nu$ is a successor ordinal, then
  $\mathbf{Mo}^{\ast}_{\alpha} = \mathbf{Mo}_{\alpha} \mathbin{\Yleft}
  \mathbf{No}_{\succ}$.
\end{proposition}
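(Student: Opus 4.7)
The plan is to transport the problem through the parametrization $\Xi_{\mathbf{Mo}_{\alpha}}$ so that it becomes a question about the translation action $\mathcal{T}$ on $\mathbf{No}$, whose simple elements form $\mathbf{No}_{\succ}$ by definition.

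First I would check the inclusion $\mathcal{E}'_{\alpha} \subseteq \mathcal{E}^{\ast}_{\alpha}$: the relation $P_r = E_1 H_r L_1$ rewrites as $H_r = L_1 P_r E_1 \in \mathcal{E}^{\ast}_{\alpha}$, so every generator $E_{\gamma} H_r L_{\gamma}$ of $\mathcal{E}'_{\alpha}$ lies in $\mathcal{E}^{\ast}_{\alpha}$. Consequently, for any $a \in \mathbf{No}^{>, \succ}$, the $\sqsubseteq$-minimum $a_0$ of $\mathcal{E}^{\ast}_{\alpha}[a]$ satisfies $\mathfrak{d}_{\alpha}(a_0) \in \mathcal{E}'_{\alpha}[a_0] \subseteq \mathcal{E}^{\ast}_{\alpha}[a]$ and $\mathfrak{d}_{\alpha}(a_0) \sqsubseteq a_0$, forcing $a_0 = \mathfrak{d}_{\alpha}(a_0) \in \mathbf{Mo}_{\alpha}$. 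Hence $\mathbf{Mo}^{\ast}_{\alpha} \subseteq \mathbf{Mo}_{\alpha}$, and it suffices to determine, for each $\mathfrak{a} = \Xi_{\mathbf{Mo}_{\alpha}}(z) \in \mathbf{Mo}_{\alpha}$, the $\sqsubseteq$-minimum of $\mathcal{E}^{\ast}_{\alpha}[\mathfrak{a}] \cap \mathbf{Mo}_{\alpha}$.

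I would then establish that on $\mathbf{No}^{>, \succ}$, $\mathcal{E}^{\ast}_{\alpha}$ is pointwise cofinal with the cyclic group $\{E_{\alpha_{/\omega}}^n : n \in \mathbb{Z}\}$. Since $\nu$ is a successor, any $\gamma < \alpha$ satisfies $\gamma \leq \alpha_{/\omega} \cdot n$ for some $n$, so $\mathbf{I}_{2, \mu}$ and $\mathbf{FE}_{\mu}$ yield $E_{\gamma} \leq E_{\alpha_{/\omega} n} = E_{\alpha_{/\omega}}^n$, with a symmetric lower bound for $L_{\gamma}$; and for $P_r$ we have $a^r \leq \exp(N \log a) \leq E_1(E_1(a)) \leq E_{\alpha_{/\omega}}^2(a)$ (symmetric for $P_{1/r}$), bounds preserved under composition. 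Via Proposition~\ref{prop-hyperexp-hypermon-identity}, $E_{\alpha_{/\omega}}^n = E_{\alpha} T_n L_{\alpha}$ acts on $\mathbf{Mo}_{\alpha}$ as the $\Xi_{\mathbf{Mo}_{\alpha}}$-conjugate of $T_n$. Setting $\mathfrak{a} = \Xi_{\mathbf{Mo}_{\alpha}}(z)$, we conclude
\[
\mathcal{E}^{\ast}_{\alpha}[\mathfrak{a}] \cap \mathbf{Mo}_{\alpha} \;=\; \Xi_{\mathbf{Mo}_{\alpha}}\bigl(\{w \in \mathbf{No} : w - z \preccurlyeq 1\}\bigr).
\]

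Finally, I would identify the $\sqsubseteq$-minimum of the convex subclass $\{w \in \mathbf{No} : w - z \preccurlyeq 1\}$ as $z_0 \assign \pi_{\mathcal{T}}(z) \in \mathbf{No}_{\succ}$. Since $z - z_0 \in \mathbb{R}$ we have $\{w : w - z \preccurlyeq 1\} = \{w : w - z_0 \preccurlyeq 1\} \ni z_0$, so the minimum is $\sqsubseteq z_0$. For the converse, Proposition~\ref{prop-simplicity-condition} applied to the thin $\mathcal{T}$-partition of $\mathbf{No}$ reformulates $z_0 \in \mathbf{No}_{\succ}$ as $(z_0)_L^{\mathbf{No}} + \mathbb{R} < z_0 < (z_0)_R^{\mathbf{No}} + \mathbb{R}$, meaning that for every $w \sqsubsetneq z_0$ the distance $|w - z_0|$ dominates $\mathbb{R}^{>}$, so $w - z_0 \not\preccurlyeq 1$. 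Hence $z_0$ is indeed the minimum, and transporting back through $\Xi_{\mathbf{Mo}_{\alpha}}$ yields $\mathbf{Mo}^{\ast}_{\alpha} = \Xi_{\mathbf{Mo}_{\alpha}}(\mathbf{No}_{\succ}) = \mathbf{Mo}_{\alpha} \mathbin{\Yleft} \mathbf{No}_{\succ}$. The main obstacle is this last simplicity identification, since the bounded perturbation class $\{w : w - z_0 \preccurlyeq 1\}$ is strictly larger than the translation orbit $z_0 + \mathbb{R}$; it is resolved by Proposition~\ref{prop-simplicity-condition}.
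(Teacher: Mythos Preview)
Your proof is correct and follows essentially the same route as the paper's. Both arguments hinge on Lemma~\ref{lem-hyperexp-hypermonomial-identity} (or its real-parameter extension, Proposition~\ref{prop-hyperexp-hypermon-identity}) to conjugate the $\mathcal{E}^{\ast}_{\alpha}$-action on $\mathbf{Mo}_{\alpha}$ into integer translations on $\mathbf{No}$, and then invoke Proposition~\ref{prop-simplicity-condition} to identify the simple elements with $\mathbf{No}_{\succ}$. The only organisational difference is that the paper proves the two inclusions directly via the simplicity criterion, whereas you first establish $\mathbf{Mo}^{\ast}_{\alpha}\subseteq\mathbf{Mo}_{\alpha}$, then transport the whole class $\mathcal{E}^{\ast}_{\alpha}[\mathfrak{a}]\cap\mathbf{Mo}_{\alpha}$ through $\Xi_{\mathbf{Mo}_{\alpha}}$ and locate its $\sqsubseteq$-minimum in $\mathbf{No}$; the content is the same.
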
}

\tmcolor{black}{\begin{proof}
  For $\theta \in \mathbf{No}_{\succ}$, we have $\theta_L +\mathbb{N}< \theta
  < \theta_R -\mathbb{N}$. By Lemma~\ref{lem-hyperexp-hypermonomial-identity},
  it follows that $E_{\alpha_{/ \omega} \mathbb{N}} \Xi_{\mathbf{Mo}_{\alpha}}
  \theta_L < \Xi_{\mathbf{Mo}_{\alpha}} \theta < L_{\alpha_{/ \omega}
  \mathbb{N}} \Xi_{\mathbf{Mo}_{\alpha}} \theta_R$. This implies that
  $\mathcal{E}^{\ast}_{\alpha} \Xi_{\mathbf{Mo}_{\alpha}} \theta_L <
  \Xi_{\mathbf{Mo}_{\alpha}} \theta <\mathcal{E}^{\ast}_{\alpha}
  \Xi_{\mathbf{Mo}_{\alpha}} \theta_R$, so $\Xi_{\mathbf{Mo}_{\alpha}} \theta$
  is~$\mathcal{E}^{\ast}_{\alpha}$-simple.
  
  Conversely, consider $\theta \in \mathbf{No}^{>, \succ}$ such that
  $\Xi_{\mathbf{Mo}_{\alpha}} \theta$ is $\mathcal{E}^{\ast}_{\alpha}$-simple.
  We have $\Xi_{\mathbf{Mo}_{\alpha}} \theta_L \subseteq
  (\Xi_{\mathbf{Mo}_{\alpha}} \theta)_L$ and $\Xi_{\mathbf{Mo}_{\alpha}}
  \theta_R \subseteq (\Xi_{\mathbf{Mo}_{\alpha}} \theta)_R$, whence
  $E_{\alpha_{/ \omega} \mathbb{N}} \Xi_{\mathbf{Mo}_{\alpha}} \theta_L <
  \Xi_{\mathbf{Mo}_{\alpha}} \theta < L_{\alpha_{/ \omega} \mathbb{N}}
  \Xi_{\mathbf{Mo}_{\alpha}} \theta_R$. We obtain $\theta_L +\mathbb{N}<
  \theta < \theta_R -\mathbb{N}$,  which proves that $\theta \in
  \mathbf{No}_{\succ}$.
\end{proof}}}

\begin{proposition}
  \label{prop-hyperexp-star-relation}We have $E_{\alpha} 
  \mathbf{Tr}_{\alpha}^{\ast} = \mathbf{Mo}_{\alpha}^{\ast}$.
\end{proposition}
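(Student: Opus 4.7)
My plan is to handle the proposition in two cases, according to whether $\nu$ is a limit or successor ordinal, making essential use of the explicit identifications of $\mathbf{Tr}_{\alpha}^{\ast}$ and $\mathbf{Mo}_{\alpha}^{\ast}$ already established.

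In the limit case, Lemma~\ref{lem-limit-hypermon} yields $\mathcal{E}_{\alpha}^{\ast} \legeangle \mathcal{E}_{\alpha}'$, so $\mathbf{Mo}_{\alpha}^{\ast} = \mathbf{Mo}_{\alpha}' = \mathbf{Mo}_{\alpha}$ by Proposition~\ref{prop-I3}. Conjugating this mutual pointwise cofinality by the strictly increasing bijection $E_{\alpha}$ gives $\mathcal{L}_{\alpha}^{\ast} \legeangle \mathcal{L}_{\alpha}'$, hence $\mathbf{Tr}_{\alpha}^{\ast} = \mathbf{Tr}_{\alpha} = \mathbf{No}_{\succ, \alpha}$ by Proposition~\ref{identification-prop}. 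The desired equality $E_{\alpha} \mathbf{Tr}_{\alpha}^{\ast} = \mathbf{Mo}_{\alpha}^{\ast}$ is then immediate from the bijection $E_{\alpha} : \mathbf{No}_{\succ, \alpha} \to \mathbf{Mo}_{\alpha}$ of Proposition~\ref{prop-surjective-hyperlog}.

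For the successor case $\nu = \mu + 1$, Lemma~\ref{lem-Tr'-successor-identity} gives $\mathbf{Tr}_{\alpha}^{\ast} = \mathbf{No}_{\succ}^{>}$ and Proposition~\ref{prop-hyperexp-J-right-factor} gives $\mathbf{Mo}_{\alpha}^{\ast} = \Xi_{\mathbf{Mo}_{\alpha}}(\mathbf{No}_{\succ})$, so the claim reduces to $L_{\alpha} \Xi_{\mathbf{Mo}_{\alpha}}(\mathbf{No}_{\succ}) = \mathbf{No}_{\succ}^{>}$. Setting $\phi \assign L_{\alpha} \Xi_{\mathbf{Mo}_{\alpha}} : \mathbf{No} \to \mathbf{No}_{\succ, \alpha}$, Proposition~\ref{prop-hyperexp-hypermon-identity} shows that $\phi$ is a strictly increasing bijection satisfying $\phi(\theta + r) = \phi(\theta) + r$ for all $r \in \mathbb{R}$. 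For $\theta \in \mathbf{No}_{\succ}$, the proof of Proposition~\ref{prop-hyperexp-J-right-factor} provides $E_{\alpha_{/ \omega} \mathbb{N}} \Xi_{\mathbf{Mo}_{\alpha}}(\theta_L) < \Xi_{\mathbf{Mo}_{\alpha}}(\theta) < L_{\alpha_{/ \omega} \mathbb{N}} \Xi_{\mathbf{Mo}_{\alpha}}(\theta_R)$; applying $L_{\alpha}$ and using the consequences $L_{\alpha} E_{\alpha_{/ \omega}}^{n} = T_{n} L_{\alpha}$ and $L_{\alpha} L_{\alpha_{/ \omega}}^{n} = T_{-n} L_{\alpha}$ of (\ref{eq-functional-total}), these bounds transform into $\phi(\theta_L) + \mathbb{N} < \phi(\theta) < \phi(\theta_R) - \mathbb{N}$.

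The hard part will be deducing from these bounds that $\phi(\theta) \in \mathbf{No}_{\succ}^{>}$, since the characterization of $\mathbf{No}_{\succ}^{>}$ as the $\mathcal{T}$-simple elements of $\mathbf{No}^{>, \succ}$ requires bounds indexed by \emph{every} simpler element of $\phi(\theta)$ in $\mathbf{No}$, not merely those of the form $\phi(\theta')$ with $\theta' \sqsubset \theta$. Because $\phi$ fails to be a $\sqsubseteq$-isomorphism in general (indeed $L_{\alpha} \omega < \omega$, so $\phi(0) \neq \omega$), bridging this gap will require an induction on $\sqsubseteq$ exploiting the uniform cut equation~(\ref{eq-rich-hyperlog-2}) for $L_{\alpha}$ together with the cut equation for $\Xi_{\mathbf{Mo}_{\alpha}}$ from Proposition~\ref{prop-thin-substructure}. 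The reverse inclusion $\mathbf{No}_{\succ}^{>} \subseteq \phi(\mathbf{No}_{\succ})$ will follow by a symmetric argument, starting from $\varphi \in \mathbf{No}_{\succ}^{>}$ and transferring its purely-infinite characterization back through $\phi^{-1}$.
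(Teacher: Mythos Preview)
Your limit-case argument is correct, but your successor-case argument is incomplete: you identify the reduction to $\phi(\mathbf{No}_{\succ}) = \mathbf{No}_{\succ}^{>}$ and then explicitly defer the ``hard part'' to an unwritten induction. That induction is not routine. As you note, $\phi = L_{\alpha} \Xi_{\mathbf{Mo}_{\alpha}}$ is not a $\sqsubseteq$-isomorphism onto $\mathbf{No}_{\succ,\alpha}$ (the extra right options $L_{<\alpha}\mathfrak{a}$ in (\ref{eq-rich-hyperlog}) obstruct this), so there is no general principle saying that $\phi$ sends $\mathcal{T}$-simple elements of $\mathbf{No}$ to $\mathcal{T}$-simple elements of $\mathbf{No}^{>,\succ}$. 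The translation-equivariance $\phi T_r = T_r \phi$ tells you only that $\phi$ sends $\mathcal{T}$-classes to $\mathcal{T}$-classes; it says nothing about which representative of each class is $\sqsubseteq$-minimal in the ambient structure. The bounds $\phi(\theta_L) + \mathbb{N} < \phi(\theta) < \phi(\theta_R) - \mathbb{N}$ you derive are indexed by $\phi(\theta_{\sqsubset})$, not by $\phi(\theta)_{\sqsubset}$, and closing that gap is precisely the content you have not supplied.

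The paper avoids all of this by invoking Proposition~\ref{prop-nearly-extensive} directly, with no case split on $\nu$. The cut equation (\ref{eq-rich-hyperexp}) for $E_{\alpha}$ has exactly the shape required by that proposition: an extensive part $\{E_{\alpha}\varphi_L^{\mathbf{No}_{\succ,\alpha}} \mid E_{\alpha}\varphi_R^{\mathbf{No}_{\succ,\alpha}}\}$ together with a left option set $\Lambda(\varphi) = E_{\vartriangleleft\alpha}\mathfrak{d}_{\alpha}(\varphi)$ whose mutual-cofinality classes are precisely the $\mathcal{L}_{\alpha}^{\ast}$-classes restricted to $\mathbf{Tr}_{\alpha}$. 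Proposition~\ref{prop-nearly-extensive} then yields that $E_{\alpha}$ restricts to a $(\leqslant,\sqsubseteq)$-embedding of each such class into $\mathbf{Mo}_{\alpha}$, so the $\sqsubseteq$-minimum $\varphi$ of $\mathcal{L}_{\alpha}^{\ast}[\varphi] \cap \mathbf{Tr}_{\alpha}$ is sent to the $\sqsubseteq$-minimum of $\mathcal{E}_{\alpha}^{\ast}[E_{\alpha}\varphi] \cap \mathbf{Mo}_{\alpha}$, forcing $E_{\alpha}\varphi \in \mathbf{Mo}_{\alpha}^{\ast}$. The reverse inclusion is symmetric via (\ref{eq-rich-hyperlog}). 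This is a two-line argument once the right tool is identified; your approach tries to rebuild this machinery by hand in the successor case.
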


\begin{proof}
  Let $\varphi \in \mathbf{Tr}_{\alpha}^{\ast}$. So $\varphi \in
  \mathbf{Tr}_{\alpha}$. By Proposition~\ref{prop-nearly-extensive}, the
  number $E_{\alpha} \varphi$ is simplest in
  \begin{eqnarray*}
    E_{\alpha} (\mathcal{E}^{\ast}_{\alpha} [\varphi] \cap
    \mathbf{Tr}_{\alpha}) & = & \mathcal{E}_{\alpha}^{\ast} [E_{\alpha}
    \varphi] \cap \mathbf{Mo}_{\alpha} .
  \end{eqnarray*}
  Since $\mathbf{Mo}_{\alpha}^{\ast} \subseteq \mathbf{Mo}_{\alpha}$, we have
  $E_{\alpha} \varphi \sqsubseteq \mathcal{E}_{\alpha}^{\ast} [E_{\alpha}
  \varphi] \cap \mathbf{Mo}_{\alpha}^{\ast}$ so $E_{\alpha} \varphi
  \sqsubseteq \mathfrak{d}_{\alpha}^{\ast} (E_{\alpha} \varphi)$. We deduce
  that $E_{\alpha} \varphi =\mathfrak{d}_{\alpha}^{\ast} (E_{\alpha}
  \varphi)$, so $E_{\alpha} \varphi$ is $\mathcal{E}_{\alpha}^{\ast}$-simple.
  Conversely, let $\mathfrak{a} \in \mathbf{Mo}_{\alpha}^{\ast}$. By
  Proposition~\ref{prop-nearly-extensive} the number $L_{\alpha} \mathfrak{a}$
  is simplest in $L_{\alpha} (\mathcal{E}_{\alpha}^{\ast} [\mathfrak{a}] \cap
  \mathbf{Mo}_{\alpha}) =\mathcal{L}_{\alpha}^{\ast} [L_{\alpha} \mathfrak{a}]
  \cap \mathbf{No}_{\succ, \alpha}$. Since $\mathbf{Tr}_{\alpha}^{\ast}
  \subseteq \mathbf{No}_{\succ, \alpha}$, we have $L_{\alpha} \mathfrak{a}
  \sqsubseteq \mathcal{L}_{\alpha}^{\ast} [L_{\alpha} \mathfrak{a}] \cap
  \mathbf{Tr}_{\alpha}^{\ast}$ so $L_{\alpha} \mathfrak{a} \sqsubseteq
  \sharp_{\alpha}^{\ast} (L_{\alpha} \mathfrak{a})$. We deduce that
  $L_{\alpha} \mathfrak{a} \sqsubseteq \sharp_{\alpha}^{\ast} (L_{\alpha}
  \mathfrak{a})$ is $\mathcal{L}_{\alpha}^{\ast}$-simple.
\end{proof}

\begin{corollary}
  \label{cor-hyperexp-successor-purely-infinite}If $\nu$ is a successor
  ordinal, then $\mathbf{Mo}^{\ast}_{\alpha} = E_{\alpha} 
  \mathbf{No}_{\succ}^{>}$.
\end{corollary}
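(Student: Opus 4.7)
The proof should be essentially a one-line combination of two previously established results. The plan is to simply chain together Proposition~\ref{prop-hyperexp-star-relation} and Lemma~\ref{lem-Tr'-successor-identity}. Specifically, Proposition~\ref{prop-hyperexp-star-relation} gives us the general identity $E_{\alpha} \mathbf{Tr}_{\alpha}^{\ast} = \mathbf{Mo}_{\alpha}^{\ast}$, valid for any nonzero ordinal $\nu$, while Lemma~\ref{lem-Tr'-successor-identity} identifies $\mathbf{Tr}_{\alpha}^{\ast}$ with $\mathbf{No}_{\succ}^{>}$ under the assumption that $\nu$ is a successor ordinal. Substituting the second identity into the first immediately yields $\mathbf{Mo}_{\alpha}^{\ast} = E_{\alpha} \mathbf{No}_{\succ}^{>}$.

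There is no obstacle here beyond verifying that the hypotheses of both invoked results are met: $\nu$ is indeed a successor ordinal (this is the assumption of the corollary), and the application of Proposition~\ref{prop-hyperexp-star-relation} requires only that $E_{\alpha}$ be defined as a bijection from $\mathbf{No}_{\succ, \alpha}$ to $\mathbf{Mo}_{\alpha}$, which we have by Theorem~\ref{th-confluent-hyperserial-field}. So the proof reduces to writing out the composition of equalities in one or two sentences.
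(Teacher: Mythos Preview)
Your proposal is correct and matches the paper's intended argument: the corollary is stated without proof in the paper, as it is an immediate consequence of Proposition~\ref{prop-hyperexp-star-relation} together with Lemma~\ref{lem-Tr'-successor-identity}, exactly as you describe.
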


\

\

\section*{Glossary}

\begin{theglossary}{gly}
  \glossaryentry{$\{ L|R \}$}{simplest number between $L$ and
  $R$}{\pageref{autolab1}}
  
  \glossaryentry{$\mathbb{R} [[\mathfrak{M}]]$}{field of well-based series
  with real coefficients over $\mathfrak{M}$}{\pageref{autolab2}}
  
  \glossaryentry{$\tmop{supp} f$}{support of a series}{\pageref{autolab3}}
  
  \glossaryentry{$\mathfrak{d}_f$}{$\max \tmop{supp} f$}{\pageref{autolab4}}
  
  \glossaryentry{$f_{\succ \mathfrak{m}}$}{truncation $\sum_{\mathfrak{n}
  \succ \mathfrak{m}} f_{\mathfrak{n}} \mathfrak{n}$ of
  $f$}{\pageref{autolab5}}
  
  \glossaryentry{$f_{\succ}$}{$f_{\succ 1}$}{\pageref{autolab6}}
  
  \glossaryentry{$h = f \oplus g$}{$h = f + g$ and $\tmop{supp} f \succ
  g$}{\pageref{autolab7}}
  
  \glossaryentry{$f \trianglelefteqslant g$}{$\tmop{supp} f \succ g -
  f$}{\pageref{autolab8}}
  
  \glossaryentry{$f \prec g$}{$\mathbb{R}^{>}  | f | < | g
  |$}{\pageref{autolab9}}
  
  \glossaryentry{$f \preccurlyeq g$}{$\exists r \in \mathbb{R}^{>}, | f | < r
  | g |$}{\pageref{autolab10}}
  
  \glossaryentry{$f \asymp g$}{$f \preccurlyeq g$ and $g \preccurlyeq
  f$}{\pageref{autolab11}}
  
  \glossaryentry{$\mathbb{S}_{\succ}$}{series $f \in \mathbb{S}$ with
  $\tmop{supp} f \succ 1$}{\pageref{autolab12}}
  
  \glossaryentry{$\mathbb{S}^{\prec}$}{series $f \in \mathbb{S}$ with $f \prec
  1$}{\pageref{autolab13}}
  
  \glossaryentry{$\mathbb{S}^{>, \succ}$}{series $f \in \mathbb{S}$ with $f
  \geqslant 0$ and $f \succ 1$}{\pageref{autolab14}}
  
  \glossaryentry{${\text{\tmtextrm{\tmtextbf{\tmtextup{On}}}}}$}{class of
  ordinals}{\pageref{autolab15}}
  
  \glossaryentry{$\sqsubseteq$}{simplicity relation}{\pageref{autolab16}}
  
  \glossaryentry{$\dot{\omega}^{\gamma}$}{ordinal exponentiation with base
  $\omega$ at $\gamma$}{\pageref{autolab17}}
  
  \glossaryentry{$\mu_-$}{$\mu = \mu_- + 1$ if $\mu$ is a successor ordinal
  and $\mu_- = \mu$ if $\mu$ is a limit ordinal}{\pageref{autolab18}}
  
  \glossaryentry{$\alpha_{/ \omega}$}{$\dot{\omega}^{\mu_-}$ for $\alpha =
  \dot{\omega}^{\mu}$}{\pageref{autolab19}}
  
  \glossaryentry{$\mathbf{U} \Yleft \mathbf{V}$}{the surreal substructure
  $\Xi_{\mathbf{U}} \mathbf{V}$}{\pageref{autolab20}}
  
  \glossaryentry{${\text{\tmtextrm{\tmtextbf{\tmtextup{Smp}}}}}_{\tmmathbf{\Pi}}$}{class
  of $\tmmathbf{\Pi}$-simple elements}{\pageref{autolab21}}
  
  \glossaryentry{$\pi_{\tmmathbf{\Pi}}$}{projection
  ${\text{\tmtextrm{\tmtextbf{\tmtextup{S}}}}} \longrightarrow
  {\text{\tmtextrm{\tmtextbf{\tmtextup{Smp}}}}}_{\tmmathbf{\Pi}}$}{\pageref{autolab22}}
  
  \glossaryentry{$\mathcal{G} [a]$}{class of numbers $b$ with $\exists g, h
  \in \mathcal{G}, g a \leqslant b \leqslant h a$}{\pageref{autolab23}}
  
  \glossaryentry{$\leqangle$}{comparison between sets of strictly increasing
  bijections}{\pageref{autolab24}}
  
  \glossaryentry{$\langle X \rangle$}{function group generated by
  $X$}{\pageref{autolab25}}
  
  \glossaryentry{$X \legeangle Y$}{$X$ and $Y$ are mutually pointwise
  cofinal}{\pageref{autolab26}}
  
  \glossaryentry{$T_r$}{translation $a \longmapsto a +
  r$}{\pageref{autolab27}}
  
  \glossaryentry{$H_s$}{homothety $a \longmapsto sa$}{\pageref{autolab28}}
  
  \glossaryentry{$P_s$}{power function $a \longmapsto
  a^s$}{\pageref{autolab29}}
  
  \glossaryentry{$\mathcal{T}$}{function group $\{ T_r \suchthat r \in
  \mathbb{R} \}$}{\pageref{autolab30}}
  
  \glossaryentry{$\mathcal{H}$}{function group $\{ H_s \suchthat s \in
  \mathbb{R}^{>} \}$}{\pageref{autolab31}}
  
  \glossaryentry{$\mathcal{P}$}{function group $\{ P_s \suchthat s \in
  \mathbb{R}^{>} \}$}{\pageref{autolab32}}
  
  \glossaryentry{$\mathcal{E}'$}{function group $\langle E_n H_s L_n : n \in
  \mathbb{N}, s \in \mathbb{R}^{>} \rangle$}{\pageref{autolab33}}
  
  \glossaryentry{$\mathcal{E}^{\ast}$}{function group $\{ E_n, L_n \suchthat n
  \in \mathbb{N} \}$}{\pageref{autolab34}}
  
  \glossaryentry{$\mathbb{L}$}{field of logarithmic
  hyperseries}{\pageref{autolab35}}
  
  \glossaryentry{$\mathfrak{L}_{< \alpha}$}{group of logarithmic
  hypermonomials of strength $< \: \alpha$}{\pageref{autolab36}}
  
  \glossaryentry{$\mathbb{L}_{< \alpha}$}{field of logarithmic hyperseries of
  strength $< \: \alpha$}{\pageref{autolab37}}
  
  \glossaryentry{$g^{\uparrow \gamma}$}{unique series in $\mathbb{L}$ with $g
  = \left( g^{\uparrow \gamma} \right) \circ
  \ell_{\gamma}$}{\pageref{autolab38}}
  
  \glossaryentry{$L_{\beta}$}{hyperlogarithm function}{\pageref{autolab39}}
  
  \glossaryentry{$\mathfrak{M}_{\beta}$}{class of $L_{< \beta}$-atomic
  series}{\pageref{autolab40}}
  
  \glossaryentry{${\text{\tmtextrm{\tmtextbf{\tmtextup{FE}}}}}_{\mu}$}{functional
  equation}{\pageref{autolab41}}
  
  \glossaryentry{$\mathbf{A}_{\mu}$}{asymptotics axiom}{\pageref{autolab42}}
  
  \glossaryentry{$\mathbf{M}_{\mu}$}{monotonicity axiom}{\pageref{autolab43}}
  
  \glossaryentry{$\mathbf{R}_{\mu}$}{regularity axiom}{\pageref{autolab44}}
  
  \glossaryentry{${\text{\tmtextrm{\tmtextbf{\tmtextup{P}}}}}_{\mu}$}{infinite
  products axiom}{\pageref{autolab45}}
  
  \glossaryentry{$\mathcal{E}_{\beta} [s]$}{class of series $t$ with $\exists
  \gamma < \beta, L_{\gamma} t = L_{\gamma} s$}{\pageref{autolab46}}
  
  \glossaryentry{$\mathfrak{d}_{\beta} (s)$}{$L_{< \beta}$-atomic element of
  $\mathcal{E}_{\beta} [s]$}{\pageref{autolab47}}
  
  \glossaryentry{$E_{\beta}$}{hyperexponential function}{\pageref{autolab48}}
  
  \glossaryentry{$\mathbb{T}_{\succ, \beta}$}{class of $\beta$-truncated
  series}{\pageref{autolab49}}
  
  \glossaryentry{$\mathcal{L}_{\alpha} [s]$}{series $t$ with $\sharp_{\alpha}
  (t) = \sharp_{\alpha} (s)$}{\pageref{autolab50}}
  
  \glossaryentry{$\sharp_{\beta} (s)$}{$\trianglelefteqslant$-maximal
  $\beta$-truncated truncation of $s$}{\pageref{autolab51}}
  
  \glossaryentry{${\text{\tmtextrm{\tmtextbf{\tmtextup{T4}}}}}$}{axiom for
  transseries fields [\cite{Schm01},
  Definition~2.2.1]}{\pageref{autolab52}}
  
  \glossaryentry{$\mathcal{E}_{\alpha}'$}{function group $\langle E_{\gamma}
  \mathcal{H}L_{\gamma} : \gamma < \alpha \rangle$}{\pageref{autolab53}}
  
  \glossaryentry{$\mathcal{E}^{\ast}_{\alpha}$}{function group $\langle E_{<
  \alpha}, \mathcal{P} \rangle$}{\pageref{autolab54}}
  
  \glossaryentry{$\mathcal{L}_{\alpha}'$}{function group $L_{\alpha}
  \mathcal{E}_{\alpha}' E_{\alpha}$}{\pageref{autolab55}}
  
  \glossaryentry{$\mathcal{L}^{\ast}_{\alpha}$}{function group $L_{\alpha}
  \mathcal{E}^{\ast}_{\alpha} E_{\alpha}$}{\pageref{autolab56}}
  
  \glossaryentry{${\text{\tmtextrm{\tmtextbf{\tmtextup{Mo}}}}}_{\alpha}'$}{structure
  of $\mathcal{E}_{\alpha}'$-simple elements}{\pageref{autolab57}}
  
  \glossaryentry{${\text{\tmtextrm{\tmtextbf{\tmtextup{Mo}}}}}^{\ast}_{\alpha}$}{structure
  of $\mathcal{E}^{\ast}_{\alpha}$-simple elements}{\pageref{autolab58}}
  
  \glossaryentry{${\text{\tmtextrm{\tmtextbf{\tmtextup{Tr}}}}}_{\alpha}$}{structure
  of $\mathcal{L}_{\alpha}'$-simple elements}{\pageref{autolab59}}
  
  \glossaryentry{${\text{\tmtextrm{\tmtextbf{\tmtextup{Tr}}}}}^{\ast}_{\alpha}$}{structure
  of $\mathcal{L}^{\ast}_{\alpha}$-simple elements}{\pageref{autolab60}}
\end{theglossary}

\end{document}